\setlist[enumerate,1]{label=\textup{(\arabic*)}}% ensure enumerates in theorems are upright
\setlist[enumerate,2]{label=\textup{(\alph*)}}
\tikzset{rndblock/.style={rounded corners,rectangle,draw,outer sep=0pt}}
\newcommand{\tframed}[2][]{\tikz[baseline=(h.base)]\node[rndblock,#1] (h) {#2};}
\theoremstyle{plain}
\newtheorem{thmx}{Theorem}
\newtheorem{corx}[thmx]{Corollary}
\newtheorem{thm}{Theorem}[section]
\newtheorem {lem} [thm]{Lemma}
\newtheorem {prop}[thm] {Proposition}
\newtheorem{cor}[thm]{Corollary}
\theoremstyle{definition}
\newtheorem {defn}[thm] {Definition}
\newtheorem {rem} [thm]{Remark}
\newtheorem{ex}[thm]{Example}
\newcommand{\eqn}{\begin{equation}}
\newcommand{\eqne}{\end{equation}}
\newcommand{\anti}{\stackrel{\mathrm{anti}}{\cong}} 
\DeclareMathOperator{\id}{id}
\DeclareMathOperator{\PHomeo}{PHomeo}
\DeclareMathOperator{\PAut}{PAut}
\DeclareMathOperator{\red}{r}
\DeclareMathOperator{\ess}{e}
\DeclareMathOperator{\Hau}{H}
\DeclareMathOperator{\tr}{tr}
\DeclareMathOperator{\BB}{\mathbb{B}}
\DeclareMathOperator{\SPIso}{SPIso}
\DeclareMathOperator{\clsp}{\overline{span}}
\DeclareMathOperator{\spane}{span}
\DeclareMathOperator{\supp}{supp}
\DeclareMathOperator{\Null}{\mathcal{N}}
\DeclareMathOperator{\Bis}{Bis}
\DeclareMathOperator{\E}{\mathbb{E}}
\DeclareMathOperator{\tight}{tight}%for tight groupoids
\newcommand{\FS}{F^{S}}
\newcommand{\FR}{F_{\RR}}
\renewcommand{\H}{\mathcal H}
\newcommand{\s}{d}
\newcommand{\B}{\mathcal B}
\newcommand{\LL}{\mathcal L}
\newcommand{\G}{\mathcal G} %groupoid
\newcommand{\EE}{\mathcal E}
\newcommand{\RR}{\mathcal R}
\newcommand{\OO}{\mathcal{O}}
\newcommand{\F}{\mathbb F}
\renewcommand{\L}{\mathcal L}
\newcommand{\D}{\mathcal D}
\newcommand{\C}{\mathbb C}
\newcommand{\R}{\mathbb R}
\newcommand{\Z}{\mathbb Z}
\newcommand{\N}{\mathbb N}
\newcommand{\cst}{\ifmmode\mathrm{C}^*\else{$\mathrm{C}^*$}\fi}
\newcommand{\Falg}[4]{F^{#2}_{#3}({#1} \ifthenelse{\equal{#4}{}}{}{, {#4}})} %For groupoid Banach algebra; first argument G,L; second argument semigroup S; third argument representations R; fourth argument reduced, essential, etc
\newcommand{\Falgg}{\Falg} %for inverse semigroup crossed product Banach algebras
\newcommand{\FFalg}[4]{\widetilde{F}_{#2}^{#3}({#1} \ifthenelse{\equal{#4}{}}{}{, {#4}})}
\subjclass[2000]{47L10, 22A22, 46H10}
\begin{document}
\author{Krzysztof Bardadyn}
\address{Faculty of Mathematics, University of Bia\l ystok, ul. K. Cio\l kowskiego 1M, 15-245	Bia\l ystok, Poland}
\email{kbardadyn@math.uwb.edu.pl}

\author{Bartosz Kwa\'sniewski}
\address{Faculty of Mathematics, University of Bia\l ystok, ul. K. Cio\l kowskiego 1M, 15-245	Bia\l ystok, Poland}
\email{b.kwasniewski@uwb.edu.pl}

\author{Andrew McKee}
\address{Faculty of Mathematics, University of Bia\l ystok, ul. K. Cio\l kowskiego 1M, 15-245Bia\l ystok, Poland}
\email{a.mckee@uwb.edu.pl}

%\date{\today}

\title{\small Banach algebras associated to  twisted \'etale groupoids:\\
 simplicity and pure infiniteness}

\begin{abstract} 
We define  reduced and essential Banach algebras associated to a twisted \'etale (not necessarily Hausdorff) groupoid $(\G,\LL)$
 and extend some fundamental results from $C^*$-algebras to this context. 
We prove that for topologically free groupoids the associated essential Banach algebras have the ideal intersection property,
and thus such an algebra is simple if and only if the groupoid is minimal. We give conditions under which reduced algebras are essential (for example Hausdorffness of $\G$ is sufficient). 
This in particular solves the simplicity problem posed  by Gardella--Lupini for $L^p$-operator algebras associated to $\G$. 
In addition, using either the $n$-filling or locally contracting condition we give pure infiniteness criteria for essential simple Banach algebras associated to  $(\G,\LL)$.  
This extends the corresponding $C^*$-algebraic results that were previously known to hold in the untwisted Hausdorff case.
The results work nicely, and allow for characterization of the generalized intersection property, in the realm of $L^P$-operator algebras  where $P\subseteq [1,\infty]$ is a non-empty set of parameters. Such algebras cover in particular
$L^p$-operator algebras, for $p\in [1,\infty]$, and  their Banach $*$-algebra versions.

We apply our results to Banach algebra crossed products by twisted partial group actions, Roe-type Banach algebras with coefficients in finite-rank operators on a Banach space, twisted tight $L^P$-operator algebras of inverse semigroups, graph $L^P$-operator algebras, and $L^P$-versions of Exel-Pardo algebras. 
%We also interpret our results in terms of twisted inverse semigroup actions and their crossed products.  
\end{abstract}

\maketitle

%
%
%\setcounter{tocdepth}{1}
%\tableofcontents 

%
%
\section{Introduction}
\label{sec:Introduction}

The characterization of simplicity of the crossed product of a discrete group action on a topological space, \cite{Kawa-Tomi}, \cite{Arch_Spiel}, is one of the most fundamental facts from the theory of $C^*$-algebras associated to dynamical systems. 
The dynamical conditions appearing there, \emph{topological freeness} and \emph{minimality}, are easy to check, applicable to a number of actions, and, most importantly, they are also sufficient for simplicity of twisted reduced crossed products. 
This classical result was generalized to twisted actions of second countable Hausdorff groupoids in \cite[Corollary 4.6]{Renault91}, though it seemed to gain more attention when phrased in terms of second countable Hausdorff \'etale groupoids in \cite{Exel_Non-Hausdorff}, \cite{BCFS}. 
One boost came from Renault's theory of Cartan inclusions \cite{Re}, as they are modeled by twisted \'etale Hausdorff groupoids satisfying a version of topological freeness for groupoids, which in the separable case is \emph{topological principality}. 
Since the example of Skandalis from the Appendice to \cite{Renault91}, see also \cite{Exel_Non-Hausdorff},  it was unclear how to formulate simplicity criteria in the non-Hausdorff case. 
On the other hand, a number of important constructions lead to non-Hausdorff \'etale groupoids, cf.\ \cite{Nekrashevych2}, \cite{Katsura}, \cite{EP16}, \cite{Exel-Pardo:Self-similar}, and recently, after a significant breakthrough  obtained in \cite{CEPSS}, \cite{Kwa-Meyer}, the number of papers on this topic has increased rapidly. 
In this article, we present a cleaned version of this modern theory. 
The main achievement is that we  develop a well-oiled machinery that works not only for $C^*$-algebras but in the much broader  setting of Banach algebras. 
By removing a few important stumbling blocks in this setting, such as the simplicity problem posed in \cite{Gardella_Lupini17}, this should open the door to many new applications and research directions connecting the current theories of $C^*$-algebras, Banach algebras and harmonic analysis. 

Our initial motivation came from Phillips's program to develop a theory of \emph{$L^p$-operator algebras}, that is, Banach algebras that can be isometrically represented on a Banach space $L^p(\mu)$ for some measure $\mu$, see \cite{Gardella} and/or \cite{PhLp1}, \cite{PhLp2a}, \cite{Phillips}, \cite{Gardella_Thiel15}, \cite{PH}, \cite{Gardella_Lupini17}, \cite{Phillips19}, \cite{cortinas_rodrogiez}, \cite{cgt},  \cite{Austad_Ortega}, \cite{Hetland_Ortega}, \cite{cortinas_Montero_rodrogiez} and the references there. 
In particular, in our previous paper \cite{BKM} we presented a thorough study of representations of the universal Banach algebras $F^S(\G,\LL)$ associated with a twisted \'etale groupoid $(\G,\LL)$, which are graded by some fixed unital inverse semigroup $S$ of bisections of $\G$. 
Representations of $F^S(\G,\LL)$ on $L^p$-spaces do not see this inverse semigroup. 
In fact for any $p\in [1,\infty]$ the \emph{universal $L^p$-operator algebras} $F^p(\G,\LL)$ are graded by the semigroup $\Bis(\G)$ of all bisections of $\G$. 
For $p\neq 2$, their representations can be described in terms of spatial partial isometries \cite{BKM}, and $F^2(\G,\LL)$ coincides with the usual universal $C^*$-algebra $C^*(\G,\LL)$ associated to $(\G,\LL)$ (or a real version of it). 
To cover other relevant Banach algebras appearing in the literature, such as group Banach algebras \cite{Gardella_Thiel15}, \cite{LiYu}, \cite{Phillips19}, Banach algebra crossed products \cite{DDW}, \cite{BK}, Cuntz Banach algebras \cite{Daws_Horwath}, Roe type Banach algebras \cite{Spakula_Willett} or $*$-Banach algebras associated to groupoids \cite{Austad_Ortega}, \cite{Hetland_Ortega}, in general we consider Banach algebras $F_{\RR}(\G,\LL)$ that are completions of the underlying convolution algebra $\mathfrak{C}_c(\G , \L)$ of quasi-continuous sections in some seminorm $\|\cdot \|_{\RR}$ bounded by the norm coming from some $S$. 
We view the twist $\LL$ over $\G$ as a Fell line bundle over $\F=\R,\C$, and the reader not interested in twists may simply ignore it.

In this paper, we turn to the study of simplicity and pure infiniteness of algebras of the form $F_{\RR}(\G,\LL)$. 
This requires introducing \emph{reduced groupoid Banach algebras} of $(\G,\LL)$ and in the non-Hausdorff case also \emph{essential groupoid Banach algebras} of $(\G,\LL)$.
As it is simpler, we discuss first the case when the \'etale groupoid $\G$ is Hausdorff. 
This holds if and only if its locally compact Hausdorff unit space $X := \G^{(0)}$ is a closed  subset of $\G$. 
Then the convolution algebra $\mathfrak{C}_c(\G , \L) = C_c(\G,\LL)$ consists of compactly supported continuous sections. 
We say that a Banach algebra $\FR(\G,\LL)$ is \emph{reduced} if it is a completion of $C_c(\G,\LL)$ such that the inclusion map $C_c(\G,\LL)\subseteq C_0(\G,\LL)$ extends to an injective contractive map $j_{\RR} : \FR(\G,\LL) \to C_0(\G,\LL)$ which isometric on $C_0(X)$, and so $C_0(X)$ is naturally a subalgebra of $\FR(\G,\LL)$. 
This definition covers reduced group algebras \cite{Phillips19}, reduced crossed products \cite{BK}, as well as the reduced $L^p$-algebras $F^p_{\red}(\G,\LL)$, $p\in [1,\infty]$ studied in \cite{Gardella_Lupini17}, \cite{cgt}, \cite{Austad_Ortega}, \cite{Hetland_Ortega}, \cite{BKM}. 
Following \cite{BK}, we generalize $L^p$-operator algebras to $L^P$-operator algebras where $P\subseteq [1,\infty]$ is a non-empty set of parameters. 
In particular, we introduce the full $F^P(\G,\LL)$ and reduced $F^P_{\red}(\G,\LL)$ groupoid $L^P$-algebras.   

The main difficulty  is to establish the relationship between \emph{topological freeness} of $\G$, as defined in \cite{Kwa-Meyer}, and the \emph{intersection property} for  $C_0(X) \subseteq \FR(\G,\LL)$, which means that $I\cap C_0(X) \neq \{ 0 \}$ for every non-zero (closed two-sided) ideal $I$ in $\FR(\G,\LL)$; equivalentely, every representation of $\FR(\G,\LL)$ which is injective on $C_0(X)$ is injective on $\FR(\G,\LL)$.
We achieve this by establishing the relationships between the canonical conditional expectation onto $C_0(X)$ and the map $j_{\RR}$. 
Here the main idea is to extrapolate the conditional expectation through bisections and the crucial technical tool is the pinching lemma (Lemma \ref{lem:pinching_property}). 
For a Hausdorff groupoid, topological freeness is equivalent to \emph{effectiveness}, that is, the interior of the isotropy bundle being only the unit space. 
In particular, combining Proposition~\ref{prop:Cartan_subalgebra} and Theorems~\ref{thm:Top_Free_Intersection_Property}, \ref{enu:topological_freeness_twisted_intersection}, \ref{thm:topological_freeness_untwisted} we have the following result: 

\begin{thmx}\label{thm:topological_freeness_intro} 
Let $(\G,\LL)$ be a twisted groupoid where $\G$ is a locally compact Hausdorff \'etale groupoid   with unit space $X$. 
Let $P\subseteq [1,\infty]$ be any non-empty set of parameters and let $\FR(\G,\LL)$ be  any reduced twisted groupoid Banach algebra, which 
is graded by some   unital inverse semigroup $S$ of bisections covering $\G$.
The following conditions are equivalent:
\begin{enumerate}
    \item\label{enu:topological_freeness_intro1} $\G$ is topologically free (equivalently effective);
    \item $C_0(X)$ is a maximal abelian subalgebra in $F_{\RR}(\G,\LL)$;
    \item\label{enu:topological_freeness_intro2} for every ideal $I$ in the (untwisted) full algebra $F^P(\G)$, if  $I\cap C_0(X) = 0$ then  $I\subseteq \ker\Lambda_P$, where $\Lambda:F^P(\G)\to F^P_{\red}(\G)$ is the regular representation;
	 \item\label{enu:topological_freeness_intro4}  $C_0(X)\subseteq F^{P_0}(\G)$ has the intersection property for some (and hence any) $P_0\subseteq \{1,\infty\}$; 
	\item\label{enu:topological_freeness_intro5} the inclusion $C_0(X)\subseteq F^P_{\red}(\H, \LL|_{\H})$ has the intersection property for  every open subgroupoid $\mathcal{H}\subseteq \G$ containing $X$.
\end{enumerate}
If the above equivalent conditions hold, then $C_0(X)\subseteq F_{\RR}(\G,\LL)$ has the intersection property.
\end{thmx}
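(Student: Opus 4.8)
The plan is to show that each of the conditions (2)--(5) is equivalent to topological freeness~\ref{enu:topological_freeness_intro1}, and then to deduce the closing assertion from~\ref{enu:topological_freeness_intro1}. The decisive structural point is that topological freeness (equivalently effectiveness, since $\G$ is Hausdorff) is an intrinsic property of $\G$ alone and does not refer to any completion of $C_c(\G,\LL)$. Hence, once~\ref{enu:topological_freeness_intro1} is shown equivalent to the intersection property for one well-chosen algebra, the equivalence transports to every other algebra in the statement, and the quantifiers (``some and hence any $P_0$'', ``every open subgroupoid'', and the passage to the general $S$-graded $\FR(\G,\LL)$) reduce to bookkeeping around this single condition. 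Throughout I would use the generalized conditional expectation $E\colon \FR(\G,\LL)\to C_0(X)$ obtained by restricting the section $j_{\RR}(a)$ to $X$, and the pinching lemma (Lemma~\ref{lem:pinching_property}) as the engine that, under effectiveness, moves the nonzero diagonal value of an element of an ideal into $C_0(X)$.

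First I would dispose of the maximal-abelian condition~(2) via Proposition~\ref{prop:Cartan_subalgebra}: effectiveness forces the relative commutant of $C_0(X)$ to equal $C_0(X)$, since any element commuting with $C_0(X)$ has $j_{\RR}$-support inside the isotropy bundle $\Iso(\G)$, whose interior reduces to $X$ under effectiveness. For the converse I would run the isotropy obstruction that also powers every reverse ``intersection property $\Rightarrow$~\ref{enu:topological_freeness_intro1}'' implication below: if $\G$ is not effective there is a nonempty open bisection $U\subseteq\Iso(\G)$ with $U\cap X=\emptyset$ and $s(U)=r(U)=V$; the normalising section $1_U$ satisfies $1_U f = f\,1_U$ for all $f\in C_0(V)$, because isotropy elements fix their base point, so $1_U$ lies in the commutant but not in $C_0(X)$, breaking~(2), and the same section generates a nonzero ideal meeting $C_0(X)$ trivially, breaking the intersection property.

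The forward (hard) directions of the remaining equivalences I would read off from the three cited theorems. Theorem~\ref{thm:Top_Free_Intersection_Property} gives~\ref{enu:topological_freeness_intro1}$\Rightarrow$ intersection property for a reduced completion; applying it to $\G$ and to each open subgroupoid $\H\supseteq X$ yields~\ref{enu:topological_freeness_intro5}, using that effectiveness is inherited by such $\H$ (its isotropy sits inside that of $\G$) and that $\LL$ restricts to $\H$, with the twisted pinching supplied by Theorem~\ref{enu:topological_freeness_twisted_intersection}. For condition~\ref{enu:topological_freeness_intro4}, the ``some and hence any'' clause is immediate from intrinsicness: the forward direction holds for each $P_0\subseteq\{1,\infty\}$, while the reverse direction needs only one such $P_0$ to build the obstructing ideal from a failure of effectiveness. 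Condition~\ref{enu:topological_freeness_intro2} is the ideal-theoretic shadow of the intersection property for the untwisted reduced algebra $F^P_{\red}(\G)=F^P(\G)/\ker\Lambda_P$: an ideal $I$ of $F^P(\G)$ with $I\cap C_0(X)=0$ descends to an ideal of $F^P_{\red}(\G)$ meeting $C_0(X)$ trivially, so the requirement that every such $I$ satisfy $I\subseteq\ker\Lambda_P$ is exactly the intersection property for $C_0(X)\subseteq F^P_{\red}(\G)$, whose equivalence with~\ref{enu:topological_freeness_intro1} is Theorem~\ref{thm:topological_freeness_untwisted}.

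Finally, granting the equivalent conditions, I would establish the intersection property for an arbitrary $S$-graded reduced $\FR(\G,\LL)$ directly. Given a nonzero closed ideal $I$, pick $0\neq a\in I$; since $j_{\RR}$ is injective the section $j_{\RR}(a)$ is nonzero, and left-multiplying $a$ by a normalising section $1_{U^{-1}}$ drawn from the covering family $S$, with $U$ a bisection through a point where $j_{\RR}(a)$ does not vanish, I may arrange that $j_{\RR}(a)$ is nonzero at a point of $X$. Effectiveness then lets the pinching lemma produce, from $a$ and finitely many $C_0(X)$-multipliers, an element $b\in I$ with $j_{\RR}(b)$ supported on $X$ and nonzero; injectivity of $j_{\RR}$ gives $0\neq b\in I\cap C_0(X)$. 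I expect the main obstacle to be the \emph{uniformity} of this last step: the pinching and extrapolation estimates must be carried out in a general seminorm $\|\cdot\|_{\RR}$ that is only bounded above by the $S$-norm and pinned below through the contractive injection $j_{\RR}$ into $C_0(\G,\LL)$, rather than in a concrete $L^P$-norm where $E$ is manifestly contractive; verifying that the cancellation of off-diagonal terms survives in this generality, and reconciling the twisted and untwisted pinching, is where the real work sits.
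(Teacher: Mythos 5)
Your overall architecture (routing everything through condition \ref{enu:topological_freeness_intro1} and invoking Proposition~\ref{prop:Cartan_subalgebra} and Theorems~\ref{thm:Top_Free_Intersection_Property}, \ref{enu:topological_freeness_twisted_intersection}, \ref{thm:topological_freeness_untwisted}) matches the paper, but your treatment of condition \ref{enu:topological_freeness_intro2} contains a genuine error. You claim that an ideal $I$ of $F^P(\G)$ with $I\cap C_0(X)=0$ ``descends to an ideal of $F^P_{\red}(\G)$ meeting $C_0(X)$ trivially,'' so that \ref{enu:topological_freeness_intro2} ``is exactly the intersection property for $C_0(X)\subseteq F^P_{\red}(\G)$.'' Both the descent claim and the identification are false. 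Take $\G$ a point acted on by a Powers group (e.g.\ $X=\{pt\}$, $\G=\F_2$): the reduced algebra is simple, so $C_0(X)\subseteq F^2_{\red}(\G)$ has the intersection property vacuously, yet $\G$ is not topologically free and condition \ref{enu:topological_freeness_intro2} fails (the kernel of the trivial representation is a proper ideal of $C^*(\F_2)$ not contained in $\ker\Lambda$; its image generates, by simplicity, all of $C^*_{\red}(\F_2)$, so the descended ideal meets the scalars). The paper's actual mechanism for \ref{enu:topological_freeness_intro1}$\Leftrightarrow$\ref{enu:topological_freeness_intro2} is different: the forward direction is the \emph{generalised} intersection property for the \emph{full} algebra with hidden ideal $\ker j_P^{\ess}=\ker\Lambda_P$ (Theorems~\ref{thm:Top_Free_Intersection_Property}, \ref{thm:topological_freeness_untwisted}, using $\mathfrak{M}_0(\G)=\{0\}$ in the Hausdorff case), and the reverse direction is powered by the $C_0(X)$-trivial representation $\Lambda_P^{\tr}$ of Lemma~\ref{lem:orbit_representation}, which your proposal never mentions: a failure of topological freeness yields $f-f_0\in\ker\Lambda_P^{\tr}$ with $\E_X^P(f-f_0)=-f_0\neq 0$, exhibiting an ideal with trivial $C_0(X)$-intersection not contained in $\ker\Lambda_P$. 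Your suggested obstruction (``the same section generates a nonzero ideal meeting $C_0(X)$ trivially'') is exactly what fails in general for reduced algebras, as the Powers example shows; this is why condition \ref{enu:topological_freeness_intro5} is quantified over subgroupoids and why its reverse direction (Theorem~\ref{enu:topological_freeness_twisted_intersection}) requires the nontrivial reduction to the subgroupoid generated by $X$ and one bisection, where the structure is a trivial $\Z$ or $\Z_n$ action and amenability makes full and reduced coincide.

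A secondary inaccuracy: in your final paragraph you assert that Lemma~\ref{lem:pinching_property} lets you ``produce an element $b\in I$ with $j_{\RR}(b)$ supported on $X$ and nonzero.'' The pinching lemma only yields $\varepsilon$-approximations of the form $\| a b f a - a(b\, f|_X)a\|^S_{\max}\leq\varepsilon$; it does not place an exact element of the closed ideal inside $C_0(X)$. The paper closes the argument differently: via Corollary~\ref{cor:top_free_implies_expectation}, the quotient of $\FR(\G,\LL)$ by any ideal $J$ with $J\cap C_0(X)=\{0\}$ is again an exotic groupoid Banach algebra, whence $J\subseteq\ker j_{\RR}^{\ess}=\ker j_{\RR}=\{0\}$ by Corollary~\ref{cor:kernel_of_reps_with_expectation} in the reduced Hausdorff case. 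Your worry about ``uniformity'' of the estimates is thus aimed at the wrong place: the seminorm generality is handled by the quotient-map formulation, and the real content you are missing is the trivial representation for \ref{enu:topological_freeness_intro2} and the amenable-subgroupoid reduction for \ref{enu:topological_freeness_intro5}.
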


The equivalence between \ref{enu:topological_freeness_intro1} and \ref{enu:topological_freeness_intro2} is a far reaching generalization of the Kawamura--Tomiyama theorem \cite{Kawa-Tomi}, \cite{Arch_Spiel}, see also \cite[Theorem 7.29]{Kwa-Meyer}. 
Condition \ref{enu:topological_freeness_intro4} is a special case of \ref{enu:topological_freeness_intro2} because for $P\subseteq \{ 1,\infty \}$ we have 
$F^{P}(\G,\LL) = F^{P}_{\red}(\G,\LL)$ -- these algebras are both full and reduced, see \cite{BKM}. 
Condition \ref{enu:topological_freeness_intro5} is reminiscent of the recently discovered fact that topological freeness (aperiodicity) is equivalent to the intersection property for all intermediate $C^*$-algebras between $C_0(X)$ and the reduced $C^*$-algebra $C^*_{\red}(\G,\LL)=F_{\red}(\G,\LL)$, see  \cite[Theorem 7.3]{Kwa-Meyer2}. 
As an immediate corollary we get the following simplicity criteria (where by simplicity of a Banach algebra we mean lack of non-trivial closed two-sided ideals): 

\begin{corx}\label{cor:simplicity_intro} 
Retaining the notation of Theorem~\ref{thm:topological_freeness_intro}, the reduced twisted groupoid Banach algebra $F_{\RR}(\G,\LL)$ is simple and contains $C_0(X)$ as a maximal abelian subalgebra if and only if $\G$ is topologically free and minimal.
\end{corx}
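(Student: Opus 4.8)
The plan is to derive Corollary~\ref{cor:simplicity_intro} from Theorem~\ref{thm:topological_freeness_intro} by combining the intersection property with minimality, treating the two implications separately. First I would prove the ``if'' direction. Assume $\G$ is topologically free and minimal. By Theorem~\ref{thm:topological_freeness_intro} (equivalence of \ref{enu:topological_freeness_intro1} with the final conclusion) topological freeness gives that $C_0(X) \subseteq \FR(\G,\LL)$ has the intersection property, and the equivalence with condition (2) shows that $C_0(X)$ is a maximal abelian subalgebra. So it remains to upgrade the intersection property to simplicity using minimality. The key step here is: let $I$ be any non-zero closed two-sided ideal of $\FR(\G,\LL)$; by the intersection property $I \cap C_0(X)$ is a non-zero ideal of $C_0(X)$, hence of the form $C_0(U)$ for some non-empty open invariant subset $U \subseteq X$ (here one must check that $I \cap C_0(X)$ is a $\G$-invariant ideal, which follows from $I$ being two-sided together with the way elements of $\mathfrak{C}_c(\G,\L)$ supported on bisections conjugate $C_0(X)$ onto itself, shifting supports by the corresponding partial homeomorphism). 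Minimality forces $U = X$, so $C_0(X) \subseteq I$; since $C_0(X)$ contains an approximate unit for $\FR(\G,\LL)$ and $I$ is closed, $I = \FR(\G,\LL)$, giving simplicity.

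For the ``only if'' direction I would argue contrapositively. Suppose $\FR(\G,\LL)$ is simple and $C_0(X)$ is maximal abelian. Maximality of $C_0(X)$ as an abelian subalgebra is, by the equivalence of \ref{enu:topological_freeness_intro1} with condition (2) in Theorem~\ref{thm:topological_freeness_intro}, exactly topological freeness, so that half is immediate. For minimality, suppose toward a contradiction that $\G$ is \emph{not} minimal; then there is a proper non-empty open invariant subset $U \subsetneq X$. The plan is to produce from $U$ a proper non-zero closed ideal of $\FR(\G,\LL)$, contradicting simplicity. The natural candidate is the closed ideal generated by $C_0(U)$ (equivalently, the closure of the span of sections supported over the saturation of $U$); one checks it is non-zero because it contains $C_0(U) \neq 0$, and proper because the injective contractive map $j_{\RR} : \FR(\G,\LL) \to C_0(\G,\LL)$ sends this ideal into sections vanishing off the reduction $\G|_U$, which is a proper subset since $U \neq X$.

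The main obstacle I anticipate is the ``only if'' construction of a proper ideal from non-minimality, specifically verifying that the ideal generated by $C_0(U)$ is genuinely proper. The issue is that passing to the closed two-sided ideal generated by $C_0(U)$ could a priori fill up the whole algebra; the invariance of $U$ is what should prevent this, since for an invariant $U$ the reduction $\G|_U = \{\gamma \in \G : r(\gamma), s(\gamma) \in U\}$ is itself an open subgroupoid and convolution cannot move support out of $U$ in either variable. Here the injectivity and contractivity of $j_{\RR}$ are essential: they let me detect properness at the level of $C_0(\G,\LL)$, where support conditions are transparent, rather than wrestling with the abstract completion directly. The invariance-of-ideal computation in the ``if'' direction (that $I \cap C_0(X)$ is invariant) and this support-tracking argument are really two sides of the same bisection-conjugation calculation, so I would isolate that as a single lemma-style observation and invoke it in both directions.
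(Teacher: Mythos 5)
Your proof is correct, and its overall skeleton is the paper's: both directions come down to Theorem~\ref{thm:topological_freeness_intro} (topological freeness $\Leftrightarrow$ maximal abelianness of $C_0(X)$, and topological freeness $\Rightarrow$ intersection property) plus the correspondence between ideals of $C_0(X)$ restricted from $F_{\RR}(\G,\LL)$ and open $\G$-invariant subsets of $X$. The paper packages that correspondence into Lemma~\ref{lem:simplicity_in_general} (simplicity $\Leftrightarrow$ intersection property together with minimality of the inclusion) and Lemma~\ref{lem:invariant_ideals} (restricted ideals are exactly $C_0(U)$ for $U$ open invariant, and $C_0(X)\subseteq B$ is a minimal inclusion if and only if $\G$ is minimal); your ``if'' direction is these two lemmas unpacked, including the same bisection-conjugation computation showing $I\cap C_0(X)$ is invariant. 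The only genuine divergence is how you certify, in the ``only if'' direction, that the closed ideal generated by $C_0(U)$ is proper: you push it through $j_{\RR}$ and track supports inside the reduction $\G|_U=d^{-1}(U)$, whereas the paper argues algebraically in Lemma~\ref{lem:invariant_ideals} that invariance makes $C_0(U)$ a symmetric ideal ($\overline{IB}=\overline{BI}$), so an approximate unit of $C_0(U)$ is an approximate unit of $\overline{BIB}$, whence $\overline{BIB}\cap C_0(X)=C_0(U)\neq C_0(X)$. Your variant is valid for the corollary as stated, but it uses the reduced structure, while the paper's argument needs no $j$-map at all and therefore works for arbitrary completions in which $C_0(X)$ embeds --- this extra generality is what lets the paper run the same argument at the level of exotic and essential algebras in Theorem~\ref{thm:Top_Free_Intersection_Property}. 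Two small refinements to your write-up: injectivity of $j_{\RR}$ is not actually needed for properness, since it suffices that $j_{\RR}$ restricts to the identity on $C_c(X)$ (any $a\in C_c(X)$ not vanishing on $X\setminus U$ then witnesses properness); and you should note explicitly that the set of sections vanishing off $\G|_U$ is $\|\cdot\|_{\infty}$-closed and is preserved by convolution with $\mathfrak{C}_c(\G,\LL)$ on both sides, so that passing from the algebraically generated ideal to its closure cannot escape it.
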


Corollary~\ref{cor:simplicity_intro} applies to reduced algebras $F^P_{\red}(\G,\LL)$ for $P\subseteq [1,\infty]$. 
In particular, it solves in positive the problem posed in \cite[Problem 8.2]{Gardella_Lupini17}, which asked whether minimality and topological principality of a (second countable Hausdorff) groupoid $\G$ implies simplicity of $F^p_{\red}(\G)$ for $p\in(1,\infty)$. 
Combining Theorem~\ref{thm:topological_freeness_intro} with \cite[Theorem 6.19]{Gardella_Lupini17}, we get the following characterization of simplicity of full $L^P$-operator algebras for amenable (untwisted) groupoids. 

\begin{corx}\label{cor:simplicity_intro2} 
If $\G$ is an amenable second countable locally compact Hausdorff \'etale groupoid, then for any non-empty set $P\subseteq [1,\infty]$ we have that 
\begin{enumerate}
    \item\label{enu:simplicity_intro1}   the inclusion $C_0(X) \subseteq F^P(\G)$ has the intersection property
        if and only if $\G$ is topologically free;
    \item\label{enu:simplicity_intro2}   the algebra $F^P(\G)$ is simple
        if and only if $\G$ is topologically free and minimal.
\end{enumerate}
\end{corx}

In the non-Hausdorff setting, we generalize the above results by replacing reduced Banach algebras with essential ones. 
Reduced algebras $F_{\RR}(\G,\LL)$ are completions of $\mathfrak{C}_c(\G , \L)$ equipped with a canonical injective contractive map $j_{\RR} : F_{\RR}(\G,\LL)\to \mathfrak{C}_0(\G , \L)$ where $\mathfrak{C}_0(\G , \L)$ is the completion of $\mathfrak{C}_c(\G , \L)$ in $\|\cdot\|_{\infty}$ (see Definition~\ref{de:GroupoidBanachAlgebrasExpectations}). 
By an \emph{essential Banach algebra} of $(\G , \L)$ we mean a Hausdorff completion $F_{\RR}(\G,\LL)$ of $\mathfrak{C}_c(\G , \L)$ equipped with an injective contractive map $j_{\RR}^{\ess} : F_{\RR}(\G,\LL)\to \mathfrak{C}_0(\G , \L)/\mathfrak{M}_0(\G , \L)$, where $\mathfrak{M}_0(\G , \L)$ is the subspace of $\mathfrak{C}_0(\G , \L)$ consisting of functions  with meager open support (see Definition~\ref{de:ExoticGroupoidBanachAlgebra}). 
Below we phrase these  definitions in terms of existence and faithfulness of canonical generalized expectations, which makes clear that they  are consistent with existing $C^*$-algebraic notions. 
Every reduced Banach algebra can be quotiented by an ideal to get an essential Banach algebra. There are cases when this ideal is zero, for instance when $\G$ is ample and every compact open set in $\G$ is regular (see Lemma~\ref{lem:ample_essential_reduced_coincide} below).

Finally, we introduce the notion of pure infiniteness for simple (not necessarily unital) Banach algebras, which is consistent with the previous notions of this type (see Definition \ref{def:purely_infinite} and Proposition~\ref{prop:various_purely_infnite}). 
We modify the classical conditions of locally contracting \cite{A-D}, and $n$-filling \cite{Jolissaint-Robertson}, \cite{Suzuki}, to include the inverse semigroup $S$ of bisections in $\G$ (Definitions~\ref{def:n_feeling},~\ref{def:locally contractive}). 
Combining Theorems~\ref{thm:Top_Free_Intersection_Property}, \ref{thm:filling_implies_pure_infinite} and \ref{thm:Anantharaman-Delaroche} we get: 

\begin{thmx}\label{thm:pure_infinteness_intro} 
Let $(\G,\LL)$ be a twisted \'etale groupoid where $\G$ is topologically free and minimal.
Let $S$ be any unital inverse semigroup of bisections covering $\G$.
Then every $S$-graded essential Banach algebra $\FR(\G,\LL)$ is simple. 
If in addition $S$ consists of bisections where the bundle $\LL$ is trivial and $\G$ is
locally contracting or $n$-filling with respect to $S$, then $\FR(\G,\LL)$ is purely infinite simple. 
\end{thmx}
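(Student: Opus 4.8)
The plan is to derive simplicity from the intersection property together with minimality, and then to read off pure infiniteness by invoking the two dedicated structural criteria; essentially all of the genuine work is already packaged into the three cited theorems. For the first assertion I would begin with Theorem~\ref{thm:Top_Free_Intersection_Property}, which under topological freeness supplies the intersection property for the inclusion $C_0(X)\subseteq \FR(\G,\LL)$ in the essential setting: every non-zero closed two-sided ideal $I$ of $\FR(\G,\LL)$ satisfies $I\cap C_0(X)\neq\{0\}$. Fixing such an $I$, the intersection $I\cap C_0(X)$ is a non-zero closed ideal of the commutative algebra $C_0(X)$, hence equals $C_0(U)$ for some non-empty open set $U\subseteq X$.

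The next step is to show that $U$ is $\G$-invariant. Using the $S$-grading, for each bisection $V\in S$ the associated spectral subspace consists of sections supported on $V$; compressing the inclusion $C_0(U)\subseteq I$ by generators of this subspace on both sides stays inside $I$ because $I$ is a two-sided ideal, and transports $C_0(U)$ along the partial homeomorphism $V\colon s(V)\to r(V)$. Since the outcome must again lie in $I\cap C_0(X)=C_0(U)$, the map $V$ must carry $U\cap s(V)$ into $U$; as $S$ covers $\G$, the set $U$ is therefore $\G$-invariant. Minimality now forces $U=X$, so $C_0(X)\subseteq I$. Finally, since $C_0(X)$ acts non-degenerately on $\FR(\G,\LL)$, we obtain $\FR(\G,\LL)=\overline{C_0(X)\,\FR(\G,\LL)}\subseteq I$, whence $I=\FR(\G,\LL)$ and $\FR(\G,\LL)$ is simple.

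For the second assertion the simplicity just established is exactly the input required by the purely-infinite criteria. Under the additional hypotheses that $\LL$ is trivial over every bisection in $S$ and that $\G$ is $n$-filling with respect to $S$, Theorem~\ref{thm:filling_implies_pure_infinite} shows that the simple algebra $\FR(\G,\LL)$ is purely infinite; if instead $\G$ is locally contracting with respect to $S$, Theorem~\ref{thm:Anantharaman-Delaroche} delivers the same conclusion. In either case $\FR(\G,\LL)$ is purely infinite simple.

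I expect the main obstacle to reside entirely inside Theorem~\ref{thm:Top_Free_Intersection_Property}, namely in establishing the intersection property in the non-Hausdorff essential setting: there one cannot evaluate sections pointwise on the unit space, and must instead control the generalized conditional expectation onto $C_0(X)$ through bisections, with the pinching lemma (Lemma~\ref{lem:pinching_property}) as the key technical device. Granting that theorem, the invariance-and-minimality argument above is routine, and the pure-infiniteness step is a direct application of the two cited results.
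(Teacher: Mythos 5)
Your proposal is correct and takes essentially the same route as the paper: Theorem~\ref{thm:pure_infinteness_intro} is obtained there by combining Theorem~\ref{thm:Top_Free_Intersection_Property} (whose part~(3), proved via Lemmas~\ref{lem:simplicity_in_general} and~\ref{lem:invariant_ideals}, already packages your invariance-plus-minimality argument for ideals meeting $C_0(X)$) with Theorems~\ref{thm:filling_implies_pure_infinite} and~\ref{thm:Anantharaman-Delaroche}. Your inline re-derivation of the invariant-ideal step is exactly the content of Lemma~\ref{lem:invariant_ideals}, so nothing is missing.
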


Groupoid $C^*$-algebras are always graded by the full inverse semigroup of bisections $\Bis(\G)$. 
Already in this context, the above pure infiniteness criteria are new, as until now they were known to work only in the Hausdorff untwisted case. 
In general, we need to assume local contractiveness or $n$-filling with respect to the inverse semigroup $S(\LL)$ consisting of bisections where the bundle $\LL$ is trivial. 
But when the bundle $\LL$ is trivial, equivalently $\LL$ is coming from a groupoid cocycle, we do not need to mention inverse semigroups at all, and we get for instance: 

\begin{corx}\label{cor:pure_infiniteness_intro}
Retaining the assumptions on $(\G,\LL)$ from Theorem~\ref{thm:pure_infinteness_intro},  assume that $\LL$ comes from a cocycle and $\G$ is either locally contracting or $n$-filling. 
Let $\emptyset \neq P\subseteq [1,\infty]$. 
Then $F^P_{\red}(\G,\LL)$ has a simple quotient $F^P_{\ess}(\G,\LL)$ which is purely infinite. 
This quotient is just $F^P_{\red}(\G,\LL)$ if $\G$ is Hausdorff or more generally when  $\mathfrak{M}_0(\G , \L) = \{ 0 \}$. 
%If $\G$ can be covered by a countable familly of open bisections, then $F^P_{\ess}(\G,\LL)$ is simple purely infinite.
\end{corx}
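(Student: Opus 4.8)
The plan is to read this off Theorem~\ref{thm:pure_infinteness_intro} after specialising the inverse semigroup to $S=\Bis(\G)$ and unwinding the hypothesis that $\LL$ comes from a cocycle. First I would recall that for every non-empty $P\subseteq[1,\infty]$ the algebras $F^P(\G,\LL)$ and $F^P_{\red}(\G,\LL)$ are, by their very construction, graded by the full inverse semigroup $\Bis(\G)$ of all bisections; so I fix $S=\Bis(\G)$ from the outset. Since $F^P_{\red}(\G,\LL)$ is a reduced Banach algebra in the sense fixed above, the general principle recorded in the excerpt --- that every reduced Banach algebra may be divided by a canonical ideal to yield an essential one --- produces the quotient $F^P_{\ess}(\G,\LL)$ together with the quotient homomorphism $F^P_{\red}(\G,\LL)\onto F^P_{\ess}(\G,\LL)$, and this quotient inherits the $\Bis(\G)$-grading. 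Thus $F^P_{\ess}(\G,\LL)$ is an $S$-graded essential Banach algebra of $(\G,\LL)$ of the kind to which Theorem~\ref{thm:pure_infinteness_intro} applies.

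Next I would use that $\LL$ coming from a groupoid cocycle is exactly the statement that the Fell line bundle $\LL$ is trivial over all of $\G$, hence trivial over every bisection. Consequently $S=\Bis(\G)$ coincides with the subsemigroup $S(\LL)$ of bisections on which $\LL$ is trivial, so the hypothesis of Theorem~\ref{thm:pure_infinteness_intro} that ``$S$ consists of bisections where $\LL$ is trivial'' holds automatically, and the qualified notions of local contractiveness and $n$-filling \emph{with respect to $S$} collapse to the unqualified ones assumed in the corollary. With these identifications both conclusions are immediate: topological freeness and minimality of $\G$ give simplicity of $F^P_{\ess}(\G,\LL)$ by the first part of the theorem, and the extra hypothesis that $\G$ is locally contracting or $n$-filling upgrades this to purely infinite simplicity by the second part.

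For the final sentence I would verify the stated criterion $\mathfrak{M}_0(\G,\L)=\{0\}$ and its relation to the essential construction. If $\G$ is Hausdorff then $\mathfrak{C}_0(\G,\L)=C_0(\G,\LL)$ consists of genuinely continuous sections, and a continuous section whose open support is meager must vanish, since a non-empty open subset of the locally compact Hausdorff space $\G$ is non-meager by the Baire category theorem; hence $\mathfrak{M}_0(\G,\L)=\{0\}$, which is the more general hypothesis. When $\mathfrak{M}_0(\G,\L)=\{0\}$ the target $\mathfrak{C}_0(\G,\L)/\mathfrak{M}_0(\G,\L)$ of the essential map $j^{\ess}_{\RR}$ agrees with the target $\mathfrak{C}_0(\G,\L)$ of the reduced map $j_{\RR}$, so the defining data of $F^P_{\ess}(\G,\LL)$ and $F^P_{\red}(\G,\LL)$ coincide and the quotient map is an isometric isomorphism, giving $F^P_{\ess}(\G,\LL)=F^P_{\red}(\G,\LL)$.

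The bulk of the work here is bookkeeping rather than analysis: the one point needing genuine care is the compatibility step in the second paragraph, namely confirming that triviality of the twist really does identify $\Bis(\G)$ with $S(\LL)$ and thereby strips away all the inverse-semigroup qualifications, and that the essential quotient built from $F^P_{\red}(\G,\LL)$ is honestly the $S$-graded essential Banach algebra featuring in Theorem~\ref{thm:pure_infinteness_intro} rather than some a priori different essential completion. Once that is pinned down, nothing else is needed beyond the cited theorem and the Baire-category observation.
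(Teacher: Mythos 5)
Your proposal is correct and follows essentially the same route the paper takes: specialise Theorem~\ref{thm:pure_infinteness_intro} to $S=\Bis(\G)=S(\LL)$ (using that a cocycle twist is exactly one whose line bundle is trivial, so the qualified contracting/filling notions reduce to the plain ones), pass from $F^P_{\red}(\G,\LL)$ to its essential quotient via Remark~\ref{rem:essential_vs_reduced}, and note that $\mathfrak{M}_0(\G,\LL)=\{0\}$ forces $\ker j^{\ess}_{\RR}=\ker j_{\RR}=\{0\}$, so the quotient is trivial in the Hausdorff (or more generally $\mathfrak{M}_0=\{0\}$) case. Your Baire-category justification of $\mathfrak{M}_0(\G,\LL)=\{0\}$ for Hausdorff $\G$ and your check that the quotient remains $\Bis(\G)$-graded are exactly the details the paper leaves implicit.
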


If $\G$ can be covered by a countable family of open bisections, then the quotient $F^P_{\ess}(\G,\LL)$ in the above corollary has a natural injective representation on a direct sum of $L^p$-spaces, $p\in P$, and it is natural to view the corresponding Banach algebra as the \emph{essential $L^P$-groupoid algebra} of $(\G,\LL)$ (see Corollary~\ref{cor:essential_L^P_operator_algebra}).

We illustrate our results by using them to generalize and extend the main results of \cite{BK} from global to partial actions (Subsection~\ref{ssec:ExamplesPartialActions}). 
We present a general groupoid picture for Roe-type Banach algebras of uniformly locally finite coarse structures with coefficients in the compact operators on some Banach space. 
Such algebras occur naturally, for instance in \cite{Spakula_Willett0}, \cite{Spakula_Willett}, \cite{Chung_Li}, in connection with criteria for Fredholmness or rigidity. 
They generalize standard uniform Roe and Roe $C^*$-algebras, and the underlying groupoid is always principal so that the equivalent conditions in Theorem~\ref{thm:topological_freeness_intro} hold (see Subsection~\ref{ssec:BanachRoeAlgebras}).
We introduce reduced and essential twisted tight $L^P$-algebras associated to inverse semigroups, and phrase our results in terms of the inverse semigroup. 
This extends, and sometimes improves, parts of \cite{Exel}, \cite{EP16} where the untwisted $C^*$-algebraic case was considered. 
It also covers all $L^P$-groupoid algebras for twisted ample groupoids (Subsection~\ref{ssec:TightBanachAlgebrasInverseSgrp}). 
Using groupoid methods we extend the main results of \cite{cortinas_Montero_rodrogiez}  from $L^p$-graph algebras to $L^P$-graph algebras (Subsection~\ref{subsect:graph_algebras}). 
Finally, we generalize Exel--Pardo algebras \cite{Exel-Pardo:Self-similar} associated to self-similar actions of groups on graphs to $L^P$-operator algebras, and discuss how the recent non-trivial results \cite{Exel-Pardo:Self-similar}, \cite{CEPSS} extend to this setting (Subsection~\ref{ssec:AlgebrasSelfSimilarActionsGraphs}). 
The groupoid modeling these algebras, as a rule, is non-Hausdorff. 
Usually the $L^p$-algebras for different $p$ are not isomorphic, thus the above results allow us to produce lots of essentially different simple Banach algebras.

The paper is organized as follows. 
After short preliminaries where we recall some basic  facts and notions from \cite{BKM}, in Section~\ref{sec:ReducedBanachAlgs} we introduce reduced groupoid Banach algebras. 
In Section~\ref{sec:Essential_and_exotic} we introduce exotic and essential groupoid Banach algebras. 
Section~\ref{sec:IntersectionAperiodicTopFree} is devoted to topological freeness and its relationship with other conditions. 
Section~\ref{sec:PureInfiniteness} discusses our pure infiniteness results and Section~\ref{sec:ApplicationsExamples} presents applications and examples. 
%For reference and the convenience of the reader we also phrased the main results of the paper in terms of crossed products by twisted inverse semigroup actions, introduced in \cite{BKM}, in Appendix~\ref{sec:BanachAlgInverseSemigrpActions}. 

%
\subsection*{Acknowledgements}

The research of KB and BK was supported by the National Science Center (NCN) grant no.~2019/35/B/ST1/02684.  BK was partially supported by the National Science Centre, Poland, through the WEAVE-UNISONO grant no. 2023/05/Y/ST1/00046.
BK thanks Agnieszka Stocka whose comments helped formulate the  definition of reduced algebras  in a correct way.
We all thank the anonymous referees for pointing us to \cite{Dixmier2} and \cite{Elkiaer}, and for helpful suggestions which improved the manuscript. 

\section{Preliminaries}
\label{sec:Prelims}

We recall here some notation and facts from \cite{BKM}. 
Throughout this paper $\G$ denotes an \'etale groupoid with locally compact Hausdorff unit space $X$. 
Thus the range and domain maps $r,d:\G\to X$ are local homeomorphisms, $X$ is an open subset of $\G$, and $\G$ is Hausdorff if and only if $X$ is also closed in $\G$. 
We consider spaces, and algebras, over the field $\F=\R, \C$ of real or complex numbers. 
By an ideal in a Banach algebra, unless stated otherwise, we mean a closed and two-sided ideal. 
%We define twists over $\G$ as Fell line bundles.

%
\subsection{Twisted groupoids}
\label{ssec:TwistedGroupoids} 
As  in \cite[Section 4]{BKM}, we define twists of $\G$  as Fell line bundles. 
A \emph{line bundle} $\L$ over $\G$ is a topological space $\L = \bigsqcup_{\gamma \in \G} L_\gamma$ such that the canonical map $\L \to \G$ is continuous and open, \
and each fiber $L_\gamma \cong \F$ is a one dimensional Banach space with a structure consistent with the topology of $\LL$ in the sense  that the maps
    $\bigsqcup_{\gamma\in \G} L_\gamma \times L_\gamma \ni (z, w) \mapsto z + w\in \LL$, 
    $\F\times \LL\ni (\lambda,z) \mapsto \lambda z\in \LL$ and  
    $\LL\ni z\mapsto  |z|\in \R
$ are continuous. 
For any open $U \subseteq \G$, we write $C_c(U , \L)$, $C_0(U , \L)$, $C(U , \L)$ and $C_u(U , \L)$ for the spaces of continuous sections of $\LL|_{U} := \bigsqcup_{\gamma \in U} L_\gamma$  that are compactly supported, vanishing at infinity, bounded and unitary, respectively. 
We use similar notation $\B_c(U , \L)$, $\B_0(U , \L)$, $\B(U , \L)$ when continuous is replaced by Borel. 
In particular, $\B_0(U , \L)$ consists of Borel sections $f:U\to \LL$ such that for every $\varepsilon>0$ the set $\{\gamma\in U:|f(\gamma)|\geq \varepsilon\}$ is precompact. 
We consider these spaces equipped with the supremum norm $\|f\|_{\infty}=\sup_{\gamma\in U} |f(\gamma)|$. 
Every line bundle is  locally trivial, which means that every point has a neighborhood $U\subseteq \G$ such that $C_u(U , \L) \neq \emptyset$. 
Note that, for any $c\in C_u(U , \L)$, the map $f\mapsto f \cdot c$ establishes isometric isomorphisms $C_*(U,\LL)\cong C_*(U)$ and
 $\B_{*}(U , \L)\cong \B_{*}(U)$ where  $*$ stands for either $c$, $0$ or  an empty space. 

%Each element of these sets is a continuous function $f : U \to \L$ such that $f(\gamma) \in L_\gamma$.

%Now we are ready to define twists as in \cite[Definition 4.1]{BKM}.

\begin{defn}\label{de:TwistedGroupoid}
A \emph{twist over the groupoid $\G$} is a line bundle $\L = \bigsqcup_{\gamma \in \G} L_\gamma$ over $\G$ equipped with two continuous maps $\cdot :\LL\times \LL\supseteq \bigsqcup_{\s(\gamma) = r(\eta)} L_\gamma \times L_\eta \to \LL$ and $^*: \L \to \L$ called \emph{multiplication} and \emph{involution}, such that: 
\begin{enumerate}
    \item $\cdot$ restricts to bilinear maps $L_\gamma \times L_\eta \to L_{\gamma \eta}$,  $(\gamma,\eta)\in \G^{(2)}$, that make the norm $|\cdot|$ multiplicative, and $\cdot$ is associative, that is, $(\LL,\cdot)$ is a semigroupoid;
    \item $^*$ restricts to conjugate-linear maps $L_\gamma \to L_{\gamma^{-1}}$, $\gamma\in \G$, that preserve the norm, are antimultiplicative, and we have  $z \cdot z^* = | z |^2$ for all $z \in L_\gamma ,\ \gamma \in \G$. 
\end{enumerate}
Then $\LL|_X$ is necessarily trivial and \emph{we shall always assume that $\LL|_X = X \times \F$}.
\end{defn}

\begin{rem}\label{ex:cocycle_twist}
The trivial bundle $\LL = \G\times \F$ with pointwise operations is the \emph{trivial twist}.
More generally, if $\sigma : \G^{(2)} \to \{ z\in \F:  |z| = 1 \}$ is a \emph{continuous normalised $2$-cocycle} on $\G$, that is, $\sigma \big( r(\gamma),\gamma \big) = 1 = \sigma \big( \gamma , d(\gamma) \big)$ and $\sigma(\alpha,\beta)\sigma(\alpha\beta,\gamma) = \sigma(\beta,\gamma)\sigma(\alpha,\beta\gamma)$ 
for every composable triple $\alpha,\beta,\gamma \in \G$, 
 then we may consider a twist $\LL_{\sigma}$ given by the trivial bundle $\G\times \F$ with operations given by 
$(\alpha,w) \cdot (\beta,z) := \big( \alpha \beta , \sigma(\alpha,\beta) wz \big)$,
$(\alpha,w)^{*} := \big( \alpha^{-1} , \overline{\sigma(\alpha^{-1},\alpha) w } \big).$
Every twist based on a trivial line bundle is of this form. 
In general, twists defined above are equivalent to Kumjian's twist~\cite{Kumjian0} defined using extensions of $\G$. 
\end{rem}

The set of \emph{open bisections} $\Bis(\G) := \{ U\subseteq \G \text{ open} : \text{$r|_{U}$ and $d|_{U}$ are injective} \}$ 
forms a unital inverse semigroup with operations defined by 
\[
    U \cdot V := \{ \gamma \eta : \text{$\gamma \in U,\ \eta\in V$ are composable} \},  \qquad  
    U^* = U^{-1} := \{ \gamma^{-1} : \gamma \in U \}, 
\]
where $U,V\in \Bis(\G)$. The unit space $X$ is the unit in  $\Bis(\G)$. 
For any twist $\LL$ over $\G$, the set 
\[
    S(\LL) := \{U\in \Bis(\G) : \text{the restricted bundle $\LL|_{U}$ is trivial}\}
\]
forms a unital inverse subsemigroup $S(\LL)\subseteq \Bis(\G)$ which is \emph{wide} in the sense that it covers $\G$ and the intersection of any two elements in $S(\LL)$ is a union of elements from $S(\LL)$, see \cite[Lemma 4.2]{BKM}. 
%Take any wide inverse subsemigroup $S\subseteq S(\LL)\subseteq \Bis(\G)$ and let $h : S\to \PHomeo(X)$ be the canonical action. 
%Then we have the inverse semigroup action $\alpha : S \to \PAut(C_0(X))$  given by $\alpha_U(a) := a\circ h_{U^*}$ for $a\in  I_{d(U)} := C_0(d(U))$, $U\in S$.
%For each $U\in S(\LL)$ we choose a continuous unitary section $c_U\in C_u(U,\LL)$, but if $U\subseteq X$ let $c_U(x) := (x,1) \subseteq X \times \F$, $x\in U$.  
%Then putting $u(U,V) := c_U* c_V * c_{UV}^*$, for $U, V\in S$, the pair $(\alpha,u)$ is a twisted inverse semigroup action in the sense of ..., see \cite[Lemma 4.15]{BKM}. 
%Moreover, $(\G,\LL)$ can be recovered from $(\alpha,u)$, see \cite[Lemma 4.19]{BKM}.
%In particular, for $U\in S$ and $a\in C_c(r(U))$, putting
%\[
%    a \delta_{U}(\gamma) := [\gamma\in U] a(r(\gamma)) c_{U}(\gamma), \qquad  \gamma \in \G,
%\]
%we get that every element in $\mathfrak{C}_c(\G,\LL)$ is of the form $\sum_{V\in F} f_V\delta_V$ for some finite $F\subseteq \Bis(\G)$ and $f_V\in C_c(r(V) , \L)$, for $V\in F$.

%
\subsection{Banach algebras associated to twisted groupoids}
\label{ssec:TwistedGroupoidBanAlgs}

Fix a twisted groupoid $(\G , \L)$. 
We will work with the set of \emph{quasi-continuous compactly supported functions}, defined as
\[
    \mathfrak{C}_c(\G , \L) := \spane \{ f \in C_c(U , \L) : U \in \Bis(\G) \} ,
\]
where a section in $C_c(U , \L)$ is viewed as a section defined on $\G$ which vanishes off $U$. 
This set is equipped with a $*$-algebra structure as follows: 
\[
    (f * g)(\gamma) := \sum_{r(\eta) = r(\gamma)} f(\eta) \cdot g(\eta^{-1} \gamma) , \quad f^*(\gamma) := \big( f(\gamma^{-1}) \big)^* , \quad f,g \in \mathfrak{C}_c(\G , \L) ,\ \gamma \in \G , 
\]
where $\cdot$ and the final $*$ indicate the product and involution from $\L$. 
When the groupoid $\G$ is Hausdorff this reduces to the usual $*$-algebra $C_c(\G , \L)$. 
We associate a norm on $\mathfrak{C}_c(\G , \L)$ with any unital inverse subsemigroup $S \subseteq \Bis(\G)$ which covers $\G$. 
By \cite[Lemma 4.5]{BKM}, the formula
\[
    \| f \|^S_{\textrm{max}} := \inf \left\{ \sum_{k = 1}^n \| f_k \|_\infty : f = \sum_{k = 1}^n f_k ,\ f_k \in C_c(U_k , \L) ,\ U_k \in S \right\} ,
\]
defines the maximal submultiplicative norm on $\mathfrak{C}_c(\G , \L)$ which agrees with $\|\cdot\|_\infty$ on each $C_c(U,\LL) \subseteq \mathfrak{C}_c(\G,\LL)$, for $U\in S$. 
The involution in $\mathfrak{C}_c(\G , \L)$ is isometric in $\| \cdot \|^S_{\textrm{max}}$. 
Thus 
\[
    \FS(\G , \L):= \overline{\mathfrak{C}_c(\G , \L)}^{\| \cdot \|^S_{\textrm{max}} }
\]
is a Banach $*$-algebra. 
The Banach spaces $C_0(U,\LL)\subseteq \FS(\G , \L)$, $U \in S$, naturally embed into  $\FS(\G , \L)$, and they form an \emph{$S$-grading} of $\FS(\G , \L)$ in the sense that for any $U,V \in S$ we have $\overline{C_0(U,\LL)\cdot C_0(V,\LL)} = C_0(UV,\LL)$ in $\FS(\G , \L)$ and these spaces span a dense $*$-subalgebra of $\FS(\G , \L)$. 
Unitality of $S$ means that $X\in S$. 
In particular, $C_0(X)\subseteq \FS(\G , \L)$ sits in $\FS(\G , \L)$ as a non-degenerate Banach subalgebra (any approximate unit in $C_0(X)$ is a two-sided approximate unit in $\FS(\G , \L)$). 
We also write $F(\G , \L) := F^{\Bis(\G)}(\G , \L)$ and $\| \cdot \|_{\textrm{max}} = \| \cdot \|^{\Bis(\G)}_{\textrm{max}}$. 
In particular, $\| \cdot \|_{\textrm{max}}\leq \| \cdot \|^{S}_{\textrm{max}}$ for any  $S\subseteq \Bis(\G)$ as above.

\begin{defn}\label{defn:S_graded_algebras}
By a \emph{representation} of a Banach algebra $A$ in a Banach algebra $B$ we mean a contractive algebra homomorphism $\psi : A \to B$. 
Given a unital inverse subsemigroup $S \subseteq \Bis(\G)$ which covers $\G$ and a class $\RR$ of representations of $\FS(\G , \L)$, we define $\FR(\G , \L)$ to be the Hausdorff completion of $\mathfrak{C}_c(\G , \L)$ in the seminorm
\begin{equation*}\label{eq:GBANormRepresentations}
    \| f \|_{\RR} := \sup \big\{ \| \psi(f) \| : \psi \in \RR \big\} .
\end{equation*}
The closures of images of the spaces $C_0(U,\LL)$, $U \in S$, form an $S$-grading of $\FR(\G , \L)$. 
To emphasize the role of $S$ we will say that $\FR(\G , \L)$ is \emph{$S$-graded}.
\end{defn} 

\begin{rem} 
The class of algebras $\FR(\G , \L)$ defined above consists of Banach algebra Hausdorff completions of $\mathfrak{C}_c(\G , \L)$ in seminorms $\|\cdot\|_{\RR}$ for which there exists a unital inverse subsemigroup $S \subseteq \Bis(\G)$ covering $\G$ such that $\|\cdot \|_{\RR}\leq \|\cdot\|_{\infty}$ on every $C_c(U,\LL)$, $U\in S$ (this is equivalent to $\|\cdot\|_{\RR}\leq \|\cdot \|_{\textrm{max}}^S$ on $\mathfrak{C}_c(\G , \L)$). 
Without loss of generality, we may always assume here that $S$ is closed under taking subsets, so that in particular it is wide, and that $S\subseteq S(\LL)$, see \cite[Remark 4.7]{BKM}.
\end{rem}

%We now recall examples of Banach algebra completions of $\mathfrak{C}_c(\G , \L)$ studied before, and which are of the form $\Falg{\G , \L}{}{\RR}{}$, so that all the spaces $C_0(U,\LL)$, $U\in \Bis(\G)$, embed into them. 

\begin{ex}[Hahn's completions]
The domain and range maps $d,r : \G \to X$ induce the following  norms on $\mathfrak{C}_c(\G , \L)$ dominated by $\| \cdot \|_{\textrm{max}}$:  
\[
    \| f \|_{*d} := \max_{x \in X} \sum_{d(\gamma) = x} \big| f(\gamma) \big| , \qquad \| f \|_{*r} := \max_{x \in X} \sum_{r(\gamma) = x} \big| f(\gamma) \big| , \qquad \| f \|_I := \max \{ \| f \|_{*d} , \| f \|_{*r} \} .
\]
The completions of $\mathfrak{C}_c(\G , \L)$ in these norms are denoted by $\Falg{\G , \L}{}{*d}{}$, $\Falg{\G , \L}{}{*r}{}$, and $\Falg{\G , \L}{}{I}{}$ respectively, and they are $\Bis(\G)$-graded. 
The above norms, especially $\|\cdot \|_{I}$, are common in the literature, cf.\ \cite[Section 2.2]{Paterson} or \cite[II, 1.4]{Renault_book}, 
and $\|\cdot \|_{I}$ is  often called \emph{Hahn's $I$-norm}, as it was introduced in~\cite{Hahn}. 
Thus we call $\Falg{\G , \L}{}{I}{}$ \emph{Hahn's algebra}. 
The algebras $\Falg{\G , \L}{}{*d}{}$ and $\Falg{\G , \L}{}{*r}{}$ coincide respectively with the groupoid $L^1$ and $L^\infty$-operator algebras that are introduced in the next example. 
The identity on $\mathfrak{C}_c(\G , \L)$ extends to representations (with dense ranges)
\[
    \Falg{\G , \L}{}{}{} \to \Falg{\G , \L}{}{I}{} \to \Falg{\G , \L}{}{*d}{} , \Falg{\G , \L}{}{*r}{} . 
\]
\end{ex}

\begin{ex}[Full groupoid $L^p$-operator algebras] \label{ex:FullLpAlgebra}
Let $p \in [1 , \infty]$. 
We recall (see e.g.\ \cite{Phillips}) that a Banach algebra $A$ is an \emph{$L^p$-operator algebra} if there is an isometric representation $\psi : A \to B(L^p(\mu))$, where $L^p(\mu)$ is the Banach $L^p$-space associated to some measure $\mu$. 
Following \cite[Definition 5.12]{BKM}, we define the \emph{full $L^p$-operator algebra of $(\G , \LL)$} as $F^p(\G , \L):= \FR(\G , \L)$ where $\RR$ consists of representations $\psi : \FS(\G , \L) \to B(E)$, for some unital inverse subsemigroup $S \subseteq \Bis(\G)$ which covers $\G$, such that 
\begin{enumerate}[label={(R\arabic*)}]
    \item\label{enu:full_Lp1} $E = L^p(\mu)$ for a measure $\mu$; 
    %\item $\psi$ is $\| \cdot \|^S_{\textrm{max}}$-contractive for some inverse semigroup $S \subseteq \Bis(\G)$ which covers $\G$; 
    \item\label{enu:full_Lp2} operators in $\psi(C_c(X)^+)$ preserve the cone of positive functions in $E$. 
    %\item\label{enu:full_Lp2} $\psi|_{C_0(X)}$ acts by multiplication operators.
\end{enumerate}
The following comments hold by \cite[Theorem 5.13]{BKM}. 
The above definition does not depend on the choice of $S$, and so $F^p(\G , \L)$ is $\Bis(\G)$-graded. 
In fact, denoting by $\| \cdot \|_{L^p}$ the norm in $F^p(\G , \L)$, for $f\in \mathfrak{C}_c(\G,\LL)$ and $q\in [1,\infty]$ with $1/p+1/q=1$ (we put $q=1$ if $p=\infty$ and vice versa) we get 
\begin{equation}\label{eq:L_p_norm_estimates}
    \| f \|_{L^p} \leq \| f \|_{*d}^{1/p} \, \| f \|_{*r}^{1/q} \leq \|f\|_{I}.
\end{equation}
In the case $\F=\C$, condition \ref{enu:full_Lp2} can be dropped (we get the same norm without it). 
Thus
\[
    \| f \|_{L^p} = \sup\{ \| \psi(f) \| : \text{$\psi$ is a representation of $F_{I}(\G,\LL)$ into an $L^p$-operator algebra} \} 
\]
in the complex case.  
In the real case trouble may come from representations as in \cite[Example 2.14]{BKM}. 
For $\F=\R,\C$ and $p=2$, $\| \cdot \|_{L^2}$ is the maximal submultiplicative norm on $\mathfrak{C}_c(\G,\LL)$ satisfying the $C^*$-equality. 
Thus $C^*(\G,\LL) := F^2(\G , \L)$ is a (real or complex) $C^*$-algebra, which is universal in the sense that $*$-homomorphisms from $\mathfrak{C}_c(\G,\LL)$ into a $C^*$-algebra $B$ extend to representations of $C^*(\G,\LL)$ in $B$, and every representation of $C^*(\G,\LL)$ in $B$ is automatically $*$-preserving. 
The norm  $\| \cdot \|_{L^{\infty}}$ does not change, if  we replace the $L^{\infty}$-space in \ref{enu:full_Lp1} by $E=C_0(\Omega)$ for some locally compact Hausdorff space. 
Thus $F^{\infty}(\G , \L)$ could also be viewed as the \emph{full $C_0$-operator algebra} of $(\G , \LL)$. 
Moreover, if $q\in [1,\infty]$ satisfies $1/p+1/q=1$, then the involution in $\mathfrak{C}_c(\G,\LL)$ extends to an isometric anti-isomorphism $F^p(\G , \L) \anti F^q(\G , \L)$. 
\end{ex}

\begin{ex}[Reduced groupoid $L^p$-operator algebras] \label{ex:reduced_L_p_groupoid_algebra}
Let $p \in [1 , \infty]$. 
For any section $f$ of $\LL$ we put $\| f \|_{p} = (\sum_{\gamma\in \G} | f(\gamma) |^{p})^{1/p}$ when $p<\infty$ and $\|f\|_{\infty} = \sup_{\gamma\in \G}|f(\gamma)|$ when $p=\infty$. 
We denote by $\ell^p(\G,\LL)$ the Banach space of all sections of $\LL$ for which the norm $\| f \|_{p}$ is finite. 
The formula
\begin{equation}\label{eq:regular_for_p}
    \Lambda_p(f) \xi (\gamma) := (f * \xi)(\gamma) = \sum_{r(\eta) = r(\gamma)} f(\eta)\cdot \xi(\eta^{-1} \gamma), \qquad f \in \mathfrak{C}_c(\G,\LL),\ \xi \in \ell^{p}(\G,\LL) ,
\end{equation}
defines a representation $\Lambda_p : F^p(\G , \LL) \to B(\ell^{p}(\G,\LL))$ which is injective on $\mathfrak{C}_c(\G,\LL)$, see \cite[Proposition 5.1]{BKM}. 
We call $F_{\red}^p(\G , \LL):= F_{\red}^{ \{ \Lambda_p\} }(\G , \LL)$ the \emph{reduced $L^p$-operator algebra of $(\G,\LL)$} and we denote the norm by $\|\cdot\|_{L^p , \red}$. 
By \cite[Theorem 5.13]{BKM} we have
\begin{equation}\label{eq:L_1_L_infty_are_reduced}
    F^1(\G , \LL) = F_{\red}^1(\G , \LL) = \Falg{\G , \LL}{}{*d}{} \quad\text{ and } \quad F^{\infty}(\G , \LL) = F^{\infty}_{\red}(\G , \LL) = F_{*r}(\G , \LL). 
\end{equation}
By \cite[Theorem 6.19]{Gardella_Lupini17}, if $\G$ is second countable and amenable, then $F^p(\G , \LL) = F_{\red}^p(\G , \LL)$ for every $p\in[1,\infty]$.
\end{ex}

\begin{rem} 
%We emphasize that we do not need to specify an inverse semigroup $S$ in the notation for the full or reduced $L^p$-operator algebra of $(\G,\LL)$ because the norm $\|\cdot\|_{L^p}$ is smaller than $\| \cdot \|_{\textrm{max}}$ (in fact smaller than $\| \cdot \|_I$). 
When $\F = \C$, the reduced algebra $F_{\red}^p(\G , \LL)$ was studied in \cite{Hetland_Ortega}, and in the untwisted case in \cite{cgt}. 
The full $L^p$-operator algebra $\Falg{\G}{}{p}{}$ was introduced in a different but equivalent way by Gardella--Lupini~\cite{Gardella_Lupini17} in the complex, separable, finite $p$ case.
\end{rem}

%A motivating example for considering completions depending on a choice of an inverse semigroup is the following.
%\begin{ex}[crossed products by  discrete group actions]
%\end{ex}

%
%
\section{Reduced groupoid Banach algebras}
\label{sec:ReducedBanachAlgs}

We fix a twisted groupoid $(\G,\LL)$. 
In this section, we will consider completions of $\mathfrak{C}_c(\G , \L)$ that are equipped with `generalized conditional expectations' that extend the map $E_X : \mathfrak{C}_c(\G , \L)\to \B_c(X)$, given by restriction $f\mapsto f|_X$. 
When $\G$ is Hausdorff, then $E_X : C_c(\G , \L) \to C_c(X)\subseteq C_c(\G , \L)$ is a projection. 

\begin{defn}\label{de:GroupoidBanachAlgebrasExpectations}
A \emph{groupoid Banach algebra} of $(\G , \L)$ is a  Banach algebra completion $\FR(\G , \LL)$ of $\mathfrak{C}_c(\G , \L)$  %in a submultiplicative norm $\|\cdot\|_{\RR}$ 
such that $C_0(X)$ canonically embeds into $\FR(\G , \LL)$ and there is a contractive map $E_{X}^{\RR} : \FR(\G , \LL) \to \B_0(X)$ that extends $E_X$. 
We say that  $\FR(\G , \LL)$ is \emph{reduced} if $E_{X}^{\RR}$ is faithful in the sense that the only ideal contained in $\ker E_{X}^{\RR}$ is $\{0\}$.
\end{defn}

\begin{rem} 
When $\G$ is Hausdorff a groupoid Banach algebra  is a completion of $\mathfrak{C}_c(\G , \L)$ in a submultiplicative seminorm $\|\cdot\|_{\RR}$ such that  $\| f|_{X} \|_{\infty} = \| f|_{X} \|_{\RR} \leq \|f\|_{\RR}$ for all $f\in C_c(\G , \L)$. 
\end{rem}

\begin{rem} 
For any unital inverse subsemigroup $S\subseteq \Bis(\G)$ covering $\G$, the algebra $\FS(\G , \LL)$ is a groupoid Banach algebra. 
For any $p \in [1 , \infty]$, the $L^p$-operator algebras $F^p(\G , \LL)$ and $F^p_{\red}(\G , \LL)$ are  groupoid Banach algebras and $F^p_{\red}(\G , \LL)$ is reduced, cf.\ Proposition~\ref{prop:regular_disintegrated} below. There is a unique $C^*$-norm $\|\cdot\|_{C^*_{\red}}$ on $\mathfrak{C}_c(\G,\LL)$ yielding a reduced Banach algebra, and then this algebra coincides with the standard reduced $C^*$-algebra $C^*_{\red}(\G , \L)$ of $(\G , \L)$ (or its real counterpart).
%\footnote{When $\F=\R$ this is perhaps less  standard but still true.}
The full $C^*$-algebra $C^*(\G , \L)$, which is a completion of $\mathfrak{C}_c(\G,\LL)$ in the maximal $C^*$-norm $\|\cdot\|_{C^*\max}$, is a groupoid Banach algebra of $(\G , \L)$. 
Thus a  $C^*$-norm $\|\cdot\|_{\RR}$ on $\mathfrak{C}_c(\G,\LL)$ yields a groupoid Banach algebra if and only if it satisfies $\|\cdot\|_{C^*_{\red}} \leq \|\cdot\|_{\RR}\leq \|\cdot\|_{C^*\max}$. 
\end{rem}
 
%The assumption that $X\in S$ implies that $C_0(X)$ embeds isometrically as a Banach subalgebra into $F^S(\G,\LL)$. 
We make some preparations to reveal more structure in groupoid Banach algebras. 
For any $U\in \Bis(\G)$, we view the space of bounded Borel sections $\B(U,\LL)$ as a Banach $C(X)$-bimodule:% where 
\[
    (a  * f ) (\gamma) := a(r(\gamma)) \cdot f(\gamma), \qquad  (f  * b) (\gamma) := f(\gamma) \cdot b(d(\gamma)), \qquad a,b \in C(X),\ f\in \B(U,\LL).
\]
It contains $\B_c(U,\LL)$ and $\B_0(U,\LL)$ as non-degenerate $C_0(X)$-subbimodules.
The restriction 
\[
    E_{U}(f) := f|_{U}, \qquad f\in \mathfrak{C}_c(\G , \L),
\] 
gives a $C_0(X)$-bilinear map $E_{U} : \mathfrak{C}_c(\G , \L)\to \B(U,\LL)$. 
%, which is easily seen to be contractive when $\mathfrak{C}_c(\G , \L)$ is equipped with $\|\cdot\|_{\max}$. 
%Hence it extends to a contractive map $E_{U}:F(\G , \L)\to \B_b(U,\LL)$. 
The map $E_U$ on $\mathfrak{C}_c(r(U)\G , \L)=C_c(r(U)) * \mathfrak{C}_c(\G , \L)$, takes values in compactly supported Borel sections $\B_c(U,\LL) = C_c(r(U))* \B(U,\LL)$. 
When $\G$ is Hausdorff $E_{U} : \mathfrak{C}_c(r(U)\G , \L) \to C_c(U,\LL)$ takes values in continuous functions. 
Indeed, $\mathfrak{C}_c(r(U)\G , \L)$ is spanned by elements $f\in C_c(V,\LL)$ for some $V\in \Bis(\G)$ with $r(V)\subseteq r(U)$, and if $\G$ is Hausdorff then $V\cap U$ is closed (and open) in $V$, and hence $f|_{U}$ is continuous for every $f\in C_c(V,\LL)$. 

We denote by $\mathfrak{C}_0(\G , \L)$ the closure of $\mathfrak{C}_c(\G , \L)$ in the supremum norm $\| \cdot \|_{\infty}$ in the space $\B(\G , \L)$ of bounded Borel sections from $\G$ to $\L$. 
Thus 
\[
    \mathfrak{C}_0(\G , \L) = \overline{\mathfrak{C}_c(\G , \L)}^{\|\cdot\|_{\infty}}
\] 
is a Banach space, which coincides with $C_0(\G , \L)$ when $\G$ is Hausdorff. 
The involution on $\mathfrak{C}_c(\G , \L)$ gives a $\|\cdot\|_{\infty}$-isometric involution on $\mathfrak{C}_0(\G , \L)$, but the convolution product might not be defined on the whole of $\mathfrak{C}_0(\G , \L)$. 
Nevertheless we have:

\begin{lem}\label{lem:bimodule_borel}
Convolution defines a $\mathfrak{C}_c(\G , \L)$-bimodule structure on $\B(\G , \L)$  which is continuous in the sense that if a sequence $( f_n )_{n=1}^\infty$ in $\B(\G , \L)$ converges to $f\in \B(\G , \L)$ in $\|\cdot\|_{\infty}$, then, for any $g\in \mathfrak{C}_c(\G , \L)$, we have $g * f_n \to g*f$ and $f_n*g\to f*g$ in $\|\cdot\|_{\infty}$. 
It restricts to a continuous   $\mathfrak{C}_c(\G , \L)$-bimodule structure on $\mathfrak{C}_0(\G , \L)$.
%$\B(\G , \L)$ that extends to a Banach $\FR(\G , \LL)$-bimodule structure on $\B(\G , \L)$, for any groupoid Banach algebra $\FR(\G , \LL)$.??
\end{lem}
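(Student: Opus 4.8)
The plan is to exploit the defining feature of $\mathfrak{C}_c(\G , \L)$: every element is a \emph{finite} sum of sections supported on open bisections. This collapses each relevant convolution sum to a single term and so sidesteps the convergence issues that would otherwise arise in the non-Hausdorff setting; all the analytic content of the lemma is then a bounded-operator estimate.

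First I would fix $f\in\B(\G,\L)$ and, using bilinearity of convolution in the first variable, reduce the study of $g*f$ to the case $g\in C_c(U,\L)$ for a single $U\in\Bis(\G)$. Since $r|_U$ is injective, for each $\gamma$ there is at most one $\eta\in U$ with $r(\eta)=r(\gamma)$, namely $\eta=(r|_U)^{-1}(r(\gamma))$ when $r(\gamma)\in r(U)$, and no $\eta\in U$ contributes otherwise. Hence
\[
    (g*f)(\gamma)=\begin{cases} g(\eta)\cdot f(\eta^{-1}\gamma), & r(\gamma)\in r(U),\ \eta=(r|_U)^{-1}(r(\gamma)),\\ 0, & \text{otherwise,}\end{cases}
\]
a genuine single-term expression. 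For the right action I would reindex the sum by $\zeta=\eta^{-1}\gamma$, rewriting $(f*g)(\gamma)=\sum_{d(\zeta)=d(\gamma)} f(\gamma\zeta^{-1})\cdot g(\zeta)$, and then use injectivity of $d|_U$ to obtain the analogous single-term formula with $\zeta=(d|_U)^{-1}(d(\gamma))$.

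Next I would check that $g*f$ and $f*g$ indeed land in $\B(\G,\L)$. Boundedness is immediate from multiplicativity of $|\cdot|$ on the fibres, giving $\|g*f\|_{\infty}\le\|g\|_{\infty}\|f\|_{\infty}$ and likewise for $f*g$. For Borel measurability I would note that $\{\gamma:r(\gamma)\in r(U)\}$ is open (as $r$ is a local homeomorphism and $U$ is open) and that on it $\gamma\mapsto\eta=(r|_U)^{-1}(r(\gamma))$ is continuous; hence $\gamma\mapsto g(\eta)$ is continuous while $\gamma\mapsto f(\eta^{-1}\gamma)$ is Borel, so their fibrewise product, via continuity of the bundle multiplication, is Borel, and $g*f$ vanishes off the open set. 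The continuity assertion of the lemma then follows at once: summing the single-term estimate over a bisection decomposition of a general $g$ yields $\|g*h\|_{\infty}\le\|g\|_{\textrm{max}}\|h\|_{\infty}$, so $h\mapsto g*h$ is a bounded linear map on $(\B(\G,\L),\|\cdot\|_{\infty})$ and $\|g*f_n-g*f\|_{\infty}\le\|g\|_{\textrm{max}}\|f_n-f\|_{\infty}\to 0$, with the symmetric argument for $f_n*g$.

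Finally I would verify the bimodule axioms and pass to $\mathfrak{C}_0(\G,\L)$. The associativity relations $(g_1*g_2)*f=g_1*(g_2*f)$, $(g_1*f)*g_2=g_1*(f*g_2)$ and $f*(g_1*g_2)=(f*g_1)*g_2$ I would check pointwise, reducing each $g_i$ to a single bisection so that all sums are single terms and the identities follow from associativity of the semigroupoid $(\L,\cdot)$ together with the groupoid relations. For the last claim I would use that $\mathfrak{C}_c(\G,\L)$ is a convolution algebra, so $g*h,\,h*g\in\mathfrak{C}_c(\G,\L)$ for $h\in\mathfrak{C}_c(\G,\L)$; given $f\in\mathfrak{C}_0(\G,\L)=\overline{\mathfrak{C}_c(\G,\L)}^{\|\cdot\|_{\infty}}$, I would choose $h_n\in\mathfrak{C}_c(\G,\L)$ with $h_n\to f$ and invoke the continuity just proved to conclude $g*f=\lim_n g*h_n\in\mathfrak{C}_0(\G,\L)$, and similarly $f*g\in\mathfrak{C}_0(\G,\L)$. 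The main obstacle is bookkeeping rather than analysis: one must carry out the single-term reduction, the measurability argument, and the associativity computation while tracking the correct fibrewise products in $\L$ and the composability conditions in $\G$; once compact support on bisections is used to eliminate infinite sums, no genuine analytic difficulty remains.
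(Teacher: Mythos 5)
Your proposal is correct and follows essentially the same route as the paper's proof: reduce to $g$ supported on a single bisection, use injectivity of the range (resp.\ domain) map on that bisection to collapse the convolution to a single-term formula, deduce $\|g*f\|_{\infty}\leq\|g\|_{\infty}\|f\|_{\infty}$ and hence continuity of the module actions, and finally pass to $\mathfrak{C}_0(\G,\L)$ using density of $\mathfrak{C}_c(\G,\L)$ and its closure under convolution. The only differences are cosmetic: the paper parametrises via the homeomorphism $\tilde{h}_U$ on $r^{-1}(d(U))$ where you use $(r|_U)^{-1}\circ r$ directly, and you spell out the Borel-measurability and bimodule-axiom checks that the paper leaves implicit.
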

\begin{proof}
Fix $g \in C_c(U^* , \L)$, where $U\in \Bis(\G)$. 
For any $f \in \B(\G , \L)$, we have
\begin{equation}\label{equ:bisection_left_module_formula}
   g * f(\gamma) = \begin{cases}
                       g \big( \gamma \tilde{h}_{U}(\gamma)^{-1})\cdot  f (\tilde{h}_{U}(\gamma) \big),  & \gamma \in r^{-1}(d(U)), \\
                      0,  & \gamma \not\in r^{-1}(d(U)),
                   \end{cases}
\end{equation}
where $\tilde{h}_U: r^{-1}(d(U))\to r^{-1}(r(U))$ is the homeomorphism given by $\tilde{h}_U(\gamma) := d|_{U}^{-1}(r(\gamma))\gamma$ (see \cite[Example 2.19]{BKM}). 
This implies that $g * f\in \B(\G , \L)$ and  $\|g * f\|_{\infty}\leq \|g\|_{\infty}\cdot \|f\|_{\infty}$. 
Thus $\|g*f_n- g*f\|_{\infty}\to 0$ whenever $\|f_n- f\|_{\infty}\to 0$. 
Since elements in $C_c(U , \L)$, $U\in \Bis(\G)$, span $\mathfrak{C}_c(\G , \L)$, this proves that $\B(\G , \L)$ is a left $\mathfrak{C}_c(\G , \L)$-module continuous in the desired sense. 
Similarly, one checks that $\B(\G , \L)$ is a continuous right $\mathfrak{C}_c(\G , \L)$-module, and so a bimodule by associativity of convolution. 
Since $\mathfrak{C}_c(\G , \L)$ is dense in $\mathfrak{C}_0(\G , \L)$ and closed under convolution, we conclude that $\mathfrak{C}_c(\G , \L)*\mathfrak{C}_0(\G , \L)$, $\mathfrak{C}_0(\G , \L)* \mathfrak{C}_c(\G , \L)\subseteq \mathfrak{C}_0(\G , \L)$. 
\end{proof}

\begin{lem}\label{lem:approx_conditional_exp_twisted}
For any $U\in S(\LL)\subseteq \Bis(\G)$ and $f\in C_c(r(U))*\mathfrak{C}_0(\G , \L)$, there is a norm one section $b\in C_c(U^{*} , \L)$ such that $|(b * f)|_X| = |f\circ s|_{U}^{-1}|$ as functions on $X$ (the modulus can be dropped in the untwisted case).
\end{lem}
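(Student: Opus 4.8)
The plan is to first reduce the convolution $b*f$, restricted to the unit space $X$, to a single summand by using that $U^*$ is a bisection, and then to construct $b$ as the product of a scalar cutoff with a unitary section of $\LL|_{U^*}$ — the latter existing precisely because $U$ (and hence $U^*$) lies in $S(\LL)$.

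Step one is to compute $(b*f)|_X$ for an arbitrary $b \in C_c(U^* , \LL)$. Fix $x \in X$ and consider $(b*f)(x) = \sum_{r(\eta) = x} b(\eta)\cdot f(\eta^{-1}x)$. Since $b$ is supported on $U^*$ and $r|_{U^*}$ is injective, at most one $\eta$ contributes, and it exists iff $x \in r(U^*) = d(U)$; denote it $\eta_x := r|_{U^*}^{-1}(x)$. As $\eta_x^{-1} \in U$ satisfies $d(\eta_x^{-1}) = r(\eta_x) = x$, we have $\eta_x^{-1} = d|_U^{-1}(x)$ and $\eta_x^{-1}x = \eta_x^{-1}$, so the sum collapses to
$$(b*f)(x) = b(\eta_x)\cdot f\big(d|_U^{-1}(x)\big), \qquad x \in d(U),$$
and $(b*f)(x) = 0$ otherwise. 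Hence on $d(U)$ one has $|(b*f)|_X| = (|b|\circ r|_{U^*}^{-1})\cdot(|f|\circ d|_U^{-1})$, and the whole problem reduces to arranging $|b| \equiv 1$ on the set where $f\circ d|_U^{-1} \neq 0$.

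Step two locates that set and builds $b$. Because $f \in \mathfrak{C}_c(r(U)\G , \LL)$, the restriction $E_U(f) = f|_U$ lies in $\B_c(U , \LL)$ and thus has compact support $K \subseteq U$; consequently $f\circ d|_U^{-1}$ vanishes outside the compact set $d(K) \subseteq d(U)$. Since $S(\LL)$ is an inverse subsemigroup, $U^* \in S(\LL)$, so $\LL|_{U^*}$ is trivial and there is a unitary section $c \in C_u(U^* , \LL)$, i.e.\ $|c|\equiv 1$. Using Urysohn's lemma in the locally compact Hausdorff space $d(U) = r(U^*)$, pick $\phi \in C_c(d(U))$ with $0 \le \phi \le 1$, $\|\phi\|_\infty = 1$, and $\phi \equiv 1$ on $d(K)$, and set $b(\eta) := \phi(r(\eta))\cdot c(\eta)$. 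Then $b \in C_c(U^* , \LL)$ with $\|b\|_\infty = 1$, and $|b(\eta)| = 1$ for every $\eta \in r|_{U^*}^{-1}(d(K)) = K^{-1}$. Feeding this into the formula from step one gives $|(b*f)|_X| = |f\circ d|_U^{-1}|$ on all of $X$, as required. For the untwisted bundle $\LL = \G\times\F$ one simply takes $c \equiv 1$ and keeps $\phi$ real and nonnegative, so that $(b*f)|_X = f\circ d|_U^{-1}$ holds verbatim, with no moduli.

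The only genuine obstacle is the clash between the two requirements on $b$: it must have modulus one where it matters, yet be compactly supported, even though a globally unitary section of $\LL|_{U^*}$ is generally not compactly supported when $U^*$ fails to be precompact. This is exactly what the two hypotheses dispose of — membership $U \in S(\LL)$ supplies the unitary section $c$, while the assumption $f \in \mathfrak{C}_c(r(U)\G , \LL)$ (rather than merely $f \in \mathfrak{C}_c(\G , \LL)$) forces $\supp(f|_U)$ to be compact, so a single compactly supported cutoff $\phi$ can be identically $1$ throughout the relevant region.
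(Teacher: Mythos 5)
Your proof is correct and follows essentially the same route as the paper's: both take $b$ to be a unitary section $c\in C_u(U^*,\LL)$ (available since $U\in S(\LL)$) multiplied by a compactly supported scalar cutoff equal to $1$ on the relevant compact set, and then use that $U^*$ is a bisection to collapse the convolution sum. The only (immaterial) difference is that the paper composes its cutoff $a\in C_c(r(U))$ with the source map on $U^*$, using that $r(\supp f)\subseteq r(U)$ is compact, whereas you compose $\phi\in C_c(d(U))$ with the range map on $U^*$, using compactness of $\supp(f|_U)$.
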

\begin{proof}
The range  $r(K)$ of the closed support $K$ of $f\in C_c(r(U))*\mathfrak{C}_0(\G , \L)$ is a compact subset of $r(U)$. 
Thus there is  a positive norm one function $a\in C_c(r(U))$ which is $1$ on $r(K)$. Taking any unitary section $c\in C_u(U^{*},\LL)$ gives the desired $b\in C_c(U^* , \L)$ by putting $b(\gamma) := a( s(\gamma) )c(\gamma)$ for $\gamma \in U^{*}$.  
\end{proof}

The essence of the following proposition is that every groupoid Banach algebra $\FR(\G , \LL)$ is equipped with a natural contractive map into $\mathfrak{C}_0(\G , \L)$ and this map is injective 
if and only if  $\FR(\G , \LL)$ is reduced. Also we could formally weaken the definition of  groupoid Banach algebras using   Hausdorff completions of $\mathfrak{C}_c(\G , \L)$, rather then completions.

\begin{prop}\label{prop:expectations_and_j_map}
Let $\FR(\G , \LL)$ be a Banach algebra Hausdorff completion of $\mathfrak{C}_c(\G , \L)$ and let $i_{\RR}: \mathfrak{C}_c(\G , \L)\to \FR(\G , \LL)$ be the canonical homomorphism. 
Assume $i_{\RR}$ is isometric on $C_c(X)$, so that $C_0(X)$ embeds isometrically into $\FR(\G , \LL)$. 
The following conditions are equivalent: 
\begin{enumerate}
    \item\label{enu:expectations_and_j_map0} $\| f|_{X} \|_{\infty} \leq \|i_{\RR}(f)\|_{\RR}$ for all $f\in \mathfrak{C}_c(\G , \L)$; 
    \item\label{enu:expectations_and_j_map1} $\FR(\G , \LL)$ is a groupoid Banach algebra of $(\G,\LL)$ (in particular $i_{\RR}$ is injective); 
    \item\label{enu:expectations_and_j_map2} $\FR(\G , \LL)$ is a completion of $\mathfrak{C}_c(\G , \L)$ and for any $U\in \Bis(\G)$ there is a contractive linear map $E^\RR_U : \FR(\G , \LL)\to \B(U,\L)$ that extends $E_U : \mathfrak{C}_c(\G , \L) \to \B(U,\L)$; 
    \item\label{enu:expectations_and_j_map3} $\FR(\G , \LL)$ is a completion of $\mathfrak{C}_c(\G , \L)$ such that the inclusion $\mathfrak{C}_c(\G,\LL) \subseteq \mathfrak{C}_0(\G,\LL)$ extends to a contractive map $j_{\RR} :\FR(\G , \LL) \to \mathfrak{C}_0(\G,\L)$. 
\end{enumerate}
Assume the above equivalent conditions.  %The spaces $C_0(U,\LL)$, $U\in S$, embed isometrically into $\FR(\G , \LL)$ and 
For each $U\in \Bis(\G)$, $E^\RR_{U}$ is a $C_0(X)$-module map given by $E^\RR_{U}(f) = j_{\RR}(f)|_{U}$, $f\in \FR(\G , \LL)$,  $j_{\RR}$ is a $\mathfrak{C}_c(\G , \L)$-bimodule map and for any family $S\subseteq \Bis(\G)$ that covers $\G$ we have
\[
    \ker j_{\RR}=\bigcap_{U\in S} \ker E_{U}^\RR = \{ f\in \FR(\G , \LL): \text{$E^\RR_U(af) = 0$  for all $a\in C_c(r(U)),\ U\in S$} \}.
\]
Moreover,  $\ker j_{\RR}$  is the largest  ideal in $\FR(\G , \LL)$ contained in $\ker E_{X}^\RR$. %In particular, $\ker j_{\RR}\cap \mathfrak{C}_c(\G , \L) = \{ 0 \}$. 
\end{prop}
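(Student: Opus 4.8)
\emph{Overview and the three formal implications.} The plan is to close the loop (\ref{enu:expectations_and_j_map3}) $\Rightarrow$ (\ref{enu:expectations_and_j_map2}) $\Rightarrow$ (\ref{enu:expectations_and_j_map1}) $\Rightarrow$ (\ref{enu:expectations_and_j_map0}) $\Rightarrow$ (\ref{enu:expectations_and_j_map3}), where only the last arrow carries real content; the first three amount to restricting a contractive map to the dense subalgebra $\mathfrak{C}_c(\G,\LL)$. For (\ref{enu:expectations_and_j_map3}) $\Rightarrow$ (\ref{enu:expectations_and_j_map2}) I would set $E^\RR_U(f) := j_\RR(f)|_U$ and note that $h\mapsto h|_U$ is a contraction $\mathfrak{C}_0(\G,\LL)\to \B(U,\LL)$, so $E^\RR_U$ is a contraction extending $E_U$. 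Taking $U = X$ and using $\overline{C_c(X)}^{\|\cdot\|_\infty} = C_0(X)\subseteq \B_0(X)$ gives (\ref{enu:expectations_and_j_map2}) $\Rightarrow$ (\ref{enu:expectations_and_j_map1}); and applying the contraction $E^\RR_X$ extending $E_X$ to $i_\RR(f)$ yields $\|f|_X\|_\infty \le \|i_\RR(f)\|_\RR$, which is (\ref{enu:expectations_and_j_map1}) $\Rightarrow$ (\ref{enu:expectations_and_j_map0}).

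\emph{The main step.} The heart of the matter is (\ref{enu:expectations_and_j_map0}) $\Rightarrow$ (\ref{enu:expectations_and_j_map3}): upgrading control of the restriction to $X$ to control of the whole uniform norm, so that the inclusion $\mathfrak{C}_c(\G,\LL)\subseteq \mathfrak{C}_0(\G,\LL)$ extends continuously. Fix $f\in\mathfrak{C}_c(\G,\LL)$ and $\gamma_0\in\G$; I would choose a bisection $U$ from the covering inverse semigroup $S\subseteq S(\LL)$ with $\gamma_0\in U$, a cutoff $a\in C_c(r(U))$ with $0\le a\le 1$ and $a(r(\gamma_0))=1$, and then apply the pinching Lemma~\ref{lem:approx_conditional_exp_twisted} to $g := a*f\in \mathfrak{C}_c(r(U)\G,\LL)$ to obtain a norm-one section $b\in C_c(U^*,\LL)$ with $|(b*g)|_X| = |g\circ s|_U^{-1}|$. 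Evaluating at $d(\gamma_0)$ recovers $|f(\gamma_0)|$, so condition (\ref{enu:expectations_and_j_map0}) and submultiplicativity give
\[
    |f(\gamma_0)| \le \big\| (b*g)|_X \big\|_\infty \le \|b*g\|_\RR \le \|b\|_\RR\,\|a\|_\RR\,\|f\|_\RR .
\]
Since $a\in C_c(X)$ is embedded isometrically and $b\in C_c(U^*,\LL)$ with $U^*\in S\subseteq S(\LL)$, both $\|a\|_\RR=\|a\|_\infty\le 1$ and $\|b\|_\RR\le\|b\|_\infty = 1$, whence $|f(\gamma_0)|\le\|f\|_\RR$; taking the supremum over $\gamma_0$ yields $\|f\|_\infty\le\|f\|_\RR$, and hence the contraction $j_\RR$ (in particular $\|\cdot\|_\RR$ is a norm). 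The step I expect to be most delicate is precisely the estimate $\|b\|_\RR\le 1$: it is what forces the pinching section to live on a bisection where the bundle is trivial, so that a \emph{unitary} section — and hence the norm-one $b$ of Lemma~\ref{lem:approx_conditional_exp_twisted} — exists, and it is where the grading bound $\|\cdot\|_\RR\le\|\cdot\|_\infty$ on each $C_c(U,\LL)$, $U\in S$, is essential. (A sup-norm-type seminorm on, say, $\F[\Z]$ taken on a circle of radius $\neq 1$ is submultiplicative, isometric on $C_c(X)=\F$, and satisfies (\ref{enu:expectations_and_j_map0}), yet fails the conclusion — so this hypothesis cannot be dropped.)

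\emph{The structural conclusions.} Once $j_\RR$ and the $E^\RR_U$ are in hand, the remaining assertions follow by extending identities from the dense subalgebra $\mathfrak{C}_c(\G,\LL)$ by continuity. The formula $E^\RR_U(f)=j_\RR(f)|_U$ and the $C_0(X)$-bimodularity of $E^\RR_U$ hold because both sides are continuous and agree with the $C_0(X)$-bilinear map $E_U$ on $\mathfrak{C}_c(\G,\LL)$. That $j_\RR$ is a $\mathfrak{C}_c(\G,\LL)$-bimodule map follows by fixing $g\in\mathfrak{C}_c(\G,\LL)$ and checking that $f\mapsto j_\RR(g*f)$ and $f\mapsto g*j_\RR(f)$ agree on the dense subalgebra and are both continuous, the continuity of the latter on $\mathfrak{C}_0(\G,\LL)$ being exactly Lemma~\ref{lem:bimodule_borel}. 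For the kernel, a section of $\mathfrak{C}_0(\G,\LL)$ vanishes iff its restriction to each $U$ in the cover $S$ vanishes, giving $\ker j_\RR=\bigcap_{U\in S}\ker E^\RR_U$; and since $E^\RR_U(af)=a*E^\RR_U(f)$ for $a\in C_0(r(U))$, with $a(r(\gamma))$ free to be nonzero at each $\gamma\in U$, vanishing of all $E^\RR_U(af)$ is equivalent to $E^\RR_U(f)=0$, which yields the third description. Finally $\ker j_\RR$ is closed as the kernel of a continuous map; it is a two-sided ideal because $j_\RR(hf)=h*j_\RR(f)$ for $h\in\mathfrak{C}_c(\G,\LL)$, from which the general case $h\in\FR(\G,\LL)$ follows by approximating $h_n\to h$ and using continuity of $j_\RR$ and of multiplication; and $\ker j_\RR\cap\mathfrak{C}_c(\G,\LL)=\{0\}$ because $j_\RR\circ i_\RR$ is the injective inclusion $\mathfrak{C}_c(\G,\LL)\hookrightarrow\mathfrak{C}_0(\G,\LL)$, which also delivers the injectivity of $i_\RR$ promised in (\ref{enu:expectations_and_j_map1}).
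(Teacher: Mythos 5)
Your proof is correct and takes essentially the same route as the paper's: the same cycle \ref{enu:expectations_and_j_map3}$\Rightarrow$\ref{enu:expectations_and_j_map2}$\Rightarrow$\ref{enu:expectations_and_j_map1}$\Rightarrow$\ref{enu:expectations_and_j_map0}, the same use of Lemma~\ref{lem:approx_conditional_exp_twisted} combined with a cutoff in $C_c(r(U))$ for the key implication \ref{enu:expectations_and_j_map0}$\Rightarrow$\ref{enu:expectations_and_j_map3} (the paper merely runs it in two stages, first building the corner maps $E^{\RR}_U$ on $C_0(r(U))\FR(\G,\LL)$, where you do a single pass), and the same density arguments via Lemma~\ref{lem:bimodule_borel} for the bimodule property and the kernel descriptions. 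Your explicit isolation of the bound $\|b\|_{\RR}\le \|b\|_{\infty}$ for $b\in C_c(U^*,\LL)$, $U\in S$ --- which the paper uses silently in the step $\|i_{\RR}(b*f)\|_{\RR}\le \|i_{\RR}(f)\|_{\RR}$, and which is available only because the notation $\FR(\G,\LL)$ implicitly means a completion dominated by $\|\cdot\|_{\max}^{S}$ (see the remark after Definition~\ref{defn:S_graded_algebras}) --- together with your $\F[\Z]$ counterexample showing this hypothesis cannot be dropped, is a worthwhile clarification of the statement rather than a deviation from the paper's argument.
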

\begin{proof} 
\ref{enu:expectations_and_j_map3} implies \ref{enu:expectations_and_j_map2} by putting $E^\RR_{U}(f) := j_{\RR}(f)|_{U}$ for $f\in \FR(\G , \LL)$ and $U\in \Bis(\G)$. 
\ref{enu:expectations_and_j_map2} implies \ref{enu:expectations_and_j_map1} because $X\in \Bis(\G)$.
\ref{enu:expectations_and_j_map1}$\Rightarrow$\ref{enu:expectations_and_j_map0} is trivial. 
Assume \ref{enu:expectations_and_j_map0}. 
We need to show \ref{enu:expectations_and_j_map3}.
For any $U\in S(\LL)$ and $f\in \mathfrak{C}_c(r(U)\G , \L)$, taking $b$ as in Lemma~\ref{lem:approx_conditional_exp_twisted} we get
\[%\begin{align*}
   \|f|_{U}\|_{\infty} = \| f\circ s|_{U}^{-1} \|_{\infty} = \| (b * f)|_X\|_{\infty} = \| E_{X}^{\RR} (b *f) \|_{\infty} \leq \|i_{\RR}(b *f) \|_{\RR} \leq \|i_{\RR}(f) \|_{\RR} .
\]%\end{align*}
Thus there is a linear contractive map $E^\RR_U : C_0(r(U)) \FR(\G , \LL) \to \B_0(U,\L)$ with $E^\RR_U (i_{\RR}(f)) = f|_{U}$ for $f\in \mathfrak{C}_c(r(U)\G , \L)$. 
Let $f\in \mathfrak{C}_c(\G,\LL)$. 
For any $\varepsilon >0$ there is $\gamma\in \G$ such that $\|f\|_{\infty}-\varepsilon < | f(\gamma) |$. 
Take $U\in S(\LL)$ with $\gamma\in U$ and norm one $a\in C_c(r(U))$ with $a( r(\gamma) ) = 1$. 
Then 
\[
    | f(\gamma) | = |(a* f)(\gamma)| = | E^\RR_{U}(i_{\RR}(a* f))(\gamma)| \leq \|E^\RR_{U}(i_{\RR}(a* f))\|_{\infty}\leq  \|i_{\RR}(a* f)\|_{\RR}\leq \| i_{\RR}(f)\|_{\RR}.
\]
This implies that $\|f\|_{\infty}\leq \| i_{\RR}(f)\|_{\RR}$, which in turn implies \ref{enu:expectations_and_j_map3}. 
Thus \ref{enu:expectations_and_j_map0}--\ref{enu:expectations_and_j_map3} are equivalent. 

Now assume that \ref{enu:expectations_and_j_map0}--\ref{enu:expectations_and_j_map3} hold.
To show that $j_{\RR}$ is a $\mathfrak{C}_c(\G , \L)$-bimodule map fix $f \in \FR(\G , \LL)$ and $g \in \mathfrak{C}_c(\G , \L)$. 
Choose a sequence $( f_n )_{n=1}^\infty$ in $\mathfrak{C}_c(\G , \L)$ converging to $f$ in $\| \cdot \|_{\RR}$. 
Then $( f_n )_{n=1}^\infty$ converges to $j_{\RR}(f)$ in $\| \cdot \|_{\infty}$ by continuity of $j_{\RR}$. 
Hence also $g * f_n \to g * j_{\RR}(f)$ in $\|\cdot\|_{\infty}$ by Lemma~\ref{lem:bimodule_borel}.
Thus $g * j_{\RR}(f) = \lim_{n\to \infty} g * f_n = \lim_{n\to \infty} j_{\RR} ( g f_n) = j_{\RR}(g f)$. 
Symmetric considerations give $j_{\RR}(f g) = j_{\RR}(f) * g$.
This in particular implies that $\ker j_{\RR}$ is an ideal in  $\FR(\G , \LL)$  and that $E^\RR_{U}$ is a $C_0(X)$-module map, for any $U\in \Bis(\G)$.
For any $S \subseteq \Bis(\G)$ covering $\G$, the equality $\ker j_{\RR}=\bigcap_{U\in S} \ker E^\RR_{U}$ is clear as $E^\RR_{U}$ is the restriction of $j_{\RR}$ to $U$.
Since  $E^\RR_U$ is a continuous $C_0(X)$-module map, we also get  $\ker E^\RR_{U} = \{f\in \FR(\G , \LL) : \text{$E^\RR_U(af)=0$ for all $a\in C_c(r(U)), \ U\in S$} \}$. This proves the displayed equalities.

Finally, assume that $I$ is an ideal in $\FR(\G , \LL)$ contained in $\ker E_{X}^\RR$, and let $f\in I$. 
For every $a\in C_c(r(U))$ with $U\in S(\LL)$, Lemma~\ref{lem:approx_conditional_exp_twisted} applied to $a*j_{\RR}(f)= j_{\RR}(af)\in C_c(r(U))*\mathfrak{C}_0(\G , \L)$ gives $b\in C_c(U^* , \L)$ such that $\|E^\RR_{X}(baf)\|_{\infty}=\|E^\RR_{U}(af)\|_{\infty}$. 
Since $baf\in I$, we have $E^\RR_{U}(af)=0$ and therefore $f\in \ker(j_{\RR})$ by the above description of $\ker(j_{\RR})$. Hence $I\subseteq \ker(j_{\RR})$.
\end{proof}

\begin{rem} 
The contractive map $j_{\RR} : \FR(\G , \LL) \to \mathfrak{C}_0(\G,\L)$, as in Proposition~\ref{prop:expectations_and_j_map}\ref{enu:expectations_and_j_map3}, in the context of $C^*$-algebras is often called \emph{Renault's $j$-map}, and it is well known that it is injective on the reduced groupoid $C^*$-algebra, cf.\ \cite[Proposition II.4.2]{Renault_book}, \cite[Proposition 2.8]{BFPR}, \cite[Proposition 7.10]{Kwa-Meyer} and \cite{DWZ}. In our more general context we will refer to it as the \emph{$j$-map} for $\FR(\G , \LL)$. 
\end{rem}
%\begin{defn}\label{defn:j_map} 
%If $\FR(\G , \LL)$ is a groupoid Banach algebra we call $j_{\RR} : \FR(\G , \LL) \to\mathfrak{C}_0(\G,\LL)$ in Proposition~\ref{prop:expectations_and_j_map}\ref{enu:expectations_and_j_map3} 
%If $\ker j_{\RR} = \{0\}$ we call $\FR(\G , \LL)$ a \emph{reduced groupoid Banach algebra} of $(\G , \L)$. 
%\end{defn}

\begin{rem}\label{rem:Fourier_decomposition} 
In view of Proposition~\ref{prop:expectations_and_j_map}, we see that a reduced groupoid Banach algebra of $(\G , \LL)$ is a Banach algebra $\FR(\G , \LL)$ completion of $\mathfrak{C}_c(\G,\L)$ such that the inclusion $\mathfrak{C}_c(\G,\LL) \subseteq \mathfrak{C}_0(\G,\LL)$ extends to an \emph{injective contraction} $j_{\RR} :\FR(\G , \LL) \to \mathfrak{C}_0(\G,\LL)$ which is isometric on $C_0(X)$.
%reduced groupoid Banach algebra is a Banach algebra of the form $\Falg{\G , \LL}{S}{\RR}{}$ such that the inclusion $\mathfrak{C}_c(\G,\LL) \subseteq \mathfrak{C}_0(\G,\LL)$ extends to an \emph{injective} contraction $j_{\RR} : \Falg{\G , \LL}{S}{\RR}{} \to \mathfrak{C}_0(\G,\LL)$. 
Equivalently, $\FR(\G , \LL)$ is a groupoid Banach algebra such that, fixing any $S\subseteq \Bis(\G)$ covering $\G$,  every $f\in \FR(\G , \LL)$ is uniquely determined by elements $\{E^\RR_U(f)\}_{U\in S}$ where $E^\RR_U : \FR(\G , \LL) \to \B(U,\L)$ is a contraction that extends $E_U$, $U\in S$. 
In fact, by the displayed equality in Proposition~\ref{prop:expectations_and_j_map}, elements in $\FR(\G , \LL)$ are determined by the restricted maps $E^\RR_U : C_0(r(U))\FR(\G , \LL) \to \B_0(U,\L)$, $U\in S$. 
%Namely, $b=0$ if and only if $E^\RR_U(ab)=0$ for all $a\in C_0(r(U))$ and $U\in S$. 
These restricted maps take values in $C_0(U,\LL)$ when $\G$ is Hausdorff. 
In the context of Fell bundles over inverse semigroups, such maps were considered in \cite[Proposition 2.18]{KwaMeyer}. 
Assume that $S\subseteq S(\LL)$ and for each  $U\in S$  fix a unitary section that trivializes the bundle $\LL|_{U}$.  Then using the homeomorphism $r : U \to r(U)$, we get an isomorphism $\Phi_{U}:\B_0(U,\L)\stackrel{\cong}{\longrightarrow} \B_0(r(U))$ that restricts to $C_0(U,\L)\cong C_0(r(U))$. 
Thus if in addition $\G$ is Hausdorff, we get maps 
\[
    \Phi_{U}\circ E^\RR_U : C_0(r(U))\FR(\G , \LL) \to C_0(r(U))\subseteq C_0(X)
\]
that take values in continuous functions on $X$ and that distinguish elements in $\FR(\G , \LL)$. 
In this way, one recovers the \emph{Fourier decomposition maps} considered in \cite[Definition 4.4]{BK} for Banach algebra crossed products. 
In particular, our notion of a reduced algebra is consistent with the definition of a \emph{reduced Banach algebra crossed product} from \cite{BK}, and a \emph{reduced group algebra} from \cite{Phillips19}. 
\end{rem}

%\begin{ex} 
%There is a unique $C^*$-norm $\|\cdot\|_{C^*_{\red}}$ on $\mathfrak{C}_c(\G,\LL)$ yielding a reduced Banach algebra, 
%and then this algebra coincides with the standard reduced $C^*$-algebra $C^*_{\red}(\G , \L)$ of $(\G , \L)$. 
%The full $C^*$-algebra $C^*(\G , \L)$, which is a completion of $\mathfrak{C}_c(\G,\LL)$ in the maximal $C^*$-norm $\|\cdot\|_{C^*\max}$, 
%is a groupoid Banach algebra of $(\G , \L)$, and every $C^*$-norm $\|\cdot\|_{\RR}$ that yields a groupoid Banach algebra satisfies 
%$\|\cdot\|_{C^*_{\red}}\leq \|\cdot\|_{\RR}\leq \|\cdot\|_{C^*\max}$. 
%\end{ex}

\begin{cor}\label{co:QuotientIsReduced} 
For any groupoid Banach algebra $\FR(\G , \LL)$ the quotient Banach algebra $F_{\RR,\red}(\G , \LL) := \FR(\G , \LL)/\ker j_{\RR}$ is naturally a reduced groupoid Banach algebra of $(\G , \L)$. 
\end{cor}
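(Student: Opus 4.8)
The plan is to factor Renault's $j$-map through the quotient and then read off that $F_{\RR,\red}(\G,\LL)$ satisfies the defining properties of a reduced groupoid Banach algebra, using the equivalences already established in Proposition~\ref{prop:expectations_and_j_map}. Everything of substance is carried by the last part of that proposition; the argument is essentially a repeated application of the first isomorphism theorem.

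First I would record that, by Proposition~\ref{prop:expectations_and_j_map}, $\ker j_{\RR}$ is a closed two-sided ideal of $\FR(\G,\LL)$ with $\ker j_{\RR}\cap \mathfrak{C}_c(\G,\LL)=\{0\}$. Hence $F_{\RR,\red}(\G,\LL)=\FR(\G,\LL)/\ker j_{\RR}$ is a Banach algebra, and writing $q$ for the quotient map, the composite $i:=q\circ i_{\RR}:\mathfrak{C}_c(\G,\LL)\to F_{\RR,\red}(\G,\LL)$ has dense range (the image of a dense set under a continuous surjection) and is injective: if $i(f)=0$ then $i_{\RR}(f)\in \ker q=\ker j_{\RR}$, so $i_{\RR}(f)\in \ker j_{\RR}\cap \mathfrak{C}_c(\G,\LL)=\{0\}$, whence $f=0$ by injectivity of $i_{\RR}$. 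This shows $F_{\RR,\red}(\G,\LL)$ is genuinely a completion of $\mathfrak{C}_c(\G,\LL)$.

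Next I would factor $j_{\RR}$. Since $\ker q=\ker j_{\RR}$, the universal property of the quotient yields a unique contractive linear map $\overline{j_{\RR}}:F_{\RR,\red}(\G,\LL)\to \mathfrak{C}_0(\G,\LL)$ with $\overline{j_{\RR}}\circ q=j_{\RR}$, and it is injective precisely because we have divided out exactly the kernel. On $\mathfrak{C}_c(\G,\LL)$ it satisfies $\overline{j_{\RR}}(i(f))=j_{\RR}(i_{\RR}(f))=f$, so $\overline{j_{\RR}}$ extends the inclusion $\mathfrak{C}_c(\G,\LL)\subseteq \mathfrak{C}_0(\G,\LL)$; being continuous on a dense subspace it is the unique such extension, i.e.\ the $j$-map for $F_{\RR,\red}(\G,\LL)$.

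The one point needing a short argument is that $C_0(X)$ still embeds isometrically, which I would obtain by a sandwich estimate: for $a\in C_0(X)$ contractivity of $q$ gives $\|q(a)\|\leq \|a\|_{\RR}=\|a\|_{\infty}$, while contractivity of $\overline{j_{\RR}}$ together with $\overline{j_{\RR}}(q(a))=j_{\RR}(a)=a$ gives $\|a\|_{\infty}=\|\overline{j_{\RR}}(q(a))\|_{\infty}\leq \|q(a)\|$, whence $\|q(a)\|=\|a\|_{\infty}$ and $i$ is isometric on $C_c(X)$. With this, $F_{\RR,\red}(\G,\LL)$ meets condition~\ref{enu:expectations_and_j_map3} of Proposition~\ref{prop:expectations_and_j_map}, so it is a groupoid Banach algebra whose $j$-map is the injective contraction $\overline{j_{\RR}}$; by Definition~\ref{defn:reduced_algebras} it is therefore reduced, and the whole construction is canonical, which accounts for the word \emph{naturally}. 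I do not expect a serious obstacle: the only genuine checks are the injectivity of $\overline{j_{\RR}}$ and the isometry on $C_0(X)$, both immediate from quotienting by the exact kernel and from the sandwich estimate.
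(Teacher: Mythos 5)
Your proposal is correct and follows essentially the same route as the paper: both factor $j_{\RR}$ through the quotient to obtain an injective contraction, use $\ker j_{\RR}\cap\mathfrak{C}_c(\G,\LL)=\{0\}$ (from Proposition~\ref{prop:expectations_and_j_map}) to view the quotient as a completion of $\mathfrak{C}_c(\G,\LL)$, and then verify that $C_0(X)$ still embeds isometrically. The only divergence is in that last step, where the paper notes the quotient map is injective and contractive on $C_0(X)$ and invokes minimality of the $C^*$-norm, while you give a direct sandwich estimate using contractivity of the factored map together with the fact that it restricts to the identity on $C_0(X)$; both arguments are valid and of comparable length.
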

\begin{proof} 
The contraction $j_{\RR} :\FR(\G , \LL) \to \mathfrak{C}_0(\G,\LL)$ factors to an injective contraction $j_{\RR}^{\red} :F_{\RR,\red}(\G , \LL) \to \mathfrak{C}_0(\G,\LL)$.
Since $\ker j_{\RR}\cap \mathfrak{C}_c(\G , \L) = \{ 0 \}$ we may view $\FR(\G , \LL)$ as a completion of $\mathfrak{C}_c(\G , \L)$. 
Since the quotient map $\FR(\G , \LL)\to F_{\RR,\red}(\G , \LL)$ is injective and contractive on $C_0(X)$, it is isometric on $C_0(X)$, by minimality of the $C^*$-norm.
\end{proof}

\begin{rem} 
If $\FR(\G , \LL)$ is a \emph{groupoid Banach $*$-algebra}, meaning that the involution on $\mathfrak{C}_c(\G,\L)$ is isometric in $\|\cdot\|_{\RR}$, then the map $j_{\RR}$ is $*$-preserving and both the ideal $\ker j_{\RR}$ and the quotient reduced algebra $F_{\RR,\red}(\G , \LL)=\FR(\G , \LL) / \ker j_{\RR}$ are Banach $*$-algebras. 
\end{rem}

%For a groupoid Banach algebra $\Falg{\G , \LL}{S}{\RR}{}$ the above result means it is reasonable to write $\Falg{\G , \LL}{S}{\RR}{\red}$ for the reduced algebra $\Falg{\G , \LL}{S}{}{} / \ker j_\RR$. 
%If $\Falg{\G , \LL}{S}{\RR}{}$ is already reduced then this just indicates that the algebra is reduced. %
%We will see in Proposition~\ref{prop:regular_disintegrated} below that this is consistent with standard notation in the $L^p$ case used in Example~\ref{ex:reduced_L_p_groupoid_algebra}. 

\begin{cor}\label{cor:kernel_of_reps_with_generalized_expectation} 
Let  $\FR(\G , \LL)$ be a groupoid Banach algebra and let  $\psi :\FR(\G , \LL)\to B$  be a representation such that $\|f|_X\|_{\infty} \leq \|\psi(f)\|$ for $f\in  \mathfrak{C}_c(\G,\LL)$. Then $\ker \psi \subseteq \ker j_{\RR}$ and  so $\psi$ is injective on $\mathfrak{C}_c(\G,\LL)$ and if   $\FR(\G , \LL)$ is reduced then $\psi$ is injective on $\FR(\G , \LL)$. % and $\psi$ is isometric on each space $C_0(U , \L)$ for $U\in \Bis(\G)$. 
\end{cor}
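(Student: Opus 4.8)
The plan is to reduce all three assertions to a single norm domination on the dense subalgebra, namely that the hypothesis $\|f|_X\|_\infty \le \|\psi(f)\|$ on the diagonal automatically upgrades to the full estimate $\|f\|_\infty \le \|\psi(f)\|$ for every $f\in\mathfrak{C}_c(\G,\LL)$. The mechanism is exactly the bootstrapping argument proving \ref{enu:expectations_and_j_map0}$\Rightarrow$\ref{enu:expectations_and_j_map3} in Proposition~\ref{prop:expectations_and_j_map}, but run with the submultiplicative seminorm $f\mapsto\|\psi(f)\|$ in place of $\|\cdot\|_{\RR}$. This seminorm qualifies: since $\psi$ is a contractive homomorphism it is submultiplicative, and $\|\psi(b)\|\le\|b\|_{\RR}=\|b\|_\infty$ for $b\in C_0(U,\LL)$ with $U\in S$ (using that the $S$-grading is isometric and, without loss of generality, $S\subseteq S(\LL)$ is closed under the involution $U\mapsto U^*$).

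Concretely, I would first fix $U\in S$ and $f\in\mathfrak{C}_c(r(U)\G,\LL)$, take the norm-one pinching section $b\in C_c(U^*,\LL)$ furnished by Lemma~\ref{lem:approx_conditional_exp_twisted} (applicable since $U\in S\subseteq S(\LL)$), and estimate
\[
\|f|_U\|_\infty = \|(b*f)|_X\|_\infty \le \|\psi(b*f)\| = \|\psi(b)\psi(f)\| \le \|\psi(b)\|\,\|\psi(f)\| \le \|\psi(f)\|,
\]
where the first inequality is the standing hypothesis applied to $b*f\in\mathfrak{C}_c(\G,\LL)$ and the last uses $\|\psi(b)\|\le\|b\|_\infty=1$ (as $U^*\in S$). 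Then, for an arbitrary $f\in\mathfrak{C}_c(\G,\LL)$ and $\gamma\in\G$, I would choose $U\in S$ with $\gamma\in U$ and a norm-one $a\in C_c(r(U))$ with $a(r(\gamma))=1$, so that $af\in\mathfrak{C}_c(r(U)\G,\LL)$ and $|f(\gamma)|=|(af)(\gamma)|\le\|(af)|_U\|_\infty\le\|\psi(af)\|\le\|\psi(f)\|$. Taking the supremum over $\gamma$ yields $\|f\|_\infty\le\|\psi(f)\|$ for all $f\in\mathfrak{C}_c(\G,\LL)$, which already gives injectivity of $\psi$ on $\mathfrak{C}_c(\G,\LL)$.

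For the inclusion $\ker\psi\subseteq\ker j_{\RR}$, I would take $x\in\ker\psi$ and approximate it by $f_n\in\mathfrak{C}_c(\G,\LL)$ with $\|f_n-x\|_{\RR}\to0$. Continuity of $\psi$ gives $\|\psi(f_n)\|\to\|\psi(x)\|=0$, whence $\|f_n\|_\infty\le\|\psi(f_n)\|\to0$ by the estimate above. Since $j_{\RR}$ is contractive and restricts to the inclusion on $\mathfrak{C}_c(\G,\LL)$, we have $\|j_{\RR}(f_n)-j_{\RR}(x)\|_\infty\to 0$ with $j_{\RR}(f_n)=f_n$; comparing the two limits forces $j_{\RR}(x)=0$, i.e.\ $x\in\ker j_{\RR}$. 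Finally, if $\FR(\G,\LL)$ is reduced then $\ker j_{\RR}=\{0\}$ by Definition~\ref{defn:reduced_algebras}, so $\ker\psi=\{0\}$ and $\psi$ is injective on all of $\FR(\G,\LL)$.

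The only genuine obstacle I anticipate is the upgrade step of the second paragraph: one must verify that the pinching section $b$ can be chosen with $\|\psi(b)\|\le 1$ and that $b*f$ stays in $\mathfrak{C}_c(\G,\LL)$ so that the hypothesis applies to it. Both are secured precisely by the conventions $S\subseteq S(\LL)$, closedness of $S$ under subsets and involution, and the isometric grading; once these are in place the argument is identical to that of Proposition~\ref{prop:expectations_and_j_map}, and everything afterwards is a routine limiting argument.
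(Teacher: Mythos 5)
Your proof is correct and is essentially the paper's argument: the paper applies Proposition~\ref{prop:expectations_and_j_map} to $F^{\psi}(\G,\LL):=\overline{\psi(\FR(\G,\LL))}$ (which is a groupoid Banach algebra precisely because of the sup-norm estimate you derive) to obtain a $j$-map $j_{\psi}$ with $j_{\psi}\circ\psi=j_{\RR}$, whence $\ker\psi\subseteq\ker j_{\RR}$. Your inlining of the pinching bootstrap for the seminorm $f\mapsto\|\psi(f)\|$ and your concluding limit argument are just an unpacked version of that same factorization, relying on the same conventions ($S\subseteq S(\LL)$ wide, $\|\cdot\|_{\RR}\leq\|\cdot\|_{\infty}$ on $C_c(U,\LL)$ for $U\in S$) that the paper's proof of Proposition~\ref{prop:expectations_and_j_map} also uses.
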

\begin{proof}
By the assumed inequality we may treat $F^{\psi}(\G , \LL) := \overline{\psi(\FR(\G , \LL))}$ as a groupoid Banach algebra. 
Denoting by $j_{\psi}$ the corresponding map from Proposition~\ref{prop:expectations_and_j_map}\ref{enu:expectations_and_j_map3} we get $j_{\psi} \circ \psi = j_{\RR}$, which implies the assertion. 
\end{proof}

The following lemma generalizes \cite[Lemma 4.10]{BK}. 
It allows us to produce (reduced) groupoid Banach algebras and Banach-$*$ algebras from other (reduced) groupoid Banach algebras.

\begin{lem}\label{lem:reduced_from_other_reduced}
Let $\|\cdot \|_{\RR}$ and $\|\cdot\|_{\RR_{i}}$, $i\in I$, be norms on $\mathfrak{C}_c(\G , \L)$ that define groupoid Banach algebras $\FR(\G , \LL)$ and $F_{\RR_i}(\G , \LL)$, $i\in I$, for $(\G , \L)$. % that are reduced (resp. admitting FD).
Then the formulas 
\begin{equation}\label{eq:R_norms_from_other_norms}
    \| f \|_{\RR^*} := \|f^*\|_{\RR}, \qquad \|f\|_{\{ \RR_i\}_{i}} := \sup_{i\in I}\|f\|_{\RR_i}, \qquad f\in \mathfrak{C}_c(\G,\LL),
\end{equation}
 define norms that yield groupoid Banach algebras $F_{\RR^*}(\G , \LL)$, $F_{ \{ \RR_i \}_{i}}(\G , \LL)$ for $(\G , \L)$, provided $\|\cdot\|_{\{ \RR_i\}_{i}}$ is finite.  
In particular, the norm $\|f\|_{\RR,*} := \max \{ \|f\|_{\RR} , \|f^*\|_{\RR} \}$ defines a groupoid algebra $F_{\RR , *}(\G , \LL)$ which is a Banach $*$-algebra.
Moreover, $F_{\RR^*}(\G , \LL)$ and $F_{\RR,*}(\G , \LL)$ are reduced when $\FR(\G , \LL)$ is; and $F_{ \{ \RR_i \}_{i}}(\G , \LL)$ is reduced if and only if all $F_{\RR_i}(\G , \LL)$, $i\in I$, are reduced.
\end{lem}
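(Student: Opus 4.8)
The plan is to verify, for each of the norms $\|\cdot\|_{\RR^*}$ and $\|\cdot\|_{\{\RR_i\}_i}$, the defining features of a groupoid Banach algebra isolated in Proposition~\ref{prop:expectations_and_j_map}: submultiplicativity, the normalisation $\|a\|_\infty=\|a\|$ for $a\in C_c(X)$, and the inequality $\|f|_X\|_\infty\le\|f\|$ for $f\in\mathfrak{C}_c(\G,\LL)$. By the equivalence \ref{enu:expectations_and_j_map0}$\Leftrightarrow$\ref{enu:expectations_and_j_map1} of that proposition these properties already yield a groupoid Banach algebra, and its proof automatically upgrades $\|f|_X\|_\infty\le\|f\|$ to $\|f\|_\infty\le\|f\|$. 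For $\|\cdot\|_{\RR^*}$ the cleanest route is to note that $f\mapsto f^*$ is an isometry of $(\mathfrak{C}_c(\G,\LL),\|\cdot\|_{\RR^*})$ onto $(\mathfrak{C}_c(\G,\LL),\|\cdot\|_{\RR})$ which reverses products. Submultiplicativity of $\|\cdot\|_{\RR^*}$ is then immediate from $(fg)^*=g^*f^*$, while the normalisation on $C_c(X)$ and the inequality $\|f|_X\|_\infty\le\|f\|_{\RR^*}$ transfer from the corresponding facts for $\|\cdot\|_{\RR}$ using $(f^*)|_X=(f|_X)^*$ together with isometry of the fibrewise involution on $\LL|_X=X\times\F$. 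Thus $F_{\RR^*}(\G,\LL)$ is a groupoid Banach algebra.

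For the supremum norm the three properties are even more direct: submultiplicativity follows from $\|fg\|_{\RR_i}\le\|f\|_{\RR_i}\|g\|_{\RR_i}\le\|f\|_{\{\RR_i\}_i}\|g\|_{\{\RR_i\}_i}$ upon taking the supremum over $i$, and $\|a\|_{\{\RR_i\}_i}=\|a\|_\infty$ for $a\in C_c(X)$ and $\|f|_X\|_\infty\le\|f\|_{\{\RR_i\}_i}$ hold termwise; finiteness is assumed. Hence $F_{\{\RR_i\}_i}(\G,\LL)$ is a groupoid Banach algebra. The $*$-algebra statement is then the special case $\{\RR_i\}_i=\{\RR,\RR^*\}$: by construction $\|f\|_{\RR,*}=\max\{\|f\|_{\RR},\|f\|_{\RR^*}\}=\|f\|_{\{\RR,\RR^*\}}$, and this norm is involution-isometric since $\|f^*\|_{\RR,*}=\max\{\|f^*\|_{\RR},\|f\|_{\RR}\}=\|f\|_{\RR,*}$, so $F_{\RR,*}(\G,\LL)$ is a Banach $*$-algebra.

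It remains to track reducedness, i.e.\ injectivity of the $j$-maps. For $\|\cdot\|_{\RR^*}$ the isometric anti-isomorphism $\iota\colon F_{\RR^*}(\G,\LL)\to\FR(\G,\LL)$ extending $f\mapsto f^*$ satisfies $j_{\RR}\circ\iota=\sigma\circ j_{\RR^*}$, where $\sigma$ is the $\|\cdot\|_\infty$-isometric involution on $\mathfrak{C}_0(\G,\LL)$; indeed both sides equal $f\mapsto f^*$ on the dense subalgebra $\mathfrak{C}_c(\G,\LL)$, so they agree by continuity. As $\iota$ and $\sigma$ are bijective, $j_{\RR^*}$ is injective exactly when $j_{\RR}$ is, giving that $F_{\RR^*}(\G,\LL)$ is reduced whenever $\FR(\G,\LL)$ is (and the same for $F_{\RR,*}(\G,\LL)=F_{\{\RR,\RR^*\}}(\G,\LL)$ via the family statement). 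For the family, let $\pi_i\colon F_{\{\RR_i\}_i}(\G,\LL)\to F_{\RR_i}(\G,\LL)$ be the contraction extending the identity on $\mathfrak{C}_c(\G,\LL)$ (available since $\|\cdot\|_{\RR_i}\le\|\cdot\|_{\{\RR_i\}_i}$); comparing the two continuous extensions of $\mathfrak{C}_c(\G,\LL)\subseteq\mathfrak{C}_0(\G,\LL)$ gives $j_{\{\RR_i\}_i}=j_{\RR_i}\circ\pi_i$ for every $i$. The key auxiliary fact I would establish is $\|f\|_{\{\RR_i\}_i}=\sup_i\|\pi_i(f)\|_{\RR_i}$ for all $f$, so that $f\mapsto(\pi_i(f))_i$ embeds $F_{\{\RR_i\}_i}(\G,\LL)$ isometrically into the $\ell^\infty$-direct sum of the $F_{\RR_i}(\G,\LL)$: the inequality ``$\le$'' is contractivity of each $\pi_i$, and ``$\ge$'' comes from approximating $f$ in $\|\cdot\|_{\{\RR_i\}_i}$ by elements of $\mathfrak{C}_c(\G,\LL)$, picking for a near-optimal approximant an index realising its supremum, and applying the triangle inequality. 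Granting this, if every $F_{\RR_i}(\G,\LL)$ is reduced then $j_{\{\RR_i\}_i}(f)=0$ forces $j_{\RR_i}(\pi_i(f))=0$, hence $\pi_i(f)=0$ for all $i$, hence $f=0$; this settles the ``if'' direction.

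The step I expect to be the main obstacle is the converse, that reducedness of $F_{\{\RR_i\}_i}(\G,\LL)$ forces each $F_{\RR_i}(\G,\LL)$ to be reduced. The natural argument uses $\ker j_{\{\RR_i\}_i}=\pi_i^{-1}(\ker j_{\RR_i})$, valid for each fixed $i$ because $j_{\{\RR_i\}_i}=j_{\RR_i}\circ\pi_i$. Reducedness of the left-hand side makes it $\{0\}$, which yields injectivity of $\pi_i$ and $\ker j_{\RR_i}\cap\operatorname{Im}\pi_i=\{0\}$, but \emph{not} $\ker j_{\RR_i}=\{0\}$ unless $\operatorname{Im}\pi_i$ is large enough to meet $\ker j_{\RR_i}$, i.e.\ unless $\pi_i$ is surjective. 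Since $\pi_i$ in general has image only dense (not closed) in $F_{\RR_i}(\G,\LL)$, this is precisely where the difficulty sits; I would expect closing the gap to require either surjectivity of the $\pi_i$ or some compatibility hypothesis on the family $\{\RR_i\}_i$ (absent such input the converse looks delicate, even doubtful), and this is where I would concentrate the technical effort.
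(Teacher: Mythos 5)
Everything you prove is correct, and it is the same argument the paper gives: for $\RR^*$ the paper likewise extends the involution to an antimultiplicative isometry of $F_{\RR^*}(\G,\LL)$ onto $\FR(\G,\LL)$ and records the identity $j_{\RR^*}(f)=j_{\RR}(f^*)^*$; for the family it likewise uses the isometric embedding $\pi$ of $F_{\{\RR_i\}_i}(\G,\LL)$ into $\prod_{i\in I}F_{\RR_i}(\G,\LL)$ and the factorisation $j_{\{\RR_i\}_i}=j_{\RR_i}\circ\pi_i$ to obtain the ``if'' direction of the last claim; and $F_{\RR,*}=F_{\{\RR,\RR^*\}}$ is handled exactly as you do.

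The obstacle you single out is genuine, but it is not a defect of your proof relative to the paper's: the paper's proof also establishes \emph{only} the ``if'' direction (its final sentence reads ``$F_{\{\RR_i\}_i}(\G,\LL)$ is reduced if all $F_{\RR_i}(\G,\LL)$, $i\in I$, are reduced''), even though the statement asserts an equivalence. In fact your suspicion that the converse is doubtful is vindicated: it is false. Take $\G=G$ a non-amenable discrete group (say over $\F=\C$), viewed as a groupoid with a single unit and trivial twist, and let $I=\{1,2\}$ with $\|\cdot\|_{\RR_1}:=\|\cdot\|_{L^2}$ (the maximal $C^*$-norm) and $\|\cdot\|_{\RR_2}:=\|\cdot\|_{*d}$ (here the $\ell^1$-norm). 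Both define groupoid Banach algebras, namely $F^2(\G)=C^*(G)$ and $F_{*d}(\G)=F^1(\G)=\ell^1(G)$. Since $\|\cdot\|_{*d}=\|\cdot\|_{*r}$ on $\mathfrak{C}_c(\G)$ in this case, \eqref{eq:L_p_norm_estimates} gives $\|\cdot\|_{L^2}\leq\|\cdot\|_{*d}$, so $\|\cdot\|_{\{\RR_i\}_i}=\|\cdot\|_{*d}$ and $F_{\{\RR_i\}_i}(\G)=F^1(\G)=F^1_{\red}(\G)$ is reduced by \eqref{eq:L_1_L_infty_are_reduced}. But $F_{\RR_1}(\G)=F^2(\G)$ is \emph{not} reduced: by Proposition~\ref{prop:regular_disintegrated} its $j$-map has kernel $\ker\Lambda_2$, where $\Lambda_2:F^2(\G)\to F^2_{\red}(\G)$ is the regular representation, and this kernel is non-zero precisely because $G$ is not amenable. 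So the ``only if'' implication fails, and this is exactly the mechanism you predicted: the canonical contraction $\pi_1$ onto a dense subalgebra of $F^2(\G)$ is injective but not surjective, and $\ker j_{\RR_1}$ lives outside its image. The correct statement — the one both you and the paper actually prove — is the one-sided implication that $F_{\{\RR_i\}_i}(\G,\LL)$ is reduced whenever all $F_{\RR_i}(\G,\LL)$ are; your write-up needs no repair beyond recording this counterexample instead of searching for the missing implication.
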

\begin{proof}
The involution on $\mathfrak{C}_c(\G , \L)$ extends to an antimultiplicative antilinear isometry from $F_{\RR^*}(\G , \LL)$ onto $\FR(\G , \LL)$. 
In particular, $\|f|_{X}\|_{\infty} = \| f^*|_{X} \|_{\infty} \leq \|f^*\|_{\RR} = \|f\|_{\RR^*}$ for $f\in \mathfrak{C}_c(\G , \L)$. 
Hence $F_{\RR^*}(\G , \LL)$ is a groupoid Banach algebra. 
Moreover, $j_{\RR^*}(f) = j_{\RR}(f^*)^*$ for $f\in \Falg{\G , \LL}{}{\RR^*}{}$.
Thus $j_{\RR^*}$ is injective if and only if $j_{\RR}$ is, equivalently $\Falg{\G , \LL}{}{\RR^*}{}$ is reduced if and only $\Falg{\G , \LL}{}{\RR}{}$ is. 

Assuming it is finite,  $\|\cdot \|_{\{ \RR_i\}_{i}}$ is clearly a submultiplicative norm on $\mathfrak{C}_c(\G , \L)$ which coincides with $\|\cdot\|_{\infty}$ on $C_c(X)$ and $\|f\|_{\infty}\leq \|f\|_{\{ \RR_i\}_{i}}$ for  $f\in \mathfrak{C}_c(\G , \L)$, as this holds for every $\|\cdot\|_{\RR_i}$, $i\in I$.
%If there is at least one $i_0\in I$ with $\|f|_X\|_{\infty}\leq \|f\|_{\RR_{i_0}}$, then all the more $\|f|_X\|_{\infty}\leq  \|f\|_{\{ \RR_i\}_{i}}$.
Thus $\Falg{\G , \LL}{}{\{ \RR_i \}_i }{}$ is a groupoid Banach algebra, and we have a canonical contraction $j_{\{ \RR_i\}_i} : \Falg{\G , \LL}{}{ \{ \RR_i \}_i }{} \to \mathfrak{C}_0(\G , \L)$. 
Note that $\pi(a) := \prod_{i\in I} a$, for $a\in \mathfrak{C}_c(\G , \L)$, determines an isometric embedding of $\Falg{\G , \LL}{}{\{ \RR_i \}_i}{}$ into the direct product $\prod_{i\in I} \Falg{\G , \LL}{}{\RR_i}{}$. 
Moreover, $\prod_{i\in I} j_{\{ \RR_i \}_i }= \prod_{i\in I}j_{\RR_i}\circ \pi$, as maps from $\Falg{\G , \LL}{}{\{ \RR_i \}_i}{}$ to $\prod_{i\in I} \mathfrak{C}_0(\G,\L)$. 
Hence $j_{\{ \RR_i\}_i}$ is injective if all $j_{\RR_i}$, $i\in I$, are injective. 
That is, $\Falg{\G , \LL}{}{\{ \RR_i \}_i}{}$ is reduced if all $\Falg{\G , \LL}{}{\RR_i}{}$, $i\in I$, are reduced. 
\end{proof}

\begin{rem} 
Let $\FR(\G , \L)$ be a groupoid Banach algebra which in addition is $S$-graded  by some unital inverse subsemigroup $S \subseteq \Bis(\G)$ which covers $\G$,
see Definition~\ref{defn:S_graded_algebras}. 
By Proposition~\ref{prop:expectations_and_j_map}\ref{enu:expectations_and_j_map2},
 the spaces $C_0(U,\LL)$, $U\in S$, forming the grading, embed isometrically into 
$\FR(\G , \LL)$. 
Also then the algebras $F_{\RR^*}(\G , \LL)$ and $F_{\RR,*}(\G , \LL)$ are $S$-graded, and if $F_{\RR_i}(\G , \LL)$, $i\in I$, are all $S$-graded groupoid Banach algebras, then also $F_{\RR^*}(\G , \LL)$ is $S$-graded. 
\end{rem}

Inspired by \cite[Definition 4.12]{BK}, we now generalize $L^p$-groupoid operator algebras to $L^P$-algebras where $P\subseteq [1,\infty]$ is a set of H\"older exponents.
%According to \cite[Remark 4.12 and Theorem 5.5]{BKM} the following definition does not depend on the choice of inverse semigroup $S \subseteq \Bis(\G)$, so we drop this $S$ from the notation. 

\begin{defn}\label{def:F^P_crossed_products}
For any non-empty $P\subseteq [1,\infty]$ we put
\[
    F^P(\G,\LL) := \overline{\mathfrak{C}_c(\G,\LL)}^{\|\cdot\|_{L^P}}\quad \textrm{and} \qquad F^P_{\red}(\G,\LL) := \overline{\mathfrak{C}_c(\G,\LL)}^{\|\cdot\|_{L^P,\red}}, 
\]
where $\| f\|_{L^P} := \sup_{p\in P}\|f\|_{L^p}$ and  $\|f\|_{L^P,\red} := \sup_{p\in P}\|f\|_{L^p,\red}$, $f\in \mathfrak{C}_c(\G,\LL)$. 
We denote by $\Lambda_{P} : F^P(\G,\LL) \to F^P_{\red}(\G,\LL)$ the canonical representation (which is the identity on $\mathfrak{C}_c(\G , \L)$). 
\end{defn}

\begin{prop}\label{prop:regular_disintegrated}
For any non-empty $P\subseteq [1,\infty]$, $F^P_{\red}(\G,\LL)$ is a reduced groupoid Banach algebra which is $\Bis(\G)$-graded, and the corresponding $j$-map $j_P^{\red} : F^P_{\red}(\G,\LL) \to \mathfrak{C}_0(\G , \L)$ turns the product in $F^P_{\red}(\G,\LL)$ into the convolution: 
\begin{equation}\label{equ:j_map1} 
    j_P^{\red} ( f\cdot g)(\gamma) = \sum_{r(\eta) = r(\gamma)} j_P^{\red} (f)(\eta)\cdot j_P^{\red}(g)(\eta^{-1} \gamma), \qquad f,g\in F^P_{\red}(\G,\LL),
\end{equation}
and it preserves the involution in the sense that $j_P^{\red} (f)^* = j_{P^*}^{\red} (f^*)$, $f\in F^P_{\red}(\G,\LL)$, where $P^* := \{ q : 1/p + 1/q = 1, p\in P \}$. 
In particular, $F^P(\G,\LL)$ is a groupoid Banach algebra with the $j$-map $j_P := j_{P}^{\red}\circ \Lambda_{P}$ and so $\ker j_{P} = \ker \Lambda_{P}$. 
\end{prop}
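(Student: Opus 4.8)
The plan is to establish Proposition~\ref{prop:regular_disintegrated} by first showing that $F^P_{\red}(\G,\LL)$ is a groupoid Banach algebra in the sense of Definition~\ref{de:GroupoidBanachAlgebrasExpectations}, then identifying its $j$-map concretely, and finally verifying that this $j$-map is injective (so the algebra is reduced) and satisfies the stated convolution and involution identities. The key reduction is to the case of a single exponent $p$, since $\|\cdot\|_{L^P,\red} = \sup_{p\in P}\|\cdot\|_{L^p,\red}$ and each $\|\cdot\|_{L^p,\red}$ already defines a reduced groupoid Banach algebra; Lemma~\ref{lem:reduced_from_other_reduced} then assembles these into the $P$-version.

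First I would treat the single exponent. For fixed $p\in[1,\infty]$, the regular representation $\Lambda_p : F^p(\G,\LL)\to B(\ell^p(\G,\LL))$ from Example~\ref{ex:reduced_L_p_groupoid_algebra} is injective on $\mathfrak{C}_c(\G,\LL)$, and the norm estimate \eqref{eq:L_p_norm_estimates} shows $\|f\|_{L^p}\leq \|f\|_I$, which in particular controls $\|f|_X\|_{\infty}$. The natural candidate for the $j$-map is evaluation along the regular representation: for $\xi\in\ell^p(\G,\LL)$ supported near a unit one reads off $j_p^{\red}(f)(\gamma)$ from the matrix coefficients of $\Lambda_p(f)$. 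Concretely, restricting a section $\xi$ to the fiber over a unit $x=d(\gamma)$ and applying $\Lambda_p(f)$, formula \eqref{eq:regular_for_p} recovers $f(\gamma)$, and this passes to the completion as a contraction into $\mathfrak{C}_0(\G,\LL)$. I would verify the hypothesis $\|f|_X\|_{\infty}\leq\|\Lambda_p(f)\|$ of Corollary~\ref{cor:kernel_of_reps_with_generalized_expectation} directly from \eqref{eq:regular_for_p}, which then yields both that $F^p_{\red}(\G,\LL)$ is a groupoid Banach algebra and that its $j$-map is injective, hence it is reduced. That $F^p_{\red}(\G,\LL)$ is $\Bis(\G)$-graded follows from \cite[Theorem 5.10]{BKM} as already noted for $F^p(\G,\LL)$.

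Next I would pass to general $P$. Applying Lemma~\ref{lem:reduced_from_other_reduced} to the family $\{\RR_i\} = \{\Lambda_p\}_{p\in P}$ shows that $F^P_{\red}(\G,\LL) = F_{\{\Lambda_p\}_{p\in P}}(\G,\LL)$ is a groupoid Banach algebra, and that it is reduced since each $F^p_{\red}(\G,\LL)$ is. The $j$-map $j_P^{\red}$ is compatible with each $j_p^{\red}$ under the isometric embedding $F^P_{\red}(\G,\LL)\hookrightarrow\prod_{p\in P}F^p_{\red}(\G,\LL)$ constructed in that lemma, so $j_P^{\red}$ is just the common value of the $j_p^{\red}$ on the image. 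The convolution identity \eqref{equ:j_map1} holds on the dense subalgebra $\mathfrak{C}_c(\G,\LL)$ by definition of the product, and extends to all of $F^P_{\red}(\G,\LL)$ by the bimodule continuity established in Lemma~\ref{lem:bimodule_borel} together with continuity of $j_P^{\red}$; the involution identity $j_P^{\red}(f)^* = j_{P^*}^{\red}(f^*)$ follows from the analogous statement for $j_{\RR^*}$ in Lemma~\ref{lem:reduced_from_other_reduced} combined with the isometric anti-isomorphism $F^p(\G,\LL)\anti F^q(\G,\LL)$ recorded in the full $L^p$ example. Finally, $F^P(\G,\LL)$ is a groupoid Banach algebra with $j$-map $j_P := j_P^{\red}\circ\Lambda_P$ because composing the injective $j_P^{\red}$ with $\Lambda_P$ gives a contraction extending the inclusion, whence $\ker j_P = \ker\Lambda_P$.

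I expect the main obstacle to be the careful verification of the convolution identity \eqref{equ:j_map1} on the completion in the non-Hausdorff case, where $\mathfrak{C}_0(\G,\LL)$ is genuinely larger than $C_0(\G,\LL)$ and the convolution is only a partially defined, asymmetric operation on $\B(\G,\LL)$. The subtlety is that $j_P^{\red}(f)$ and $j_P^{\red}(g)$ are limits in $\|\cdot\|_{\infty}$ of compactly supported sections, so one must confirm that the pointwise sum defining the convolution in \eqref{equ:j_map1} converges and respects these limits; here I would lean on Lemma~\ref{lem:bimodule_borel}, approximating one factor by elements of $\mathfrak{C}_c(\G,\LL)$ and using the one-sided module continuity to pass to the limit in each variable separately. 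The single-exponent injectivity of $j_p^{\red}$, though conceptually the heart of the matter, is essentially immediate from \cite[Proposition 5.1]{BKM} and Corollary~\ref{cor:kernel_of_reps_with_generalized_expectation}, so the real work is bookkeeping the passage to the $\|\cdot\|_{\infty}$-limits rather than any deep estimate.
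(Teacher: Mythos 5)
Your skeleton matches the paper's proof — reduce to a singleton $P=\{p\}$ via Lemma~\ref{lem:reduced_from_other_reduced}, realise $F^p_{\red}(\G,\LL)$ inside $B(\ell^p(\G,\LL))$, and read the $j$-map off matrix coefficients of $\Lambda_p$ — but the two steps you dismiss as routine are exactly where the content lies, and the tools you cite cannot deliver them. First, injectivity of $j_p^{\red}$ on the completion (i.e.\ reducedness, the central claim) does not follow from \cite[Proposition 5.1]{BKM} together with Corollary~\ref{cor:kernel_of_reps_with_generalized_expectation}. Those results only give injectivity on the dense subalgebra $\mathfrak{C}_c(\G,\LL)$, and $\ker j_{\RR}\cap \mathfrak{C}_c(\G,\LL)=\{0\}$ holds for \emph{every} groupoid Banach algebra by Proposition~\ref{prop:expectations_and_j_map}; it is precisely not the statement $\ker j_{\RR}=\{0\}$ (injectivity on a dense subalgebra never implies injectivity on a completion — that is the whole full-versus-reduced distinction). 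Note also that Corollary~\ref{cor:kernel_of_reps_with_generalized_expectation} applied to the defining isometric representation of $F^p_{\red}(\G,\LL)$ yields the vacuous inclusion $\{0\}\subseteq \ker j_p^{\red}$, while applied to the full algebra it yields $\ker\Lambda_p\subseteq\ker j_p$; neither is what you need. The paper closes this gap with the quantitative identity $\| f 1_{\gamma} \|_p = \bigl(\sum_{d(\eta) = d(\gamma)} |j_p^{\red}(f)(\eta \gamma^{-1})|^p\bigr)^{1/p}$, valid for all $f$ in the completion by continuity: if $j_p^{\red}(f)=0$ then $f1_\gamma=0$ for every $\gamma$, hence $f=0$ because $\{1_\gamma\}_{\gamma\in\G}$ is a Schauder basis of $\ell^p(\G,\LL)$ (with $p=\infty$ handled via the anti-isometry with $p=1$). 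Some such recovery of the operator from its matrix coefficients is unavoidable, and your write-up contains none.

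Second, your plan for \eqref{equ:j_map1} — approximate one factor at a time and invoke Lemma~\ref{lem:bimodule_borel} — only works in the first variable. That lemma requires the \emph{fixed} convolution factor to lie in $\mathfrak{C}_c(\G,\LL)$; it gives $j_p^{\red}(fg)=j_p^{\red}(f)*g$ for $g\in\mathfrak{C}_c(\G,\LL)$ (this is just the bimodule property already in Proposition~\ref{prop:expectations_and_j_map}). To then let $g_j\to g$ with $f$ a general element of the completion, you would need sup-norm continuity of $h\mapsto j_p^{\red}(f)*h$ where $j_p^{\red}(f)$ lies merely in $\mathfrak{C}_0(\G,\LL)$; this fails, and the limiting sum need not even converge pointwise (already for $\G=\Z$: the convolution of two $c_0$ functions). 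The paper replaces this with H\"older's inequality on matrix coefficients,
\[
\sum_{r(\eta)=r(\gamma)} \bigl|j_p^{\red}(f)(\eta)\cdot j_p^{\red}(g)(\eta^{-1}\gamma)\bigr| \;\leq\; \|f^* 1_{r(\gamma)}\|_q\, \|g 1_\gamma\|_p \;\leq\; \|f^*\|_{L^q,\red}\,\|g\|_{L^p,\red},
\]
which uses \eqref{eq:j_map} and the anti-isometry $F^p_{\red}(\G,\LL)\anti F^q_{\red}(\G,\LL)$ (note: the anti-isometry must be at the \emph{reduced} level, not the full-algebra one you cite, and the same reduced-level statement is what justifies your involution identity). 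This estimate makes the convolution sum absolutely convergent and jointly controlled by the reduced norms, so one can pass to limits in both variables; without an estimate of this kind your appeal to Lemma~\ref{lem:bimodule_borel} cannot be repaired.
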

\begin{proof} 
By Lemma~\ref{lem:reduced_from_other_reduced}, it suffices to consider the case when $P = \{ p \}$ is a singleton (in particular, $j_{P}^{\red} = \prod_{p\in P}j_{p}^{\red} \circ \pi$, where $\pi : F^P_{\red}(\G,\LL) \to \prod_{p\in P} F^p_{\red}(\G,\LL)$ is given by  $\pi(a) := \prod_{p\in P} a$, for $a\in \mathfrak{C}_c(\G , \L)$). 
Moreover, since involution on $\mathfrak{C}_c(\G , \L)$ gives $\Falg{\G , \LL}{}{\infty}{\red} \stackrel{\text{anti}}{\cong} \Falg{\G , \LL}{}{1}{\red}$ we may assume that $p < \infty$. 
Then the claim, using a different picture of the twist, in essence follows from \cite[Propositions 6.21, 6.24]{Hetland_Ortega}. 
For the sake of completeness and further reference, we give a complete proof based on the argument behind \cite[Proposition 2.8]{BFPR}.
Namely, we identify $F^p_{\red}(\G,\LL)$ with the corresponding subalgebra of $B(\ell^{p}(\G,\LL))$ using the extension of the map $\mathfrak{C}_c(\G,\LL) \to B(\ell^{p}(\G,\LL))$ given by \eqref{eq:regular_for_p}. 
For each $\gamma \in \G$, choose a norm one element $1_{\gamma}\in L_\gamma$ and treat it as a section of $\LL$ which is zero at $\eta \neq \gamma$. 
%When $\gamma=x\in X$, then $L_x=\F$ and we assume that $1_x(x)=1\in\F$.
Then $\{1_\gamma\}_{\gamma\in \G}$ is a Schauder basis for $\ell^{p}(\G,\LL)$ and for any $f\in \mathfrak{C}_0(\G,\LL)$ we have $\| f \|_{\infty} = \sup_{\gamma\in \G} | ( f 1_{d(\gamma)} )(\gamma) | \leq \sup_{\gamma\in \G} \| f 1_{d(\gamma)} \|_p \leq \|f\|_{L^p,\red}$. 
Thus we have a contractive linear map $j_p^{\red} : F^p_{\red}(\G,\LL) \to \mathfrak{C}_0(\G,\LL)$ extending the inclusion $\mathfrak{C}_c(\G , \L)\subseteq \mathfrak{C}_0(\G,\LL)$. 
Clearly, for every $\gamma \in \G$ and any $f\in \mathfrak{C}_c(\G , \L)$,  we have
\begin{equation}\label{eq:j_map}
    |j_p^{\red}(f)(\gamma)| := |\big( f 1_{d(\gamma)} \big) (\gamma)|  
    \qquad\text{ and } \qquad
    \| f 1_{\gamma} \|_p = 
        \left(\sum_{d(\eta) = d(\gamma)} |j_p^{\red}(f)(\eta \gamma^{-1})|^p\right)^{1/p}.
\end{equation}
By continuity, these relations hold for any $f\in F^p_{\red}(\G,\LL)$. 
The second formula in \ref{eq:j_map} implies that $j_p^{\red}$ is injective on $F^p_{\red}(\G,\LL)$ and hence $F^p_{\red}(\G,\LL)$ is a reduced groupoid Banach algebra.

Since $^*$ yields the isometry $F^p_{\red}(\G,\LL) \stackrel{\text{anti}}{\cong} F^q_{\red}(\G , \LL)$, where $1/p + 1/q=1$, we get $j_p^{\red} (f)^* = j_{q}^{\red} (f^*)$, $f\in \Falg{\G , \LL}{}{p}{r}$. 
Now we prove \eqref{equ:j_map1}, with $P=\{p\}$. 
Fix $f,g\in F^p_{\red}(\G,\LL)$ and take nets $(f_i), (g_j)$ in $\mathfrak{C}_c(\G , \L)$ converging to $f,g$ in the norm of $F^p_{\red}(\G,\LL)$. 
Using H\"olders inequality, \eqref{eq:j_map}, and the isometry $F^p_{\red}(\G,\LL) \stackrel{\text{anti}}{\cong} F^q_{\red}(\G , \LL)$, for any $\gamma\in \G$, we get
\[
\begin{split}
     \sum_{r(\eta) = r(\gamma)} \left|j_p^{\red} (f_i - f)(\eta) \cdot j_p^{\red}(g_j)(\eta^{-1} \gamma) \right| 
		%&\leq \sum_{d(\eta) = r(\gamma)} \big| j_p^{\red}( f_i - f ) (\eta^{-1}) \cdot  g_j (\eta \gamma) \big| \\
	   \leq \| ( f_i - f )^* 1_{r(\gamma)} \|_{q} \| g_j 1_{\gamma} \|_p 
	   &\leq \| ( f_i - f )^* \|_{_{L^q,\red}} \| g_j \|_{_{L^p,\red}} \\
            &= \| f_i - f \|_{L^p,\red} \| g_j \|_{L^p,\red}.
\end{split}
\]
Similarly, we get $\sum_{r(\eta) = r(\gamma)}| j_p^{\red}(f) (\eta) \cdot j_p^{\red}( g_j - g ) (\eta^{-1} \gamma) | \leq \| f \|_{L^p,\red} \| g_j - g \|_{L^p,\red}$. 
Altogether, since $j_{p}^{\red}(f_i \cdot g_j)= f_i * g_j$, this implies that
\[
\begin{split}
    \left|j_{p}^{\red}(f_i \cdot g_j)(\gamma) - \sum_{r(\eta) = r(\gamma)} j_p(f)(\eta) \cdot j_p(g)(\eta^{-1} \gamma) \right| \leq \| f_i - f \|_{L^p,\red} \| g_j \|_{L^p,\red} +\| f \|_{L^p,\red} \| g_j - g \|_{L^p,\red} 
    %\\
    %& \left| \sum_{r(\eta) = r(\gamma)} j_p ( f_i ) (\eta) \cdot j_p( g_j )(\eta^{-1} \gamma) - \sum_{r(\eta) = r(\gamma)} j_p(f)(\eta) \cdot j_p(g)(\eta^{-1} \gamma) \right| \\
    %&=  \sum_{r(\eta) = r(\gamma)} \left|j_p ( f_i - f ) (\eta) \cdot j_p(g_j)(\eta^{-1} \gamma)\right| +\sum_{r(\eta) = r(\gamma)} \left|j_p(f)(\eta) \cdot j_p( g_j - g ) (\eta^{-1} \gamma) \right| \to 0.
\end{split}
\]
tends to zero, which proves \eqref{equ:j_map1}, as $j_{p}^{\red}(f_i \cdot g_j)(\gamma)$ tends to $j_{p}^{\red}(f \cdot g)(\gamma)$. 
\end{proof}

\begin{rem}\label{rem:L_P_definition_discussion}
If $P = P^*$, then $F^P(\G,\LL)$ and $F^P_{\red}(\G,\LL)$ are Banach $*$-algebras and $\Lambda_{P}$ is a $*$-homomorphism, cf.\ \cite[Theorem 5.13(7)]{BKM}. 
For $P=\{p,q\}$ with $1/p+1/q=1$, the Banach $*$-algebras $F^P_{\red}(\G,\LL)$ are sometimes called \emph{symmetrized pseudo-function algebras}. 
They  were studied in \cite{Austad_Ortega} and in the group case in \cite{Elkiaer}, \cite{LiYu}, \cite{Phillips19}. 
When $P\subseteq [1,\infty]$ contains $\{1,\infty\}$, then $F^P(\G,\LL) = F^P_{\red}(\G,\LL) = F_{I}(\G,\LL)$, by \eqref{eq:L_p_norm_estimates} and \eqref{eq:L_1_L_infty_are_reduced}. 
In particular, $F_{I}(\G,\LL)$ is a reduced groupoid Banach algebra. 
In fact, we always have $F^P(\G,\LL) = F^P_{\red}(\G,\LL)$ for any $P\subseteq \{1,\infty\}$, and if $\G$ is second countable and amenable, then $F^P(\G,\LL) = F^P_{\red}(\G,\LL)$ for every $P\subseteq [1,\infty]$, see the last part of Example~\ref{ex:reduced_L_p_groupoid_algebra}. 
\end{rem}

%\begin{rem}
%Following \cite[Definition 4.12]{BK} we extend Definition~\ref{def:F^P_crossed_products} by putting 
%\[
%    F^\emptyset(\G , \L):=F(\G , \L),\qquad   F^\emptyset_{\red}(\G , \L):=F(\G , \L)/\ker j   
%\]
%and denoting by $\Lambda_{\emptyset}:=F^\emptyset(\G , \L)\to F^\emptyset_{\red}(\G , \L)$ the quotient map.
%Then Proposition~\ref{prop:regular_disintegrated} holds for any (not necessarily non-empty) $P\subseteq [1,\infty]$.
%\end{rem}

%
%
\section{Essential and exotic groupoid Banach algebras}
\label{sec:Essential_and_exotic}

The reader interested only in Hausdorff groupoids may skip this section. 
When the groupoid $\G$ is non-Hausdorff we need to extend the previous definition of groupoid Banach algebras  to allow constructions that `disregard' the points of discontinuity for quasi-continuous functions in $\mathfrak{C}_0(\G , \L)$.  
In the $C^*$-algebraic context, this quotient was introduced in \cite{Kwa-Meyer} and examples go back (at least) to the work of Nekrashevych~\cite{Nekrashevych}, \cite{Nekrashevych2}. 
The crucial observation is that elements of $\mathfrak{C}_0(\G , \L)$ are continuous modulo \emph{meager sets} (that is, countable unions of nowhere dense sets).

\begin{lem}[{\cite[Lemma 7.13]{Kwa-Meyer}}]\label{lem:comeager}
For every $f\in \mathfrak{C}_0(\G , \L)$, there is a meager set $M\subseteq \G$ such that $f$ is continuous at every point $\gamma \in \G \setminus M$.
\end{lem}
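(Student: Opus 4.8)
The plan is to first handle the dense subalgebra $\mathfrak{C}_c(\G , \L)$ by hand, using that its elements are finite sums of continuous sections supported on open bisections, and then to promote this to all of $\mathfrak{C}_0(\G , \L)$ via a uniform-approximation argument, since $\mathfrak{C}_0(\G , \L)$ is by definition the $\|\cdot\|_{\infty}$-closure of $\mathfrak{C}_c(\G , \L)$.

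First I would show that each $f\in\mathfrak{C}_c(\G , \L)$ is continuous off a nowhere dense set. Writing $f=\sum_{k=1}^n f_k$ with $f_k\in C_c(U_k , \L)$ and $U_k\in\Bis(\G)$, each summand $f_k$, viewed as a section on $\G$ vanishing off $U_k$, is continuous on the open set $U_k$ (where it agrees with a continuous section) and on the open set $\G\setminus\overline{U_k}$ (where it is identically zero). Hence its only possible discontinuities lie in $\partial U_k:=\overline{U_k}\setminus U_k$. Since $U_k$ is open, $\partial U_k$ is closed with empty interior, i.e.\ nowhere dense: any open $W\subseteq\partial U_k$ would be disjoint from $U_k$ yet contained in $\overline{U_k}$, which is impossible. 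Consequently $f$ is continuous at every point outside the nowhere dense set $N_f:=\bigcup_{k=1}^n\partial U_k$.

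Next, for arbitrary $f\in\mathfrak{C}_0(\G , \L)$ I would pick $f_n\in\mathfrak{C}_c(\G , \L)$ with $\|f_n-f\|_{\infty}\to 0$ and set $M:=\bigcup_{n=1}^\infty N_{f_n}$, which is meager as a countable union of nowhere dense sets. It then remains to verify that the uniform limit $f$ is continuous at every $\gamma_0\in\G\setminus M$, where all $f_n$ are continuous. Here I would invoke local triviality of $\L$: choose an open bisection $U\ni\gamma_0$ with a unitary section $c\in C_u(U , \L)$, giving a homeomorphism $\L|_U\cong U\times\F$ that identifies each section $g$ on $U$ with a scalar function $\lambda_g$ satisfying $|\lambda_g(\gamma)|=|g(\gamma)|$. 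Under this identification $\lambda_{f_n}\to\lambda_f$ uniformly near $\gamma_0$ and each $\lambda_{f_n}$ is continuous at $\gamma_0$, so the standard $\varepsilon/3$ estimate yields continuity of $\lambda_f$, hence of $f$, at $\gamma_0$.

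The bookkeeping with open bisections is routine; the two points requiring care are the nowhere-density of $\partial U_k$ (immediate, but it is what forces us from the closed-nowhere-dense into the meager world once we pass to the countable union over $n$) and the transfer of the scalar uniform-limit argument to sections of the line bundle $\L$, which is exactly where local triviality enters. I expect the main (mild) obstacle to be making this last step fully rigorous in the bundle language, namely ensuring that continuity of a section into the total space $\L$ corresponds, in a trivializing chart, to continuity of the associated scalar function, so that the classical preservation of continuity under uniform limits applies verbatim.
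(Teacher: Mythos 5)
Your proof is correct and follows essentially the same route as the paper's: sections supported on a single open bisection are discontinuous only on the nowhere dense boundary of that bisection, finite sums are therefore continuous off a finite union of such sets, and uniform limits push this to a countable union, i.e.\ a meager set. The only difference is one of detail: the paper leaves the final uniform-limit step implicit, whereas you spell it out via a local trivialization $\L|_U \cong U\times\F$ and the scalar $\varepsilon/3$ argument, which is a legitimate and welcome filling-in rather than a different approach.
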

\begin{proof}
When $f\in \mathfrak{C}_0(\G , \L)$ is supported on $U\in \Bis(\G)$, then $f$ can be discontinuous only on the nowhere dense set $\partial U \subseteq \G$.
Thus $f\in \mathfrak{C}_c(\G , \L)$ can be discontinuous only on a finite union of nowhere dense sets. 
Since every $f\in \mathfrak{C}_0(\G , \L)$ is a limit of a sequence of functions in $\mathfrak{C}_c(\G , \L)$, the set of discontinuity points of $f$ is contained in a countable union of nowhere dense sets. 
\end{proof}

\subsection{Quasi-continous functions with meager support} 
\label{ssec:QuasiContinuousMeagerSupport}

We will first discuss the space
\[
    \mathfrak{M}_0(\G , \L) := \{ f \in \mathfrak{C}_0(\G , \L): \text{$\supp(f)$ is meager} \} , 
\]
where by $\supp$ we mean the strict support, given by %(without the closure)
\[
    \supp(f) := \{ \gamma \in \G : f(\gamma)\neq 0 \}.
\]

We show that the elements in $\mathfrak{M}_0(\G , \L)$ have to be supported on a subgroupoid of $\G$ where Hausdorffness fails.
To this end, we adopt the nomenclature of Dixmier \cite{Dixmier}. 

\begin{defn}[Dixmier]\label{defn:dangerous} 
Let $Y$ be a topological space. We call $x\in Y$ a \emph{Hausdorff point}  in $Y$ if 
for any other point $y\in X$ there are disjoint open neighbourhoods of $x$ and $y$. 
We denote by $Y_{\Hau}$ the set of all Hausdorff points in $Y$.
\end{defn}

\begin{rem}
We  apply the above definition to the groupoid  $\G$. 
In particular, $\gamma\in \G\setminus \G_{\Hau}$ if there is $\eta \in \G \setminus \{ \gamma \}$ such that $\gamma$ and $\eta$ cannot be separated by disjoint open sets (equivalently, there is a net $(\gamma_i)_i$ in $\G$ converging to $\gamma$ and $\eta$). 
The authors of \cite{Buss_Martinez} call arrows in $ \G\setminus \G_{\Hau}$ \emph{dangerous}, which was inspired by \cite[Definition 7.14]{Kwa-Meyer} where the points in $X\setminus \G_H$  were given this name.
\end{rem}

Statements similar to those in the following lemma can also be found in \cite{Buss_Martinez}, which appeared on arXiv  after the first preprint of this article, and were proved independently. 
%If $K,M\subseteq \G$ we write $K M := \{ \eta\gamma : \text{$\eta \in K,\ \gamma\in M$ are composable} \}$. 
%A set $U\subseteq X$ is \emph{$\G$-invariant} if $d(\gamma)\in U$ implies $r(\gamma)\in U$ for every $\gamma \in \G$.

\begin{lem}\label{lem:singular_groupoid}
Both $\G_{\Hau}$  and  $\G\setminus \G_{\Hau}$ are full subgroupoids of $\G$ and so $\G$ decomposes into the groupoid disjoint union $\G=\G_{\Hau}\sqcup \G\setminus \G_{\Hau}$.
In particular, $d(\G_{\Hau})=r(\G_{\Hau})=X\cap \G_{\Hau}$ and  $d(\G\setminus \G_{\Hau})=r(\G\setminus\G_{\Hau})=X\setminus \G_{\Hau}$. 
Moreover,  
\begin{enumerate}
	\item \label{en:singular_groupoid1} for any  $x\in  X\setminus \G_{\Hau}$	the isotropy group $\G(x) := \{\gamma\in \G: d(\gamma)=r(\gamma)=x\}$   is non-trivial; 
	\item \label{en:singular_groupoid2} if $\G$ has a countable open cover consisting of bisections, then  $\G_{\Hau}$ is dense in $\G$
	(in fact $\G_{\Hau}$ is comeager and $\G$ is Baire);
	\item \label{en:singular_groupoid3}	every $f\in \mathfrak{C}_0(\G , \L)$ is continuous at every $\gamma\in \G_{\Hau}$. 
\end{enumerate}
\end{lem}
\begin{proof} 
Assume that a net  $(\gamma_i)_i$ in $\G$ converges towards two different elements $\gamma, \gamma'\in \G$. 
Then $(\gamma_i^{-1})_i$ converges towards two different elements $\gamma^{-1}, \gamma'^{-1}\in \G$, so $(\G\setminus\G_{\Hau})^{-1} = \G\setminus\G_{\Hau}$. 
Also $r(\gamma)=r(\gamma')$ and $d(\gamma)=d(\gamma')$, because $r,d$ are continuous and $X$ is Hausdorff. 
This in particular implies \ref{en:singular_groupoid1}, cf.\ \cite[Lemma 7.15]{Kwa-Meyer}.
Thus, if $\eta\in \G$ is composable with $\gamma$, it is also composable with $\gamma'$ and we may construct a net $(\eta_i)_i$ convergent to $\eta$ such that $\eta_i$ and $\gamma_i$ are composable, so the net $(\eta_i\gamma_i)_i$ converges to two different elements $\eta\gamma$ and $\eta\gamma'$. 
Indeed, fixing two bisections $U,V\in \Bis(\G)$ where $\eta\in V$, $\gamma\in U$ and $d(V)=r(U)$, we may assume that $(\gamma_i)_i$ is a net in $U$, and then the elements $\eta_i := d|_V^{-1}(r(\gamma_i))$ form the desired net. 
This shows that $\G  (\G\setminus\G_{\Hau}) \subseteq \G\setminus\G_{\Hau}$. 
By taking inverses, and using $(\G\setminus\G_{\Hau})^{-1} = (\G\setminus\G_{\Hau})$, we also get $(\G\setminus\G_{\Hau}) \G\subseteq (\G\setminus\G_{\Hau})$. 
Since arrows in the groupoid are invertible, these relations for $\G\setminus\G_{\Hau}$ are equivalent to saying that $\G\setminus\G_{\Hau}$  and  $\G_{\Hau}$ are full subgroupoids of $\G$.
 If $\G$ has a countable open cover consisting of bisections, then since open bisections are Baire spaces, $\G$ is a Baire space.
Also by \cite[Lemma 7.15]{Kwa-Meyer}, $\G\setminus \G_{\Hau}$ is meager, and therefore it has empty interior. This proves item \ref{en:singular_groupoid2}. Item \ref{en:singular_groupoid3} follows from (the proof of) \cite[Lemma 7.13]{Kwa-Meyer}. 
\end{proof}

\begin{rem}\label{rem:meagerness_of_singular} 
Recall that a groupoid $\G$ is called \emph{principal} if there are no points $x\in X$ with non-trivial isotropy, and $\G$ is called \emph{topologically principal} if the set of points with non-trivial isotropy has empty interior.
By Lemma~\ref{lem:singular_groupoid}\ref{en:singular_groupoid1}, if $\G$ is principal then $\G=\G_{\Hau}$ is Hausdorff. 
If $\G$ is topologically principal, then $ \G_{\Hau}$ is dense in $\G$. 
\end{rem}

The following proposition is a minor improvement of \cite[Proposition 7.18]{Kwa-Meyer}, cf.\ also \cite[Corollary 3.15]{Buss_Martinez}. 
Namely, in general we do not need any assumptions on $\G$ and we consider quasi-continuous functions that vanish at infinity.

\begin{prop} \label{prop:singular_functions}
For any $f\in \mathfrak{C}_0(\G , \L)$ the following conditions are equivalent:
\begin{enumerate}
    \item \label{en:singular_functions1} $\supp(f)$ is meager in $\G$, that is, $f\in \mathfrak{M}_0(\G , \L)$; 
    \item \label{en:singular_functions2} $\supp(f)$ has empty interior in $\G$;
    \item \label{en:singular_functions3} $\{ \gamma \in \G : | f(\gamma) | > \varepsilon \}$ has empty interior in $\G$ for all $\varepsilon>0$;
%    \item \label{en:singular_functions4}% $\supp(f)\cap U$ is meager in $U$ for every $U\in S$;
%    \item \label{en:singular_functions5}% $\supp(f)\cap U$ has empty interior in $U$ for every $U\in S$;
%    \item \label{en:singular_functions6}% $\{\gamma\in U: |f(\gamma)|> \varepsilon\}$ has empty interior in $U$ for every $U\in S$ and $\varepsilon>0$;
    \item \label{en:singular_functions4} $\supp(f^* * f)\cap X$ is meager in $X$;
    \item \label{en:singular_functions5} $\supp(f^* * f)\cap X$ has empty interior in $X$;
    \item \label{en:singular_functions6} $\{ x \in X: |f^* *f (x)|> \varepsilon\}$ has empty interior in $X$ for all $\varepsilon > 0$.
   \end{enumerate}	
Moreover, the above equivalent conditions imply
\begin{enumerate} \setcounter{enumi}{6}
    \item \label{en:singular_functions7} $\supp(f)\cap \G_{\Hau}=\emptyset$,	
\end{enumerate}	
and if $\G_{\Hau}$ is comeager or dense in $\G$ (which holds when $\G$ can be covered by a countable family of bisections or when $\G$ is topologically principal), then all conditions \ref{en:singular_functions1}--\ref{en:singular_functions7} are equivalent.
\end{prop}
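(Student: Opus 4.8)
The plan is to run everything off a single \emph{interior principle} together with the fact that, being locally homeomorphic to open subsets of the locally compact Hausdorff space $X$, both $\G$ and $X$ are Baire spaces. By Lemma~\ref{lem:comeager} every $h\in\mathfrak{C}_0(\G,\L)$ is continuous off a meager set $M_h\subseteq\G$, and by Lemma~\ref{lem:singular_groupoid}\ref{en:singular_groupoid3} it is in fact continuous at every regular point. The interior principle reads: if $h$ is continuous at a point $\gamma$ with $|h(\gamma)|>\varepsilon$, then $|h|>\varepsilon$ on an entire neighbourhood of $\gamma$, so $\gamma\in\Int\{|h|>\varepsilon\}$; equivalently $\{|h|>\varepsilon\}\setminus M\subseteq\Int\{|h|>\varepsilon\}$ for any set $M$ off which $h$ is continuous.

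First I would establish the equivalence of \ref{en:singular_functions1}--\ref{en:singular_functions3} as the cycle $\ref{en:singular_functions2}\Rightarrow\ref{en:singular_functions3}\Rightarrow\ref{en:singular_functions1}\Rightarrow\ref{en:singular_functions2}$. Here $\ref{en:singular_functions2}\Rightarrow\ref{en:singular_functions3}$ is immediate since $\{|f|>\varepsilon\}\subseteq\supp(f)$. For $\ref{en:singular_functions3}\Rightarrow\ref{en:singular_functions1}$ I apply the interior principle with $M=M_f$: condition \ref{en:singular_functions3} forces $\{|f|>\varepsilon\}\subseteq M_f$ for every $\varepsilon>0$, whence $\supp(f)=\bigcup_n\{|f|>1/n\}\subseteq M_f$ is meager. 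Finally $\ref{en:singular_functions1}\Rightarrow\ref{en:singular_functions2}$ is the Baire property, that a meager set has empty interior. Running the \emph{same} interior principle with $M=\G_{\sing}$, which is legitimate because $f$ is continuous at every point of $\G_{\reg}$, gives $\{|f|>\varepsilon\}\subseteq\G_{\sing}$ under \ref{en:singular_functions3}, hence $\supp(f)\subseteq\G_{\sing}$, which is \ref{en:singular_functions7}.

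Next I would treat \ref{en:singular_functions4}--\ref{en:singular_functions6}. A direct computation gives $(f^* * f)|_X(x)=\sum_{d(\zeta)=x}|f(\zeta)|^2$, so that $\supp(f^* * f)\cap X=d(\supp f)$ and $g:=(f^* * f)|_X$ is a nonnegative function on the Baire space $X$, continuous off a meager subset of $X$ (obtained by restricting to the open set $X$ the meager discontinuity set of $f^* * f\in\mathfrak{C}_0(\G,\L)$). The equivalences $\ref{en:singular_functions4}\Leftrightarrow\ref{en:singular_functions5}\Leftrightarrow\ref{en:singular_functions6}$ are then \emph{literally} $\ref{en:singular_functions1}\Leftrightarrow\ref{en:singular_functions2}\Leftrightarrow\ref{en:singular_functions3}$ applied to $g$ on $X$, since the previous paragraph used nothing but Baireness of the ambient space and continuity off a meager set. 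It remains to bridge the two blocks, for which I use $\ref{en:singular_functions1}\Leftrightarrow\ref{en:singular_functions4}$: as each $\{|f|\ge 1/n\}$ is precompact, $\supp f$ is $\sigma$-compact and therefore covered by countably many bisections $U_k$, and since each $d|_{U_k}$ is a homeomorphism onto the open set $d(U_k)\subseteq X$ while $d$ is open, meagerness of $\supp f=\bigcup_k(\supp f\cap U_k)$ transfers back and forth to meagerness of $d(\supp f)=\bigcup_k d(\supp f\cap U_k)$.

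For the final assertion, once \ref{en:singular_functions1}--\ref{en:singular_functions6} are equivalent and imply \ref{en:singular_functions7}, it suffices to note that when $\G_{\sing}$ has empty interior the inclusion $\supp(f)\subseteq\G_{\sing}$ of \ref{en:singular_functions7} forces $\supp(f)$ to have empty interior, i.e.\ \ref{en:singular_functions2}; and $\G_{\sing}$ has empty interior whenever $\G$ is covered by countably many bisections (then it is even meager, by Lemma~\ref{lem:singular_groupoid}\ref{en:singular_groupoid2}) or $\G$ is topologically principal (Remark~\ref{rem:meagerness_of_singular}). I expect the main obstacle to be the passage between $\G$ and $X$: verifying that $f^* * f$ is quasi-continuous, so that the Baire/continuity machinery applies to its restriction $g$ on $X$, and transporting meagerness and empty interior through the \emph{non-injective} map $d$ — which is precisely where the $\sigma$-compactness of $\supp f$ and the local-homeomorphism structure of the \'etale groupoid are essential.
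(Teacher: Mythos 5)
Your first block \ref{en:singular_functions1}$\Leftrightarrow$\ref{en:singular_functions2}$\Leftrightarrow$\ref{en:singular_functions3}, the derivation of \ref{en:singular_functions7}, the bridge \ref{en:singular_functions1}$\Leftrightarrow$\ref{en:singular_functions4} via a countable bisection cover of $\supp(f)$, and the final assertion are all correct and essentially coincide with the paper's proof. The genuine gap is in your treatment of \ref{en:singular_functions4}--\ref{en:singular_functions6}: you want to run the interior principle on $g:=(f^* * f)|_X$, and for that you need $g$ to be continuous off a meager subset of $X$, which you justify by asserting $f^* * f\in\mathfrak{C}_0(\G , \L)$ so that Lemma~\ref{lem:comeager} applies. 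This assertion is false in general. The space $\mathfrak{C}_0(\G , \L)$ is \emph{not} closed under convolution (the paper states this explicitly just before Lemma~\ref{lem:bimodule_borel}), and indeed $(f^* * f)(x)=\sum_{d(\gamma)=x}|f(\gamma)|^2$ is an infinite series that can even diverge: if some fibre $d^{-1}(x)$ is infinite, pick pairwise disjoint bisections $U_n$ meeting it at points $\gamma_n$ and set $f=\sum_n n^{-1/2}h_n$ with $h_n\in C_c(U_n,\L)$ of norm one and $|h_n(\gamma_n)|=1$; then $f\in\mathfrak{C}_0(\G , \L)$ (the partial sums converge uniformly), but $(f^* * f)(x)\geq\sum_n 1/n=+\infty$, so $f^* * f$ is not even a bounded section. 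Conditions \ref{en:singular_functions4}--\ref{en:singular_functions6} still make sense because the fibrewise sum defines a $[0,+\infty]$-valued function on $X$, but Lemma~\ref{lem:comeager} cannot be invoked for it.

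Without quasi-continuity of $g$, what survives of your second block is only the easy chain \ref{en:singular_functions4}$\Rightarrow$\ref{en:singular_functions5}$\Rightarrow$\ref{en:singular_functions6} (Baire property of $X$ plus inclusion of sets); the essential implication \ref{en:singular_functions6}$\Rightarrow$\ref{en:singular_functions4} is exactly what goes missing, and it cannot be recovered from what you have: $g$ is a countable sum of nonnegative functions each continuous off a meager set, hence only lower semicontinuous off a meager set, and relative lower semicontinuity on a comeager set does not produce interior points of $\{g>\varepsilon\}$, which is what the interior principle needs. The paper sidesteps the problem entirely: it proves \ref{en:singular_functions6}$\Rightarrow$\ref{en:singular_functions3} by contraposition --- if $\{\gamma:|f(\gamma)|>\varepsilon\}$ has non-empty interior then it contains a non-empty open bisection $V$, and $d(V)$ is then a non-empty open subset of $\{x:(f^* * f)(x)>\varepsilon^2\}$ --- and closes the cycle \ref{en:singular_functions1}$\Rightarrow$\ref{en:singular_functions4}$\Rightarrow$\ref{en:singular_functions5}$\Rightarrow$\ref{en:singular_functions6}$\Rightarrow$\ref{en:singular_functions3}$\Rightarrow$\ref{en:singular_functions1}, using precisely the transfer argument you already have for \ref{en:singular_functions1}$\Rightarrow$\ref{en:singular_functions4}. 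So the repair is one line and within your toolkit, but as written your route through the second block does not go through.
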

\begin{proof}
Implication \ref{en:singular_functions1}$\Rightarrow$\ref{en:singular_functions2} holds because we can cover $\supp(f)$ with a countable family of bisections (cf. the proof of Lemma \ref{lem:comeager}), and so its union is a Baire space. 
Implication \ref{en:singular_functions2}$\Rightarrow$\ref{en:singular_functions3} is obvious. 
And \ref{en:singular_functions3} implies \ref{en:singular_functions1} because if $\supp(f)$ is not meager in $\G$, then by Lemma~\ref{lem:comeager} there is a point $\gamma_0 \in \supp(f)$ at which $f$ is continuous. 
Thus, for $\varepsilon = |f(\gamma_{0})|/2$, the set $\{\gamma\in \G: |f(\gamma)|> \varepsilon\}$ contains a neighbourhood of $\gamma_0$ and so has non-empty interior. 
Hence \ref{en:singular_functions1}--\ref{en:singular_functions3} are equivalent.  

Implications \ref{en:singular_functions4}$\Rightarrow$\ref{en:singular_functions5}$\Rightarrow$\ref{en:singular_functions6} are clear. 
Since $(f^* * f)(x) = \sum_{ \gamma \in d^{-1}(x) } |f(\gamma)|^2$ for $x\in X$,  \ref{en:singular_functions6} implies \ref{en:singular_functions3}. 
We close the cycle of implications by showing that \ref{en:singular_functions1} implies \ref{en:singular_functions6}. 
To this end, note that $\supp(f)$ can be covered by a countable family $\{ U_n \}_{n=1}^{\infty} \subseteq \Bis(\G)$, because $f$ is a uniform limit of a sequence of functions in $\mathfrak{C}_0(\G , \L)$, which are supported on a finite union of bisections. 
So $\supp(f^* * f) \cap X = \bigcup_{n=1}^\infty d(\supp(f)\cap U_n)$ is meager if each of the sets $\supp(f)\cap U_n$ is meager. 
This proves that \ref{en:singular_functions1}--\ref{en:singular_functions6} are equivalent.

Lemma~\ref{lem:singular_groupoid}\ref{en:singular_groupoid3} implies that if $\supp(f)\cap \G_{\Hau}\neq \emptyset$, then $\supp(f)$ has non-empty interior. 
Thus \ref{en:singular_functions2} implies \ref{en:singular_functions7}. 
The converse implication is trivial if we assume that $\G_{\Hau}$ is dense (which is guaranteed if there is a countable cover of $\G$ by bisections, by Lemma~\ref{lem:singular_groupoid}\ref{en:singular_groupoid2}, or when $\G$ is topologically principal, see  Remark~\ref{rem:meagerness_of_singular}). 
\end{proof}

\begin{rem}
In some pathological situations, see \cite[Example 7.16]{Kwa-Meyer}, it may happen that $\G_{\Hau} = \emptyset$ and so in general condition \ref{en:singular_functions7} above does not imply the others. 
\end{rem}

When $\G=\G_{\Hau}$ is Hausdorff, then $\mathfrak{C}_0(\G , \L) = C_0(\G , \L)$ and $\mathfrak{M}_0(\G , \L) = \{ 0 \}$.
Examples of non-Hausdorff groupoids with $\mathfrak{M}_0(\G , \L) = \{ 0 \}$ can be constructed using the following criteria coming from \cite[Lemma 3.1]{CEPSS}.
Recall that a topological groupoid is \emph{ample} if it has a basis consisting of compact open bisections. 
Equivalentely, it is an \'etale groupoid with a totally disconnected unit space. 

\begin{lem} \label{lem:ample_essential_reduced_coincide}
%If  then essential algebras coincide with reduced ones (i.e.\ for every reduced algebra $B$ of $(\G , \L)$ we have $B = B_{\ess}$).
If $\G$ is ample and every compact open set in $\G$ is regular open, that is, it is equal to the interior of its closure, then $\mathfrak{M}_0(\G , \L) = \{ 0 \}$.
\end{lem}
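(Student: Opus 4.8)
The plan is to establish the contrapositive content: a nonzero element of $\mathfrak{C}_0(\G,\LL)$ with meager support would force one of the ambient compact open bisections to fail regularity. By the equivalence \ref{en:singular_functions1}$\Leftrightarrow$\ref{en:singular_functions2} of Proposition~\ref{prop:singular_functions}, the assertion $\mathfrak{M}_0(\G,\LL)=\{0\}$ amounts to showing that every $f\in\mathfrak{C}_0(\G,\LL)$ with $\Int(\supp f)=\emptyset$ vanishes. So I would assume $f\neq 0$, fix $\gamma_0$ with $f(\gamma_0)\neq 0$, and aim for a contradiction. Ampleness provides a basis of compact open bisections on which $\LL$ is trivial; fixing trivialising unitary sections turns sections over such a $U$ into scalar functions, and since $r$ maps $U$ homeomorphically onto a compact open, hence totally disconnected, subset of $X$, continuous sections over $U$ are uniform limits of locally constant ones. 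Thus, choosing $0<\varepsilon<|f(\gamma_0)|/2$, I can pick $g=\sum_{j=1}^n a_j\,1_{V_j}\in\mathfrak{C}_c(\G,\LL)$ with $\|f-g\|_\infty<\varepsilon$, where each $V_j$ is a compact open bisection; note that $1_{V_j}\in\mathfrak{C}_c(\G,\LL)$ precisely because $V_j$ is compact open.

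The key observation is that $g$ is locally constant off the nowhere dense set $\bigcup_j\partial V_j$: at any $\gamma\notin\bigcup_j\partial V_j$ the index set $\{j:\gamma\in V_j\}$ is locally constant, so $g$ is constant near $\gamma$. Hence $\Omega:=\G\setminus\bigcup_j\partial V_j$ is dense and open, and each level set $\{|g|>\varepsilon\}\cap\Omega$ is open in $\G$. Since $|g|\le\|f-g\|_\infty<\varepsilon$ on the dense set $\G\setminus\supp f$, the set $\{|g|>\varepsilon\}$ has empty interior; therefore $\{|g|>\varepsilon\}\cap\Omega=\emptyset$, i.e.\ the large values of $g$ are concentrated on the nowhere dense union of boundaries $\bigcup_j\partial V_j$, while $|g(\gamma_0)|>\varepsilon$ forces $\gamma_0\in\bigcup_j\partial V_j$. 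This is exactly the configuration ruled out by regularity, as the two-term prototype $g=1_{U_1}-1_{U_2}$ shows: if $\supp g=U_1\bigtriangleup U_2$ has empty interior then $U_1\cap U_2$ is dense in each $U_i$, whence $U_1\subseteq\Int\overline{U_2}$ and $U_2\subseteq\Int\overline{U_1}$; picking $p$ in the nonempty symmetric difference then exhibits a point of $\Int\overline{U_i}\setminus U_i$, so $U_i$ is a compact open bisection that is not regular open, contrary to the hypothesis.

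The main work, and the main obstacle, is to turn this localisation into a genuine contradiction for a general finite combination $g=\sum_j a_j 1_{V_j}$ rather than the two-term prototype: one must track the constant values of $g$ on the atoms cut out by the $V_j$ and show that confining the nonzero values to the nowhere dense set $\bigcup_j\partial V_j$ forces some Boolean combination of the $V_j$ to be strictly smaller than the interior of its closure. This is precisely the bookkeeping of \cite[Lemma~3.1]{CEPSS}, which I would follow, using the standard facts about ample groupoids that keep the relevant intersections and unions of compact open bisections compact open so that the regularity hypothesis applies to each of them. The only point beyond the locally constant setting of \cite{CEPSS} is the passage from the dense subalgebra $\mathfrak{C}_c(\G,\LL)$ to its uniform completion $\mathfrak{C}_0(\G,\LL)$; this is exactly what the uniform approximation above provides, since the estimate $|g|<\varepsilon$ on the dense complement of $\supp f$ is what transfers the empty-interior property of $\supp f$ to the locally constant approximant $g$, on which the combinatorial regularity argument can be run.
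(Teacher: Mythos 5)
Your proposal follows the same route as the paper: reduce via Proposition~\ref{prop:singular_functions} to showing that every nonzero $f\in\mathfrak{C}_0(\G,\LL)$ has $\Int \supp(f)\neq\emptyset$, approximate $f$ uniformly by a finite combination $g$ of trivialised indicator-type sections over compact open bisections (ampleness plus total disconnectedness), and then play the regular-openness hypothesis against Boolean combinations of the $V_j$ through \cite[Lemma 2.1]{CEPSS}. Your approximation step and the localisation of $\{|g|>\varepsilon\}$ inside $\bigcup_j\partial V_j$ are sound. The genuine shortfall is that the step you yourself label ``the main work'' is never carried out: deferring it to ``the bookkeeping of \cite[Lemma 3.1]{CEPSS}'' leaves out exactly the content of the lemma. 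In the paper this step takes only a few lines, and your localisation turns out to be unnecessary: for nonempty $F_0\subseteq F=\{V_1,\dots,V_n\}$ put $V_{F_0}^F:=(\bigcap_{j\in F_0}V_j)\setminus(\bigcup_{k\notin F_0}V_k)$; these sets partition each $V_j$, and (in the untwisted picture) $g$ is constant on each of them; each nonempty $V_{F_0}^F$ is a difference of two compact open, hence regular open, sets, and so has nonempty interior by \cite[Lemma 2.1]{CEPSS}; applying this to the atom containing your $\gamma_0$ yields a nonempty open subset of $\{|g|>\varepsilon\}\subseteq\supp(f)$, contradicting the fact, which you already established, that this set has empty interior. (The paper phrases the same argument positively, via level sets $|g|^{-1}(c)$ with $c>2\|f\|_\infty/3$ and $\|f-g\|_\infty<\|f\|_\infty/3$, and concludes directly that $\supp(f)$ has nonempty interior.)

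A second point concerns the twist, and it affects your write-up and, in fairness, the paper's as well. Your claim that $g$ is \emph{constant} near every $\gamma\notin\bigcup_j\partial V_j$ is not justified: the trivialising sections over different $V_j$ differ by continuous, generally non-constant, phase functions, so near such a $\gamma$ the section $g$ is only continuous, not locally constant; the same issue underlies the paper's assertion that $|g|$ is constant on the sets $V_{F_0}^F$. Continuity is all your localisation step actually needs (it makes $\{|g|>\varepsilon\}\cap\Omega$ open). For the atom step, one should, before invoking \cite[Lemma 2.1]{CEPSS}, use continuity of the local sum $\sum_{j\in F_0}a_jc_j$ on the open set $\bigcap_{j\in F_0}V_j$ to shrink to a compact open neighbourhood $K'$ of $\gamma_0$ on which this sum has modulus $>\varepsilon$, and then take the nonempty interior of $K'\setminus\bigcup_{k\notin F_0}V_k$. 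With that repair the argument closes in the twisted case.
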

\begin{proof} 
Let $K(\G , \L)\subseteq \mathfrak{C}_c(\G , \L)$ be the linear span of sections $g_V \in C(V , \L)$ where $V$ is a compact open bisection and $|g_V| = \textit{const}$ on $V$. 
Since $\G$ is ample, $K(\G,\L)$ is dense in $\mathfrak{C}_0(\G , \L)$. 
Indeed, the linear span of sections in $C_0(V,\LL)$, $V\in S(\LL)$, is dense in $\mathfrak{C}_0(\G , \L)$, and for every $V\in S(\LL)$ we have $C_0(V,\LL)\cong C_0(V)$, and it is well known that the linear span of indicator functions of compact open sets is dense in $C_0(V)$ when $V$ is totally disconnected. 

We claim that for any $g\in K(\G , \L)$ and $c>0$ the set $|g|^{-1}(c)=\{\gamma\in \G: |g(\gamma)|=c\}$ is either empty or has non-empty interior. 
Indeed, we have $g = \sum_{V \in F} g_V$, where $F$ is a finite collection of compact open bisections and $g_V \in C(V , \L)$ with $|g_V| = \textit{const}$ on  $V\in F$. 
By  the inclusion-exclusion principle, putting for any non-empty $F_0\subseteq F$
\[
   V_{F_0}^F := \left( \bigcap_{V \in F_0} V \right) \setminus \left( \bigcup_{W\in F\setminus F_0} W \right) ,
\]
we get a family $\{ V_{F_{0}}^{F} \}_{\emptyset \neq F_0\subseteq F}$ of pairwise-disjoint sets such that $V = \bigsqcup_{V\in F_0\subseteq F}V_{F_0}^F$, for every $V \in F$ (cf.\ \cite[Remark 2.5]{CFST} or \cite[Lemma 6.2]{BKM}). 
Thus $|g|$ is constant on each of the sets $V_{F_{0}}^{F}$. 
Hence if $|g|^{-1}(c)\neq \emptyset$, then $|g|^{-1}(c)$ contains a non-empty set $V_{F_{0}}^{F}$. 
But the set $V_{F_{0}}^{F}$ is the difference of two compact open sets $\bigcap_{V \in F_0} V $ and $\bigcup_{W\in F\setminus F_0} W$, which are regular open by assumption. 
Hence $V_{F_{0}}^{F}$ has a non-empty interior by \cite[Lemma 2.1]{CEPSS}. 

Now let $f\in \mathfrak{C}_0(\G , \L)$ be non-zero. 
Find $g\in K(\G , \L)$ with $\|f-g\|_{\infty}<\|f\|_{\infty}/3$, and so $\|g\|_{\infty}> 2\|f\|_{\infty}/3$. 
Thus there is $\gamma\in \G$ with $c:=|g(\gamma)|> 2\|f\|_{\infty}/3$. 
By the above claim $|g|^{-1}(c)$ has a non-empty interior. 
Moreover, $|g|^{-1}(c) \subseteq \supp(f)$ because, for each $\gamma \in |g|^{-1}(c)$, we have $| f(\gamma) | \geq c -\| f \|_{\infty} / 3 > \| f \|_{\infty} / 3 >0$. 
Hence $\supp(f)$ has nonempty interior and therefore $f\notin \mathfrak{M}_0(\G , \L)$.
\end{proof}

\subsection{Essential and exotic groupoid Banach algebras}

For any open $U \subseteq \G$, we denote by $\mathfrak{M}(U , \L)\subseteq \B(U , \L)$ the space of bounded Borel sections of meager support, and we consider
\[
    \D(U , \L) := \B(U , \L)/ \mathfrak{M}(U , \L)  %\text{ where }\,\, \mathfrak{M}(U):=\{f\in \B(U):\supp(f) \text{ is meager}\}
\]
as the quotient Banach space. 
Let $q_U : \B(U , \L) \to  \D(U , \L)$ be the quotient map.
%By Lemma \ref{lem:comeager},   the images of $\mathfrak{C}_0(\G , \L)$ and $C_0(\G , \L)$ in $\D(\G , \L)$ coincide. 
If $U,V\in \Bis(\G)$, then the convolution and involution induce mappings $\D(U, \L) \times \D(V,\L) \to \D(UV, \L)$ and $\D(U, \L)\to \D(U^*, \L)$ that we will still denote by $*$ and $^*$. 
Also we may and we will treat $\D(U,\LL)$ as a subspace of $\D(\G,\LL)$. 
When the twist is trivial we will omit writing it. 
In particular, 
\[
    \D(X) = \B(X)/ \mathfrak{M}(X)
\]
is a $C^*$-algebra, often called the \emph{Dixmier algebra of $X$}, as it was studied first in \cite{Dixmier}. 
It can be identified both with the \emph{injective hull} of $C_0(X)$ and with the \emph{local multiplier algebra} of $C_0(X)$, see \cite[subsection 4.4]{Kwa-Meyer} and references therein. 
The Dixmier algebra $\D(X)$ contains $C(X)$, in the sense that the quotient map $q_X : \B(X) \to \D(X)$ is isometric on $C(X)$, so we will write $C(X) \subseteq \D(X)$. 
Similarly, we have $C(U,\LL) \subseteq \D(U,\LL)$ for any $U\in \Bis(\G)$, and for $f \in \B(U,\LL)$ we have
\begin{equation}\label{eq:essential_norm}
    \| q_{U}(f) \| =%\|f\|_{\ess}:=
        \inf_{ \stackrel{M \subseteq U}{\text{meager}} } \sup_{\gamma\in U\setminus M} | f(\gamma) | = \min_{ \stackrel{M \subseteq U}{\text{meager}} } \sup_{\gamma\in U\setminus M}|f(\gamma)| , 
\end{equation}
where the minimum exists because countable intersections of comeager sets are comeager. 
The Banach $C_0(X)$-bimodule structure on $\B(U,\LL)$ factors to a Banach $C_0(X)$-bimodule structure on $\D(U,\LL)$, and we have a $C_0(X)$-bimodule map $\E_{U}:=q_{U}\circ E_{U}: \mathfrak{C}_c(\G , \L) \to \D(U,\LL)$. 
Now we proceed as in Section~\ref{sec:ReducedBanachAlgs}, but with $E_U$ replaced by $\E_U$. 
In the $C^*$-algebraic context, $\E_X$ was called a local expectation in \cite{Kwa-Meyer}.

\begin{defn}\label{de:ExoticGroupoidBanachAlgebra}
An \emph{exotic groupoid Banach algebra of $(\G , \L)$} is a Hausdorff completion $\Falg{\G , \LL}{}{\RR}{}$ of $\mathfrak{C}_c(\G , \L)$ such that  $C_0(X)$ embeds isometrically into  $\FR(\G , \LL)$ and  $\E_{X}:\mathfrak{C}_c(\G , \L)\to \D(X)$ induces a contractive map $\E_{X}^{\RR} : \Falg{\G , \LL}{}{\RR}{} \to \D(X)$, that is, the corresponding seminorm satisfies $\| q_{X}(f|_X) \|\leq \|f\|_{\RR}$ for  $f\in \mathfrak{C}_c(\G , \L)$ and $\|f\|_{\infty}=\|f\|_{\RR}$ for $f\in C_c(X)$. 
If in addition $\E_{X}^{\RR}$ is faithful, we call $\FR(\G , \LL)$ an \emph{essential groupoid Banach algebra} of $(\G , \L)$. 
\end{defn}

\begin{rem}\label{rem:about_exotic}
A $C^*$-algebra is an exotic groupoid Banach algebra of $(\G , \L)$ if and only if it is an exotic $C^*$-algebra of $(\G , \L)$ in the sense of \cite{Kwa-Meyer}. 
There is a unique $C^*$-algebra which is an essential groupoid Banach algebra of $(\G , \L)$, and  this is the algebra introduced in \cite[Definition 7.12]{Kwa-Meyer} (for $\F=\C$).  Every groupoid Banach algebra is an exotic groupoid Banach algebra and the converse holds when $\G$ is Hausdorff because then, for every $f\in \mathfrak{C}_c(\G , \L) = C_c(\G , \L)$, we have $f|_X\in C_c(X)$ and therefore $\| q_{X}(f|_X)\| = \| f|_X \|_{\infty}$.
In particular, if $\G$ is Hausdorff then the classes of essential and reduced groupoid Banach algebras coincide.%, cf. \ref{rem:Fourier_decomposition}.

\end{rem}

\begin{lem}\label{lem:bimodule_Dixmier}
The $\mathfrak{C}_c(\G,\LL)$-bimodule structure on $\B(\G , \L)$ induced by convolution, defined in Lemma~\ref{lem:bimodule_borel}, factors to a $\mathfrak{C}_c(\G,\LL)$-bimodule structure on $\D(\G , \L)$,  which is continuous
in the sense that for any sequence  $(f_n)_{n=1}^\infty$ in $\B(\G , \L)$, $f \in \B(\G , \L)$ and $g\in \mathfrak{C}_c(\G , \L)$, if $q_{\G}(f_n)\to q_{\G}(f)$, then $q_{\G}(g*f_n)\to q_{\G}(g*f)$ and $q_{\G}(f_n*g)\to q_{\G}(f*g)$. 
This turns $q_{\G} : \mathfrak{C}_c(\G,\LL) \to \D(\G,\LL)$ into a $\mathfrak{C}_c(\G,\LL)$-bimodule map.
\end{lem}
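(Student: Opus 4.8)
The plan is to reduce everything to the explicit formula for convolution by a section supported on a single bisection, namely \eqref{equ:bisection_left_module_formula} in Lemma~\ref{lem:bimodule_borel}. Factoring the bimodule structure through $\D(\G , \L) = \B(\G , \L)/\mathfrak{M}(\G , \L)$ amounts to checking that the subspace $\mathfrak{M}(\G , \L)$ of meager-support sections is invariant under left and right convolution by $\mathfrak{C}_c(\G , \L)$; once this is known, the actions $g\cdot q_{\G}(f):=q_{\G}(g*f)$ and $q_{\G}(f)\cdot g:=q_{\G}(f*g)$ are well defined, and $q_{\G}$ is a bimodule map by construction. The stated continuity will then follow from boundedness of these actions in the quotient (essential-supremum) norm \eqref{eq:essential_norm}.

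First I would prove invariance. Fix $g\in C_c(U^* , \L)$ for some $U\in \Bis(\G)$ and $f\in \mathfrak{M}(\G , \L)$. By \eqref{equ:bisection_left_module_formula} we have $g*f(\gamma)=0$ unless $\gamma\in r^{-1}(d(U))$ and $\tilde{h}_{U}(\gamma)\in\supp(f)$, so that
\[
    \supp(g*f)\subseteq \tilde{h}_{U}^{-1}\big(\supp(f)\cap r^{-1}(r(U))\big).
\]
Since $\supp(f)$ is meager in $\G$, its trace on the open set $r^{-1}(r(U))$ is meager there, and as $\tilde{h}_{U}$ is a homeomorphism between the open sets $r^{-1}(d(U))$ and $r^{-1}(r(U))$ the preimage above is meager in $r^{-1}(d(U))$. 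A meager subset of an open subspace of $\G$ is meager in $\G$ (the boundary of an open set is nowhere dense), so $\supp(g*f)$ is meager and $g*f\in\mathfrak{M}(\G , \L)$. As a general $g\in\mathfrak{C}_c(\G , \L)$ is a finite sum of such sections and finite unions of meager sets are meager, we get $g*\mathfrak{M}(\G , \L)\subseteq \mathfrak{M}(\G , \L)$; the inclusion $\mathfrak{M}(\G , \L)*g\subseteq \mathfrak{M}(\G , \L)$ follows symmetrically, using the involution $f\mapsto f^*$, which preserves meagerness of supports because inversion is a homeomorphism of $\G$.

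For continuity it suffices to show that left and right multiplication by a fixed $g\in\mathfrak{C}_c(\G , \L)$ are bounded on $\D(\G , \L)$, since then $\|q_{\G}(g*(f_n-f))\|\leq C\,\|q_{\G}(f_n-f)\|\to 0$, and symmetrically on the right. For $g\in C_c(U^* , \L)$ and $\varepsilon>0$ choose, by \eqref{eq:essential_norm}, a meager $M$ with $\sup_{\eta\notin M}|f(\eta)|\leq\|q_{\G}(f)\|+\varepsilon$. Putting $M':=\tilde{h}_{U}^{-1}(M\cap r^{-1}(r(U)))$, which is meager in $\G$ by the previous paragraph, the pointwise bound $|g*f(\gamma)|\leq\|g\|_{\infty}\,|f(\tilde{h}_{U}(\gamma))|$ from \eqref{equ:bisection_left_module_formula} gives $\sup_{\gamma\notin M'}|g*f(\gamma)|\leq\|g\|_{\infty}(\|q_{\G}(f)\|+\varepsilon)$, whence $\|q_{\G}(g*f)\|\leq\|g\|_{\infty}\|q_{\G}(f)\|$; summing over the finitely many bisections in a decomposition of a general $g$ yields the desired boundedness, and the right-handed estimate is analogous.

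The main obstacle is the interplay between meagerness and the change of variables $\tilde{h}_{U}$: one must verify both that $\tilde{h}_{U}$ transports meager sets correctly and that meagerness computed inside the open subspaces $r^{-1}(d(U))$ and $r^{-1}(r(U))$ coincides with meagerness in $\G$. Once this topological bookkeeping is in place, both the factoring and the norm estimate are immediate, and nothing in the argument requires $\G$ to be Hausdorff.
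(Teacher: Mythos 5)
Your proof is correct and follows essentially the same route as the paper's: both rest on the formula \eqref{equ:bisection_left_module_formula} for $g\in C_c(U^*,\L)$ to obtain the estimate $\|q_{\G}(g*f)\|\leq \|g\|_{\infty}\|q_{\G}(f)\|$ (which simultaneously gives the factoring through $\mathfrak{M}(\G,\L)$ and the continuity), then extend to general $g$ by linearity over finitely many bisections and handle the right action by symmetry or the involution. The only difference is that you spell out the meager-set transport along the homeomorphism $\tilde{h}_U$, which the paper leaves implicit.
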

\begin{proof} 
If $g \in C_c(U^* , \L)$, for $U\in \Bis(\G)$ and $f\in \B(\G,\LL)$, then \eqref{equ:bisection_left_module_formula} implies $\|q_{\G}(g*f)\|_{\D(\G , \L)}\leq \|g\|_{\infty} \|q_{\G}(f)\|_{\D(\G , \L)}$, which gives $q_{\G}(g*f_n)\to q_{\G}(g*f)$ whenever $q_{\G}(f_n)\to q_{\G}(f)$. 
Since elements in $C_c(U , \L)$, $U\in \Bis(\G)$, span $\mathfrak{C}_c(\G , \L)$, this holds for any $g\in \mathfrak{C}_c(\G , \L)$. 
Similarly (or by using the involution) one gets the same conclusion for $q_{\G}(f_n*g)\to q_{\G}(f*g)$. 
\end{proof}

\begin{prop}\label{prop:local_expectations_and_local_j_map}
Let $\Falg{\G , \LL}{}{\RR}{}$ be a Banach algebra Hausdorff completion of $\mathfrak{C}_c(\G , \L)$ in a seminorm  satisfying $\|f\|_{\RR}=\|f\|_{\infty}$ for $f\in C_c(X)$.
The following conditions are equivalent:
\begin{enumerate}
    \item\label{enu:local_expectations_and_j_map1} $\Falg{\G , \LL}{}{\RR}{}$ is an exotic groupoid Banach algebra of $(\G,\LL)$; 
    \item\label{enu:local_expectations_and_j_map2} for any $U\in \Bis(\G)$, $\E_{U}: \mathfrak{C}_c(\G , \L) \to \D(U,\LL)$ factors through a contractive linear map $\E^\RR_U : \Falg{\G , \LL}{}{\RR}{} \to \D(U,\L)$;%, i.e. $E^\RR_U (f) = f|_{U}$ for all $f\in\mathfrak{C}_c(\G , \L)$; 
    \item\label{enu:local_expectations_and_j_map3} the quotient map $q_{\G}:\mathfrak{C}_c(\G,\LL) \to \D(\G,\LL)$ factors through a contractive linear map $j_{\RR}^{\ess} : \Falg{\G , \LL}{}{\RR}{} \to \D(\G,\LL)$. 
\end{enumerate}
If the above equivalent conditions hold, then %, the spaces $C_0(U,\LL)$, $U\in S$, embed into $\Falg{\G , \LL}{S}{\RR}{}$, and 
for each $U\in \Bis(\G)$, $\E^\RR_{U}$ is a $C_0(X)$-module map given by $\E^\RR_{U}(f) = j_{\RR}^{\ess}(f)|_{U}$ for $f\in \Falg{\G , \LL}{}{\RR}{}$, and so $C_c(U,\LL)$ embeds canonically into  $\Falg{\G , \LL}{}{\RR}{}$. 
Moreover,  for any $S \subseteq \Bis(\G)$ that covers $\G$  we have
\[
    \ker j_{\RR}^{\ess} = \bigcap_{U\in S} \ker \E^\RR_{U} = \{ f\in \Falg{\G , \LL}{}{\RR}{} : \text{$\E^\RR_U(af)=0$ for all $a\in C_0(r(U)),\  U\in S$} \},
\]
and $\ker j_{\RR}^{\ess}$ %=\bigcap_{U\in S} \ker \E^\RR%=\{b\in \F_{\RR}(\G , \L): \E^\RR_U(ab)=0 \text{ for all } a\in C_0(r(U)),\,\, U\in S\}
is the largest ideal in $\Falg{\G , \LL}{}{\RR}{}$ contained in $\ker \E^\RR_{X}$.
\end{prop}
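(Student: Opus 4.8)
The plan is to prove Proposition~\ref{prop:local_expectations_and_local_j_map} by mirroring the structure of the proof of Proposition~\ref{prop:expectations_and_j_map}, systematically replacing the restriction maps $E_U$ by their composites $\E_U = q_U \circ E_U$ with the quotient maps onto the Dixmier-type spaces $\D(U,\LL)$, and replacing the supremum norm estimates on $\B(U,\LL)$ by the essential-norm estimates \eqref{eq:essential_norm}. First I would establish the cycle of implications \ref{enu:local_expectations_and_j_map3}$\Rightarrow$\ref{enu:local_expectations_and_j_map2}$\Rightarrow$\ref{enu:local_expectations_and_j_map1}. The first of these is immediate by setting $\E^\RR_U(f) := j_{\RR}^{\ess}(f)|_U$, using that $f \mapsto f|_U$ maps $\D(\G,\LL)$ contractively to $\D(U,\LL)$. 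The second follows at once from $X \in \Bis(\G)$, since taking $U = X$ recovers the defining contractive map $\E_X^{\RR} : \Falg{\G , \LL}{}{\RR}{}\to\D(X)$ of an exotic groupoid Banach algebra.

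The substantive implication is \ref{enu:local_expectations_and_j_map1}$\Rightarrow$\ref{enu:local_expectations_and_j_map3}, which I would prove in two stages exactly paralleling the argument in Proposition~\ref{prop:expectations_and_j_map}. Starting from the hypothesis $\| q_X(f|_X) \| \leq \|f\|_{\RR}$, I would first use Lemma~\ref{lem:approx_conditional_exp_twisted} to extrapolate the estimate through bisections: for $U \in S(\LL)$ and $f \in \mathfrak{C}_c(r(U)\G , \L)$, choosing the norm-one $b \in C_c(U^*,\LL)$ with $|(b*f)|_X| = |f \circ s|_U^{-1}|$, one obtains $\| q_U(f|_U)\| = \| q_X((b*f)|_X)\| \leq \|b*f\|_{\RR} \leq \|f\|_{\RR}$, where the middle equality uses that $q_X$ and $q_U$ are linked by the homeomorphism $s|_U^{-1} : r(U) \to U$, which carries meager sets to meager sets and hence intertwines the two essential norms. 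This yields a contractive $\E^\RR_U$ on $C_0(r(U))\Falg{\G , \LL}{}{\RR}{}$. In the second stage I would promote this to a global bound: for any $f \in \mathfrak{C}_c(\G,\LL)$ and $\varepsilon > 0$, one wants to bound $\|q_\G(f)\|_{\D(\G,\LL)}$ from above by $\|f\|_{\RR}$. By \eqref{eq:essential_norm} there is a point $\gamma$ outside a suitable meager set where $|f(\gamma)|$ nearly attains the essential norm; choosing $U \in S(\LL)$ containing $\gamma$ and $a \in C_c(r(U))$ with $a(r(\gamma))=1$ one localizes near $\gamma$ and applies the first-stage estimate to $a*f$, giving $\|q_\G(f)\| \leq \|f\|_{\RR}$ and hence the desired factorization $j_{\RR}^{\ess}$.

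The main obstacle, and the point deserving most care, is the very last step: extracting from the essential norm $\|q_\G(f)\|$ a genuine point $\gamma \in \G$ witnessing it, so that the pointwise localization argument of Proposition~\ref{prop:expectations_and_j_map} goes through. In the reduced (non-essential) case one works with the honest supremum $\|f\|_\infty = \sup_\gamma |f(\gamma)|$ and any near-maximizing point serves; here one must instead invoke the minimum-over-meager-sets formula in \eqref{eq:essential_norm} to produce a comeager set $\G \setminus M$ on which the supremum of $|f|$ is close to $\|q_\G(f)\|$, and then select $\gamma \in \G \setminus M$ with $|f(\gamma)|$ close to that value. A compatibility check is needed to ensure that the local trivialization identification $\Phi_U$ and the passage to $\D(U,\LL)$ respect one another, i.e.\ that meagerness is preserved under $s|_U^{-1}$ and under multiplication by the unitary section trivializing $\LL|_U$; both hold since these are homeomorphisms.

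Once the equivalence is established, the closing assertions are routine. That $\E^\RR_U$ is a $C_0(X)$-module map with $\E^\RR_U(f) = j_{\RR}^{\ess}(f)|_U$, and that $j_{\RR}^{\ess}$ is a $\mathfrak{C}_c(\G,\LL)$-bimodule map, follow by approximating $f \in \Falg{\G , \LL}{}{\RR}{}$ by a sequence in $\mathfrak{C}_c(\G,\LL)$ in $\|\cdot\|_{\RR}$, passing to the limit through the continuity of the bimodule action on $\D(\G,\LL)$ proved in Lemma~\ref{lem:bimodule_Dixmier}, exactly as in Proposition~\ref{prop:expectations_and_j_map}. The bimodule property then forces $\ker j_{\RR}^{\ess}$ to be a (closed, two-sided) ideal, and the identities $\ker j_{\RR}^{\ess} = \bigcap_{U \in S}\ker \E^\RR_U = \{ f : \E^\RR_U(af) = 0 \text{ for all } a \in C_0(r(U)),\ U\in S \}$ follow because $\E^\RR_U$ is the restriction of $j_{\RR}^{\ess}$ to $U$, the family $S$ covers $\G$, and each $\E^\RR_U$ is a $C_0(X)$-module map so that $\E^\RR_U(f)=0$ iff $\E^\RR_U(af)=0$ for all such $a$.
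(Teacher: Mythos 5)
Most of your outline coincides with the paper's actual proof: the implications \ref{enu:local_expectations_and_j_map3}$\Rightarrow$\ref{enu:local_expectations_and_j_map2}$\Rightarrow$\ref{enu:local_expectations_and_j_map1} via $\E^\RR_U(f):=j_{\RR}^{\ess}(f)|_U$, the first stage of \ref{enu:local_expectations_and_j_map1}$\Rightarrow$\ref{enu:local_expectations_and_j_map3} via Lemma~\ref{lem:approx_conditional_exp_twisted}, and the closing bimodule/ideal assertions via Lemma~\ref{lem:bimodule_Dixmier} are exactly the paper's steps. The gap is in your second stage of \ref{enu:local_expectations_and_j_map1}$\Rightarrow$\ref{enu:local_expectations_and_j_map3}. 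After selecting $\gamma\in\G\setminus M$ with $|f(\gamma)|>\|q_{\G}(f)\|-\varepsilon$ and localizing with $a\in C_c(r(U))$, $a(r(\gamma))=1$, your argument needs the chain
\[
\|q_{\G}(f)\|-\varepsilon<|f(\gamma)|=|(a*f)(\gamma)|\le \big\|\E^{\RR}_U(a*f)\big\|\le \|a*f\|_{\RR}\le\|f\|_{\RR},
\]
and the third step --- a point evaluation dominated by the norm of $\D(U,\LL)$ --- is false in general: the norm on $\D(U,\LL)$ is computed modulo meager sets, and a quasi-continuous section can take values at individual non-isolated points (e.g.\ at its points of discontinuity) that strictly exceed its essential supremum. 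This is exactly the phenomenon that makes $\mathfrak{M}_0(\G,\LL)$ non-zero and distinguishes the essential from the reduced setting, so the pointwise localization of Proposition~\ref{prop:expectations_and_j_map} does not carry over verbatim. Your ``main obstacle'' paragraph addresses only the selection of $\gamma$ (a lower bound on $|f(\gamma)|$), not this upper bound, and your ``compatibility check'' about $\Phi_U$ and meagerness under homeomorphisms is tangential to the real difficulty.

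The paper resolves this by localizing on an open set rather than at a point: using Lemma~\ref{lem:comeager} together with \eqref{eq:essential_norm} it produces a non-empty open $U$ (which one may take in $S(\LL)$) and a meager $M\subseteq U$ such that $\|q_{\G}(f)\|<|f(\gamma)|+\varepsilon$ for \emph{all} $\gamma\in U\setminus M$, and chooses $a\in C_c(r(U))$ equal to $1$ on $r(V)$ for some non-empty open $V\subseteq U$. Then $(a*f)|_V=f|_V$, and since for every meager $M'\subseteq V$ the set $V\setminus(M\cup M')$ is non-empty (Baire category), one gets $\|q_V(f|_V)\|\ge\|q_{\G}(f)\|-\varepsilon$; contractivity of the restriction map $\D(U,\LL)\to\D(V,\LL)$ then yields $\|q_{\G}(f)\|-\varepsilon\le\|\E^{\RR}_U(a*f)\|\le\|f\|_{\RR}$. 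Alternatively, your pointwise route could be repaired by insisting that $\gamma$ be a point of continuity of $f$ (Lemma~\ref{lem:comeager} permits enlarging $M$ to arrange this) and then arguing via density of comeager sets that at a continuity point the value is dominated by the essential norm; but that argument, which is the crux of the adaptation, is missing from your proposal.
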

\begin{proof} 
We adapt the proof of Proposition~\ref{prop:expectations_and_j_map}. 
Implications \ref{enu:local_expectations_and_j_map3}$\Rightarrow$\ref{enu:local_expectations_and_j_map2}$\Rightarrow$\ref{enu:local_expectations_and_j_map1} are clear (by putting $\E^\RR_{U}(f) := j_{\RR}^{\ess}(f)|_{U}$ for $f\in \Falg{\G , \LL}{}{\RR}{}$, $U\in \Bis(\G)$). 
We assume \ref{enu:local_expectations_and_j_map1}. 
For any $U\in S(\LL)$ and $f\in \mathfrak{C}_c(r(U)\G , \L)$, taking $b$ as in Lemma~\ref{lem:approx_conditional_exp_twisted}, %and using \eqref{eq:essential_norm}  
we get
\[
    \|q_U(f|_{U})\|=\|q_X(f\circ s|_{U}^{-1})\| = \| q_X(b * f|_X)\| = \| \E_{X}^{\RR} (b *f) \| \leq \|b *f \|_{\RR} \leq \| f \|_{\RR}.
\]
This proves the existence of a linear contractive map $\E^\RR_U : C_0(r(U)) \Falg{\G , \LL}{}{\RR}{} \to \D(U,\L)$ induced by $\E^\RR_U$. 
Let $f\in \mathfrak{C}_c(\G,\LL)$.  
By Lemma~\ref{lem:comeager} and \eqref{eq:essential_norm}, for any $\varepsilon >0$ there is a non-empty open set $U\subseteq \G$ and a meager set $M\subseteq U$  such that $\|q_{\G}(f)\|<|f(\gamma)|+\varepsilon$ for any $\gamma\in U\setminus M$. 
We may assume that $U\in S(\LL)$. 
Pick a  norm one function $a\in C_c(r(U))$ which is equal to $1$ on $r(V)$ for some non-empty open set $V\subseteq U$. 
Then $f(\gamma)=E^\RR_{U}(a* f)(\gamma)$ for $\gamma \in V$. 
Thus 
\[
    \| q_{\G}(f) \|-\varepsilon < \|q_{V}(f|_{V})\|=\|q_{V}\circ E^\RR_{V}(a* f)\|\leq \|\E^\RR_{U}(a* f)\| \leq \|a* f\|_{\RR}\leq \| f\|_{\RR}.
\]
This implies that $\|q_{\G}(f)\|_{\infty}\leq \| f\|_{\RR}$ and hence \ref{enu:local_expectations_and_j_map3} holds. 
Thus \ref{enu:local_expectations_and_j_map1}--\ref{enu:local_expectations_and_j_map3} are equivalent. 

Now assume that \ref{enu:local_expectations_and_j_map1}--\ref{enu:local_expectations_and_j_map3} hold.
Let $i_{\RR} : \mathfrak{C}_c(\G,\LL) \to \Falg{\G , \LL}{}{\RR}{}$ be the canonical homomorphism. 
The quotient $q_{\G} : \mathfrak{C}_c(\G,\LL) \to \mathfrak{C}_0(\G,\LL)/\mathfrak{M}(\G , \LL) \subseteq \D(\G,\LL)$ is a $\mathfrak{C}_c(\G,\LL)$-bimodule map by Lemma~\ref{lem:bimodule_Dixmier}. 
By definition of $j_\RR^{\ess}$ we have $q_{\G}|_{\mathfrak{C}_c(\G,\LL)} = j_\RR^{\ess}\circ i_{\RR}$.
Fix $f \in \Falg{\G , \LL}{}{\RR}{}$ and $g \in \mathfrak{C}_c(\G , \L)$. 
Using the bimodule structure on $\D(\G,\L)$ from Lemma~\ref{lem:bimodule_Dixmier}, we claim that 
\begin{equation}\label{eq:j_ess_bimodule_property}
    j_{\RR}^{\ess}(i_{\RR}(g)f) = g *j_{\RR}^{\ess}(f), \qquad j_{\RR}^{\ess}(f i_{\RR}(g)) = j_{\RR}^{\ess}(f) * g.
\end{equation}
Indeed, pick a sequence $( f_n )_{n=1}^\infty$ in $\mathfrak{C}_c(\G , \L)$ such that $( i_{\RR}(f_n) )_{n=1}^\infty$ converges to $f$ in $\| \cdot \|_{\RR}$. 
Then $( q_{\G}(f_n) )_{n=1}^\infty$ converges to $j_{\RR}^{\ess}(f)$ in the quotient norm of $\D(\G,\LL)$.
Hence also $g * q_{\G}(f_n) \to g* j_{\RR}^{\ess}(f)$ by Lemma~\ref{lem:bimodule_Dixmier}.
Thus
\[  
    g * j_{\RR}^{\ess}(f) = \lim_{n\to \infty} g * q_{\G}(f_n) =\lim_{n\to \infty} q_{\G}(g * f_n) = \lim_{n\to \infty} j_{\RR}^{\ess} \big( i_{\RR}(g) i_{\RR}(f_n) \big) = j_{\RR}^{\ess}( i_{\RR}(g) f).
\]
Symmetric considerations give $j_{\RR}^{\ess}(fi_{\RR}(g))=j_{\RR}^{\ess}(f) *g$, which proves \eqref{eq:j_ess_bimodule_property}.
By density of $\mathfrak{C}_c(\G,\LL)$ in $\Falg{\G , \LL}{}{\RR}{}$, the equalities \eqref{eq:j_ess_bimodule_property} imply that $\ker j_{\RR}^{\ess}$ is an ideal in $\Falg{\G , \LL}{}{\RR}{}$. Since $\E^\RR_{U}(b) = j_{\RR}^{\ess}(b)|_{U}$ for 
 $U\in \Bis(\G)$ and $b\in \Falg{\G , \LL}{}{\RR}{}$, \eqref{eq:j_ess_bimodule_property} implies that $\E^\RR_{U}$ is a $C_0(X)$-module map. 
Now the equivalent descriptions of $\ker j_{\RR}^{\ess}$ follow, and the same argument as in the proof of Proposition \ref{prop:expectations_and_j_map} 
shows that $\ker j_{\RR}^{\ess}$ is the largest ideal in $\Falg{\G , \LL}{}{\RR}{}$ contained in  $\ker \E^\RR_{X}$.
\end{proof}

\begin{rem}\label{rem:essential_vs_reduced}
In view of Proposition~\ref{prop:local_expectations_and_local_j_map}, an exotic Banach algebra of  $(\G , \L)$ 
is a Hausdorff completion $\Falg{\G , \LL}{}{\RR}{}$ of $\mathfrak{C}_c(\G , \L)$ equipped with a contractive linear map $j_{\RR}^{\ess} : \Falg{\G , \LL}{}{\RR}{} \to \D(\G,\LL)$ which is the identity on each of the spaces $C_c(U,\LL)$, $U\in \Bis(\G)$. 
This map is injective if and only if  $\Falg{\G , \LL}{}{\RR}{}$ is essential.
By the same argument as in Corollary~\ref{co:QuotientIsReduced}, for any exotic groupoid Banach algebra $\Falg{\G , \LL}{}{\RR}{}$, the quotient $\Falg{\G , \LL}{}{\RR}{} / \ker j_{\RR}^{\ess}$ is naturally an essential groupoid Banach algebra of $(\G , \L)$. 
Every groupoid Banach algebra $\Falg{\G , \LL}{}{\RR}{}$ is exotic and we have $j_{\RR}^{\ess} = q_{\G}\circ j_{\RR}$ and $\ker j_{\RR}^{\ess}=j_{\RR}^{-1}(\mathfrak{M}_0(\G , \L))$. 
In particular, if $\mathfrak{M}_0(\G , \L)=\{0\}$, then all reduced algebras are essential, as then $\ker j_{\RR}^{\ess}=\ker j_{\RR}^{\red}$. 
\end{rem}

\begin{rem}\label{rem:exotic_and_graded}  
If an exotic groupoid Banach algebra $\FR(\G , \L)$ is $S$-graded by a unital inverse subsemigroup $S \subseteq \Bis(\G)$ which covers $\G$, see Definition~\ref{defn:S_graded_algebras}, then  the spaces $C_0(U,\LL)$, $U\in S$,  embed isometrically into $\FR(\G , \LL)$ in a way that $j_{\RR}^{\ess}|_{C_0(U,\LL)}$ is the identity on $C_0(U,\LL)$, $U\in S$. 
Indeed, by $S$-grading, for $f\in C_c(U,\LL)$, $U\in S$, we have $\|f\|_{\RR}\leq \|f\|_{\infty}$ and contractivness of $\E^\RR_U$ 
gives  the opposite inequality. 
\end{rem}

The following is an analogue of Corollary~\ref{cor:kernel_of_reps_with_generalized_expectation}: 
%generalizes Remark~\ref{rem:essential_universal}.

\begin{cor}\label{cor:kernel_of_reps_with_expectation} 
Let $\Falg{\G , \LL}{}{\RR}{}$ be an exotic groupoid Banach algebra. 
For any representation $\psi : \Falg{\G , \LL}{}{\RR}{} \to B$ such that $\|q_X(f|_X)\| \leq \|\psi(f)\|$ for $f\in \mathfrak{C}_c(\G,\LL)$, we have $\ker \psi \subseteq \ker j_{\RR}^{\ess}$, and thus $\psi$ is necessarily injective on the spaces $C_c(U , \L)$, $U\in \Bis(\G)$. 
If $\Falg{\G , \LL}{}{\RR}{}$ is an essential groupoid Banach algebra, then $\psi$ is necessarily injective on $\Falg{\G , \LL}{}{\RR}{}$.
\end{cor}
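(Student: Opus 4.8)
The plan is to mirror the structure of the proof of Corollary~\ref{cor:kernel_of_reps_with_generalized_expectation}, using the essential $j$-map $j_{\RR}^{\ess}$ in place of the reduced one. The key observation is that the hypothesis $\|q_X(f|_X)\| \leq \|\psi(f)\|$ for $f\in \mathfrak{C}_c(\G,\LL)$ is precisely the condition that lets us regard the closure of the image $\psi(\Falg{\G , \LL}{}{\RR}{})$ as an exotic groupoid Banach algebra in its own right. Indeed, setting $F^{\psi}(\G , \LL) := \overline{\psi(\Falg{\G , \LL}{}{\RR}{})}$ and equipping $\mathfrak{C}_c(\G,\LL)$ with the seminorm $f\mapsto \|\psi(i_{\RR}(f))\|$, the stated inequality says exactly that this seminorm dominates $\|q_X(\cdot|_X)\|$ on $\mathfrak{C}_c(\G,\LL)$; since $\psi$ is contractive and $\FR(\G , \LL)$ is exotic, the seminorm is also bounded above by $\|\cdot\|_{\RR}$ and agrees with $\|\cdot\|_\infty$ on $C_c(X)$. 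Hence $F^{\psi}(\G , \LL)$ satisfies the hypotheses of Definition~\ref{de:ExoticGroupoidBanachAlgebra}.

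First I would invoke Proposition~\ref{prop:local_expectations_and_local_j_map} applied to $F^{\psi}(\G , \LL)$ to obtain its own essential $j$-map, call it $j_{\psi}^{\ess} : F^{\psi}(\G , \LL)\to \D(\G,\LL)$, which extends the quotient map $q_{\G}$ on $\mathfrak{C}_c(\G,\LL)$. The factorization identity I need is
\[
    j_{\psi}^{\ess}\circ \psi = j_{\RR}^{\ess}.
\]
This is verified by checking it on the dense subalgebra $\mathfrak{C}_c(\G,\LL)$: for $f\in \mathfrak{C}_c(\G,\LL)$ both sides equal $q_{\G}(f)$, since $j_{\RR}^{\ess}\circ i_{\RR} = q_{\G}$ by definition of $j_{\RR}^{\ess}$ and likewise $j_{\psi}^{\ess}(\psi(i_{\RR}(f))) = q_{\G}(f)$ by definition of $j_{\psi}^{\ess}$. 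Both $j_{\psi}^{\ess}\circ \psi$ and $j_{\RR}^{\ess}$ are continuous, so they agree everywhere on $\Falg{\G , \LL}{}{\RR}{}$.

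From the factorization the conclusions follow immediately. If $\psi(f)=0$ then $j_{\RR}^{\ess}(f) = j_{\psi}^{\ess}(\psi(f)) = 0$, giving $\ker\psi\subseteq \ker j_{\RR}^{\ess}$. Since, by the final statement of Proposition~\ref{prop:local_expectations_and_local_j_map}, $j_{\RR}^{\ess}$ restricts to the identity on each $C_c(U,\LL)$ with $U\in\Bis(\G)$, its kernel meets each such space trivially, so $\psi$ is injective on every $C_c(U,\LL)$. Finally, if $\Falg{\G , \LL}{}{\RR}{}$ is essential then $\ker j_{\RR}^{\ess}=\{0\}$ by Definition~\ref{def:essential_algebras}, whence $\ker\psi=\{0\}$ and $\psi$ is injective on all of $\Falg{\G , \LL}{}{\RR}{}$. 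The only point requiring care, and the main (albeit mild) obstacle, is confirming that $F^{\psi}(\G , \LL)$ genuinely satisfies the exotic-algebra axioms so that Proposition~\ref{prop:local_expectations_and_local_j_map} applies --- in particular that $\|\psi(\cdot)\|$ restricted to $C_c(X)$ coincides with $\|\cdot\|_\infty$, which is where the isometric embedding of $C_0(X)$ together with the two-sided inequality $\|q_X(f|_X)\|\leq\|\psi(f)\|\leq\|f\|_{\RR}$ is used.
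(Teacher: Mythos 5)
Your proposal is correct and follows essentially the same route as the paper: the paper's proof likewise treats $\overline{\psi(\Falg{\G , \LL}{}{\RR}{})}$ as an exotic groupoid Banach algebra via the assumed inequality, invokes Proposition~\ref{prop:local_expectations_and_local_j_map} to obtain $j_{\psi}^{\ess}$ with the factorization $j_{\psi}^{\ess}\circ\psi = j_{\RR}^{\ess}$, and concludes exactly as you do. Your added verifications (the exotic axioms for the image algebra, the density/continuity argument for the factorization) are details the paper leaves implicit, and they are carried out correctly.
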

\begin{proof}
By the assumed inequality we may treat $F_{\psi}(\G , \LL) := \overline{\psi(\Falg{\G , \LL}{}{\RR}{})}$ as an exotic groupoid Banach algebra. 
By Proposition~\ref{prop:local_expectations_and_local_j_map}, there are contractive maps $j_{\psi}^{\ess} : F_{\psi}(\G , \LL) \to \D(\G,\LL)$ and $j_{\RR}^{\ess} : \Falg{\G , \LL}{}{\RR}{} \to \D(\G,\LL)$ such that $j_{\psi}^{\ess} \circ \psi = j_{\RR}^{\ess}$. 
Hence $\ker \psi \subseteq \ker j_{\RR}^{\ess}$. 
If $\Falg{\G , \LL}{}{\RR}{}$ is essential, then  $j_{\RR}^{\ess}$ is injective, and so $\psi$ is injective.
\end{proof}

%\begin{rem} 
Consider the essential algebra $F^p_{\ess}(\G , \LL) := F^p(\G,\LL) / j_{p}^{-1}(\mathfrak{M}_0(\G , \L))$ for $p\in [1,\infty]$. 
If $p=2$, then $F^2_{\ess}(\G , \LL) = C^{*}_{\ess}(\G , \L)$ is the unique essential $C^*$-algebra. 
If $p\neq 2$, we do not know if $\Falg{\G , \LL}{\ess}{p}{}$ is an $L^p$-operator algebra, as such operator algebras in general are not closed under quotients. 
However, if $\G$ can be covered by a countable family of bisections (so for instance when $\G$ is $\sigma$-compact) or when $\G$ is topologically principal, then we have a natural injective representation of $\Falg{\G , \LL}{\ess}{p}{}$ on an $L^p$-space. 
Namely, by Lemma~\ref{lem:singular_groupoid}, $\G_{\Hau}$ is a full subgroupoid of $\G$, and so the regular representation $\Lambda_p : F^p(\G,\LL) \to B(\ell^{p}(\G , \L))$ compresses to an `essential representation' $\widetilde{\Lambda}_p : F^p(\G,\LL) \to B ( \ell^{p} (\G_{\Hau} , \L))$. 
By \eqref{eq:j_map}, we have
\[
    \ker \widetilde{\Lambda}_p = \{ f\in \mathfrak{C}_c(\G , \L) : \supp( j_p(f) ) \subseteq \G\setminus \G_{\Hau} \} .
\]
Hence if $\G_{\Hau}$ is dense in $\G$ (which holds whenever $\G$ can be covered by a countable family of bisections or when $\G$ is topologically principal), then using Proposition~\ref{prop:singular_functions} we see that $\widetilde{\Lambda}_p$ induces an injective representation $\Falg{\G , \LL}{\ess}{p}{} \to B(\ell^{p}( \G_{\Hau} , \L))$.  
The closure of its range is an essential groupoid $L^p$-operator algebra:
%We get a similar injective representation when $p$ is replaced by $P\subseteq [1,\infty]$. 
%\end{rem}

\begin{prop}\label{prop:essential_L^p_operator_algebra}
Let $p\in [1,\infty]$.
Assume $\G$ can be covered by a countable family of open bisections and consider the compression $\widetilde{\Lambda}_p : F^p(\G,\LL) \to B ( \ell^{p} (\G_{\Hau} , \L))$ of the regular representation $\widetilde{\Lambda}_p : F^p(\G,\LL) \to B ( \ell^{p} (\G_{\Hau} , \L))$, given by \eqref{eq:regular_for_p}.
Then  
\[
    \FFalg{\G , \LL}{\ess}{p}{} := \Falg{\G , \LL}{}{\{\widetilde{\Lambda}_p\}}{}\cong \overline{\widetilde{\Lambda}_p(F^p(\G,\LL))}
\]
is an essential groupoid $L^p$-operator algebra of $(\G , \L)$. 
\end{prop}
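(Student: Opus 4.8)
The plan is to establish in turn the three assertions packaged in the statement: the isometric identification $F_{\{\widetilde{\Lambda}_p\}}(\G,\LL)\cong\overline{\widetilde{\Lambda}_p(F^p(\G,\LL))}$, that the resulting algebra is an $L^p$-operator algebra, and that it is an essential groupoid Banach algebra of $(\G,\LL)$. The first is definitional: by Definition~\ref{defn:S_graded_algebras}, $F_{\{\widetilde{\Lambda}_p\}}(\G,\LL)$ is the Hausdorff completion of $\mathfrak{C}_c(\G,\LL)$ in the seminorm $f\mapsto\|\widetilde{\Lambda}_p(f)\|$, and as $\mathfrak{C}_c(\G,\LL)$ is dense in $F^p(\G,\LL)$ this completion is isometrically isomorphic to the closure of $\widetilde{\Lambda}_p(F^p(\G,\LL))$ in $B(\ell^p(\G_{\reg},\LL))$. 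For the second, fixing norm-one vectors $1_\gamma\in L_\gamma$ as in the proof of Proposition~\ref{prop:regular_disintegrated} identifies $\ell^p(\G_{\reg},\LL)$ isometrically with $\ell^p(\G_{\reg})=L^p(\mu)$ for the counting measure $\mu$ on $\G_{\reg}$; hence $\overline{\widetilde{\Lambda}_p(F^p(\G,\LL))}$ is a closed subalgebra of $B(L^p(\mu))$, so an $L^p$-operator algebra.

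The substance lies in the third assertion. I would introduce the \emph{desingularized $j$-map} $\tilde{\jmath}(a)(\gamma):=(a\,1_{d(\gamma)})(\gamma)$, $a\in B(\ell^p(\G_{\reg},\LL))$, $\gamma\in\G_{\reg}$, mirroring \eqref{eq:j_map}; since $\|1_{d(\gamma)}\|_p=1$ it satisfies $|\tilde{\jmath}(a)(\gamma)|\leq\|a\|$, so $\tilde{\jmath}$ is a contraction into the bounded sections over $\G_{\reg}$ restricting on $\mathfrak{C}_c(\G,\LL)$ to $f\mapsto f|_{\G_{\reg}}$. By Lemma~\ref{lem:singular_groupoid}\ref{en:singular_groupoid3} every such $f$ is continuous on $\G_{\reg}$, so $\tilde{\jmath}(a)$, being a uniform limit of these restrictions, is continuous on $\G_{\reg}$ for all $a$. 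From the contraction bound and meagerness of $X_{\sing}=\G_{\sing}\cap X$ in $X$ (Lemma~\ref{lem:singular_groupoid}\ref{en:singular_groupoid2}) I obtain the exotic property of Proposition~\ref{prop:local_expectations_and_local_j_map}: by \eqref{eq:essential_norm}, $\|q_X(f|_X)\|\leq\sup_{X_{\reg}}|f|\leq\|\tilde{\jmath}(f)\|_\infty\leq\|\widetilde{\Lambda}_p(f)\|$, while for $f\in C_c(X)$ the operator $\widetilde{\Lambda}_p(f)$ acts by the multipliers $f(r(\gamma))$, $\gamma\in\G_{\reg}$, hence has norm $\sup_{x\in X_{\reg}}|f(x)|=\|f\|_\infty$ (as $X_{\reg}$ is dense in $X$), so $C_0(X)$ embeds isometrically. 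Proposition~\ref{prop:local_expectations_and_local_j_map} then furnishes $j^{\ess}$, which agrees with the contraction $q_{\G}\circ\tilde{\jmath}$ on the dense subalgebra $\mathfrak{C}_c(\G,\LL)$ (using $q_{\G}(f)=q_{\G}(f|_{\G_{\reg}})$, valid since $\G_{\sing}$ is meager), and hence $j^{\ess}=q_{\G}\circ\tilde{\jmath}$ throughout.

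Finally I would prove $j^{\ess}$ injective via two ingredients. A short convolution computation shows that for $f\in\mathfrak{C}_c(\G,\LL)$ the coefficient $(\widetilde{\Lambda}_p(f)1_{\gamma'})(\gamma)$ vanishes unless $d(\gamma)=d(\gamma')$ and otherwise equals $f(\gamma\gamma'^{-1})$; passing to operator-norm limits (which converge coefficientwise, as $\|1_{\gamma'}\|_p=1$) and using that $\G_{\reg}$ is a subgroupoid (Lemma~\ref{lem:singular_groupoid}) shows $(a\,1_{\gamma'})(\gamma)=\tilde{\jmath}(a)(\gamma\gamma'^{-1})$ on the blocks $d(\gamma)=d(\gamma')$ and $0$ elsewhere, for every $a\in\overline{\widetilde{\Lambda}_p(F^p(\G,\LL))}$; thus $\tilde{\jmath}(a)=0$ forces $a=0$. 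Secondly, $\G$ is a Baire space, since every point has an open bisection neighbourhood homeomorphic to an open subset of the locally compact Hausdorff space $X$, so $\G$ is covered by open Baire subspaces. Now if $j^{\ess}(a)=q_{\G}(\tilde{\jmath}(a))=0$ then $\supp(\tilde{\jmath}(a))$ is meager; but $\tilde{\jmath}(a)$ is continuous on $\G_{\reg}$, so this support is the trace on $\G_{\reg}$ of an open set $O\subseteq\G$, and were it nonempty then $O$, being the union of the meager sets $\supp(\tilde{\jmath}(a))$ and $O\cap\G_{\sing}$, would be a nonempty meager open set, contradicting the Baire property. Hence $\tilde{\jmath}(a)=0$, so $a=0$, giving $\ker j^{\ess}=\{0\}$ and essentiality. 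The main obstacle is exactly this last step, where one must upgrade ``meager support'' to ``identically zero'': this is where the Baire property of the possibly non-Hausdorff $\G$, the continuity of $\tilde{\jmath}(a)$ on $\G_{\reg}$, and the convolution structure recovering $a$ from $\tilde{\jmath}(a)$ all enter essentially.
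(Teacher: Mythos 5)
Your argument is essentially the paper's own: the same desingularized coefficient map $\tilde{\jmath}(a)(\gamma)=(a\,1_{d(\gamma)})(\gamma)$, the same chain $\|q_{\G}(f)\|\leq\|f|_{\G_{\reg}}\|_{\infty}\leq\|\widetilde{\Lambda}_p(f)\|$ resting on meagerness of $\G_{\sing}$ (Lemma~\ref{lem:singular_groupoid}), and the same matrix-coefficient recovery of $a$ from $\tilde{\jmath}(a)$ used for injectivity. In fact you are more complete than the paper at the crucial point: the paper's closing ``Thus $j_p^{\ess}$ is injective'' silently requires upgrading ``the coefficient function has meager support'' (which is all that $j_p^{\ess}(a)=0$ gives, since $j_p^{\ess}$ lands in the quotient $\D(\G,\LL)$) to ``the coefficient function vanishes identically on $\G_{\reg}$''; your continuity-on-$\G_{\reg}$ plus Baire-category argument is exactly the missing justification, and your check that $C_0(X)$ embeds isometrically (via density of $X_{\reg}$ in $X$) is likewise a hypothesis of Proposition~\ref{prop:local_expectations_and_local_j_map} that the paper's proof does not verify explicitly.

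There is, however, one genuine gap: the case $p=\infty$, which the statement includes. Your final step ``$\tilde{\jmath}(a)=0$ forces $a=0$'' amounts to: $a\,1_{\gamma'}=0$ for all $\gamma'\in\G_{\reg}$ implies $a=0$. This is valid for $p<\infty$, where $\{1_{\gamma'}\}_{\gamma'\in\G_{\reg}}$ is a Schauder basis and finitely supported sections are dense; but for $p=\infty$ the closed span of the $1_{\gamma'}$ is only the subspace of sections vanishing at infinity, and a bounded operator on $\ell^{\infty}(\G_{\reg},\LL)$ is \emph{not} determined by its matrix coefficients (equivalently, by its restriction to that subspace). This is precisely why the paper begins with ``we may assume $p<\infty$'', invoking the isometric anti-isomorphism between the $p=\infty$ and $p=1$ algebras induced by the involution, exactly as in the proof of Proposition~\ref{prop:regular_disintegrated}; the corresponding $j$-maps intertwine with $*$, so injectivity for $p=1$ yields it for $p=\infty$. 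You should either make that reduction, or argue separately that elements of $\overline{\widetilde{\Lambda}_\infty(\mathfrak{C}_c(\G,\LL))}$ are determined by their values on finitely supported vectors --- true here because each $\widetilde{\Lambda}_\infty(f)$, $f\in\mathfrak{C}_c(\G,\LL)$, computes every output coordinate from finitely many input coordinates, a property that survives operator-norm limits by a Moore--Osgood interchange --- but this is an extra argument, not automatic.
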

\begin{proof} 
As in the the proof of Proposition \ref{prop:regular_disintegrated}, we may assume that $p<\infty$. 
We identify $\FFalg{\G , \LL}{\ess}{p}{}$  with $\overline{\widetilde{\Lambda}_p(F^p(\G,\LL))}$, and we choose a Schauder basis $\{1_\gamma\}_{\gamma\in  \G_{\Hau}}$ for $\ell^{p}( \G_{\Hau},\LL)$. 
By Lemma~\ref{lem:singular_groupoid}\ref{en:singular_groupoid2}, the set $\G_{\Hau}$ is meager. 
Thus, using \eqref{eq:essential_norm} and recalling the notation from the proof of Proposition~\ref{prop:regular_disintegrated}, for any $f\in \mathfrak{C}_c(\G , \L)$ we get 
\[
    \| q_{\G}(f) \| \leq \| f|_{\G_{\Hau} } \|_{\infty} = \sup_{\gamma\in  \G_{\Hau}} | ( f 1_{d(\gamma)} )(\gamma) | \leq \sup_{\gamma\in\G_{\Hau}} \|\widetilde{\Lambda}_p(f) 1_{d(\gamma)} \|_p \leq \|\widetilde{\Lambda}_p(f)\|.
\]
Hence we get a contractive linear map $j_p^{\ess} : \FFalg{\G , \LL}{\ess}{p}{} \to \D(\G,\LL)$  consistent with  the quotient map $q_{\G}:\mathfrak{C}_c(\G,\LL) \to \D(\G,\LL)$. 
As in \eqref{eq:j_map} one sees that  for every $\gamma \in \G_{\Hau}$ and any $f\in \FFalg{\G , \LL}{\ess}{p}{}$, we have $\| f 1_{\gamma} \|_p = \left(\sum_{d(\eta) = d(\gamma)} |j_p^{\ess}(f)(\eta \gamma^{-1})|^p\right)^{1/p}$. 
Thus $j_p^{\ess}$ is injective.
\end{proof}

\begin{rem}
When $p=2$, the injective representation $\widetilde{\Lambda}_p$ is automatically isometric and so 
$\FFalg{\G , \LL}{\ess}{2}{}\cong \Falg{\G , \LL}{\ess}{2}{}\cong C^*_{\ess}(\G,\LL)$ is the unique essential $C^*$-algebra. 
This representation (for untwisted groupoids) also appears in \cite[Proposition 3.31]{Buss_Martinez}, and a similar representation was constructed in \cite[Proposition 1.12]{Neshveyev_Schwartz}.
For $p\neq 2$, there is no reason for $\widetilde{\Lambda}_p$ to be isometric, unless $\Falg{\G , \LL}{\red}{p}{}=\Falg{\G , \LL}{\ess}{p}{}$.
\end{rem}

\begin{lem}\label{lem:essential_from_other_essential}
Let $\|\cdot \|_{\RR}$ and $\|\cdot\|_{\RR_{i}}$, $i\in I$, be seminorms on $\mathfrak{C}_c(\G , \L)$ that define exotic groupoid Banach algebras 
$\Falg{\G , \LL}{}{\RR}{}$ and $\Falg{\G , \LL}{}{\RR_i}{}$, $i\in I$, for $(\G , \L)$. % that are reduced (resp. admitting FD).
Then the formulas \eqref{eq:R_norms_from_other_norms}, provided $\|\cdot\|_{\{ \RR_i\}_{i}}$ is finite, define seminorms that yield exotic groupoid Banach algebras $\Falg{\G , \LL}{}{\RR^*}{}$, $\Falg{\G , \LL}{}{ \{ \RR_i \}_{i} }{}$ for $(\G , \L)$.  
Moreover, $\Falg{\G , \LL}{}{\RR^*}{}$ is essential when $\Falg{\G , \LL}{}{\RR}{}$ is, and $\Falg{\G , \LL}{}{ \{ \RR_i \}_i}{}$ is essential if and only if all $\Falg{\G , \LL}{}{\RR_i}{}$, $i\in I$, are essential. 
\end{lem}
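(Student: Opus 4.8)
The plan is to run the proof of Lemma~\ref{lem:reduced_from_other_reduced} essentially verbatim, replacing the reduced $j$-map $j_{\RR}$ into $\mathfrak{C}_0(\G,\LL)$ by the essential map $j_{\RR}^{\ess}$ into the Dixmier bundle $\D(\G,\LL)$ furnished by Proposition~\ref{prop:local_expectations_and_local_j_map}, and the expectation $E_U$ by its local version $\E_U=q_U\circ E_U$. Two preliminary observations legitimise this transfer, and I would record them first: (i) the fibrewise involution $f^*(\gamma):=\big(f(\gamma^{-1})\big)^*$ preserves $\|\cdot\|_\infty$ and, since $\gamma\mapsto\gamma^{-1}$ is a homeomorphism, carries meager-support sections to meager-support sections, hence descends to a conjugate-linear, antimultiplicative \emph{isometric} involution on $\D(\G,\LL)$ restricting to the $C^*$-involution of $\D(X)$; and (ii) by Lemma~\ref{lem:bimodule_Dixmier} the map $q_{\G}$ is a $\mathfrak{C}_c(\G,\LL)$-bimodule $*$-map, so $j_{\RR}^{\ess}$ intertwines the involutions on the dense subalgebra $\mathfrak{C}_c(\G,\LL)$.

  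For $\Falg{\G , \LL}{}{\RR^*}{}$, the involution extends to a conjugate-linear antimultiplicative isometry $\iota:\Falg{\G , \LL}{}{\RR^*}{}\to\Falg{\G , \LL}{}{\RR}{}$, exactly as in Lemma~\ref{lem:reduced_from_other_reduced}. Using isometry of the involution on $\D(X)$ together with the identity $(f^*)|_X=(f|_X)^*$ (valid since $\LL|_X$ is trivial) I get $\|q_X(f|_X)\|=\|q_X(f^*|_X)\|\le\|f^*\|_{\RR}=\|f\|_{\RR^*}$ for $f\in\mathfrak{C}_c(\G,\LL)$, and $\|f\|_{\RR^*}=\|f^*\|_\infty=\|f\|_\infty$ on $C_c(X)$; hence $\Falg{\G , \LL}{}{\RR^*}{}$ is exotic. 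Passing to the completions and using density gives $j_{\RR^*}^{\ess}(f)=\big(j_{\RR}^{\ess}(f^*)\big)^*$, so $\iota$ carries $\ker j_{\RR^*}^{\ess}$ onto $\ker j_{\RR}^{\ess}$ (the involution on $\D(\G,\LL)$ being injective). Thus $j_{\RR^*}^{\ess}$ is injective exactly when $j_{\RR}^{\ess}$ is, i.e.\ $\Falg{\G , \LL}{}{\RR^*}{}$ is essential precisely when $\Falg{\G , \LL}{}{\RR}{}$ is.

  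For the sup-seminorm $\|\cdot\|_{\{\RR_i\}_i}$ (assumed finite) submultiplicativity and agreement with $\|\cdot\|_\infty$ on $C_c(X)$ are immediate, and since $\|q_X(f|_X)\|\le\|f\|_{\RR_i}\le\|f\|_{\{\RR_i\}_i}$ for each $i$ it defines an exotic groupoid Banach algebra. As in the reduced case, the constant diagonal $\pi(a):=\prod_{i\in I}a$ extends to an isometric embedding $\pi:\Falg{\G , \LL}{}{\{\RR_i\}_i}{}\hookrightarrow\prod_{i\in I}\Falg{\G , \LL}{}{\RR_i}{}$ whose components are the canonical contractions $\lambda_i:\Falg{\G , \LL}{}{\{\RR_i\}_i}{}\to\Falg{\G , \LL}{}{\RR_i}{}$; by density and continuity $j_{\RR_i}^{\ess}\circ\lambda_i=j_{\{\RR_i\}_i}^{\ess}$, so $\ker j_{\{\RR_i\}_i}^{\ess}=\lambda_i^{-1}(\ker j_{\RR_i}^{\ess})$ for every $i$. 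If all $\Falg{\G , \LL}{}{\RR_i}{}$ are essential, then $j_{\{\RR_i\}_i}^{\ess}(f)=0$ forces $\lambda_i(f)=0$ for all $i$, i.e.\ $\pi(f)=0$, whence $f=0$; so $\Falg{\G , \LL}{}{\{\RR_i\}_i}{}$ is essential.

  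The step I expect to be the main obstacle is the converse, namely deducing essentiality of each $\Falg{\G , \LL}{}{\RR_i}{}$ from that of $\Falg{\G , \LL}{}{\{\RR_i\}_i}{}$ (the corresponding assertion is left implicit in Lemma~\ref{lem:reduced_from_other_reduced}). The kernel identity $\ker j_{\{\RR_i\}_i}^{\ess}=\lambda_i^{-1}(\ker j_{\RR_i}^{\ess})$ shows at once that each $\lambda_i$ is injective and that $\operatorname{im}\lambda_i\cap\ker j_{\RR_i}^{\ess}=\{0\}$; but because $\operatorname{im}\lambda_i$ is only dense (not closed) in $\Falg{\G , \LL}{}{\RR_i}{}$, this does not formally force $\ker j_{\RR_i}^{\ess}=\{0\}$. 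I would therefore argue by contraposition at the level of seminorms on $\mathfrak{C}_c(\G,\LL)$: a nonzero element of $\ker j_{\RR_i}^{\ess}$ yields a $\|\cdot\|_{\RR_i}$-Cauchy sequence in $\mathfrak{C}_c(\G,\LL)$ with vanishing image in $\D(\G,\LL)$ but nonzero $\|\cdot\|_{\RR_i}$-limit, and the real work is to control the remaining seminorms $\|\cdot\|_{\RR_j}$, $j\neq i$, along such a sequence so as to produce a genuine witness of non-injectivity of $j_{\{\RR_i\}_i}^{\ess}$. This seminorm-level bookkeeping, rather than the Dixmier-quotient adaptations, is where I would concentrate the technical effort.
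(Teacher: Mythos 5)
Your proposal is correct exactly where the paper's proof is, and it follows the same route: the paper's proof of this lemma is literally the sentence ``We leave it to the reader to adapt the proof of Lemma~\ref{lem:reduced_from_other_reduced}'', and your preliminary observations (i)--(ii) --- that inversion is a homeomorphism of $\G$, so the involution preserves meager supports and descends to an isometric involution on $\D(\G,\LL)$ compatible with $q_{\G}$ --- are precisely the extra ingredients that adaptation needs. Your treatment of $F_{\RR^*}(\G,\LL)$ and of the implication ``all $F_{\RR_i}(\G,\LL)$ essential $\Rightarrow F_{\{\RR_i\}_i}(\G,\LL)$ essential'' (isometric embedding $\pi$ into the product, together with $j^{\ess}_{\RR_i}\circ\lambda_i=j^{\ess}_{\{\RR_i\}_i}$) coincides with the intended argument.

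On the converse, your diagnosis is in fact sharper than the paper's: the model proof of Lemma~\ref{lem:reduced_from_other_reduced} also establishes only the ``if'' direction, and the ``only if'' direction is never proved there --- nor can it be, so do not invest effort in the ``seminorm-level bookkeeping'' you outline: the implication is false. A counterexample inside the paper's own framework (say with $\F=\C$): let $\G=G$ be a non-amenable discrete group, viewed as an \'etale Hausdorff groupoid with a single unit (so $\mathfrak{M}_0(\G)=\{0\}$ and essential $=$ reduced by Remark~\ref{rem:essential_vs_reduced}), take the trivial twist, $I=\{1,2\}$, $\|\cdot\|_{\RR_1}:=\|\cdot\|_{L^2}$ and $\|\cdot\|_{\RR_2}:=\|\cdot\|_{L^1}$. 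For a group, $\|\cdot\|_{*d}=\|\cdot\|_{*r}=\|\cdot\|_I$ is the $\ell^1$-norm, so \eqref{eq:L_p_norm_estimates} and \eqref{eq:L_1_L_infty_are_reduced} give $\|\cdot\|_{L^2}\leq\|\cdot\|_I=\|\cdot\|_{L^1}$; hence $\|\cdot\|_{\{\RR_i\}_i}=\|\cdot\|_{L^1}$ and $F_{\{\RR_i\}_i}(\G)=F^1(\G)=F^1_{\red}(\G)$ is reduced, hence essential. Yet $F_{\RR_1}(\G)=F^2(\G)=C^*(G)$ is not essential: by Proposition~\ref{prop:regular_disintegrated} and Remark~\ref{rem:essential_vs_reduced} we have $\ker j^{\ess}_{2}=\ker j_{2}=\ker\Lambda_{2}$, and $\Lambda_2:C^*(G)\to C^*_{\red}(G)$ cannot be injective for non-amenable $G$ (an injective $*$-homomorphism of $C^*$-algebras is isometric, which would force the full and reduced norms to agree). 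So the sup-algebra is essential while one of its constituents is not. Since the paper only ever uses the ``if'' direction (Proposition~\ref{prop:regular_disintegrated}, Corollary~\ref{cor:essential_L^P_operator_algebra}, and the analogous uses of Lemma~\ref{lem:reduced_from_other_reduced}), the correct conclusion is that the ``only if'' half of the statement --- in both lemmas --- should be deleted or reformulated, not that your argument is missing a step that can be filled in.
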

\begin{proof}
We leave it to the reader to adapt the proof of Lemma \ref{lem:reduced_from_other_reduced}.
\end{proof}

\begin{cor}\label{cor:essential_L^P_operator_algebra}
Assume $\G$ can be covered by a countable family of open bisections and let  $P\subseteq [1,\infty]$ be non-empty.
Then  $\FFalg{\G , \LL}{\ess}{P}{} := \Falg{\G , \LL}{}{\{\widetilde{\Lambda}_p\}_{p\in P}}{}$ is an essential groupoid Banach algebra of $(\G , \LL)$. 
\end{cor}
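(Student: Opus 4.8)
The plan is to deduce this directly from Proposition~\ref{prop:essential_L^p_operator_algebra} together with Lemma~\ref{lem:essential_from_other_essential}, since $\FFalg{\G , \LL}{\ess}{P}{}$ is by definition the completion of $\mathfrak{C}_c(\G , \L)$ in the seminorm $f\mapsto \sup_{p\in P}\|\widetilde{\Lambda}_p(f)\|$, which is precisely the sup-seminorm $\|\cdot\|_{\{\RR_p\}_{p\in P}}$ of \eqref{eq:R_norms_from_other_norms} for the one-element representation classes $\RR_p := \{\widetilde{\Lambda}_p\}$, $p\in P$. First I would invoke Proposition~\ref{prop:essential_L^p_operator_algebra} (whose countable-cover hypothesis is exactly the standing assumption of the corollary) to note that each $\FFalg{\G , \LL}{\ess}{p}{} = \Falg{\G , \LL}{}{\{\widetilde{\Lambda}_p\}}{}$ is an essential, hence in particular exotic, groupoid Banach algebra of $(\G , \L)$. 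This places us exactly in the situation of Lemma~\ref{lem:essential_from_other_essential}, with index set $I = P$ and $\RR_i = \RR_p$.

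The only hypothesis of Lemma~\ref{lem:essential_from_other_essential} that is not immediate is the finiteness of $\|\cdot\|_{\{\RR_p\}_{p\in P}}$ on $\mathfrak{C}_c(\G , \L)$. To verify it I would use that $\widetilde{\Lambda}_p$ is a compression of the regular representation $\Lambda_p$ (the coordinate projection onto $\ell^{p}(\G_{\reg},\L)\subseteq \ell^{p}(\G,\L)$ is contractive), so that $\|\widetilde{\Lambda}_p(f)\| \leq \|\Lambda_p(f)\| = \|f\|_{L^p,\red} \leq \|f\|_{L^p}$, and then bound this uniformly in $p$ by Hahn's $I$-norm via \eqref{eq:L_p_norm_estimates}, namely $\|f\|_{L^p} \leq \|f\|_{*d}^{1/p}\,\|f\|_{*r}^{1/q} \leq \|f\|_I$. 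Since $\|f\|_I$ does not depend on $p$, one obtains $\sup_{p\in P}\|\widetilde{\Lambda}_p(f)\| \leq \|f\|_I < \infty$ for every $f\in \mathfrak{C}_c(\G , \L)$, so the sup-seminorm is indeed finite.

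With finiteness in hand, Lemma~\ref{lem:essential_from_other_essential} applies and asserts that $\Falg{\G , \LL}{}{\{\widetilde{\Lambda}_p\}_{p\in P}}{}$ is an exotic groupoid Banach algebra which is essential if and only if every constituent $\Falg{\G , \LL}{}{\{\widetilde{\Lambda}_p\}}{}$, $p\in P$, is essential; as these were already shown to be essential, the conclusion follows. The proof is thus essentially a bookkeeping assembly of the two earlier results, and the single point requiring genuine (though routine) verification is the uniform-in-$p$ finiteness bound, for which the $p$-independent Hahn $I$-norm estimate \eqref{eq:L_p_norm_estimates} is the natural tool. I do not anticipate any further obstacle.
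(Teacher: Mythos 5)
Your proof is correct and follows exactly the route the paper intends: the corollary is stated without proof precisely because it is the combination of Proposition~\ref{prop:essential_L^p_operator_algebra} (each $\Falg{\G , \LL}{}{\{\widetilde{\Lambda}_p\}}{}$ is essential) with Lemma~\ref{lem:essential_from_other_essential} applied to the family $\{\widetilde{\Lambda}_p\}_{p\in P}$. Your extra verification of finiteness of the sup-seminorm via $\|\widetilde{\Lambda}_p(f)\|\leq\|\Lambda_p(f)\|=\|f\|_{L^p,\red}\leq\|f\|_{L^p}\leq\|f\|_I$ is a valid (and welcome) discharge of the hypothesis of Lemma~\ref{lem:essential_from_other_essential} that the paper leaves implicit.
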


\section{Intersection property and topological freeness}
\label{sec:IntersectionAperiodicTopFree}

In this section, we study conditions implying the following intersection properties for $A=C_0(X)$ and $B$ being an exotic groupoid Banach algebra, see \cite[Definition 5.6]{BK} and \cite[Definitions 5.5, 5.6]{Kwa-Meyer}. 

\subsection{Intersection property and topological freeness} 
\label{ssec:IntersectionPropTopFree}

Recall that the ideals we consider are closed and two-sided. 

\begin{defn}\label{def:visible} 
Let $A\subseteq B$ be a Banach subalgebra of a Banach algebra $B$. 
We say that \emph{$A$ detects ideals in $B$}, or that $A\subseteq B$ has the \emph{intersection property}, if for every non-zero ideal $J$ in $B$, we have $J\cap A \neq \{ 0 \}$. 
The inclusion has the \emph{generalized intersection property} if there is a largest ideal $\Null$ in $B$ with $\Null \cap A = \{ 0 \}$. 
In this case we call $\Null$ the \emph{hidden ideal} and put $B_{\ess} := B/ \Null$. We say $A\subseteq B$ is \emph{minimal} if for every non-zero  $a\in A$ we have $\overline{BaB}=B$. 
\end{defn}

\begin{lem}\label{lem:simplicity_in_general} 
If $A\subseteq B$ has the generalized intersection property and $\overline{A}$ is the closure of the range of $A$ in the quotient $B_{\ess}$, then $\overline{A}\subseteq B_{\ess}$ has the intersection property.  
An inclusion $A\subseteq B$  has the intersection property and is minimal if and only if $B$ is simple. 
\end{lem}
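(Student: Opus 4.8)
The plan is to treat the two assertions separately, since they rest on different mechanisms: the first is a statement about the ideal lattice of a quotient, while the second is a direct ideal-chasing argument. Throughout I use that ``ideal'' means closed two-sided ideal, as fixed in the Preliminaries.

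For the first assertion, I would exploit the standard bijective correspondence between ideals of $B_{\ess} = B/\Null$ and ideals of $B$ containing $\Null$, implemented by the quotient map $\pi : B \to B_{\ess}$. Given a non-zero ideal $J$ of $B_{\ess}$, its preimage $\pi^{-1}(J)$ is an ideal of $B$ that properly contains $\Null$; properness holds because $\pi$ is surjective, so $\pi(\pi^{-1}(J)) = J \neq \{0\}$. The key point is then the maximality built into the generalised intersection property: since $\Null$ is the \emph{largest} ideal meeting $A$ trivially and $\pi^{-1}(J)$ properly contains it, the ideal $\pi^{-1}(J)$ cannot satisfy $\pi^{-1}(J)\cap A = \{0\}$, so I may pick $0 \neq a \in \pi^{-1}(J)\cap A$. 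Because $\Null \cap A = \{0\}$ forces $a \notin \Null$, its image $\pi(a)$ is a non-zero element lying in $J \cap \overline{A}$, which is exactly the intersection property for $\overline{A}\subseteq B_{\ess}$.

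For the second assertion the reverse implication is the substantive half and is immediate: if $A\subseteq B$ has the intersection property and is minimal, and $J$ is any non-zero ideal of $B$, then $J\cap A \neq \{0\}$ provides some $0\neq a \in J\cap A$; since $J$ is a closed two-sided ideal we get $\overline{BaB}\subseteq J$, and minimality gives $\overline{BaB}=B$, whence $J = B$ and $B$ is simple. For the forward implication, assuming $B$ simple its only ideals are $\{0\}$ and $B$; the intersection property then reduces to $B\cap A = A \neq \{0\}$, and minimality reduces to showing $\overline{BaB}=B$ for each $0\neq a\in A$, which by simplicity amounts to ruling out $\overline{BaB}=\{0\}$.

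The main obstacle is precisely this last point: for a general Banach algebra one could in principle have $\overline{BaB}=\{0\}$ with $a\neq 0$, so the forward direction needs a non-degeneracy input. In all cases of interest this is harmless, since $A = C_0(X)$ carries a bounded approximate unit $(e_i)$ which is a two-sided approximate unit for $B$; then $e_i a e_i \in BaB$ and $e_i a e_i \to a$, so $a\in\overline{BaB}$, forcing $\overline{BaB}\neq\{0\}$ and hence $\overline{BaB}=B$ by simplicity. I would therefore flag explicitly the non-degeneracy hypothesis (together with $A\neq\{0\}$, which is what makes the intersection property hold when $B$ is simple); with these in place the remaining steps are routine.
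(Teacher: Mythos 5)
Your proposal is correct and follows essentially the same route as the paper: the first assertion is proved by exactly the paper's argument (pull a non-zero ideal $J\subseteq B_{\ess}$ back along the quotient map, use maximality of the hidden ideal $\Null$ to get $0\neq a\in A\cap q^{-1}(J)$, and use injectivity of $q$ on $A$ to land a non-zero element in $\overline{A}\cap J$), and the substantive direction of the second assertion is the same ideal-chasing via $\overline{BaB}\subseteq J$. Your caveat on the converse, which the paper dismisses as "straightforward", is a fair catch: under the paper's definitions a one-dimensional algebra with zero multiplication is simple yet not minimal over itself, so one does need $A\neq\{0\}$ and some non-degeneracy to rule out $\overline{BaB}=\{0\}$; in all of the paper's applications this is automatic, exactly as you observe, because $A=C_0(X)$ is non-zero and contains a bounded two-sided approximate unit for $B$.
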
	
\begin{proof} 
Assume $A\subseteq B$ has the generalized intersection property, let $q : B\to B_{\ess}$ be the quotient map and put $\overline{A} := \overline{q(A)}$. 
If $J$ is a non-zero ideal in $B_{\ess}$ then $q^{-1}(J)$ is an ideal in $B$ which is strictly larger than $\Null$. 
Thus $A\cap q^{-1}(J)\neq \{0\}$ and because $q$ is injective on $A$ we therefore get $\{0\}\neq q( A\cap q^{-1}(J))\subseteq \overline{A}\cap J$. 
Hence $\overline{A}\subseteq B_{\ess}$ has the intersection property. 

Let $A\subseteq B$ be any Banach algebra inclusion.
If it is minimal and has the intersection property, then for any non-zero ideal $J$ in $B$, $I:= A \cap J$ is a non-zero ideal in $B$ and therefore $B = \overline{BIB} \subseteq J$, which proves that $B$ is simple. 
The converse implication is straightforward. 
\end{proof}

\begin{rem}\label{rem:intersection_property_for_C(X)} 
If $A\subseteq B$ has the generalized intersection property and $A$ is a $C^*$-algebra, %, 
%so for instance when $A=C_0(X)$, then by minimality of the $C^*$-norm, 
then the embedding $A\subseteq B_{\ess}$ is isometric (that is, we have $A = \overline{A}$ in Lemma~\ref{lem:simplicity_in_general}). %, and $A\subseteq B_{\ess}$ is minimal if and only if $A \subseteq B$ is minimal. 
\end{rem} 

Recall that a set $U\subseteq X$ is \emph{$\G$-invariant} if $d(\gamma)\in U$ implies $r(\gamma)\in U$ for all $\gamma \in G$.
The groupoid is \emph{minimal} if there are no non-trivial $\G$-invariant open sets in $X$.

\begin{lem}\label{lem:invariant_ideals}
Let $B$ be a Banach algebra of the form $\Falg{\G , \L}{}{\RR}{}$ and such that $C_0(X)$ embeds isometrically into $B$. 
For any ideal $I$ in $C_0(X)$, the following are equivalent: 
\begin{enumerate}
    \item\label{enu:invariant_ideals1} $I$ is restricted, that is, $I=C_0(X)\cap J$ for an ideal $J$ in $B$; 
    \item\label{enu:invariant_ideals2} $I=C_0(U)$ for an open $\G$-invariant set $U\subseteq X$; 
    \item\label{enu:invariant_ideals3} $I$ is symmetric, that is, $\overline{IB} = \overline{BI}$.
\end{enumerate} 
% be an exotic algebra of $\G$. For any ideal $J$ in $B$ we have $J\cap C_0(X)=C_0(U)$ for an open $\G$-invariant set $U\subseteq X$.
%Conversely, for  any open $\G$-invariant set $U\subseteq X$ the closure of an image of $\mathcal{C}_c(\G|_U)$ in $B$ is an ideal $J$ such that 
%$J\cap C_0(X)=C_0(U)$.
Moreover, the inclusion $C_0(X) \subseteq B$ is minimal if and only if $\G$ is minimal. 
\end{lem}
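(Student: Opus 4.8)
The plan is to start from the standard fact that every closed ideal $I$ of the commutative algebra $C_0(X)$ has the form $I=C_0(U)$ for a unique open $U\subseteq X$, so that all three conditions are really statements about this $U$, and then to prove the cycle \ref{enu:invariant_ideals1}$\Rightarrow$\ref{enu:invariant_ideals2}$\Rightarrow$\ref{enu:invariant_ideals3}$\Rightarrow$\ref{enu:invariant_ideals1}. The two tools I rely on throughout are the $C_0(X)$-bimodule structure on sections, where $(a*f)(\gamma)=a(r(\gamma))f(\gamma)$ and $(f*a)(\gamma)=f(\gamma)a(d(\gamma))$, and the contractive restriction maps $E^{\RR}_V:\FR(\G,\LL)\to\B(V,\LL)$ from Proposition~\ref{prop:expectations_and_j_map}, which are $C_0(X)$-module maps acting as the identity on each $C_c(V,\LL)$.

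For \ref{enu:invariant_ideals1}$\Rightarrow$\ref{enu:invariant_ideals2}, write $I=C_0(U)=C_0(X)\cap J$ for an ideal $J$ in $B:=\FR(\G,\LL)$ and suppose $d(\gamma)\in U$. I would choose a bisection $V\ni\gamma$, a section $f\in C_c(V,\LL)$ with $f(\gamma)\neq 0$, and $a\in C_0(U)\subseteq J$ with $a(d(\gamma))\neq 0$. A direct convolution computation on the bisection $V$ shows that $h:=f*a*f^{*}$ is supported on units, i.e. $h\in C_c(X)$, with $h(r(\gamma))=a(d(\gamma))\,|f(\gamma)|^{2}\neq 0$. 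Since $J$ is two-sided and $a\in J$ we get $h\in C_0(X)\cap J=C_0(U)$, so $r(\gamma)\in U$; this is exactly $\G$-invariance. For \ref{enu:invariant_ideals2}$\Rightarrow$\ref{enu:invariant_ideals3}, I first note that invariance is symmetric: applying the defining implication to $\gamma^{-1}$ yields $r^{-1}(U)=d^{-1}(U)=:\G_U$. Fixing a grading semigroup $S$, nondegeneracy of the module actions gives $\overline{C_0(U)*C_0(V,\LL)}=C_0(V\cap r^{-1}(U),\LL)$ and $\overline{C_0(V,\LL)*C_0(U)}=C_0(V\cap d^{-1}(U),\LL)$ for $V\in S$; by invariance these coincide, both equal to $C_0(V\cap\G_U,\LL)$. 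Summing over the grading produces $\overline{IB}=\overline{BI}=\overline{\operatorname{span}}\{C_0(V\cap\G_U,\LL):V\in S\}$, which is \ref{enu:invariant_ideals3}.

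For \ref{enu:invariant_ideals3}$\Rightarrow$\ref{enu:invariant_ideals1}, set $J:=\overline{IB}=\overline{BI}$; being simultaneously a right and a left ideal, it is two-sided. An approximate unit of $I=C_0(U)$ gives $I\subseteq\overline{IB}=J$, hence $I\subseteq C_0(X)\cap J$. For the reverse inclusion, take $b\in C_0(X)\cap J$ and approximate it in $B$ by finite sums $\sum_k a_{n,k}*g_{n,k}$ with $a_{n,k}\in C_0(U)$. Applying $E^{\RR}_X$, which satisfies $E^{\RR}_X(b)=b$ and is a $C_0(X)$-module map, gives $E^{\RR}_X(a_{n,k}*g_{n,k})=a_{n,k}\cdot E^{\RR}_X(g_{n,k})$, which vanishes at every $x_0\notin U$ since $a_{n,k}(x_0)=0$. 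Contractivity of $E^{\RR}_X$ and uniform convergence then force $b(x_0)=0$ for all $x_0\notin U$, so $b\in C_0(U)=I$. I expect this last inclusion to be the only genuinely delicate point: it is precisely here that one needs the generalised expectation $E^{\RR}_X$ (honest restriction to $X$), because a bounded way of reading off the $C_0(X)$-component of the approximating sums is exactly what is missing from the bare grading; in the non-Hausdorff essential setting the same argument is run with $\E^{\RR}_X$ into the Dixmier algebra $\D(X)$.

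Finally, the minimality statement follows from the equivalence. If $\G$ is minimal and $0\neq a\in C_0(X)$, then $J:=\overline{BaB}$ contains $a$ by approximate units, so the restricted ideal $C_0(X)\cap J=C_0(U)$ has $U\neq\emptyset$ and $\G$-invariant, whence $U=X$; thus $C_0(X)\subseteq J$ and, by nondegeneracy of $C_0(X)$ in $B$, $J=B$, so the inclusion $C_0(X)\subseteq B$ is minimal. Conversely, a nontrivial invariant open $U\subsetneq X$ produces $I=C_0(U)$ with $\overline{BaB}\subseteq\overline{BIB}\subseteq J:=\overline{IB}$ for every $0\neq a\in C_0(U)$; since $C_0(X)\cap J=C_0(U)\subsetneq C_0(X)$ we have $J\neq B$, hence $\overline{BaB}\neq B$ and the inclusion is not minimal. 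Together these give that $C_0(X)\subseteq B$ is minimal if and only if $\G$ is minimal.
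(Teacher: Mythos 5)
Your steps (1)$\Rightarrow$(2) and (2)$\Rightarrow$(3) and your derivation of the minimality statement are correct and essentially the paper's argument (the paper phrases (1)$\Rightarrow$(2) via the grading identity $\overline{B_V C_0(U) B_{V^*}} = C_0(VUV^*)$ rather than your pointwise computation of $f*a*f^*$, but it is the same idea). The genuine gap is in (3)$\Rightarrow$(1): you invoke the contractive expectation $E^{\RR}_X : \Falg{\G , \L}{}{\RR}{} \to \B(X)$ of Proposition~\ref{prop:expectations_and_j_map}, but the hypotheses of the lemma do not provide such a map. The lemma assumes only that $B$ is of the form $\Falg{\G , \L}{}{\RR}{}$ and that $C_0(X)$ embeds isometrically; by Proposition~\ref{prop:expectations_and_j_map}, the existence of $E^{\RR}_X$ is \emph{equivalent} to the strictly stronger condition $\| f|_X \|_{\infty} \leq \| f \|_{\RR}$ for all $f \in \mathfrak{C}_c(\G , \L)$, i.e.\ to $B$ being a groupoid Banach algebra in the sense of Definition~\ref{de:GroupoidBanachAlgebrasExpectations}. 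This can genuinely fail: for a nontrivial discrete group $\G$ (unit space a point) and $\RR$ consisting of the single character $f \mapsto \sum_{\gamma} f(\gamma)$, the Hausdorff completion is $B \cong \F$, into which $C_0(X) = \F$ embeds isometrically, yet no contractive extension of $E_X$ exists. Moreover, in the non-Hausdorff applications (e.g.\ inside Theorem~\ref{thm:Top_Free_Intersection_Property}) only the essential expectation $\E^{\RR}_X$ into $\D(X)$ of Proposition~\ref{prop:local_expectations_and_local_j_map} is ever available, and your remark that ``the same argument is run with $\E^{\RR}_X$'' is too quick: classes in $\D(X) = \B(X)/\mathfrak{M}(X)$ have no well-defined value at a point $x_0$, so the step ``$b(x_0)=0$ for $x_0 \notin U$'' does not transfer verbatim; it can be repaired, but only via a Baire-category argument (nonempty open neighbourhoods of $x_0$, on which the $C_0(U)$-coefficients are uniformly small, are non-meager).

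The paper's proof of (3)$\Rightarrow$(1) avoids expectations entirely, which is exactly what the stated generality requires: since $J := \overline{IB} = \overline{BI}$ is a two-sided ideal, any approximate unit $\{\mu_i\}_i$ of $I = C_0(U)$ is a two-sided approximate unit for $J$; hence every $b \in J \cap C_0(X)$ satisfies $b = \lim_i \mu_i b$ with $\mu_i b \in C_0(U) C_0(X) \subseteq I$, so $b \in I$ and $J \cap C_0(X) = I$. This uses nothing beyond the isometric multiplicative embedding of $C_0(X)$. Since your converse minimality argument also runs through the identity $C_0(X) \cap \overline{IB} = C_0(U)$, the same gap appears there; substituting the approximate-unit argument for your expectation argument repairs the whole proof.
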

\begin{proof} 
We have $I = C_0(U)$ for an open set $U\subseteq X$. 
For $V\in \Bis(\G)$, we denote the image of $C_0(V,\LL)$ in $B = \Falg{\G , \L}{}{\RR}{}$ by $B_V$. 

\ref{enu:invariant_ideals1}$\Rightarrow$\ref{enu:invariant_ideals2}. Assume $I=C_0(X)\cap J$ for an ideal $J$ in $B$. 
For every $V\in \Bis(\G)$, we have $C_0(V U V^*) = \overline{B_V C_0(U)  B_{V^*}} \subseteq J \cap C_0(X) = C_0(U)$.
Hence $V U V^* \subseteq U$ for every $V \in \Bis(\G)$, which implies $U$ is $\G$-invariant. 
%if $\gamma \in \G$ and $d(\gamma) \in U$ then find $V \in S$ containing $\gamma$; since $VUV^* = U$ we must have $\gamma d(\gamma) \gamma\inv \in U$, so $r(\gamma) \in U$

\ref{enu:invariant_ideals2}$\Rightarrow$\ref{enu:invariant_ideals3}. 
If \ref{enu:invariant_ideals2} holds, then $VU=UV$ for all $V\in\Bis(\G)$, which implies that $I B_V = B_V I$ for $V\in \Bis(\G)$. 
Hence $\overline{IB} = \overline{\sum_{V\in \Bis(\G)} IB_V} = \overline{\sum_{V\in \Bis(\G)} B_VI} = \overline{BI}$.

\ref{enu:invariant_ideals3}$\Rightarrow$\ref{enu:invariant_ideals1}.  If \ref{enu:invariant_ideals3} holds, then $J=\overline{IB}=\overline{BI}$ is an ideal in $B$ (generated by $I$) and an approximate unit $\{\mu_i\}_i$ in $I$ is an approximate unit in $J$. 
Using this, one gets  $I = J\cap C_0(X)$.

Now assume $\G$ is minimal and let $a \in C_0(X)$ be non-zero. 
By the equivalence \ref{enu:invariant_ideals1}$\Leftrightarrow$\ref{enu:invariant_ideals2},  $\overline{BaB} \cap C_0(X) = C_0(U)$ where $U$ is an open $\G$-invariant set. 
Since $U$ contains the open support of $a$ it is non-empty. 
Hence $U=X$ by $\G$-minimality, and therefore $C_0(X)\subseteq \overline{BaB}$.
Since $C_0(X)$ contains an approximate unit for $B$, this implies that $\overline{BaB}=B$. 
Thus $C_0(X)\subseteq B$ is minimal. 
For the converse, assume $\G$ is not minimal. 
Let $U\subseteq X$ be a non-trivial open $\G$-invariant set and put $I=C_0(U)$. 
By the equivalence \ref{enu:invariant_ideals2}$\Leftrightarrow$\ref{enu:invariant_ideals3}, $\overline{BIB} = \overline{IB}$ and so any approximate unit in $I$ is also an approximate unit in $\overline{BIB}$. 
This implies that $\overline{BIB}\cap C_0(X) = I\neq C_0(X)$. 
Hence for any non-zero $a\in I$ we get $\overline{BaB}\neq B$. 
Thus $C_0(X)\subseteq B$ is not minimal. 
\end{proof}

Topological freeness for \'etale groupoids was introduced in \cite[Definition 2.20]{Kwa-Meyer}.
%When $\G$ is Hausdorff it coincides with effectivness used in a number of papers, cf.\ \cite{Re}, \cite{BCFS}, \cite{cgt}.
%In general, topological freeness is weaker. 

\begin{defn}%[\cite{Kwa-Meyer}]
An \'etale groupoid $\G$ is \emph{topologically free} if  there is no non-empty open set $V\subseteq \G\setminus X$ with $r|_V=d|_V$.   
\end{defn}

\begin{rem}\label{re:TopologicallyFreeGroupoid}
The groupoid $\G$ is topologically free if and only if for every bisection $V\subseteq \G\setminus X$, the set $\{ x\in X : \G(x) \cap V \neq \emptyset\} = \{ x\in d(V): h_{V}(x) = x \}$ has empty interior in $X$ ($h_V=r \circ d|_{V}^{-1}$ is the partial homeomorphism induced by $V$). The relationship between this and other popular non-triviality conditions is presented in the following diagram: 
		\begin{center}\small
			\begin{picture}(320,104)(0,14)
\put(-8,94){
$ \xymatrixcolsep{5pc} \xymatrixrowsep{0.35pc}
	\xymatrix{
		 & 					
	\text{\tframed[line width=0.5bp,fill=white!]{
\begin{minipage}{2.1cm}topologically 
\centerline{principal}
\end{minipage} }}
 \ar@{=>}[rd]  
  	&     
	\\
	\text{\tframed[line width=0.5bp,fill=white!]{principal}} \ar@{=>}[rd] \ar@{=>}[ru]  &    &  
\text{\tframed[line width=1bp,fill=white!50]{
\begin{minipage}{2.1cm}topologically 
\centerline{free}
\end{minipage} }}  
  \ar@/_2pc/[lu]%^{\qquad \qquad \qquad\qquad \qquad \qquad \text{}} 
	 \ar@/^2pc/[ld]%_{\qquad \qquad \qquad \qquad \qquad \qquad\text{$\G$ is Hausdorff}} 
	\\
			 & \text{\tframed[line width=0.5bp,fill=white!]{effective}}\ar@{=>}[ru]  & 
			}$}
	%		\pause
			\put(242,19){\footnotesize $\G$ is Hausdorff}
			\put(238,97){\footnotesize \begin{minipage}{2.5cm}  $\G$ has a countable
				\\
				\centerline{bisection cover}
				\end{minipage}} 
			
\end{picture}
\end{center}
\emph{Effectiveness} of $\G$ means that for any open $V\subseteq \G$ with $r|_V = d|_V$ we have $V\subseteq X$. It  implies topological freeness and the converse is true if $\G$ is Hausdorff ($X$ is closed in $\G$). If $\G$ is topologically principal, that is  if $\{ x\in X : \G(x) \neq \{x\} \}$ has empty interior,
then $\G$ is topologically free and the converse is true if $\G$ is covered by a countable family of bisections (so, for example, when $\G$ is second countable or more generally $\sigma$-compact). 
In general, all these conditions are  different  and topological freeness is the weakest. 
See \cite[Section 2.4]{Kwa-Meyer} for more details.  
\end{rem}

The following lemma is the technical heart of our simplicity and pure infiniteness criteria. 
For these results, we will need an $S$-grading, that we have been able to ignore until now. 
In what follows, we suppress writing the convolution sign `$*$' when multiplying elements of $\mathfrak{C}_c(\G , \L)$. 

\begin{defn}
We call $a\in C_c(X)$ a \emph{bump function} if it is positive norm-one and equal to $1$ on some non-empty open set. 
\end{defn}

\begin{lem}[Pinching property]\label{lem:pinching_property}
Assume $\G$ is topologically free and let $S\subseteq \Bis(\G)$ be a unital inverse semigroup covering $\G$. 
For any $f\in \mathfrak{C}_c(\G , \L)$ and any $\varepsilon >0$, there is a contractive element $b\in C_c(X)$ and a bump function $a \in C_c(X)$ such that $b  f|_X:=(bf)|_{X}=b (f|_{X})$ is in $C_0(X)^+$ and 
\[
    \big\| a \cdot \|q_{X} (f|_{X})\|  - a  (b  f|_X) \big\|_{\infty} \leq \varepsilon  \quad \text{ and } \quad \| a b f a - a ( b  f|_X ) a \|^S_{\max} \leq \varepsilon. 
\]
In particular, the first inequality implies that $\| q_{X} (f|_{X}) \| \leq \| a (b f |_X) a \|_{\infty} + \varepsilon$.
%There are norm-one functions $a,b\in C_0(X)$ such that $ a \cdot f|_X  \cdot b \in C_0(X)^+$, 
%\[
%    \|q_{X} (f|_{X})\|\leq \|a f|_X b\|+ \varepsilon  \quad  \text{ and }\quad \|  a f  b-  (a f|_X b)\|_{\max}  \leq \varepsilon .
%\]
\end{lem}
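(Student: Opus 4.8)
The plan is to localise near a point $x_{0}\in X$ where $|f|_{X}|$ nearly attains its essential supremum $c:=\|q_{X}(f|_{X})\|$, to use a continuous phase correction $b$ to turn $f|_{X}$ into a nonnegative function there, and to exploit topological freeness to ensure that cutting down by a bump function $a$ supported very close to $x_{0}$ annihilates all off-diagonal contributions. If $c=0$ one simply takes $b=0$ and any bump function $a$, so assume $c>0$; moreover we may assume $\varepsilon<c$, since a smaller $\varepsilon$ yields stronger conclusions.

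First I would fix a finite decomposition $f=\sum_{k=1}^{n}f_{k}$ with $f_{k}\in C_{c}(U_{k},\L)$, $U_{k}\in S$, and set $K_{k}:=\supp(f_{k})$, a compact subset of the Hausdorff bisection $U_{k}$. The off-diagonal part of $abfa$, namely $abfa-a(bf|_{X})a$, is supported on $\G\setminus X$ and its value at $\gamma\notin X$ is $a(r(\gamma))\,b(r(\gamma))\,f(\gamma)\,a(d(\gamma))$; so a point $\gamma\in K_{k}\setminus X$ contributes only if $d(\gamma),r(\gamma)\in\supp(a)$. The key point is that the set of base points at which such contributions persist under shrinking,
\[
\widetilde{F}_{k}:=\{\,d(\gamma):\gamma\in K_{k}\setminus X,\ r(\gamma)=d(\gamma)\,\},
\]
is closed (since $K_{k}\setminus X$ is compact in the Hausdorff space $U_{k}$ and $r,d$ are continuous with $X$ Hausdorff) and, being contained in the fixed-point set $\{x\in d(U_{k}\setminus X):h_{U_{k}}(x)=x\}$ of the bisection $U_{k}\setminus X\subseteq\G\setminus X$, has empty interior by topological freeness (Remark~\ref{re:TopologicallyFreeGroupoid}). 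Likewise each boundary $D_{k}:=\partial(U_{k}\cap X)$ is closed nowhere dense, and off $\bigcup_{k}D_{k}$ the restriction $f|_{X}$ is continuous.

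Next I would choose $x_{0}$. The exceptional set $N:=\bigcup_{k}(D_{k}\cup\widetilde{F}_{k})$ is meager, while $\{x:|f|_{X}(x)|>c-\varepsilon\}$ is non-meager by \eqref{eq:essential_norm} (otherwise removing it from $X$ would give a meager set with strictly smaller supremum, contradicting minimality of $c$); since $X$ is locally compact Hausdorff, hence Baire, there is $x_{0}\in X$ with $|f|_{X}(x_{0})|>c-\varepsilon$ and $x_{0}\notin N$. Around such $x_{0}$ there is an open $V$ on which $f|_{X}$ is continuous and, as $\varepsilon<c$, nonvanishing; shrinking $V$ I would arrange $\big||f|_{X}(x)|-c\big|<\varepsilon$ on $V$ (each point of $V$ is a continuity point, so $|f|_{X}|\le c$ there), and, using $x_{0}\notin\bigcup_{k}\widetilde{F}_{k}$, that no $\gamma\in K_{k}\setminus X$ has both $d(\gamma),r(\gamma)\in V$. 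On $V$ set $b:=\chi\,\overline{f|_{X}}/|f|_{X}|$ (read as $\chi\,\sign(f|_{X})$ when $\F=\R$) for a cutoff $\chi\in C_{c}(V)$ equal to $1$ on a smaller neighbourhood $V'\ni x_{0}$; then $b\in C_{c}(X)$ is contractive and $bf|_{X}=\chi\,|f|_{X}|\in C_{0}(X)^{+}$. Finally take a bump function $a\in C_{c}(X)$ supported in $V'$.

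With these choices the verification is routine. On $\supp(a)\subseteq V'$ one has $\chi=1$, hence $bf|_{X}=|f|_{X}|$ and $\big\|a\,c-a(bf|_{X})\big\|_{\infty}=\sup_{x}a(x)\big|c-|f|_{X}(x)|\big|\le\varepsilon$, which is the first inequality; and $abfa-a(bf|_{X})a$ vanishes on $X$ (the diagonal terms agree) and off $X$ (no $\gamma$ contributes, by the choice of $V$), so it is the zero element and its $\|\cdot\|^{S}_{\max}$-norm is $0\le\varepsilon$. Multiplying the first inequality by the contraction $a$ gives $\|a(bf|_{X})a-a^{2}c\|_{\infty}\le\varepsilon$, whence $\|a(bf|_{X})a\|_{\infty}\ge c-\varepsilon$, which is the ``in particular'' clause. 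The main obstacle is precisely the off-diagonal estimate: making rigorous, in the possibly non-Hausdorff $\G$, that topological freeness forces the persistent-contribution sets $\widetilde{F}_{k}$ to be nowhere dense — this is where compactness of the supports inside the Hausdorff bisections together with Remark~\ref{re:TopologicallyFreeGroupoid} are used — and that the peak point $x_{0}$ can simultaneously be taken in the non-meager level set and outside the meager exceptional set $N$.
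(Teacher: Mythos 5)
Your proposal is correct, and while it shares the paper's overall skeleton (decompose $f=\sum_k f_k$ over bisections in $S$, use topological freeness and a Baire/meagerness argument to locate a good point $x_0$ where $|f|_X|$ nearly attains $\|q_X(f|_X)\|$, phase-correct with $b$, pinch with a bump $a$ near $x_0$), the technical heart --- killing the off-diagonal terms --- is handled genuinely differently. The paper splits $\G\setminus X$ into the boundary part $\partial X$ (neutralized by forcing the construction to avoid the meager set $r(\partial X\cap\overline{\supp(f_V)})$) and the open part $\G\setminus\overline{X}$, where topological freeness is applied to the \emph{open} bisections $V\setminus\overline{X}$ together with the finite-union result cited from \cite[Proposition 2.24]{Kwa-Meyer}; a three-case construction of bump functions $a_V$ then makes the off-diagonal part small, $\|\cdot\|^S_{\max}\le\varepsilon$, but not zero. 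You instead treat all of $\G\setminus X$ at once through the compact sets $K_k\setminus X$: your bad sets $\widetilde F_k$ are compact, hence \emph{closed}, with empty interior, so their finite union is automatically nowhere dense (no need for the external finite-union proposition), and your off-diagonal term vanishes \emph{identically} rather than being merely $\varepsilon$-small. This is a cleaner and more self-contained route; what the paper's version buys is only that it never needs your compactness bookkeeping.

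Two steps in your write-up need to be made precise, though both are routine. First, the empty interior of $\widetilde F_k$: you cite Remark~\ref{re:TopologicallyFreeGroupoid} for the bisection $U_k\setminus X$, which is \emph{not open} (it is relatively closed in $U_k$), and for arbitrary non-open bisections that remark can fail (e.g.\ in a topologically free groupoid that is not topologically principal, a choice of one nontrivial isotropy element over each point with nontrivial isotropy is a bisection in $\G\setminus X$ whose fixed-point set need not have empty interior). Your application is nevertheless valid because $U_k\setminus X$ sits inside the open bisection $U_k$: if $W\subseteq\widetilde F_k$ were open and non-empty, then $d|_{U_k}^{-1}(W)$ would be a non-empty open subset of $\G\setminus X$ on which $r=d$ (by injectivity of $d|_{U_k}$), contradicting the definition of topological freeness directly. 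Second, the passage from $x_0\notin\widetilde F_k$ to ``no $\gamma\in K_k\setminus X$ has both $d(\gamma),r(\gamma)\in V$'' requires the compactness argument you only gesture at: the set $\{(d(\gamma),r(\gamma)):\gamma\in K_k\setminus X\}$ is compact, hence closed in $X\times X$, and it misses $(x_0,x_0)$ precisely because $x_0\notin\widetilde F_k$, so some neighbourhood $V\times V$ of $(x_0,x_0)$ misses it. With these two sentences supplied, your proof is complete.
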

\begin{proof}
Write $f = \sum_{V\in F} f_V $ where $F\subseteq S$ is finite and $f_V \in C_c(V, \L)$ for $V\in F$. 
The section $f$ is continuous outside of the closed meager set $\bigcup_{V\in F} \partial V$. 
Hence all the more it is continuous on the complement of the closed meager set $M = \bigcup_{V\in F} \partial V \cup r\left(\partial X \cap \overline{\supp(f_V)}\right)$.
%(a finite union of boundaries of open sets).
Hence $\| q_{X} (f|_{X}) \| = \sup_{x\in X\setminus M} | f (x) |$ and for any $\varepsilon>0$ there is a nonempty open set $W\subseteq X\setminus M$ such that
\begin{equation}\label{eq:auxiliary_approx0}
    |f(x)| \leq \|q_{X} (f|_{X})\|< |f(x)| +\varepsilon , \qquad x\in W . 
\end{equation}
We may assume that $\varepsilon < \|q_{X} (f|_{X})\|$, so that $f\neq 0$ on $W$. 
Let $V\in F$ and consider the open bisection $V\setminus \overline{X}\subseteq \G\setminus X$. %, $  \subseteq X$ and a  meager set $\partial (V\cap X)\subseteq M$.
By topological freeness and \cite[Proposition 2.24]{Kwa-Meyer}, see also \cite[Lemma 2.2]{ELQ}, the finite union  
\[
    R:=\bigcup_{V\in F}\{x\in d(V\setminus \overline{X}): h_{V\setminus \overline{X}}(x)=x\}=\bigcup_{V\in F}\{ x\in X : \G(x) \cap V\setminus \overline{X} \neq \emptyset\}
\]
has empty interior. 
Thus there is $x_0\in W\setminus R$. 
For each $V \in F$, we claim there is a bump function $a_V\in C_c(W)$ equal to $1$ in a neighbourhood of $x_0$ and
\begin{equation}\label{eq:auxiliary_approx}
    | a_V(r(\gamma)) \cdot f_V(\gamma) \cdot a_V(d(\gamma)) | \leq \varepsilon/|F| , \qquad \gamma\in V\setminus \overline{X}.
\end{equation}
Recall that $x_0\notin r\left(\partial X \cap \overline{\supp(f_V)}\right)$ and so only the following three cases are possible:

(1) $x_0 \in X\setminus r\left(\overline{\supp(f_V)}\right)$. 
Then $f_V( r|_{V}^{-1}(x_0))=0$ so there is a neighbourhood $U\subseteq W$ of $x_0$ where $|f_V\circ  (r|_{V})^{-1}| \leq \varepsilon/|F|$. 
Taking any norm one  $a_V\in C_0(U)$, %(equal to $1$ in a neighbourhood of $x_0$)
we get  $| a_V(r(\gamma)) \cdot f_V(\gamma) | \leq \varepsilon/|F|$ for $\gamma\in V\setminus \overline{X}$, and so \eqref{eq:auxiliary_approx} is satisfied.

(2) $x_0\in r\left(\overline{\supp(f_V)}\cap X\right)\subseteq r(V\cap X)=V\cap X$.  
Thus $x_0\in V\cap W$ and, for any $a_V\in C_c( V\cap W)$, we get $a_V(r(\gamma)) \cdot f_V(\gamma)=0$ for $\gamma \in V\setminus \overline{X}$ because $d(V\setminus \overline{X})\cap V=\emptyset$.

(3) $x_0 \in r\left(\overline{\supp(f_V)}\setminus \overline{X}\right)\subseteq r(V\setminus \overline{X})$. 
Since $x_0\not\in R$, there is a neighborhood $U\subseteq r(V\setminus \overline{X})\cap W$ of $x_0$ such that $h_{V}^{-1}(U) \cap U = \emptyset$. 
Then, for any $a_V\in C_0(U)$, we have $a_V(r(\gamma))a_V(d(\gamma))=0$ for $\gamma \in V\setminus \overline{X}$, and so \eqref{eq:auxiliary_approx} holds.

This proves our claim. 
Since $W\cap r\left(\partial X \cap \overline{\supp(f_V)}\right) = \emptyset$, we have $(a_Vf_V )|_{\partial X} =0$. 
Thus $(a_Vf_V )|_{X}=a_V  (f_V|_{X})\in C_0(X)$, $(a_Vf_V )|_{V\setminus \overline{X}}\in C_0(V\setminus \overline{X},\LL)\subseteq \mathfrak{C}_c(\G , \L)$ and $a_V f_V= (a_Vf_V )|_{X} + (a_Vf_V )|_{V\setminus \overline{X}}$. 
Therefore putting $a := \prod_{V\in F} a_V\in C_c(W)$, we get a bump function equal to $1$ in a neighborhood of $x_0$
such that 
\[
    a f = af|_X  + \sum_{V\in F} af_V|_{V\setminus \overline{X}} 
\]
where $a f|_X := (af)|_X = a\cdot (f|_X) \in C_0(X)$ and $af_V|_{V\setminus \overline{X}}:= (af_V)|_{V\setminus \overline{X}}\in C_0(V\setminus \overline{X},\LL)$, for $V\in F$. 
Take any bump function $c\in C_c(W)$ equal to $1$ on the support of $a$. 
Then $b(x):= c(x)\cdot\frac{\overline{f}(x)}{| f(x) | }$ defines a norm one continuous function on $X$ and $b \cdot (f|_X) = c |(f|_X)|\in C_c(W)^+$.
Moreover, $a b \cdot (f|_X) = a | (f|_X) | \in C_c(W)^+$. 
Thus using \eqref{eq:auxiliary_approx0} we get the first inequality in the assertion.
By the triangle inequality, the definition of $\|\cdot\|^S_{\max}$ and \eqref{eq:auxiliary_approx} we get
\[
    \| \sum_{V\in F} a f_V|_{V\setminus \overline{X}} a \|^S_{\max} \leq \sum_{V\in F} \| af_V|_{V\setminus \overline{X}} a \|^S_{\max} = \sum_{V\in F} \| (a f_V a)|_{V\setminus \overline{X}} \|_{\infty} \leq \varepsilon.
\]
Thus $\| a b f a - a (b f|_X) a \|^S_{\max} \leq \| a f a - a f|_X a \|^S_{\max} = \| \sum_{V\in F} af_V|_{V\setminus \overline{X}} a \|^S_{\max} \leq \varepsilon$. 
\end{proof}

\begin{cor}\label{cor:top_free_implies_expectation}
Assume $\G$ is topologically free and let $S\subseteq \Bis(\G)$ be a unital inverse semigroup covering $\G$. 
If $ \psi : \Falg{\G , \L}{S}{}{} \to B$ is a representation which is injective on $C_0(X)$, then $\overline{\psi(\Falg{\G , \L}{S}{}{})}$ is naturally an exotic groupoid Banach algebra. 
\end{cor}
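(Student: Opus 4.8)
The plan is to show that the seminorm $\|f\|_{\psi} := \|\psi(f)\|$ on $\mathfrak{C}_c(\G , \L)$, whose Hausdorff completion is $\overline{\psi(\FS(\G , \L))}$, satisfies the two defining requirements of Definition~\ref{de:ExoticGroupoidBanachAlgebra}: that $\|f\|_{\infty} = \|f\|_{\psi}$ for $f \in C_c(X)$, and that $\|q_X(f|_X)\| \leq \|f\|_{\psi}$ for all $f \in \mathfrak{C}_c(\G , \L)$. Since $\psi$ is a representation of $\FS(\G , \L)$, the seminorm $\|\cdot\|_{\psi}$ is submultiplicative and dominated by $\|\cdot\|_{\textrm{max}}^{S}$, so once these two inequalities are in place the completion is automatically an exotic groupoid Banach algebra (this is exactly the situation already exploited in the proof of Corollary~\ref{cor:kernel_of_reps_with_expectation}).

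First I would dispose of $C_0(X)$. Because $X \in S$, the space $C_0(X)$ embeds isometrically into $\FS(\G , \L)$, and $\psi$ restricts to an injective contractive homomorphism of the commutative $C^*$-algebra $C_0(X)$ into $B$; by minimality of the $C^*$-norm such a map is isometric, exactly as invoked in Corollary~\ref{co:QuotientIsReduced}. Hence $\|\psi(g)\| = \|g\|_{\infty}$ for every $g \in C_0(X)$, which gives $\|f\|_{\psi} = \|f\|_{\infty}$ on $C_c(X)$ and the isometric embedding of $C_0(X)$ into $\overline{\psi(\FS(\G , \L))}$. This isometry is what will later let me convert a sup-norm estimate into one that $\psi$ can detect.

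The core of the argument is the estimate $\|q_X(f|_X)\| \leq \|\psi(f)\|$, and here I would feed $f$ into the pinching Lemma~\ref{lem:pinching_property}, which is where topological freeness enters. Given $\varepsilon > 0$, it produces a contractive $b \in C_c(X)$ and a bump function $a \in C_c(X)$ with $bf|_X \in C_0(X)^+$ such that $\|q_X(f|_X)\| \leq \|a(bf|_X)a\|_{\infty} + \varepsilon$ and $\|abfa - a(bf|_X)a\|_{\textrm{max}}^{S} \leq \varepsilon$. Since $a(bf|_X)a \in C_0(X)$, the isometry of the previous paragraph gives $\|a(bf|_X)a\|_{\infty} = \|\psi(a(bf|_X)a)\|$; contractivity of $\psi$ in $\|\cdot\|_{\textrm{max}}^{S}$ applied to the second pinching inequality yields $\|\psi(a(bf|_X)a) - \psi(abfa)\| \leq \varepsilon$; and finally $\|\psi(abfa)\| \leq \|\psi(a)\|\,\|\psi(b)\|\,\|\psi(f)\|\,\|\psi(a)\| \leq \|\psi(f)\|$, using multiplicativity together with $\|a\|_{\infty} = 1$ and $\|b\|_{\infty} \leq 1$. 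Chaining these gives $\|q_X(f|_X)\| \leq \|\psi(f)\| + 2\varepsilon$, and letting $\varepsilon \to 0$ finishes the estimate.

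The main obstacle I anticipate is the asymmetry between the two norms that the pinching lemma balances against each other: $\psi$ is only controlled by the inverse-semigroup norm $\|\cdot\|_{\textrm{max}}^{S}$, whereas the local expectation lives in $\D(X)$ and is governed by $\|\cdot\|_{\infty}$. The whole scheme works only because the pinching lemma simultaneously approximates $f|_X$ in the sup norm \emph{and} approximates the off-diagonal part $abfa - a(bf|_X)a$ in the $S$-norm; the isometric copy of $C_0(X)$ is then the bridge that lets one pass from the $\|\cdot\|_{\infty}$-side to the $\psi$-side. Once the estimate is secured, the conclusion that $\overline{\psi(\FS(\G , \L))}$ is an exotic groupoid Banach algebra is immediate from Definition~\ref{de:ExoticGroupoidBanachAlgebra} and Proposition~\ref{prop:local_expectations_and_local_j_map}.
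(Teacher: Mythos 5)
Your proposal is correct and follows essentially the same route as the paper: isometry of $\psi$ on $C_0(X)$ via minimality of the $C^*$-norm, then the pinching Lemma~\ref{lem:pinching_property} combined with the chain $\| q_X(f|_X) \| \leq \| a(b\cdot f|_X)a \|_\infty + \varepsilon = \| \psi(a(b\cdot f|_X)a) \| + \varepsilon \leq \| \psi(a(b\cdot f)a) \| + 2\varepsilon \leq \| \psi(f) \| + 2\varepsilon$, where the middle step uses contractivity of $\psi$ with respect to $\|\cdot\|_{\max}^S$. This is precisely the paper's argument, so nothing further is needed.
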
 
\begin{proof}
Since $\psi|_{C_0(X)}$ is injective, $\psi|_{C_0(X)}$ is isometric by minimality of the supremum norm, cf.\ \cite[Proposition 2.10]{BKM}. 
Let $f\in \mathfrak{C}_c(\G , \L)$ and take $\epsilon >0$. 
For $a,b\in C_0(X)$ as in Lemma~\ref{lem:pinching_property}, using the inequalities there, we get
\[
\begin{split}
    \| q_{X} (f|_{X}) \| &\leq \| a ( b \cdot f|_{X}) a \|_\infty +\epsilon = \| \psi(a ( b \cdot f|_{X}) a) \|_B + \epsilon \\
        &\leq \| \psi(a ( b \cdot f) a) \|_B +2\epsilon \leq \| \psi( f ) \| +2\epsilon,
\end{split}
\]
where the third step uses the reverse triangle inequality, contractiveness of $\psi$ and Lemma~\ref{lem:pinching_property}: 
\[
\begin{split}
    \| \psi \big( a (b \cdot f) a \big) \|_B - \| \psi \big( a (b \cdot f|_X) a \big) \|_B &\leq \| \psi \big( a (b \cdot f) a \big) - \psi \big( a (b \cdot f|_X) a \big) \|_B \\
       &\leq \| a (b \cdot f) a - a ( b \cdot f|_X) a \|_{\max}^{S} \leq \epsilon .
\end{split}
\]
Thus we conclude that $\| q_X(f|_X) \| \leq \| \psi(f) \|_B$. 
It follows that $\overline{\psi(\Falg{\G , \L}{S}{}{})}$ is exotic. 
\end{proof}

\begin{thm}\label{thm:Top_Free_Intersection_Property}
Let $S\subseteq \Bis(\G)$ be a unital inverse semigroup covering $\G$. 
Assume $\G$ is topologically free and consider an $S$-graded Banach algebra $\Falg{\G , \L}{}{\RR}{}$ such that $C_0(X)$ embeds isometrically into $\Falg{\G , \L}{}{\RR}{}$. 
\begin{enumerate}
    \item $\Falg{\G , \L}{}{\RR}{}$ is an exotic groupoid Banach algebra and the inclusion $C_0(X) \subseteq \Falg{\G , \L}{}{\RR}{}$ has the generalized intersection property with the hidden ideal $\ker j_{\RR}^{\ess}$.
    \item $\Falg{\G , \L}{}{\RR}{}$ is an essential groupoid Banach algebra if and only if $C_0(X)\subseteq \Falg{\G , \L}{}{\RR}{}$ has the intersection property.
    \item The Banach algebra $\Falg{\G , \L}{}{\RR}{}$ is simple if and only if $\Falg{\G , \L}{}{\RR}{}$ is an essential groupoid Banach algebra and $\G$ is minimal.
\end{enumerate}
\end{thm}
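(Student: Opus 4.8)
The plan is to treat the three parts in order, concentrating the real work in part (1); parts (2) and (3) then follow almost formally from the definitions together with Lemmas~\ref{lem:simplicity_in_general} and~\ref{lem:invariant_ideals}.

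For part (1), I would first note that since $\FR(\G,\LL)$ is $S$-graded, the identity on $\mathfrak{C}_c(\G , \L)$ extends to a canonical representation $\psi : \Falg{\G , \L}{S}{}{} \to \FR(\G,\LL)$ which by hypothesis is isometric, in particular injective, on $C_0(X)$. Topological freeness then lets me invoke Corollary~\ref{cor:top_free_implies_expectation} to conclude that $\FR(\G,\LL) = \overline{\psi(\Falg{\G , \L}{S}{}{})}$ is an exotic groupoid Banach algebra; equivalently $\|q_X(f|_X)\| \leq \|f\|_{\RR}$ on $\mathfrak{C}_c(\G , \L)$, so the map $j_{\RR}^{\ess}$ of Proposition~\ref{prop:local_expectations_and_local_j_map} exists and $\ker j_{\RR}^{\ess}$ is an ideal. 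That $\ker j_{\RR}^{\ess}$ meets $C_0(X)$ trivially is immediate from Remark~\ref{rem:exotic_and_graded} (using $X\in S$), since $j_{\RR}^{\ess}$ restricts to the identity on $C_0(X)$.

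The heart of the argument, and the step I expect to be the main obstacle, is showing that $\ker j_{\RR}^{\ess}$ is the \emph{largest} ideal meeting $C_0(X)$ trivially, so that it is indeed the hidden ideal $\Null$ of Definition~\ref{def:visible}. Given an arbitrary ideal $J$ with $J \cap C_0(X) = \{0\}$, I would pass to the quotient map $\pi : \FR(\G,\LL) \to \FR(\G,\LL)/J$. Because $J \cap C_0(X) = \{0\}$ and $C_0(X)$ is a $C^*$-algebra, $\pi$ is injective, hence isometric, on $C_0(X)$ by minimality of the supremum norm; thus the composite $\Falg{\G , \L}{S}{}{} \to \FR(\G,\LL) \xrightarrow{\pi} \FR(\G,\LL)/J$ is again injective on $C_0(X)$. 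A second application of Corollary~\ref{cor:top_free_implies_expectation} (using topological freeness) yields $\|q_X(f|_X)\| \leq \|\pi(f)\|$ for all $f \in \mathfrak{C}_c(\G , \L)$, and Corollary~\ref{cor:kernel_of_reps_with_expectation} then forces $J = \ker \pi \subseteq \ker j_{\RR}^{\ess}$. This establishes the generalised intersection property with hidden ideal $\ker j_{\RR}^{\ess}$ and completes part (1).

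For part (2), by Definition~\ref{def:essential_algebras} the algebra $\FR(\G,\LL)$ is essential exactly when $\ker j_{\RR}^{\ess} = \{0\}$; since part (1) identifies $\ker j_{\RR}^{\ess}$ with the largest ideal meeting $C_0(X)$ trivially, this vanishing is equivalent to the assertion that every non-zero ideal meets $C_0(X)$ non-trivially, i.e.\ to the intersection property. Finally, for part (3), I would combine part (2) with the two halves of Lemma~\ref{lem:simplicity_in_general}: the inclusion $C_0(X) \subseteq \FR(\G,\LL)$ has the intersection property and is minimal if and only if $\FR(\G,\LL)$ is simple. By part (2) the intersection property is equivalent to $\FR(\G,\LL)$ being essential, while by Lemma~\ref{lem:invariant_ideals} minimality of the inclusion is equivalent to minimality of $\G$; putting these together yields the claimed characterisation of simplicity.
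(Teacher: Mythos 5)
Your proposal is correct and follows essentially the same route as the paper: Corollary~\ref{cor:top_free_implies_expectation} applied first to the canonical map $\Falg{\G , \L}{S}{}{} \to \Falg{\G , \L}{}{\RR}{}$ to get exoticness, then to the quotient by an ideal $J$ with $J\cap C_0(X)=\{0\}$, followed by Corollary~\ref{cor:kernel_of_reps_with_expectation} to conclude $J\subseteq \ker j_{\RR}^{\ess}$, with parts (2) and (3) deduced formally via Lemmas~\ref{lem:simplicity_in_general} and~\ref{lem:invariant_ideals}. Your write-up merely makes explicit two small points the paper leaves implicit (isometry of the quotient on $C_0(X)$ via minimality of the $C^*$-norm, and $\ker j_{\RR}^{\ess}\cap C_0(X)=\{0\}$ via Remark~\ref{rem:exotic_and_graded}), which is fine.
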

\begin{proof}
Applying Corollary~\ref{cor:top_free_implies_expectation} to the canonical map $\Lambda_\RR : \Falg{\G , \L}{S}{}{} \to \Falg{\G , \L}{}{\RR}{}$ we get that $\Falg{\G , \L}{}{\RR}{}$ is an exotic groupoid Banach algebra. 
For an ideal $J$ in $\Falg{\G , \L}{}{\RR}{}$ with $J\cap C_0(X) = \{ 0 \}$, Corollaries~\ref{cor:kernel_of_reps_with_expectation} and \ref{cor:top_free_implies_expectation} applied to the quotient map $\Falg{\G , \L}{}{\RR}{} \to \Falg{\G , \L}{}{\RR}{} / J$ give that $\Falg{\G , \L}{}{\RR}{} / J$ is an exotic groupoid Banach algebra and $J\subseteq \ker j_{\RR}^{\ess}$. 
Hence the inclusion $C_0(X) \subseteq \Falg{\G , \L}{}{\RR}{}$ has the generalized intersection property with the hidden ideal $\ker j_{\RR}^{\ess}$.
Thus it has the intersection property if and only if $\ker j_{\RR}^{\ess}=\{0\}$, which by definition means that $\Falg{\G , \L}{}{\RR}{}$ is an essential groupoid Banach algebra. 
Combining this with Lemmas~\ref{lem:simplicity_in_general}, \ref{lem:invariant_ideals} one gets that $\Falg{\G , \L}{}{\RR}{}$ is simple if and only if $\Falg{\G , \L}{}{\RR}{}$ is essential and $\G$ is minimal. 
\end{proof}

\subsection{Characterizations of topological freeness}

We start by linking  topological freeness to maximal abelianness of $C_0(X)$ in an essential groupoid algebra. 
In the setting of $C^*$-algebra crossed products this goes back to \cite[Proposition 4.14]{Zeller-Meier} and plays a crucial role in the theory of Cartan $C^*$-subalgebras, cf.\ \cite[Proposition II.4.7]{Renault_book}, \cite[Theorem 4.2]{Re}. %, and for essential groupoid $C^*$-algebras follows from \cite{Taylor}.

\begin{prop}\label{prop:Cartan_subalgebra}
If $C_0(X)$ is maximal abelian in some exotic groupoid Banach algebra $\Falg{\G , \L}{}{\RR}{}$, then $\G$ is topologically free. 
If $\G$ is topologically free and Hausdorff, then $C_0(X)$ is maximal abelian in every reduced groupoid Banach algebra $\Falg{\G , \L}{}{\RR}{}$. 
\end{prop}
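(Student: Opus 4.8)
The plan is to prove the two implications separately: the first by contraposition, and the second directly via the injective $j$-map supplied by reducedness.

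For the implication ``$C_0(X)$ maximal abelian $\Rightarrow$ $\G$ topologically free'' I would argue the contrapositive and exhibit an element of the relative commutant of $C_0(X)$ lying outside $C_0(X)$. If $\G$ is not topologically free there is a non-empty open $V\subseteq\G\setminus X$ with $r|_V=d|_V$; after shrinking I may assume $V\in S(\LL)$ and pick a non-zero $f\in C_c(V,\LL)$. The convolution formulas give $(af)(\gamma)=a(r(\gamma))f(\gamma)$ and $(fa)(\gamma)=f(\gamma)a(d(\gamma))$ on $V$, and these coincide because $r=d$ on $V$, so $f$ commutes with every $a\in C_0(X)$. Since $C_c(V,\LL)$ embeds canonically into the exotic algebra (Proposition~\ref{prop:local_expectations_and_local_j_map}) we have $f\neq 0$, while $\E_X^\RR(f)=q_X(f|_X)=0$ as $f$ vanishes on $X$; because $\E_X^\RR$ is isometric on $C_0(X)$, the equality $f=a\in C_0(X)$ would force $a=0$, a contradiction. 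Hence $C_0(X)$ is not maximal abelian.

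For the converse, assume $\G$ is topologically free and Hausdorff and that $\FR(\G,\LL)$ is reduced, so that $j_\RR:\FR(\G,\LL)\to\mathfrak{C}_0(\G,\LL)=C_0(\G,\LL)$ is an injective contraction taking values in genuinely continuous sections. Given $f$ in the relative commutant of $C_0(X)$, the key step is to transport the commutation relation onto the continuous section $g:=j_\RR(f)$ using that $j_\RR$ is a $\mathfrak{C}_c(\G,\LL)$-bimodule map (Proposition~\ref{prop:expectations_and_j_map}): for each $a\in C_c(X)$ one obtains $a*g=j_\RR(af)=j_\RR(fa)=g*a$, i.e.\ $a(r(\gamma))g(\gamma)=a(d(\gamma))g(\gamma)$ for all $\gamma\in\G$. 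Since $C_c(X)$ separates points of $X$, this pins $\supp(g)$ inside the isotropy bundle $\{\gamma:r(\gamma)=d(\gamma)\}$.

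Finally I would invoke continuity of $g$ together with topological freeness to push this support into $X$. If $g(\gamma_0)\neq 0$ for some $\gamma_0\in\G\setminus X$, then, $\G\setminus X$ being open by Hausdorffness, continuity of $g$ lets me choose an open bisection $V\ni\gamma_0$ with $V\subseteq\G\setminus X$ on which $g$ never vanishes; the support condition forces $r|_V=d|_V$ on the non-empty open $V\subseteq\G\setminus X$, contradicting topological freeness. Hence $\supp(g)\subseteq X$, so $g\in C_0(X)$; since $j_\RR$ is the identity on $C_0(X)$ there is $a\in C_0(X)$ with $j_\RR(a)=g=j_\RR(f)$, and injectivity of $j_\RR$ gives $f=a\in C_0(X)$. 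I expect the main obstacle to lie in this second implication, specifically in the passage from the abstract commutation relation in the completion to the pointwise identity for $g=j_\RR(f)$: this is precisely where the bimodule property of $j_\RR$ and the injectivity built into reducedness are indispensable, while Hausdorffness does double duty, upgrading the range of $j_\RR$ to continuous sections and making $\G\setminus X$ open so that topological freeness can be applied to an honest open bisection.
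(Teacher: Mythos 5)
Your proof is correct and takes essentially the same route as the paper: the first implication is proved contrapositively by exhibiting a nonzero element of $C_c(V,\LL)$ that commutes with $C_0(X)$, and the second uses Hausdorffness, the injectivity of $j_{\RR}$, and its $\mathfrak{C}_c(\G,\LL)$-bimodule property to pin $\supp(j_{\RR}(f))$ first into the isotropy bundle and then (via topological freeness) into $X$. The only cosmetic difference is that the paper subtracts $E^{\RR}_X(b)$ and finds an open bisection inside $\supp\big(j_{\RR}(b-E^{\RR}_X(b))\big)\subseteq \G\setminus X$ to contradict topological freeness, whereas you argue directly that the part of the support off $X$ is empty; these are the same argument organized contrapositively versus directly.
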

\begin{proof} 
Assume that $\G$ is not topologically free, so that there is a non-empty open bisection $V\subseteq \G \setminus X$ for which $r|_V = d|_V$. 
Thus every $b \in C_c(V , \L) \subseteq \Falg{\G , \L}{}{\RR}{}$ commutes with $C_0(X)$ for any algebra $\Falg{\G , \L}{}{\RR}{}$ that contain the spaces $C_c(U,\LL)$,  $U\in \Bis(\G)$ (so for instance an exotic Banach algebra). 
Hence $C_0(X)$ is not maximal abelian in $\Falg{\G , \L}{}{\RR}{}$. 

Now assume that $\G$ is Hausdorff, $\Falg{\G , \L}{}{\RR}{}$ is a reduced groupoid Banach algebra, and let  $b\in \Falg{\G , \L}{}{\RR}{} \setminus C_0(X)$  commute with all elements in $C_0(X)$. 
Since $j_{\RR} : \Falg{\G , \L}{}{\RR}{} \to C_0(\G , \L)$ is injective and $E^\RR_X(b) := j_{\RR}(b)|_{X}\in C_0(X)$, we get that $c = b - E^\RR_X(b)\in \Falg{\G , \L}{}{\RR}{}$ commutes with elements in $C_0(X)$ and $\supp(j_{\RR}(c))$ has non-empty interior and is contained in $\G\setminus X$. 
Thus there is a non-empty open bisection $V\subseteq \supp(j_{\RR}(c))\subseteq \G\setminus X$ and we have $a * j_{\RR}(c)=j_{\RR}(ac)=j_{\RR}(ca)=j_{\RR}(c) *a$ for all  $a\in C_c(V)$, because $j_{\RR}$ is a $C_0(X)$-bimodule map. 
This implies that $r|_{V}=d|_{V}$. 
Hence $\G$ is not topologically free.
\end{proof}

It is known that, in the untwisted case, topological freeness is equivalent to the generalized intersection property for essential groupoid $C^*$-algebras, see \cite[Theorem 7.29]{Kwa-Meyer}. 
This generalizes classical results for crossed products by Kawamura--Tomiyama~\cite{Kawa-Tomi} and Archbold--Spielberg~\cite{Arch_Spiel}. 
A corresponding result for $L^P$-operator algebra crossed products was proved in \cite[Theorem 5.9]{BK}. 
We now generalize all these results to essential groupoid Banach algebras associated to $P\subseteq [1,\infty]$. 
To this end, we use the following representation, which in the $\ell^2$-context is called the \emph{orbit representation} \cite{Kwa-Meyer}, \emph{augmentation representation} \cite{BCFS}, or \emph{trivial representation} \cite{Renault_Fourier}. 
As in \cite{BK}, we prefer to call it a \emph{$C_0(X)$-trivial representation}.

\begin{lem}\label{lem:orbit_representation} 
For any $p\in[1,\infty]$, we have a representation $\Lambda^{\tr}_p : \Falg{\G}{p}{}{} \to B(\ell^{p}(X))$ where
\[
      \Lambda^{\tr}_p (f)\xi (x) := \sum_{d(\gamma)=x} f(\gamma)\xi (r(\gamma)), \qquad f\in \mathfrak{C}_c(\G) ,\ \xi \in \ell^{p}(X).
\]
\end{lem}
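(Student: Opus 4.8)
The plan is to show that $\Lambda^{\tr}_p$ is a well-defined contractive algebra homomorphism from $\mathfrak{C}_c(\G)$ into $B(\ell^p(X))$ that lies in the class of representations defining the full $L^p$-norm, so that it factors through $F^p(\G)$. I treat $X$ as a discrete set with counting measure, so that $\ell^p(X)=L^p(X)$ genuinely is an $L^p$-space.

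First I would unpack the formula on a single bisection. For $U\in\Bis(\G)$ and $f\in C_c(U)$ the injectivity of $d|_U$ collapses the inner sum: for $x\in d(U)$ the only contributing arrow is $\gamma=d|_U^{-1}(x)$, with $r(\gamma)=h_U(x)$, where $h_U=r\circ d|_U^{-1}:d(U)\to r(U)$ is the partial homeomorphism induced by $U$ (as in Lemma~\ref{lem:bimodule_borel} and Remark~\ref{re:TopologicallyFreeGroupoid}). Thus $\Lambda^{\tr}_p(f)$ is the weighted composition operator $\xi\mapsto(f\circ d|_U^{-1})\cdot(\xi\circ h_U)$ supported on $d(U)$. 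Since for any $f\in\mathfrak{C}_c(\G)$ the set $d^{-1}(x)\cap\supp(f)$ is finite, the defining sum is always finite, and splitting a general $f$ into finitely many bisection-supported pieces shows $\Lambda^{\tr}_p(f)$ is a well-defined element of $B(\ell^p(X))$.

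The one genuine computation is the norm estimate. Across the bijection $h_U:d(U)\to r(U)$ the change of variables $y=h_U(x)$ shows that $\xi\mapsto\xi\circ h_U$ restricts to an isometry $\ell^p(r(U))\to\ell^p(d(U))$ for \emph{every} $p\in[1,\infty]$; composing with multiplication by $f\circ d|_U^{-1}$ gives $\|\Lambda^{\tr}_p(f)\|\le\|f\|_\infty$ for $f\in C_c(U)$, uniformly in $p$. By linearity and the very definition of $\|\cdot\|^S_{\textrm{max}}$ as the largest submultiplicative norm agreeing with $\|\cdot\|_\infty$ on each grading component, this yields $\|\Lambda^{\tr}_p(f)\|\le\|f\|^S_{\textrm{max}}$, so $\Lambda^{\tr}_p$ is contractive on $\Falg{\G}{S}{}{}$. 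Multiplicativity is then a short direct calculation which, on generators $f\in C_c(U)$, $g\in C_c(V)$, reduces to the composition law $h_{VU}=h_V\circ h_U$ for the induced partial homeomorphisms; both sides being weighted composition operators attached to the composite bisection, they agree. Finally, $\ell^p(X)$ is an $L^p$-space (condition~(R1)), and for $a\in C_c(X)$ the formula gives $\Lambda^{\tr}_p(a)\xi(x)=a(x)\xi(x)$, so $\Lambda^{\tr}_p|_{C_0(X)}$ acts by multiplication operators (condition~(R2)). Hence $\Lambda^{\tr}_p$ belongs to the class $\RR$ defining $\|\cdot\|_{L^p}$, whence $\|\Lambda^{\tr}_p(f)\|\le\|f\|_{L^p}$ and $\Lambda^{\tr}_p$ extends by continuity to a representation $F^p(\G)\to B(\ell^p(X))$.

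The main obstacle is the norm control: producing the single bisection-level estimate $\|\Lambda^{\tr}_p(f)\|\le\|f\|_\infty$ valid simultaneously for all $p\in[1,\infty]$. Once this change-of-variables bound is in place, everything else --- contractivity in $\|\cdot\|^S_{\textrm{max}}$, membership in the defining class via~(R1)--(R2), and the passage to $F^p(\G)$ --- is formal. A secondary point requiring care is the bookkeeping in the non-Hausdorff case, where $\mathfrak{C}_c(\G)$ consists of quasi-continuous sections and $f$ must be decomposed into finitely many bisection-supported pieces before the weighted-composition-operator description applies.
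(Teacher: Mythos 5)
The gap is in the multiplicativity step, which you wave through as ``a short direct calculation \dots\ reduc[ing] to the composition law $h_{VU}=h_V\circ h_U$''. With the formula exactly as displayed, that calculation fails: $\Lambda^{\tr}_p$ is \emph{anti}-multiplicative. Indeed, for $f\in C_c(U)$, $g\in C_c(V)$ your weighted-composition description gives, for $x\in d(U)$ with $h_U(x)\in d(V)$,
\[
\big(\Lambda^{\tr}_p(f)\Lambda^{\tr}_p(g)\xi\big)(x)=f\big(d|_U^{-1}(x)\big)\,g\big(d|_V^{-1}(h_U(x))\big)\,\xi\big(h_V(h_U(x))\big),
\]
so the product operator is the weighted composition operator attached to the bisection $VU$ (its underlying partial homeomorphism is $h_V\circ h_U=h_{VU}$), whereas $f*g$ is supported on $UV$ and $\Lambda^{\tr}_p(f*g)$ is attached to $h_{UV}=h_U\circ h_V$. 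Thus $\Lambda^{\tr}_p(f)\Lambda^{\tr}_p(g)=\Lambda^{\tr}_p(g*f)$, which differs from $\Lambda^{\tr}_p(f*g)$ in general: the ``both sides'' in your argument are attached to \emph{different} composite bisections. The pair groupoid makes this stark: for $\G=X\times X$ with $X$ finite, the displayed formula reads $\Lambda^{\tr}_p(f)=f^{\mathrm{T}}$, transposition of matrices, which reverses products.

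To be fair, the discrepancy originates in the statement itself: the displayed formula is transposed relative to what the paper's own proof constructs. The paper takes a different route from yours --- it exhibits a covariant representation $(\pi^{\tr},v^{\tr})$ of the canonical inverse semigroup action $h : S \to \PAut(X)$ on $\ell^p(X)$ and invokes the integration theorem of \cite{BKM} --- and unwinding $\pi^{\tr}(f\circ r|_U^{-1})\,v_U^{\tr}$ for $f\in C_c(U)$ yields $\Lambda^{\tr}_p(f)\xi(x)=\sum_{r(\gamma)=x}f(\gamma)\xi(d(\gamma))$, i.e.\ the formula with $d$ and $r$ interchanged, which \emph{is} a homomorphism. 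For that corrected formula your argument goes through verbatim: the product of the operators for $f$ and $g$ is then attached to $h_{(UV)^*}=h_{V^*}\circ h_{U^*}$ and matches $f*g\in C_c(UV)$, and your remaining steps --- the bisection-level change-of-variables isometry, contractivity in $\|\cdot\|_{\max}^{S}$ via the infimum definition, verification of (R1)--(R2), and extension by continuity to $F^p(\G)$ --- are all correct and constitute a perfectly good self-contained alternative to the paper's appeal to the machinery of \cite{BKM}. So the verdict is: right strategy, but the multiplicativity claim is false for the formula you actually wrote down, and carrying out the ``short direct calculation'' honestly would have forced the needed correction.
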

\begin{proof}
Consider the canonical action $h : S \to \PAut(X)$ of any wide inverse semigroup $S\subseteq \Bis(\G)$, cf.\ \cite[Example 2.19]{BKM}. 
One readily sees that 
\[
    \pi^{\tr}(a)\xi(x) := a(x)\xi(x), \qquad 
    v_U^{\tr}\xi(x) := \begin{cases}
                            \xi(h_{U^*}(x)), & x\in r(U), \\
                            0 , & x \notin r(U),
                        \end{cases}
\]
for $a\in C_0(X)$, $\xi \in \ell^p(X)$, $x \in X$, $U\in S$, defines a covariant representation $(\pi^{\tr},v^{\tr})$ of $h$ on $\ell^{p}(X)$ in the sense of \cite[Definition 3.17]{BKM}. By \cite[Proposition 4.23 and Theorem 5.5]{BKM}, $(\pi^{\tr},v^{\tr})$ integrates to 
a $\|\cdot\|_{I}$-contractive homomorphism $\pi^{\tr}\rtimes v^{\tr}:  \mathfrak{C}_c(\G) \to B(\ell^{p}(X))$. 
Hence it extends to the desired representation $\Lambda^{\tr}_p : \Falg{\G}{p}{}{} \to B(\ell^{p}(X))$, see Example~\ref{ex:FullLpAlgebra}.
\end{proof}

Recall that for any non-empty $P\subseteq [1,\infty]$ we denoted by $\Lambda_{P} : F^P(\G , \L) \to F^P_{\red}(\G , \L)$  the canonical \emph{regular representation}, 
see Definition \ref{def:F^P_crossed_products}. 
We denote by $j_{P}^{\ess} : F^P(\G , \LL) \to \D(\G,\LL)$ the canonical contractive linear map, which is the identity on every $C_0(U,\LL)$, $U\in \Bis(G)$, cf.\ Remark~\ref{rem:exotic_and_graded}. 
Following \cite{BK}, we also define the \emph{$C_0(X)$-trivial representation} $\Lambda_{P}^{\tr}$ of $F^P(\G)$ as an extension of the $\ell^{\infty}$-direct sum $\oplus_{p\in P} \Lambda_p^{\tr}$ of representations from Lemma~\ref{lem:orbit_representation}.

\begin{thm} %[Generalized intersection property in the untwisted case]
\label{thm:topological_freeness_untwisted} 
Let $\G$ be an \'etale groupoid. 
For any non-empty $P\subseteq [1,\infty]$, the following conditions are equivalent:
\begin{enumerate}
    \item\label{enu:topological_freeness_untwisted1} $\G$ is topologically free;
%    \item\label{enu:topological_freeness_untwisted1}  $C_0(X)\subseteq F_{\RR}(\alpha)$ has the generalized interesection property with the hidden ideal $\bigcap_{t\in G}\ker(E_t^{\RR})$, for any crossed product $F_{\RR}(\alpha)$ with Fourier decomposition $\{E_t^{\RR}\}_{t\in G}$;
    \item\label{enu:topological_freeness_untwisted2} $C_0(X) \subseteq F^P(\G)$ has the generalized intersection property with the hidden ideal $\ker(j_P^{\ess})$;
    \item\label{enu:topological_freeness_untwisted3} $\ker(\Lambda_P^{\tr}) \subseteq \ker(j_P^{\ess})$ where $\Lambda_{P}^{\tr}$ is the $C_0(X)$-trivial representation of $F^P(\G)$. 
%    \item\label{enu:topological_freeness_untwisted6} $C_0(X)$ detects ideals in $F(\alpha)$.
\end{enumerate}
If $\mathfrak{M}_0(\G)=\{0\}$, so for instance when $\G$ is Hausdorff, then $\ker(j_P^{\ess}) = \ker(\Lambda_P)$ and the above conditions are equivalent to
\begin{enumerate}\setcounter{enumi}{3}
    \item\label{enu:topological_freeness_untwisted4} $C_0(X)$ detects ideals in one (and hence all) of $F^1(\G)$, $F^{\infty}(\G)$, $F_{I}(\G)$. 
\end{enumerate}
\end{thm}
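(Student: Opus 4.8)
The plan is to prove the cycle \ref{enu:topological_freeness_untwisted1}$\Rightarrow$\ref{enu:topological_freeness_untwisted2}$\Rightarrow$\ref{enu:topological_freeness_untwisted3}$\Rightarrow$\ref{enu:topological_freeness_untwisted1} for a fixed non-empty $P\subseteq[1,\infty]$, and then to treat the case $\mathfrak{M}_0(\G)=\{0\}$ separately. Throughout I would use that $F^P(\G)$ is a $\Bis(\G)$-graded groupoid Banach algebra into which $C_0(X)$ embeds isometrically (Definition~\ref{def:F^P_crossed_products}, Proposition~\ref{prop:regular_disintegrated}), and that its essential $j$-map $j_P^{\ess}$ restricts to the identity on each $C_0(U,\LL)$, $U\in\Bis(\G)$, so that on $\mathfrak{C}_c(\G)$ it coincides with the quotient map $q_{\G}$.

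For \ref{enu:topological_freeness_untwisted1}$\Rightarrow$\ref{enu:topological_freeness_untwisted2} I would simply invoke Theorem~\ref{thm:Top_Free_Intersection_Property} with $S=\Bis(\G)$ and $\RR$ the defining class of $F^P(\G)$: topological freeness yields the generalised intersection property for $C_0(X)\subseteq F^P(\G)$ with hidden ideal $\ker j_P^{\ess}$. For \ref{enu:topological_freeness_untwisted2}$\Rightarrow$\ref{enu:topological_freeness_untwisted3} I would use that $\Lambda_P^{\tr}$ restricts on $C_0(X)$ to an $\ell^\infty$-direct sum of the faithful multiplication representations $\pi^{\tr}$ of Lemma~\ref{lem:orbit_representation}; hence $\ker\Lambda_P^{\tr}$ is a closed two-sided ideal with $\ker\Lambda_P^{\tr}\cap C_0(X)=\{0\}$, and maximality of the hidden ideal (Definition~\ref{def:visible}) forces $\ker\Lambda_P^{\tr}\subseteq\ker j_P^{\ess}$.

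The substantive step is \ref{enu:topological_freeness_untwisted3}$\Rightarrow$\ref{enu:topological_freeness_untwisted1}, which I would prove by contraposition with an explicit construction. Suppose $\G$ is not topologically free, so there is a non-empty open bisection $V\subseteq\G\setminus X$ with $r|_V=d|_V$. Pick $0\neq f\in C_c(V)$ and set $g:=f\circ(r|_V)^{-1}\in C_c(r(V))\subseteq C_c(X)$ and $h:=f-g\in\mathfrak{C}_c(\G)$. Using $r|_V=d|_V$ one computes for each $p\in P$ and each $x\in d(V)$ that $\Lambda_p^{\tr}(f)\xi(x)=f(d|_V^{-1}(x))\,\xi(x)=g(x)\,\xi(x)=\Lambda_p^{\tr}(g)\xi(x)$ (both sides vanishing for $x\notin d(V)$), so $\Lambda_P^{\tr}(h)=0$. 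On the other hand $j_P^{\ess}(h)=q_{\G}(h)$, and since $g$ is supported on $X$ while $\supp(f)$ is a non-empty open subset of $V\subseteq\G\setminus X$, the support of $h$ has non-empty interior; by Proposition~\ref{prop:singular_functions} this means $h\notin\mathfrak{M}_0(\G)$, i.e.\ $j_P^{\ess}(h)\neq0$. Thus $h\in\ker\Lambda_P^{\tr}\setminus\ker j_P^{\ess}$, contradicting \ref{enu:topological_freeness_untwisted3}. I expect the care needed in verifying $j_P^{\ess}(h)\neq0$—the interplay between the trivial representation killing $h$ while the essential $j$-map still detects it—to be the main technical point.

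Finally, assume $\mathfrak{M}_0(\G)=\{0\}$. By Proposition~\ref{prop:regular_disintegrated} the full $j$-map factors as $j_P=j_P^{\red}\circ\Lambda_P$ with $j_P^{\red}$ injective, so $\ker j_P=\ker\Lambda_P$; and by Remark~\ref{rem:essential_vs_reduced} we have $\ker j_P^{\ess}=j_P^{-1}(\mathfrak{M}_0(\G))$, which collapses to $\ker\Lambda_P$ under the standing hypothesis. For the equivalence with \ref{enu:topological_freeness_untwisted4} I would apply the already-established equivalence \ref{enu:topological_freeness_untwisted1}$\Leftrightarrow$\ref{enu:topological_freeness_untwisted2} at the three parameter sets $P_0\in\{\{1\},\{\infty\},\{1,\infty\}\}$. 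For each such $P_0$ we have $F^{P_0}(\G)=F^{P_0}_{\red}(\G)$, so $\Lambda_{P_0}$ is the identity and $\ker j_{P_0}^{\ess}=\ker\Lambda_{P_0}=\{0\}$, while $F^{\{1\}}(\G)=F^1(\G)$, $F^{\{\infty\}}(\G)=F^{\infty}(\G)$ and $F^{\{1,\infty\}}(\G)=F_I(\G)$. Hence for each of these algebras the generalised intersection property has vanishing hidden ideal and is therefore the ordinary intersection property; since condition \ref{enu:topological_freeness_untwisted1} does not depend on $P$, this yields \ref{enu:topological_freeness_untwisted1}$\Leftrightarrow$\ref{enu:topological_freeness_untwisted4} for any one (and hence all) of $F^1(\G)$, $F^{\infty}(\G)$, $F_I(\G)$, completing the proof.
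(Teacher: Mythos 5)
Your proposal is correct and follows essentially the same route as the paper: (1)$\Rightarrow$(2) via Theorem~\ref{thm:Top_Free_Intersection_Property}, (2)$\Rightarrow$(3) from $\ker\Lambda_P^{\tr}\cap C_0(X)=\{0\}$ and maximality of the hidden ideal, (3)$\Rightarrow$(1) by contraposition with the same witness $f-f_0$ killed by the trivial representation, and the same reduction of (4) to (2) at $P\subseteq\{1,\infty\}$. The only cosmetic difference is that the paper certifies $f-f_0\notin\ker j_P^{\ess}$ by evaluating the essential expectation $\E_X^P(f-f_0)=-f_0\neq 0$, whereas you argue via the non-meagerness of $\supp(f-f_0)$ through Proposition~\ref{prop:singular_functions}; both are valid and rest on the same machinery.
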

\begin{proof}
\ref{enu:topological_freeness_untwisted1} implies \ref{enu:topological_freeness_untwisted2} by Theorem~\ref{thm:Top_Free_Intersection_Property}, and \ref{enu:topological_freeness_untwisted2} implies \ref{enu:topological_freeness_untwisted3} because $\ker(\Lambda_P^{\tr})$ is an ideal in $F^P(\G)$ satisfying $\ker(\Lambda_P^{\tr})\cap C_0(X) = \{ 0 \}$. 
To show that \ref{enu:topological_freeness_untwisted3} implies \ref{enu:topological_freeness_untwisted1}, assume that $\G$ is not topologically free. 
Thus there is a non-empty open bisection $U\subseteq \G \setminus X$ with $r|_U = d|_U$. 
Since $U\cap X = \emptyset$, we get $\E_X(f) = 0$ for all $f\in C_0(U)$. 
Choose any non-zero $f\in C_c(U)$ and define $f_0\in C_c(d(U)) \subseteq C_0(X)$ by $f_0( d(\gamma) ) := f(\gamma)$ for $\gamma\in U$.
Since $r|_U = d|_U$, both $ \Lambda^{\tr}_P (f)$ and $\Lambda^{\tr}_P (f_0)$ act on each subspace $\ell^p(X)$, $p\in P$, by pointwise multiplication with the same function $f_0$. 
Hence $f - f_0\in \ker\Lambda^{\tr}_P$. 
For the canonical essential expectation $\E_X^{P} : F^P(\G)\to \D(X)$ we have $\E_X^{P}(f - f_0) = - f_0 \neq 0$.
Hence $f - f_0 \notin \ker(j_P^{\ess})$, cf.\ Proposition~\ref{prop:local_expectations_and_local_j_map}. 
Thus \ref{enu:topological_freeness_untwisted2} fails.

This proves that \ref{enu:topological_freeness_untwisted1}--\ref{enu:topological_freeness_untwisted3} are equivalent. 
These conditions are independent of the choice of $P$ because \ref{enu:topological_freeness_untwisted1} is. 
Assuming $\mathfrak{M}_0(\G)=\{0\}$, we have $\ker(j_P^{\ess}) = \ker j_{P}=\ker(\Lambda_P)$, by Remark~\ref{rem:essential_vs_reduced} and Proposition~\ref{prop:regular_disintegrated}. 
For a non-empty $Q\subseteq \{1,\infty\}$, we have $\ker(\Lambda_Q)=\{0\}$, by Remark~\ref{rem:L_P_definition_discussion}. 
Hence condition \ref{enu:topological_freeness_untwisted4} is equivalent to \ref{enu:topological_freeness_untwisted2} for $P=Q$, because $F^Q(\G) = F^Q_{\red}(\G) = F^Q_{\ess}(\G)$.  
\end{proof} 

\begin{rem}
When $\G$ is a countable union of open bisections, then $\ker(j_P^{\ess})$ coincides with the kernel of the \emph{essential representation}  $\widetilde{\Lambda}_{P}^{\ess}:F^P(\G , \L) \to \widetilde{F}^{P}_{\ess}(\G , \L)$ given by the $\ell^{\infty}$-direct sum $\oplus_{p\in P} \widetilde{\Lambda}_p$ of representations from Proposition~\ref{prop:essential_L^p_operator_algebra}, see  also Corollary~\ref{cor:essential_L^P_operator_algebra}. 
\end{rem}

\begin{cor}\label{cor:simplicity3}
Assume that $\mathfrak{M}_0(\G)=\{0\}$, which holds when $\G$ is Hausdorff or the topology of $\G$ has a basis of compact regular open sets.
The following conditions are equivalent:
\begin{enumerate}
    \item\label{enu:simplicity31} $\G$ is topologically free and minimal; 
    \item\label{enu:simplicity32} for every twist $\LL$ and every unital inverse subsemigroup $S\subseteq \Bis(G)$ that covers $\G$, every $S$-graded reduced Banach algebra $F_{\RR}(\G,\LL)$ is simple; 
    \item\label{enu:simplicity33} one of the algebras $F^1(\G)$, $F^{\infty}(\G)$, $F_{I}(\G)$ is simple. 
%    \item\label{enu:simplicity34} the complex algebra $F^p(\G)$ is simple for $p=1$ or $p=\infty$.
%    \item\label{enu:simplicity24} for every regular representation $\psi:F(\G)\to B$ all Banach subalgebras $and every open sugroupoid $\mathcal{H}\subseteq \G$ with $X\subseteq \mathcal{H}$ is simple for $p=I$, $p=1$ or $p=\infty$;
\end{enumerate}
Assume in addition that $\G$ is second countable and amenable, then for any non-empty set $P\subseteq [1,\infty]$, the above are further equivalent to: 
\begin{enumerate} \setcounter{enumi}{3}
    \item\label{enu:simplicity35} $F^P(\G)$ is simple. 
\end{enumerate}
\end{cor}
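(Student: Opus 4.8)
The plan is to run the cyclic chain $(1)\Rightarrow(2)\Rightarrow(3)\Rightarrow(1)$ and then dispatch the amenable addendum through $(1)\Leftrightarrow(4)$, leaning throughout on Theorems~\ref{thm:Top_Free_Intersection_Property} and \ref{thm:topological_freeness_untwisted}, which already carry the analytic weight (the pinching lemma and the orbit representation). First I would record a preliminary reduction: the hypothesis $\mathfrak{M}_0(\G)=\{0\}$ forces $\mathfrak{M}_0(\G,\LL)=\{0\}$ for \emph{every} twist $\LL$. Indeed, by Proposition~\ref{prop:singular_functions} membership in $\mathfrak{M}_0$ depends only on the topology of supports in $\G$, and local triviality of $\LL$ lets one pass, by trivialising on a bisection, between a nonzero section of meagre support and a nonzero scalar quasi-continuous function of meagre support. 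By Remark~\ref{rem:essential_vs_reduced} this means that on such a groupoid every reduced groupoid Banach algebra is automatically essential, since $\ker j_{\RR}^{\ess}=\ker j_{\RR}^{\red}=\{0\}$.

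For $(1)\Rightarrow(2)$ I would fix a twist $\LL$, a covering unital inverse semigroup $S$, and an $S$-graded reduced algebra $\FR(\G,\LL)$, so that $C_0(X)$ embeds isometrically. Topological freeness from (1) places us in Theorem~\ref{thm:Top_Free_Intersection_Property}: part~(1) makes $\FR(\G,\LL)$ exotic with the generalised intersection property, and the preliminary reduction upgrades ``reduced'' to ``essential''. Minimality from (1) then yields simplicity by Theorem~\ref{thm:Top_Free_Intersection_Property}(3). The implication $(2)\Rightarrow(3)$ is a specialisation: taking the trivial twist and $S=\Bis(\G)$, the algebras $F^1(\G)$, $F^{\infty}(\G)$, $F_I(\G)$ coincide with their reduced versions by \eqref{eq:L_1_L_infty_are_reduced} and Remark~\ref{rem:L_P_definition_discussion} and are $\Bis(\G)$-graded by Proposition~\ref{prop:regular_disintegrated}, so they are reduced groupoid Banach algebras covered by (2); hence each is simple, giving (3).

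For $(3)\Rightarrow(1)$, let $B$ be whichever of $F^1(\G), F^{\infty}(\G), F_I(\G)$ is simple. By Lemma~\ref{lem:simplicity_in_general} the inclusion $C_0(X)\subseteq B$ has the intersection property and is minimal; minimality of the inclusion is exactly minimality of $\G$ by Lemma~\ref{lem:invariant_ideals}, while the intersection property for $B$ is condition~\ref{enu:topological_freeness_untwisted4} of Theorem~\ref{thm:topological_freeness_untwisted} (``one and hence all''), which under $\mathfrak{M}_0(\G)=\{0\}$ is equivalent to topological freeness. This closes the first cycle. For the amenable part I would first note that second countability and amenability give $F^P(\G)=F^P_{\red}(\G)$ for every non-empty $P\subseteq[1,\infty]$ by \cite[Theorem 6.19]{Gardella_Lupini17} (cf.\ Example~\ref{ex:reduced_L_p_groupoid_algebra}), so $F^P(\G)$ is a $\Bis(\G)$-graded reduced, hence essential, algebra. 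Then $(1)\Rightarrow(4)$ is the instance of $(1)\Rightarrow(2)$ with trivial twist, $S=\Bis(\G)$, and $\RR$ the reduced $L^P$-seminorm. For $(4)\Rightarrow(1)$, minimality of $\G$ comes once more from Lemmas~\ref{lem:simplicity_in_general} and~\ref{lem:invariant_ideals}, while topological freeness follows from the equivalence $(1)\Leftrightarrow\ref{enu:topological_freeness_untwisted3}$ of Theorem~\ref{thm:topological_freeness_untwisted}: were $\G$ not topologically free, $\ker(\Lambda_P^{\tr})$ would fail to sit inside $\ker(j_P^{\ess})$ and would therefore be a nonzero proper ideal (proper since $\Lambda_P^{\tr}\neq 0$), contradicting simplicity of $F^P(\G)$.

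The conceptual content is entirely upstream, so this is essentially an assembly argument; the one place demanding care is extracting topological freeness from simplicity in $(3)\Rightarrow(1)$ and $(4)\Rightarrow(1)$ \emph{without circularity}, since Theorem~\ref{thm:Top_Free_Intersection_Property} presupposes the very property we want. The clean route, which I would emphasise, is to feed the bare intersection property (resp.\ the nonvanishing of $\ker\Lambda_P^{\tr}$) into the already-established equivalences of Theorem~\ref{thm:topological_freeness_untwisted} rather than into Theorem~\ref{thm:Top_Free_Intersection_Property}. The only other delicate point is the twist-independence of the $\mathfrak{M}_0=\{0\}$ hypothesis, which legitimises the reduced-equals-essential reduction uniformly across all twists appearing in (2); both sufficient conditions in the statement ($\G$ Hausdorff, or a basis of compact regular open sets via Lemma~\ref{lem:ample_essential_reduced_coincide}) in fact yield $\mathfrak{M}_0(\G,\LL)=\{0\}$ for every twist directly, so no subtlety survives in the cases of primary interest.
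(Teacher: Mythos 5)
Your proposal follows the paper's own proof in every main step: $(1)\Rightarrow(2)$ via Theorem~\ref{thm:Top_Free_Intersection_Property} after upgrading ``reduced'' to ``essential'', $(2)\Rightarrow(3)$ and, in the amenable case, $(2)\Rightarrow(4)$ because $F^1(\G)$, $F^{\infty}(\G)$, $F_I(\G)$ and $F^P(\G)=F^P_{\red}(\G)$ (by \cite[Theorem 6.19]{Gardella_Lupini17}) are reduced, and the converse implications by feeding the intersection property and minimality coming from Lemmas~\ref{lem:simplicity_in_general} and~\ref{lem:invariant_ideals} into Theorem~\ref{thm:topological_freeness_untwisted}. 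Your one routing difference is cosmetic: for $(4)\Rightarrow(1)$ you contrapose through condition~\ref{enu:topological_freeness_untwisted3} of Theorem~\ref{thm:topological_freeness_untwisted} (the $C_0(X)$-trivial representation) instead of using condition~\ref{enu:topological_freeness_untwisted2} as the paper does; both work.

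The genuine gap is your ``preliminary reduction''. You are right that the argument for $(1)\Rightarrow(2)$ needs $\mathfrak{M}_0(\G,\LL)=\{0\}$ for \emph{every} twist appearing in (2) (Remark~\ref{rem:essential_vs_reduced} has the twisted hypothesis), and right to flag this---the paper passes over the point in silence. But your justification is not a proof. ``Trivialising on a bisection'' means replacing $f\in\mathfrak{M}_0(\G,\LL)$ by the scalar function $\gamma\mapsto f(\gamma)\cdot c(\gamma)^*$ cut down to $U$, for a unitary $c\in C_u(U,\LL)$, and such \emph{pointwise} operations do not preserve quasi-continuity on non-Hausdorff groupoids. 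Already untwisted: take $\G=X\cup\{g_0\}$ with $X=\{0\}\cup\{1/n\}_{n}$, $g_0$ a nontrivial isotropy element over $0$ whose basic neighbourhoods are $\{g_0\}\cup\{1/n:n\geq N\}$. Every $g\in\mathfrak{C}_0(\G)$ satisfies the matching identity $\lim_n g(1/n)=g(0)+g(g_0)$ (check it on each $C_c(W)$ and pass to uniform limits), so the pointwise product $1_X\cdot 1_V=1_{\{1/n\}_n}$ of the two quasi-continuous functions $1_X,\,1_V$ with $V=\{g_0\}\cup\{1/n\}_n$ is \emph{not} in $\mathfrak{C}_0(\G)$. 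Your trivialised pieces $f_V\cdot c^*$ on $U\cap V$ have exactly this shape: continuous on $U\cap V$, supported in a compact set chopped by an open one, with no reason to satisfy the matching conditions defining $\mathfrak{C}_0(\G)$. Proposition~\ref{prop:singular_functions} does not help here, since it compares supports of sections of a \emph{fixed} bundle, whereas the issue is transporting a section of $\LL$ to a scalar function at all. For cocycle twists there is nothing to prove, as $\mathfrak{C}_0(\G,\LL_\sigma)=\mathfrak{C}_0(\G)$ as spaces (Remark~\ref{ex:cocycle_twist}); the substance is for twists with nontrivial underlying line bundle, where the bundle genuinely interacts with the cancellations producing elements of $\mathfrak{M}_0$ (a nontrivial bundle can destroy them), so the claim is not formal. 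To close the gap one must either actually prove this twist-independence, restrict (2) to cocycle twists, or strengthen the hypothesis to $\mathfrak{M}_0(\G,\LL)=\{0\}$ for all twists---which, as you correctly observe, both sufficient conditions in the statement do provide (Hausdorffness, or Lemma~\ref{lem:ample_essential_reduced_coincide}).
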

\begin{proof} 
If $\mathfrak{M}_0(\G)=\{0\}$, then all reduced algebras are essential, see Remark~\ref{rem:essential_vs_reduced}.
Hence \ref{enu:simplicity31} implies \ref{enu:simplicity32} by Theorem~\ref{thm:Top_Free_Intersection_Property}. 
Implication \ref{enu:simplicity32}$\Rightarrow$\ref{enu:simplicity33} is obvious because the algebras $F^1(\G)$, $F^{\infty}(\G)$, $F_{I}(\G)$ are reduced. 
If $\G$ is second countable and amenable, then $F^P(\G)=F^P_{\red}(\G)$, by \cite[Theorem 6.19]{Gardella_Lupini17}, and hence these algebras are reduced. 
Thus also \ref{enu:simplicity32}$\Rightarrow$\ref{enu:simplicity35} in the amenable case. 
Since simplicity implies both the intersection property and minimality, see Lemmas~\ref{lem:simplicity_in_general} and~\ref{lem:invariant_ideals}, we get that either of \ref{enu:simplicity33} or \ref{enu:simplicity35} implies \ref{enu:simplicity31} by Theorem~\ref{thm:topological_freeness_untwisted}. 
\end{proof}

In the twisted case, we have the following characterization of topological freeness that generalizes \cite[Theorem 5.13]{BK}, which is formulated for group actions, and \cite[Proposition 6.4]{Kwa-Meyer2} in the case of a line bundle. 

\begin{thm}%[Generalized intersection property for twisted groupoid algebras]
\label{enu:topological_freeness_twisted_intersection} 
Let $(\G,\LL)$ be a twisted \'etale groupoid and let $P\subseteq [1,\infty]$ by any non-empty set.
The following are equivalent: 
\begin{enumerate} 
    \item\label{enu:topological_freeness_twisted_intersection1} $\G$ is topologically free;
%    \item\label{enu:topological_freeness_twisted_intersection2} the inclusion $C_0(X)\subseteq F_{\RR}^S(\H, \LL|_{\H})$ has the intersection property for every 
%	 essential groupoid Banach algebra $F^S_{\RR}(\H, \LL|_{\H})$ for every open subgroupoid $\mathcal{H}\subseteq \G$ containing $X$ and  a unital inverse semigroup $S \subseteq \Bis(\mathcal{H})$ which covers $\mathcal{H}$;
    \item\label{enu:topological_freeness_twisted_intersection3} the inclusion $C_0(X)\subseteq F^P_{\ess}(\H, \LL|_{\H})$ has the intersection property for  every open subgroupoid $\mathcal{H}\subseteq \G$ containing $X$;
	\item\label{enu:topological_freeness_twisted_intersection4} the inclusion $C_0(X)\subseteq F^P_{\ess}(\H, \LL|_{\H})$ has the intersection property for every open subgroupoid $\mathcal{H}\subseteq \G$ generated by $X$ and an open bisection $U\subseteq \G\setminus X$. 
\end{enumerate}
\end{thm}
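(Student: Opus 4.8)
The plan is to establish the three equivalences cyclically, as $(1)\Rightarrow(2)\Rightarrow(3)\Rightarrow(1)$. For $(1)\Rightarrow(2)$ I would first note that topological freeness passes to open subgroupoids: if $\H\subseteq\G$ is open with $X\subseteq\H$, then $\H^{(0)}=X$, and any non-empty open $V\subseteq\H\setminus X$ is also open in $\G\setminus X$, so $r|_V=d|_V$ would contradict topological freeness of $\G$. Hence each such $\H$ is topologically free, and since $F^P_{\ess}(\H,\LL|_{\H})$ is by definition an essential groupoid Banach algebra that is $\Bis(\H)$-graded with $C_0(X)$ embedded isometrically, Theorem~\ref{thm:Top_Free_Intersection_Property} yields that $C_0(X)\subseteq F^P_{\ess}(\H,\LL|_{\H})$ has the intersection property. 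The implication $(2)\Rightarrow(3)$ is then immediate, once I observe that the subgroupoids appearing in $(3)$ are among those in $(2)$: for an open bisection $U\subseteq\G\setminus X$ the generated subgroupoid is $\H=\langle X,U\rangle=X\cup\bigcup_{k\in\Z\setminus\{0\}}U^{k}$, a union of open bisections (products of open bisections are open bisections), hence an open subgroupoid containing $X$.

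The substance is $(3)\Rightarrow(1)$, which I would prove contrapositively. Assuming $\G$ is not topologically free, I pick a non-empty open bisection $U\subseteq\G\setminus X$ with $r|_U=d|_U$; shrinking $U$ within the wide semigroup $S(\LL)$ I may assume $\LL|_U$ is trivial and fix a unitary section $c\in C_u(U,\LL)$. Put $\H=\langle X,U\rangle$. The key structural point is that $\H=X\cup\bigcup_{k\in\Z\setminus\{0\}}U^{k}$ is covered by \emph{countably many} bisections, each $U^{k}$ being the isotropy bisection over a subset of $r(U)$. Consequently, by Lemma~\ref{lem:singular_groupoid} the singular set $\H_{\sing}=d^{-1}(X_{\sing})$ is meager and $X_{\sing}$ has empty interior, so that $X_{\reg}$ is dense in $X$.

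On $\H$ I would then build a \emph{desingularised twisted orbit representation}. Compressing the orbit representation of Lemma~\ref{lem:orbit_representation}, in its twisted form obtained from a covariant representation of the canonical inverse semigroup action on $X$ as in \cite{BKM}, to the $\H$-invariant set $X_{\reg}$ gives, for each $p\in P$, a representation on $\ell^{p}(X_{\reg})$; the $\ell^{\infty}$-direct sum over $p\in P$ yields a representation $\widetilde{\Lambda}^{\tr}_P$ of $F^P(\H,\LL|_{\H})$. Because its defining sums range only over $\gamma\in\H_{\reg}$, every element whose $j$-image has meager support is annihilated, so $\widetilde{\Lambda}^{\tr}_P$ factors through $F^P_{\ess}(\H,\LL|_{\H})$ (cf.\ Proposition~\ref{prop:essential_L^p_operator_algebra}); and since $X_{\reg}$ is dense, $C_0(X)$ acts faithfully by multiplication, so the descended representation is injective on $C_0(X)$. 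Finally I exploit that $U$ is isotropy with trivial twist: for $f\in C_c(U,\LL)$ and $x\in X_{\reg}$ the only contributing arrow is the isotropy element $u_x\in U$ with $r(u_x)=d(u_x)=x$, so $f$ acts by multiplication by a function $f_0\in C_c(r(U))\subseteq C_0(X)$ obtained by trivialising $\LL|_U$ via $c$ and transporting along $r|_U$. Thus $\widetilde{\Lambda}^{\tr}_P(f-f_0)=0$, while $j^{\ess}_P(f-f_0)\neq 0$ because $f$ is supported on the open set $U$ and is therefore non-meager; hence $f-f_0$ is a non-zero element of $F^P_{\ess}(\H,\LL|_{\H})$ lying in the kernel of the descended representation. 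That kernel is a non-zero ideal meeting $C_0(X)$ only in $\{0\}$, so $C_0(X)\subseteq F^P_{\ess}(\H,\LL|_{\H})$ fails the intersection property, contradicting $(3)$.

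I expect the main obstacle to be the careful construction and bookkeeping of the twisted orbit representation, together with the verification that it simultaneously (i) descends to the essential quotient and (ii) stays faithful on $C_0(X)$. Both rely on the fact, established in the second step, that $\langle X,U\rangle$ admits a countable bisection cover so that $X_{\sing}$ is negligible, and on trivialising $\LL$ over $U$ so that the isotropy bisection acts by an honest multiplication operator rather than a genuine partial shift.
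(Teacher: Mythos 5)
Your implications (1)$\Rightarrow$(2)$\Rightarrow$(3) are correct and coincide with the paper's argument: topological freeness passes to open subgroupoids containing $X$, and Theorem~\ref{thm:Top_Free_Intersection_Property} does the rest. The gap is in the hard direction (3)$\Rightarrow$(1): the ``desingularised twisted orbit representation'' you want to build does not exist in general. Lemma~\ref{lem:orbit_representation} is stated only for \emph{untwisted} groupoids, and for good reason: the trivial pair $(\pi^{\tr},v^{\tr})$ satisfies $v^{\tr}_s v^{\tr}_t = v^{\tr}_{st}$, whereas covariance for a twisted action requires $\pi(a)v_s v_t=\pi\big(a\,u(s,t)\big)v_{st}$; a $C_0(X)$-trivial representation therefore exists precisely when the cocycle is a coboundary, i.e.\ when one can choose unitaries $\omega_t\in C_u(X_t)$ with $\omega_s\cdot(\omega_t\circ h_{s^*})=u(s,t)\,\omega_{st}$. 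Trivialising the line bundle $\LL|_U$ by a unitary section $c$ does not remove this obstruction on $\H=\langle X,U\rangle$. If the powers $U^k$ are pairwise disjoint, the sections $c^k$ do trivialise the twist and your argument goes through --- but then you have simply reproduced the untwisted argument of Theorem~\ref{thm:topological_freeness_untwisted}. In the cyclic case, where $U^n=U^0:=r(U)$ for some $n>0$, one gets a unitary $w:=c^n\in C_u(U^0)$, and the twist on $\H$ is trivialisable only if $w$ admits a continuous $n$-th root, which can fail (e.g.\ $U^0\cong\mathbb{T}$ and $w(z)=z$). In that situation there is no representation of $F^P(\H,\LL|_{\H})$ in which $C_c(U,\LL)$ acts by multiplication operators, your element $f_0$ has no canonical meaning, and the kernel element $f-f_0$ cannot be produced. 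Since the untwisted statement is already covered by Theorem~\ref{thm:topological_freeness_untwisted}, the twisted cyclic case is exactly the content this theorem adds, and it is the case your proof misses.

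This is why the paper's proof of (3)$\Rightarrow$(1) takes a different route. It observes that $\H_0=\bigcup_{k}U^k$ is an ideal subgroupoid, reduces (after shrinking $U$) to the two cases above, and identifies $\H$ with the transformation groupoid of a \emph{partial action} of $G=\Z$ or $\Z_n$ generated by $\mathrm{id}|_{U^0}$; in particular $\H$ is Hausdorff, so $F^P_{\ess}(\H,\LL|_{\H})=F^P_{\red}(\H,\LL|_{\H})$. The ideal $B$ generated by $C_0(U^0)$ is then a reduced --- and, by amenability of $G$, full --- twisted $L^P$-crossed product by a trivial $G$-action, and the failure of the intersection property for $C_0(U^0)\subseteq B$ is imported from the twisted crossed-product results of \cite{BK} (Corollary 5.13 there, which does handle the twist), after which a short approximate-unit argument transfers the failure to $C_0(X)\subseteq F^P_{\ess}(\H,\LL|_{\H})$. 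If you want to salvage your approach, you would need to prove the key input of \cite{BK} yourself: that for a trivial but genuinely twisted action of $\Z_n$ on $C_0(U^0)$ the canonical inclusion fails to detect ideals --- and that cannot be done with a trivial representation, since none exists.
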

\begin{proof}
\ref{enu:topological_freeness_twisted_intersection1} implies \ref{enu:topological_freeness_twisted_intersection3} by Theorem~\ref{thm:Top_Free_Intersection_Property}, because topological freeness of $\G$ passes to an open subgroupoid $\mathcal{H}\subseteq \G$ containing $X$. 
It is clear that \ref{enu:topological_freeness_twisted_intersection3} implies \ref{enu:topological_freeness_twisted_intersection4}.  %\ref{enu:topological_freeness_twisted_intersection2} by Proposition~\ref{prop:Top_Free_Intersection_Property}, because topological freeness of $\G$ passes to an open subgroupoid $\mathcal{H}\subseteq \G$ containing $X$. 
%Implications \ref{enu:topological_freeness_twisted_intersection2}$\Rightarrow$\ref{enu:topological_freeness_twisted_intersection3}$\Rightarrow$\ref{enu:topological_freeness_twisted_intersection4} are obvious. 
To prove that \ref{enu:topological_freeness_twisted_intersection4} implies \ref{enu:topological_freeness_twisted_intersection1}, assume that \ref{enu:topological_freeness_twisted_intersection1} fails.
Then there is a non-empty bisection $U\subseteq \G \setminus \overline{X}$ with $r|_U = d|_U$.
Then the union $\mathcal{H} := X\cup \bigcup_{k\in \Z} U^{k}$ is an open subgroupoid of $\G$ generated by $X$ and $U$. 
The open set  $U^{0} := d(U) = r(U)$ is $\mathcal{H}$-invariant. 
In particular, the groupoid $\mathcal{H}_0 = \bigcup_{k\in \Z} U^{k}$ is an ideal in $\mathcal{H}$ (that is, $\mathcal{H} \mathcal{H}_0 \subseteq \mathcal{H}_0$ and $\mathcal{H}_0 \mathcal{H} \subseteq \mathcal{H}_0$), and so if $B$ denotes the closure of the image of $\mathfrak{C}_c(\mathcal{H}_0, \L|_{\mathcal{H}_0})$ in $F^P_{\ess}(\H, \LL|_{\H})$, we see that $B$ is an ideal. 
In fact, $B$ is an ideal in $F^P_{\ess}(\H, \LL|_{\H})$ generated by $C_0(U^0)=B\cap C_0(X)$. 

We claim that if $C_0(U^0)\subseteq B$ does not have the intersection property, then $C_0(X)\subseteq F^P_{\ess}(\H, \LL|_{\H})$ does not have it either.
Indeed, assume that there is a non-zero ideal $I$ in $B$ with $I\cap C_0(U^0)=\{0\}$. 
Using that an approximate unit in $C_0(U_0)$ is an approximate unit in $B$, and that $B$ is an ideal in $F^P_{\ess}(\H, \LL|_{\H})$, one concludes that $I$ is an ideal in $F^P_{\ess}(\H, \LL|_{\H})$ and $I\cap C_0(X)=I\cap C_0(U^0)=\{0\}$. 
Hence $C_0(X)$ does not detect ideals in $F^P_{\ess}(\H, \LL|_{\H})$.
This proves our claim. 

Thus to show that \ref{enu:topological_freeness_twisted_intersection4} fails, which finishes the proof, it is enough to show that $C_0(U^0)\subseteq B$ does not have the intersection property. 
We have the following two cases:

1) If $U^{n} \cap U^0 = U^{n} \cap X = \emptyset$ for all $n>0$, then $\{ U^{n} \}_{n\in \Z}$ are pairwise disjoint and $\mathcal{H}_0\cong U^0\times \Z$ is the transformation groupoid for the trivial $\Z$-action (since $U$ is a bisection with $r|_U = d|_U$, we have $U^{n}:=\{\gamma^n: \gamma\in U\}$). 

2) If $U^{n}\cap U^0\neq\emptyset$ for some $n>0$ then, assuming $n$ is the smallest such number and replacing $U$ by $ U (U^{n}\cap U^0)$, we may assume that $\{ U^{k} \}_{k\in \Z_n}$ are pairwise disjoint and $U^{n} = U^0$. 
Then $\mathcal{H}_0 \cong U^0 \times \Z_n$ is the transformation groupoid for the trivial $\Z_n$-action.%, and similarily as in case 1) we may identify $B$ with $F^P_{\red}(\H_0, \LL|_{\H_0})$.

Thus we may assume that $\mathcal{H}_0$ is the transformation groupoid for the trivial $G$-action where $G=\Z$ or $G=\Z_n$. 
Then $\mathcal{H}$ is a transformation groupoid for a partial $G$-action generated by $\textrm{id}_{U^0}$, cf.\ Example~\ref{ex:groupoid_model_for_partial_action_crossed_product} below. 
In particular, $\H$ is Hausdorff and $F^P_{\ess}(\H, \LL|_{\H}) = F^P_{\red}(\H, \LL|_{\H})$ is a reduced groupoid algebra. 
This implies that $B\subseteq F^P_{\red}(\H, \LL|_{\H})$ can be naturally identified with the reduced algebra $F^P_{\red}(\H_0, \LL|_{\H_0})$. 
Thus $B$ can be identified with a reduced $L^P$-operator algebra crossed product studied in \cite{BK}. 
Since the group $G$ is amenable, the trivial action is amenable and $B$ is in fact also a full $L^P$-operator algebra crossed product by \cite[Corollary 4.14]{BK}. 
Since the $G$-action is trivial, by passing to a subgroup of $G$ if necessary, we get that $C_0(U^{0})\subseteq B$ does not have the intersection property by \cite[Corollary 5.13]{BK}. 
Formally, in \cite{BK} it was assumed that $\F=\C$, but a quick inspection of the proofs of the invoked results show that they remain valid  for $\F=\R$, also due to our definition of the universal algebras in the real case. 
\end{proof}

\section{Pure infiniteness}
\label{sec:PureInfiniteness}

Cuntz~\cite{Cuntz} defined pure infiniteness for a simple $C^*$-algebra $B$ as the property that every nonzero hereditary $C^*$-subalgebra of $B$ has an infinite projection. 
The success of this notion inspired a number of attempts to transfer it to other contexts. 
In particular, Phillips~\cite{PhLp2a}, see also \cite{Daws_Horwath}, introduced pure infiniteness for unital simple Banach algebras, which agrees with the pure infiniteness for unital simple rings from \cite{AGP}. 
Recently, this definition was extended to arbitrary (not necessarily unital) Banach algebras in \cite{cortinas_Montero_rodrogiez} using an approximation condition mimicking the Cuntz preorder. 
Here we will define pure infiniteness for simple Banach algebras using an algebraic condition from \cite{AGP}, \cite{AAM}. 

Recall that idempotents $e, f$ in a ring $B$ are equivalent (written $e\sim f)$, provided there are $x,y\in B$ such that $e=xy$ and $f=yx$. 
For any idempotents, we write $e\leq f$ if $ef = fe = e$, and $e<f$ if in addition $e\neq f$. 
We call an idempotent $e\in B$ \emph{infinite} if there exists an idempotent $f\in R$ such that $e \sim f < e$. 
Note that $\sim$ is an equivalence relation that preserves infiniteness. 
A \emph{simple ring} $B$ is called \emph{purely infinite}, see \cite[Definition 3.1.8]{AAM}, \cite[Definition 1.2]{AGP}, if 
\begin{itemize}
    \item[($\dagger$)]\label{enu:pure_infiniteness_def} for every $y\in B\setminus\{0\}$, there are $x, z\in B$ such that $xyz$ is an infinite idempotent in $B$.
\end{itemize}

\begin{defn}\label{def:purely_infinite}
We say that a simple Banach algebra $B$ is \emph{purely infinite} if it satisfies ($\dagger$). 
\end{defn}

\begin{rem}\label{rem:on_condition_dagger}
Condition ($\dagger$) holds if and only if every non-zero left  ideal (not necessarily closed) in $B$ contains an infinite idempotent, see \cite[Proposition 3.1.7]{AAM}. 
Thus ($\dagger$) is equivalent to the apparently stronger and unsymmetrical condition that, for every $y\in B\setminus\{0\}$ there is $x\in B$  such that $xy$  is an infinite idempotent in $B$. A version of this remark where left ideals are replaced by right ideals is also true.
\end{rem}

%We establish the following relationships between the aforementioned versions of pure infiniteness.

\begin{prop}\label{prop:various_purely_infnite}
Let $B$ be a simple Banach algebra. 
\begin{enumerate}
    \item\label{enu:various_purely_infnite2} If $B$ is unital then the following conditions are equivalent:
        \begin{enumerate}
            \item\label{enu:various_purely_infnitea} $B$ is a purely infinite Banach algebra;
            \item\label{enu:various_purely_infnitec} $B$ is a purely infinite simple ring in the sense of \cite{AGP}, \cite{AAM};
		  \item\label{enu:various_purely_infniteb} $B$ is purely infinite in the sense of \cite{PhLp2a}, \cite{Daws_Horwath}, that is, $B\not\cong \F$ and for every $y\in B\setminus\{0\}$ there are $x,z\in B$ such that $xyz=1$.   
        \end{enumerate}
	\item\label{enu:various_purely_infnite2.5} If $B$ has a bounded (two-sided) approximate unit consisting of idempotents, then the following conditions are equivalent:
        \begin{enumerate}
            \item\label{enu:projection_purely_infnitea} $B$ is a purely infinite Banach algebra;
            \item\label{enu:projection_purely_infniteb} $B$ is purely infinite in the sense of \cite{cortinas_Montero_rodrogiez}, that is, $B\not\cong \F$ and for every $a,b\in B\setminus\{0\}$ there are sequences $(x_n)_{n=1}^\infty, ( z_n )_{n=1}^\infty$ in $B$ such that $x_n b z_n\to a$; 
		  \item\label{enu:projection_purely_infnitec} for every idempotent $e\in B\setminus\{0\}$, the unital Banach algebra $eBe$ is purely infinite in the sense of the equivalent conditions in \ref{enu:various_purely_infnite2}.
        \end{enumerate}
    \item\label{enu:various_purely_infnite1} If $B$ is a $C^*$-algebra, then $B$ is a purely infinite Banach algebra if and only if it is purely infinite in the sense of Cuntz \cite{Cuntz}, that is, every non-zero hereditary $C^*$-subalgebra of $B$ contains a partial isometry $u$ with $u u^* < u^*u$.
\end{enumerate}
\end{prop}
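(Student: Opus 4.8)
The plan is to dispatch the three parts in turn, anchoring everything on Definition~\ref{def:purely_infinite} and on the ring-theoretic characterisations of \cite{AGP}, \cite{AAM}. For part~\ref{enu:various_purely_infnite2} I would first note that \ref{enu:various_purely_infnitea} and \ref{enu:various_purely_infnitec} are \emph{the same statement}: Definition~\ref{def:purely_infinite} declares a simple Banach algebra purely infinite precisely when it satisfies ($\dagger$), which is the definition of a purely infinite simple ring in \cite{AGP}, \cite{AAM}. For \ref{enu:various_purely_infnitec}$\Leftrightarrow$\ref{enu:various_purely_infniteb} I would invoke the standard unital characterisation: a simple unital ring satisfies ($\dagger$) iff it is not a division ring and every nonzero $y$ admits $x,z$ with $xyz=1$. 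The only care needed is matching the nontriviality clauses: ($\dagger$) forces $B$ not to be a division ring (in a division ring the only idempotents are $0,1$, and $1=uv$, $vu<1$ is impossible), while conversely non-division plus the $xyz=1$ condition makes $B$ directly infinite, so $1=xyz$ is itself an infinite idempotent; over $\C$ the hypothesis $B\not\cong\F$ coincides with non-division by Gelfand--Mazur.

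For part~\ref{enu:various_purely_infnite2.5} I would run the cycle \ref{enu:projection_purely_infnitea}$\Rightarrow$\ref{enu:projection_purely_infnitec}$\Rightarrow$\ref{enu:projection_purely_infnitea} and \ref{enu:projection_purely_infnitea}$\Rightarrow$\ref{enu:projection_purely_infniteb}$\Rightarrow$\ref{enu:projection_purely_infnitec}, using the bounded approximate unit of idempotents $\{e_i\}$ throughout. For \ref{enu:projection_purely_infnitea}$\Rightarrow$\ref{enu:projection_purely_infnitec} I would use that corners $eBe$ of a purely infinite simple ring are again purely infinite simple and unital (unit $e$), so part~\ref{enu:various_purely_infnite2} applies; simplicity of $eBe$ is the usual ideal correspondence, and the infiniteness of every nonzero idempotent in a purely infinite simple ring comes from \cite{AGP}, \cite{AAM}. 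For \ref{enu:projection_purely_infnitec}$\Rightarrow$\ref{enu:projection_purely_infnitea}, given $y\neq 0$ I would pick $e=e_i$ with $eye\neq 0$ (possible since $e_iye_i\to y$), apply pure infiniteness of $eBe$ to $eye$ to get $x,z\in eBe$ with $x(eye)z=e$; since $xe=x$ and $ez=z$ this reads $xyz=e$ with $e$ infinite in $eBe$, hence infinite in $B$, so ($\dagger$) holds.

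The analytic content lives in the passage to and from \ref{enu:projection_purely_infniteb}, and I expect the hard direction to be \ref{enu:projection_purely_infniteb}$\Rightarrow$\ref{enu:projection_purely_infnitec}, where an \emph{approximate} condition must be upgraded to an \emph{exact} algebraic identity. For \ref{enu:projection_purely_infnitea}$\Rightarrow$\ref{enu:projection_purely_infniteb}, given nonzero $a,b$ I would first turn $b$ into an infinite idempotent $e=x_0 b z_0$ via ($\dagger$), then exploit that a properly infinite idempotent in a simple ring absorbs finite orthogonal sums, so the elements of the dense ideal $\overline{BeB}=B$ approximating $a$ can be rewritten as single terms $uew=(ux_0)b(z_0w)$, giving the condition of \cite{cortinas_Montero_rodrogiez}. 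For \ref{enu:projection_purely_infniteb}$\Rightarrow$\ref{enu:projection_purely_infnitec}, fixing a nonzero idempotent $e$ and $y\in eBe\setminus\{0\}$, applying \ref{enu:projection_purely_infniteb} to the pair $(e,y)$ gives $x_nyz_n\to e$, hence $w_n:=ex_nyz_ne\to e$ inside the unital Banach algebra $eBe$; for large $n$ the element $w_n$ lies within distance $<1$ of the unit $e$, so it is invertible in $eBe$ by a Neumann series, and $w_n^{-1}(ex_n)\,y\,(z_ne)=e$ supplies exact witnesses. One also checks $eBe\not\cong\F$ under \ref{enu:projection_purely_infniteb} (a one-dimensional corner would make every $xez$ a rank-one element, and such elements cannot approximate all of $B$, contradicting the approximation condition). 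Part~\ref{enu:various_purely_infnite2} then yields pure infiniteness of $eBe$, which is \ref{enu:projection_purely_infnitec}.

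Finally, for part~\ref{enu:various_purely_infnite1} I would use that in a $C^*$-algebra every idempotent is similar to a projection, with infinite idempotents corresponding to infinite projections, together with the dictionary between closed right ideals and hereditary $C^*$-subalgebras. Combined with Remark~\ref{rem:on_condition_dagger}, which recasts ($\dagger$) as ``every nonzero right ideal contains an infinite idempotent'', this matches Cuntz's requirement that every nonzero hereditary $C^*$-subalgebra contain an infinite projection \cite{Cuntz}. Concretely, for ($\dagger$)$\Rightarrow$Cuntz, given $\overline{cBc}$ I would apply ($\dagger$) to $c$ and transport the resulting infinite idempotent into $\overline{cBc}$ using functional calculus and the Cuntz comparison available in the simple $C^*$-setting; for the converse, given $y\neq 0$ I would produce from an infinite projection in $\overline{y^*yBy^*y}$ elements $x,z$ with $xyz$ an infinite idempotent via the polar decomposition of $y$. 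The main obstacle here is precisely the bridge between the purely algebraic idempotents and ideals of ($\dagger$) and the closed self-adjoint objects of the $C^*$-theory; once this translation is set up, the equivalence reduces to the classical characterisations of purely infinite simple $C^*$-algebras.
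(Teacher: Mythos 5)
Your parts (1) and (2) are essentially sound, and in part (2) you take a genuinely different route from the paper: the paper proves (c)$\Rightarrow$(b) by compressing both $a$ and $b$ with the idempotent approximate unit (getting exact witnesses $x_n,z_n\in e_nBe_n$ and hence $x_nbz_n\to a$), and it simply cites \cite[Corollary 3.4]{cortinas_Montero_rodrogiez} for (b)$\Rightarrow$(c); you instead prove (a)$\Rightarrow$(b) via proper infiniteness and absorption of finite sums $\sum_i u_iew_i$ into a single term $uew$, and you prove (b)$\Rightarrow$(c) directly by the Neumann-series upgrade in the corner $eBe$. The Neumann-series argument is correct and is arguably cleaner than outsourcing to a citation, and the absorption argument works provided you can quote from \cite{AGP}, \cite{AAM} that nonzero idempotents in purely infinite simple rings are properly infinite. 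Two caveats, though. In part (1), conditions (a) and (c) are \emph{not} literally the same statement: (a) assumes no nontrivial closed two-sided ideals, (c) assumes no nontrivial ring ideals; the paper reconciles the two notions of simplicity for unital $B$ by openness of the group of invertibles (the closure of a proper ideal stays proper). And in your (b)$\Rightarrow$(c), the check that $eBe\not\cong\F$ is hand-waved: ``rank-one elements cannot approximate all of $B$'' is not an argument in an abstract Banach algebra --- making it precise requires showing that the left-multiplication operators on $Be$ of rank $\leq 1$ form a norm-closed set and then deriving a contradiction with simplicity (e.g.\ producing a character and a codimension-one closed ideal), which is real work you have not done.

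The genuine gap is in part (3), in the direction Cuntz $\Rightarrow$ ($\dagger$). You propose to produce the witnesses ``via the polar decomposition of $y$,'' but polar decomposition is not available inside a $C^*$-algebra: writing $y=u|y|$ yields a partial isometry $u$ only in the enveloping von Neumann algebra (or when $y$ has closed range), and this $u$ need not lie in $B$, so it cannot be used to build $x,z\in B$ with $xyz$ an infinite idempotent. The paper's proof exists precisely to circumvent this: with $a=y^*y$ and a continuous cutoff $f$ vanishing on $(-\infty,1/2]$ one has $af(a)\geq f(a)/2$; Cuntz's hypothesis applied to $\overline{f(a)Bf(a)}$ gives a partial isometry $u=f(a)b$ with $uu^*<u^*u$, whence $u^*au\geq\tfrac12 u^*u$ is invertible in the corner $\overline{u^*Bu}$ with unit $u^*u$; if $z$ is its inverse, then $w:=zu^*y^*y\in By$ satisfies $wu=u^*u$, so $uw\in By$ is an idempotent equivalent to the infinite idempotent $u^*u$, and Remark~\ref{rem:on_condition_dagger} converts ``every nonzero left ideal contains an infinite idempotent'' into ($\dagger$). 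Without this device (or an equivalent replacement for the missing polar decomposition) your step fails. The converse direction also remains a deferred ``translation'': one needs that an infinite idempotent $e\in By$ is similar to a projection $p$ with $p=pe$, hence $p=e^*pe\in\overline{y^*By}$, and that infiniteness survives the passage from $e$ to $p$ through similarity and Murray--von Neumann equivalence; this part is more routine, but it is exactly the bridge you acknowledge as the main obstacle and do not build.
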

\begin{proof} 
\ref{enu:various_purely_infnite2}. 
The conditions \ref{enu:various_purely_infnitea} and \ref{enu:various_purely_infnitec} differ only in that one assumes algebraic simplicity and the other analytic simplicity, but for unital Banach algebras these are equivalent by openness of the set of invertible elements. 
The equivalence \ref{enu:various_purely_infnitec}$\Leftrightarrow$\ref{enu:various_purely_infniteb} holds by \cite[Theorem 1.6]{AGP}. 

\ref{enu:various_purely_infnite2.5}. 
The implications \ref{enu:projection_purely_infnitea}$\Rightarrow$\ref{enu:projection_purely_infnitec} and \ref{enu:projection_purely_infniteb}$\Rightarrow$\ref{enu:projection_purely_infnitec} hold in general.
Indeed, for \ref{enu:projection_purely_infniteb}$\Rightarrow$\ref{enu:projection_purely_infnitec} see \cite[Corollary 3.4]{cortinas_Montero_rodrogiez}, and for \ref{enu:projection_purely_infnitea}$\Rightarrow$\ref{enu:projection_purely_infnitec} assume \ref{enu:projection_purely_infnitea} and let $e\in B\setminus\{0\}$ be an idempotent. 
Then $eBe$ is a unital Banach algebra, and for any nonzero $y\in eBe$, there is $x\in B$ such that $xy\in Be$ is an infinite projection in $B e$. 
Then $xy=xye\sim exy=exey \in eBe$, and so for $x':=exe\in eBe$ the product $x'y$ is  an infinite projection in $eBe$.
Hence $eBe$ is purely infinite, see Remark~\ref{rem:on_condition_dagger}.

Now let us assume \ref{enu:projection_purely_infnitec} and that $B$ has a bounded two-sided approximate unit consisting of idempotents. 
By assumption $B\neq \F$. 
Let $a,b\in B\setminus\{0\}$. 
Then we may choose a sequence $( e_n )_n$ of idempotents in $B$ such that $e_n a e_n\to a$ and $e_n b e_n\to b$, and by \ref{enu:projection_purely_infnitec} for each $n$ we may find $x_n, z_n\in e_n B e_n$ such that $x_n (e_n b e_n) y_n = e_n a e_n$. 
Then $x_n b z_n\to a$, which proves \ref{enu:projection_purely_infniteb}.
Now let $y\in B\setminus\{0\}$. 
Then there is an idempotent $e\in B$ such that $eye\in eBe$ is nonzero. 
By \ref{enu:projection_purely_infnitec} there are $x,z\in eBe$ such that $xeyez$ is an infinite idempotent in $eBe$. 
This proves \ref{enu:projection_purely_infnitea}.

\ref{enu:various_purely_infnite1}. 
Assume $B$ is purely infinite in the sense of Cuntz~\cite{Cuntz}. 
Let $y\in B\setminus\{0\}$. 
We may assume that $\|y\|=1$.  
Put $a:=y^*y$ and let $f$ be a continuous function which takes the value $0$ on $(-\infty, 1/2]$ and the value $1$ on $[1, +\infty)$, and is linear on $[1/2, 1]$. 
Then  $af(a)\geq f(a)/2>0$  in $\overline{aBa}\subseteq \overline{y^*By}$. 
By assumption, there is a partial isometry $u\in \overline{f(a) Bf(a)}$ such that $u u^* < u^*u$. 
Writing $u=f(a)b$ with $b\in \overline{f(a) Bf(a)}$, we get $u^*a u=b^*f(a)a f(a)b\geq 1/2 \cdot b^*f(a) f(a)b = 1/2 \cdot u^*u$.
Thus $u^*a u$ is invertible in the unital algebra $\overline{u^*Bu}$ with $u^*u$ being a unit. 
Denote the inverse of $u^*a u$  by $z$, and put $w:=zu^*a=zu^*y^* y \in By$. 
Then $uw\in By$ is an idempotent equivalent to the infinite idempotent $wu=u^*u$. 
Hence $uw\in By$ is infinite.

Conversely, assume $B$ is purely infinite in the sense of Definition~\ref{def:purely_infinite} and let $y\in B\setminus\{0\}$. 
Then there is an infinite idempotent $e$ in $By$. 
Every idempotent $e$ in a $C^*$-algebra is similar (in fact even homotopy equivalent) to a projection $p$ with $p=p e$, see the proof of \cite[Proposition 4.6.2]{Bl3}. 
Since $p\in Be\subseteq By$ and $p$ is self-adjoint, we get $p\in \overline{y^*By}$. 
Since $e$ is an infinite element, it follows that $p$ is an infinite projection. 
Indeed, $p$ is an infinite element because it is equivalent to an infinite element $e$, and so there is an idempotent $f$ with $p \sim f < p$. 
Then, as mentioned above, there is a projection $q\sim f$ with $q=qf$. 
Thus we have $q\sim p$ in $\overline{y^*By}$ and so they are Murray--von Neumann equivalent, see \cite[Proposition 4.6.4]{Bl3}.
Using that $f < p$ and $q=qf$, we get $q = qf = q(fp) = (qf)p = qp$. 
Therefore $q\leq p$ (because $p$ and $q$ are self-adjoint) and in fact $q<p$ (because $q=p$ would imply $f=p$).
\end{proof}

\begin{rem} 
Without the standing assumption in  \ref{enu:various_purely_infnite2.5} above, we do not know the relationship between pure infiniteness in our sense and that of \cite{cortinas_Montero_rodrogiez}. 
When $B$ is a simple unital $C^*$-algebra, the equivalence between the Cuntz definition and the condition in \ref{enu:various_purely_infnite2}(b) is well known, see \cite[Proposition 6.11.5]{Bl3}. 
%Another equivalent condition in this case is that $B \not\cong \F$ and for every $y \in B \setminus \{ 0 \}$ there is $x\in B$ such that $xyx^* = 1$, see \cite[Theorem V.5.5]{Davidson}.
\end{rem}

We now consider two known pure infiniteness criteria introduced for $C^*$-algebras associated to Hausdorff groupoids. 
We generalize them to twisted Banach algebras associated to not necessarily Hausdorff groupoids. 
The first criterion was introduced for group actions on compact spaces \cite{Laca-Spielberg}, \cite{Jolissaint-Robertson} and then generalized to groupoids with compact unit spaces by Suzuki~\cite{Suzuki}.
%We generalize it further to the twisted Banach algebra, non-Hausdorff case as follows. 
Recall that $S(\LL)\subseteq \Bis(\G)$ is the wide inverse semigroup of bisections on which the bundle $\LL$ is trivial. 
Thus $S(\LL)= \Bis(\G)$ when the twist comes from a cocycle. 

\begin{defn}[{cf.\ \cite[Definition 3.2]{Suzuki}}] \label{def:n_feeling} 
Let $S\subseteq \Bis(\G)$ and $n\in \N$. 
We say that $\G$ is \emph{$n$-filling with respect to $S$} if $X$ is \emph{compact, infinite} and for every non-empty open set $U\subseteq X$ there are $W_1, \ldots , W_n\in S$ with $\bigcup_{i=1}^n r(W_i U) = X$. 
\end{defn}

\begin{lem}[{cf.\ \cite[Lemma 3.10]{Suzuki}}] \label{lem:Suzuki}
Let $\G$ be $n$-filling  with respect to a unital inverse semigroup $S\subseteq S(\LL)$ covering $\G$. 
For any $b\in C(X)^+$ and any $\varepsilon >0$, there is $c\in \mathfrak{C}_c(\G,\LL)$ such that $\| c \|_{\max}^S \leq n$, $c^* * b* c\in C(X)$ and $c^* *b *c \geq \|b\|_{\infty}-\varepsilon$. 
\end{lem}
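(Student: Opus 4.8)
The plan is to realise $c$ as a sum $c=\sum_{i=1}^n c_i$ of sections supported on $n$ bisections supplied by the $n$-filling condition, with moduli prescribed by a partition of unity, so that the bound $\|c\|_{\max}^S\le n$ is automatic and the lower bound can be read off the convolution formula on $X$. First I would set $M:=\|b\|_\infty$ and use compactness of $X$ to pick a non-empty open $U\subseteq X$ with $b>M-\varepsilon$ on $U$; shrinking $U$ we may assume $U\in S$ (recall $S$ is taken closed under open subsets, so $U\subseteq X\in S$ lies in $S$). Applying Definition~\ref{def:n_feeling} to $U$ yields $W_1,\dots,W_n\in S$ with $\bigcup_{i=1}^n r(W_iU)=X$, and I would pass to the adjoint bisections $B_i:=(W_iU)^*\in S$, for which $d(B_i)=r(W_iU)$, so that $\{d(B_i)\}_{i=1}^n$ is an open cover of $X$, while $r(B_i)=d(W_iU)\subseteq U$; thus every element of $B_i$ has its range in the region where $b$ is large. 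Choosing a partition of unity $\{\phi_i\}_{i=1}^n$ subordinate to this cover ($\phi_i\in C_c(d(B_i))$, $0\le\phi_i\le1$, $\sum_i\phi_i=1$) and, since $B_i\in S\subseteq S(\LL)$, fixing unitary trivialising sections $u_i\in C_u(B_i,\LL)$, I would put $c_i(\gamma):=\phi_i(d(\gamma))^{1/2}u_i(\gamma)$ and $c:=\sum_{i=1}^n c_i\in\mathfrak{C}_c(\G,\LL)$. The definition of $\|\cdot\|_{\max}^S$ then gives $\|c\|_{\max}^S\le\sum_i\|c_i\|_\infty\le n$.

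Next I would compute $c^**b*c$. A direct computation with the convolution, using $z^*z=|z|^2$ in $\LL$ and that $b$ is supported on $X$, shows that its value at every unit $x\in X$ is
\[
  (c^**b*c)(x)=\sum_{d(\gamma)=x}b(r(\gamma))\,|c(\gamma)|^2\ \ge\ (M-\varepsilon)\sum_{d(\gamma)=x}|c(\gamma)|^2 ,
\]
because each $\gamma$ contributing has $r(\gamma)\in r(\supp c_i)\subseteq U$. After the routine reduction to pairwise disjoint $B_i$ the inner sum equals $\sum_{i:\,x\in d(B_i)}\phi_i(x)=\sum_i\phi_i(x)=1$, so the restriction of $c^**b*c$ to $X$ is everywhere $\ge M-\varepsilon$; this is the desired lower bound and it does \emph{not} require any disjointness of ranges.

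The remaining and decisive point is that $c^**b*c$ must genuinely belong to $C(X)$. Expanding $c^**b*c=\sum_{i,j}c_i^**b*c_j$, the diagonal terms $c_i^**b*c_i$ are supported on $B_i^*B_i=d(B_i)\subseteq X$, whereas a cross term $c_i^**b*c_j$ is supported on the bisection $B_i^*B_j$, which meets $X$ only along the diagonal and is otherwise contained in $\G\setminus X$. Hence $c^**b*c\in C(X)$ precisely when all off-diagonal contributions vanish, i.e.\ when the range-supports $r(\supp c_i)\subseteq U$ are pairwise disjoint, so that $B_i^*B_j\cap\supp(c_i^**b*c_j)=\emptyset$ for $i\neq j$. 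I expect this disjointification to be the main obstacle, and it is the place where the hypothesis that $X$ is \emph{infinite} enters: it gives enough room inside $U$ to route the units of $X$ to pairwise distinct images, so that the bisections $B_i$ and the functions $\phi_i$ can be refined to make the range-supports disjoint while still covering $X$ on the domain side. I would carry this out as in \cite[Lemma~3.10]{Suzuki}, additionally keeping track of the Fell-bundle trivialisations $u_i$ (so that the surviving diagonal is positive) and of the meager set of points in $\partial X$ where quasi-continuous sections may jump, which is harmless here since it does not affect membership in $C(X)$. Once the range-supports are disjoint, the two displayed estimates together yield $c^**b*c\in C(X)^+$ with $c^**b*c\ge M-\varepsilon$, completing the argument.
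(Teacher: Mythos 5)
Your proposal is correct and follows essentially the same route as the paper's proof: both defer the crucial disjointification to Suzuki's Lemma 3.10 (a family of bisections whose supports inside $U$ are pairwise disjoint while the other side still covers $X$), after which the convolution estimate on $X$ is immediate. The remaining differences are cosmetic — you normalise with a partition of unity $\phi_i^{1/2}$ where the paper uses norm-one sections of modulus one on closed sets $K_i$ with $\bigcup_i r(K_i)=X$, and you discover the need for disjoint ranges after a first naive application of $n$-filling, whereas the paper invokes Suzuki's construction upfront.
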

\begin{proof}
%We explain how the proof of \cite[Lemma 3.10]{Suzuki} can be adopted to the twisted case.
We may assume that $\|b\|_{\infty}=1$. 
Using the $n$-filling property, exactly as in the proof of \cite[Lemma 3.10]{Suzuki}, one may construct mutually disjoint non-empty subsets $U_1,\ldots,U_n$ of $U := \{ x \in X: b(x)\geq 1-\varepsilon \}$, relatively compact bisections $W_1,\ldots , W_n\in \Bis(\G)$, where each $W_i$ is a subset of an element of $S$, and closed subsets $K_i\subseteq W_i$, $i=1, \ldots , n$, such that 
\[
    \s(W_i) \subseteq U_i,\ i=1,\ldots,n, \qquad \text{and} \qquad \bigcup_{i=1}^n r(K_i)=X.
\]
Now, since the bundle over each $W_i$ can be trivialized, we may pick sections $f_i\in C_c(W_i,\LL)$ such that $\| f_i \|_{\infty} = 1$ and $|f_i|_{K_i}|=1$, for $i=1, \ldots ,n$. 
Putting $c := \sum_{i=1}^{n} f_i^{*}$, we have $\| c \|_{\max}^S \leq \sum_{i=1}^{n} \| f_i^{*} \|_{\infty} = n$, and since $\{ W_i \}_{i=1}^n$ are bisections with mutually disjoint domains $\{d(W_i)\}_{i=1, \ldots , n}$, we get $c^* *b * c= \sum_{i=1} f_i^* * b* f_i\in C(X)$. 
Since $d(W_i)\subseteq U$, for $i=1, \ldots , n$ and $\bigcup_{i=1}^n r(K_i)=X$, we further obtain $c^* *b * c\geq 1-\varepsilon$.
\end{proof}

%Recall that the saturation $\widetilde{S}:=\{U: U\subseteq V\in S\}$ of a unital inverse semigroup $S$ covering $\G$ is a  wide unital inverse subsemigroup of $\Bis(\G)$.

\begin{thm}\label{thm:filling_implies_pure_infinite}
Assume that $\G$ is topologically free and $n$-filling with respect to a unital inverse semigroup $S\subseteq S(\LL)$  covering $\G$. 
Then every $S$-graded essential Banach algebra $\FR(\G , \LL)$ is purely infinite simple (and unital).
\end{thm}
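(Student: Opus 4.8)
The plan is to dispatch unitality and simplicity from the material already developed, and then concentrate on pure infiniteness. Since $\G$ is $n$-filling, its unit space $X$ is compact, so $C_0(X)=C(X)$ is unital; because $C_0(X)$ contains a two-sided approximate unit for $\FR(\G,\LL)$, the constant section $1_X$ is a unit, and I would identify $1_X$ with the unit $1$ of $\FR(\G,\LL)$. For simplicity I first note that $n$-filling forces minimality of $\G$: if $U\subseteq X$ is open, nonempty and $\G$-invariant and $W\in S$, then $\gamma\in W$ with $d(\gamma)\in U$ has $r(\gamma)\in U$ by invariance, so $r(WU)\subseteq U$; the covering $\bigcup_{i=1}^{n}r(W_iU)=X$ from $n$-filling then gives $U=X$. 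As $\FR(\G,\LL)$ is assumed essential and $\G$ is topologically free, Theorem~\ref{thm:Top_Free_Intersection_Property}(3) now yields that $\FR(\G,\LL)$ is simple.

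Write $B:=\FR(\G,\LL)$. Since $B$ is simple and unital, the unital case of Proposition~\ref{prop:various_purely_infnite} reduces pure infiniteness to two facts: $B\not\cong\F$, and for every $y\in B\setminus\{0\}$ there exist $x,z\in B$ with $xyz=1$. The first is immediate, as $X$ is infinite and hence $C(X)\hookrightarrow B$ is infinite-dimensional. For the second I would instead manufacture $x,z$ with $xyz$ \emph{invertible}: then $(xyz)^{-1}x\cdot y\cdot z=1$. So everything comes down to turning an arbitrary nonzero $y$ into an invertible element of the form $xyz$.

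Here I would combine the two technical lemmas in four moves; fix small $\eta,\varepsilon>0$. (i) Choose $f\in\mathfrak{C}_c(\G,\LL)$ with $\|y-f\|_{\RR}<\eta$; since $B$ is essential, $j_{\RR}^{\ess}(y)\neq0$, and using $j_{\RR}^{\ess}(f)=q_{\G}(f)$ together with contractivity of $j_{\RR}^{\ess}$, for small $\eta$ the element $q_{\G}(f)$ is nonzero and, by Lemma~\ref{lem:comeager}, there is a continuity point $\gamma_0$ of $f$ with $|f(\gamma_0)|\geq\beta:=\|j_{\RR}^{\ess}(y)\|-2\eta>0$. (ii) Pick $U\in S$ containing $\gamma_0$ and a norm-one section $c_0\in C_c(U^{*},\LL)$ that is unitary near $\gamma_0^{-1}$; then $g:=c_0f\in\mathfrak{C}_c(\G,\LL)$ transports the mass of $f$ on $U$ onto $X$, and since $U^{*}$ is a bisection $g|_X$ is continuous at $d(\gamma_0)$ with $|g|_X(d(\gamma_0))|=|f(\gamma_0)|$, so $\|q_X(g|_X)\|\geq\beta$. (iii) Apply the pinching Lemma~\ref{lem:pinching_property} to $g$, obtaining a contraction $b$ and a bump $a$ in $C(X)$ with $h:=a(bg|_X)a\in C(X)^{+}$, $\|abga-h\|_{\max}^{S}\leq\varepsilon$ and $\|h\|_\infty\geq\beta-\varepsilon$. (iv) Apply the $n$-filling Lemma~\ref{lem:Suzuki} to $h$, obtaining $c'\in\mathfrak{C}_c(\G,\LL)$ with $\|c'\|_{\max}^{S}\leq n$ and $h_0:=c'^{*}hc'\in C(X)$ satisfying $h_0\geq\beta-2\varepsilon$ pointwise; thus $h_0$ is invertible in $C(X)\subseteq B$ with $\|h_0^{-1}\|\leq(\beta-2\varepsilon)^{-1}$.

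To finish, set $w_y:=c'^{*}abc_0\,y\,ac'$ and $w:=c'^{*}ab\,g\,ac'=c'^{*}abc_0\,f\,ac'$. Using the $S$-grading bound $\|\cdot\|_{\RR}\leq\|\cdot\|_{\max}^{S}$, isometry of the involution in $\|\cdot\|_{\max}^{S}$ (so $\|c'^{*}\|_{\RR}\leq n$), and contractivity of $a,b$ and $\|c_0\|_\infty=1$, I would estimate $\|w-h_0\|_{\RR}\leq n^{2}\varepsilon$ and $\|w-w_y\|_{\RR}\leq n^{2}\eta$, whence $\|w_y-h_0\|_{\RR}\leq n^{2}(\varepsilon+\eta)$. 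Choosing $\eta,\varepsilon$ with $(n^{2}+2)(\varepsilon+\eta)<\|j_{\RR}^{\ess}(y)\|$ makes $h_0^{-1}(w_y-h_0)$ of norm $<1$, so $w_y=h_0\big(1+h_0^{-1}(w_y-h_0)\big)$ is invertible in $B$; taking $x:=w_y^{-1}c'^{*}abc_0$ and $z:=ac'$ gives $xyz=1$. The main obstacle I anticipate is the bookkeeping across the two norms: pinching and filling produce estimates in $\|\cdot\|_{\max}^{S}$ while invertibility lives in $\|\cdot\|_{\RR}$, and the argument closes only because the $S$-grading gives $\|\cdot\|_{\RR}\leq\|\cdot\|_{\max}^{S}$ and the involution is $\|\cdot\|_{\max}^{S}$-isometric, keeping $\|c'^{*}\|_{\RR}$ bounded by $n$. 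A second delicate point is move (ii): one must verify that multiplication by a unitary section genuinely moves the essential mass of $f$ on $U$ to a continuity point of $g|_X$ of the same size, since the whole scheme fails if $g|_X$ is essentially zero.
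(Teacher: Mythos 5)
Your proposal is correct and follows essentially the same route as the paper's own proof: the same reduction to producing an invertible $xyz$ via Proposition~\ref{prop:various_purely_infnite}, the same two key lemmas (the pinching Lemma~\ref{lem:pinching_property} and the $n$-filling Lemma~\ref{lem:Suzuki}), and the same Neumann-series perturbation in $\|\cdot\|_{\RR}$ controlled through the bounds $\|\cdot\|_{\RR}\leq\|\cdot\|^S_{\max}$ and $\|c'\|^S_{\max}\leq n$. The only differences are organizational: the paper first localizes $y$ using the essential expectations $\E^{\RR}_U$ (Proposition~\ref{prop:local_expectations_and_local_j_map}) and Lemma~\ref{lem:approx_conditional_exp_twisted}, normalizing $\|\E^{\RR}_U(i_{\RR}(f))\|=1$ exactly by an argument cited from Jolissaint--Robertson, whereas you approximate first and transport mass to $X$ by a hand-built section $c_0$ at a continuity point of $f$, carrying approximate constants $\beta,\eta,\varepsilon$ instead --- both versions close the same way, and your explicit treatment of unitality and of simplicity (via minimality from $n$-filling and Theorem~\ref{thm:Top_Free_Intersection_Property}) fills in steps the paper leaves implicit.
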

\begin{proof} 
Let $y\in \FR(\G , \LL) \setminus \{0\}$. 
By Proposition~\ref{prop:various_purely_infnite}\ref{enu:various_purely_infnite2}, it suffices to show that there are $x, z \in \FR(\G , \LL)$ such that $x y z$ is invertible. 
By Proposition~\ref{prop:local_expectations_and_local_j_map}, there is $U\in S(\LL)$ and $a_U\in C_c(r(U))$ such that $\E^{\RR}_U(a_U y)\neq 0$. 
Replacing $y$ with $a_U y$ and scaling, we may assume that $y\in C_c(r(U)) \FR(\G , \LL)$ and $\| \E^{\RR}_U(y) \| = 1$. 
Let $\varepsilon >0$ be such that $\varepsilon < 1/(2n^2+2)$. 
Let $i_{\RR} : \mathfrak{C}_c(\G, \LL)\to\FR(\G , \LL)$ be the canonical homomorphism, which is isometric on the subspaces $C_c(V,\LL)$, $V\in S$. 
The argument from the beginning of the proof of \cite[Theorem 1.2]{Jolissaint-Robertson} shows that we may find $f \in C_c(r(U)) \mathfrak{C}_c(\G, \LL)$ with $\| y - i_{\RR}(f)\|_{\RR} < \varepsilon$ and $\|\E^{\RR}_{U} (i_{\RR}(f )) \| = \| q_U ( f \circ r|_U^{-1} ) \| = 1$. 
By Lemma~\ref{lem:approx_conditional_exp_twisted}, there is a contractive $b \in C_c(U^*,\LL)$ such that $|b * f|_X |= |f\circ r|_{U}^{-1}|$. 
Thus, replacing $y$ with $b y$ and $f$ with $b f$, we may assume that $U=X$, that is, we have $f \in\mathfrak{C}_c(\G, \LL)$ with
\[
    \| y - i_{\RR}(f)\|_{\RR} < \varepsilon 
\]
and $\| \E^{\RR}_{X}(i_{\RR}(f)) \| = \| q_X(f|_X) \| = 1$.
By Lemma~\ref{lem:pinching_property} there are norm one functions $a,b \in C(X)$ such that $a, b  f|_X \in C(X)^+$, $1=\| q_{X} ( f|_{X} ) \| \leq \| a (b  f|_X) a \|_{\infty} + \varepsilon$ and 
\[
    \| a b i_{\RR}(f) a - a (b f|_X ) a \|_{\RR} \leq \| a (b f) a - a (b f|_X ) a \|_{\max}^S \leq \varepsilon.
\]
In particular, $\|a (b  f|_X ) a\|\geq 1-\varepsilon$. 
Applying Lemma~\ref{lem:Suzuki} to $a (b  f|_X )a$, we get $c\in \mathfrak{C}_c(\G, \LL)$ such that $\| c \|_{\max}^{S} \leq n$ and $g:=c^* (a (b  f|_X ) a) c \in C(X)$ satisfies $g \geq 1 - 2\varepsilon$. 
In particular, $d:=i_{\RR}(c)\in  \FR(\G , \LL)$ satisfies $\|d\|_{\RR}\leq \| c \|_{\max}^{S} \leq n$ and $g $ is invertible in $C(X) \subseteq  \FR(\G , \LL)$ where $\| g^{-1} \|_{\infty} \leq (1 - 2\varepsilon)^{-1}$. 
Thus every $h\in  \FR(\G , \LL)$ with $\| h - g \|_{\RR}< 1-2\varepsilon$ is invertible in $\FR(\G , \LL)$. 
Using the above displayed inequalities, together with $\|d\|_{\RR}\leq n$, $\|a\|, \|b\|\leq 1$, and our choice of $\varepsilon$, we get
\[
\begin{split}
    \big\| d^*  (a b y a)  d - g \big\|_{\RR} &\leq \big\| d^* \big( a b y a   - a b i_{\RR}(f) a \big) d \big\|_{\RR}+ \big\| d^* \big( a b i_{\RR}(f) a   - a b f|_X a\big) d \big\|_{\RR} \\
        &\leq n^2 \varepsilon + n^2 \varepsilon = 2 n^2 \varepsilon <  1-2\varepsilon.
\end{split}
\]
Hence $d^* ( a d y a )  d$ is invertible in $\FR(\G , \LL)$. 
\end{proof}

We now turn to the second criterion which is due to Anantharaman-Delaroche and is sometimes easier to apply. 
Namely, we generalize the classical result \cite[Proposition 2.4]{A-D} from (untwisted) groupoid $C^*$-algebras to the twisted Banach algebra case. 
Recall that $h_{W} = r\circ \s|_{W}^{-1} : \s(W)\to r(W)$ is the homeomorphism induced by $W\in\Bis(\G)$.

\begin{defn}[{cf.\ \cite[Definition 2.1]{A-D}}] \label{def:locally contractive}
Let $S\subseteq \Bis(\G)$. 
We call $\G$ \emph{locally contracting with respect to $S$} if for every non-empty open $U\subseteq X$ there are open $V \subseteq U$ and $W\in S$ with $\overline{V} \subseteq d(W)$ and $h_{W}(\overline{V}) \subsetneq V$ (equivalently $r(W\overline{V})\subsetneq V$).
\end{defn}

\begin{rem} 
If $S=S(\LL) = \Bis(\G)$, so for instance when the twist comes from a $2$-cocycle (equivalently $\LL$ is trivial as a line bundle, see Example~\ref{ex:cocycle_twist}), then the properties in Definitions~\ref{def:n_feeling} and~\ref{def:locally contractive} do not depend on the twist, and if they hold, we simply call the groupoid $\G$ \emph{$n$-filling} \cite{Suzuki} or \emph{locally contractive} \cite{A-D}, respectively. 
%We do not know whether these properties depend on the twist in general.
\end{rem}

\begin{rem}
The $n$-filling property implies that $\G$ is minimal, while locally contracting does not. 
Assuming minimality, the relationship between $n$-filling and locally contracting is not completely clear. 
However, when $\G$ is ample, it is known that $n$-filling does not depend on $n$ (is equivalent to $1$-filling) and it implies locally contracting, see \cite[Theorem 5.8]{Ma} (it also implies paradoxicality of all non-empty compact open sets, see \cite[Proposition 7.7]{Rainone-Sims}). 
%Locally contracting seems to be easier to check in examples. 
%This suggests that locally contracting might be a weaker condition than $n$-filling.
\end{rem}

\begin{lem}\label{lem:Anantharaman-Delaroche}
Assume that $\G$ is locally contracting with respect to a unital inverse semigroup $S\subseteq S(\LL)$ covering $\G$.  
For any bump function $a\in C_c(X)$, there is an idempotent $p\in \mathfrak{C}_c(\G, \LL)$ with $p=ap=pa$ and $\|p\|_{\max}^{S}\leq 4$, and there is $u\in \mathfrak{C}_c(\G, \LL)$, with $\|u\|_{\max}^{S}\leq 8$, such that $u^*u = p$ and $uu^* < p$ (so $p$ is infinite). 
Moreover, these relations remain valid for the images of $u$ and $p$ in any $S$-graded essential Banach algebra $\FR(\G , \LL)$ of $(\G,\LL)$. 
\end{lem}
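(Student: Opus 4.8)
The plan is to adapt the classical argument of Anantharaman--Delaroche \cite[Proposition 2.4]{A-D} to a hands-on construction carried out \emph{inside} the convolution algebra $\mathfrak{C}_c(\G,\LL)$: I will produce the idempotent $p$ and the partial isometry $u$ explicitly, as finite sums of sections supported over a single contracting bisection (and its powers), rather than extracting projections by continuous functional calculus as one does in the $C^*$-algebraic case, where no functional calculus and no compact open sets are available.

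First I would normalize the data. Writing $U_0 := \Int\{a = 1\}\neq\emptyset$ and applying local contractiveness (Definition~\ref{def:locally contractive}) to $U_0$, I obtain an open set $V$ and $W\in S\subseteq S(\LL)$ with $\overline V\subseteq d(W)$ and $h_W(\overline V)\subsetneq V$; after shrinking $V$ I may assume $V$ is relatively compact and $\overline V\subseteq U_0$, so that $a\equiv 1$ on $\overline V$ and on $h_W(\overline V)\subseteq V\subseteq\overline V$. This will guarantee $p = ap = pa$ at the end, since every piece of $p$ is arranged to have range and domain inside $\{a = 1\}$. Because $W\in S(\LL)$, I fix a unitary section $w\in C_u(W,\LL)$; multiplication by $w$ trivialises $\LL$ over $W$ and its powers and reduces all computations to scalar-valued functions, so I may argue as in the untwisted case and reinsert $w$ at the end.

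The technical core is to build, from the strict inclusion $h_W(\overline V)\subsetneq V$, a genuine idempotent $p$ and a genuine partial isometry $u$ (that is, $uu^*u = u$, whence $p = u^*u$ and $uu^*$ are both idempotent) in $\mathfrak{C}_c(\G,\LL)$, supported over $\overline V$ on the bisections $X$, $W$, $W^*$ (and, if needed, higher products, all lying in $S\subseteq S(\LL)$). The model is a Rieffel-type projection: one chooses bump functions subordinate to the nested sets $h_W(\overline V)\subsetneq V\subseteq\overline V$ and solves the identities $p^2 = p$, $u^*u = p$, $uu^*u = u$ by hand, the room to solve them being furnished precisely by the overlap region $V\setminus\overline{h_W(\overline V)}$. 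The strictness of the inclusion is what forces $uu^*\neq p$: the element $p - uu^*$ restricts on the unit space to a nonzero continuous function supported on the nonempty open set $V\setminus\overline{h_W(\overline V)}$, while $(uu^*)p = p(uu^*) = uu^*$ holds by the support bookkeeping; hence $uu^* < p$ and $p$ is infinite. Counting the finitely many sections entering $p$ and $u$ and bounding their supremum norms, the definition of $\|\cdot\|_{\max}^S$ yields $\|p\|_{\max}^S\le 4$ and $\|u\|_{\max}^S\le 8$.

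The passage to an arbitrary $S$-graded essential Banach algebra $\FR(\G,\LL)$ is then soft. The relations $p^2 = p$, $u^*u = p$ and $uu^*u = u$ are purely algebraic, involving only the product and the involution, so they are preserved by the canonical homomorphism $\mathfrak{C}_c(\G,\LL)\to\FR(\G,\LL)$. The one point to verify is that $uu^* < p$ survives, i.e.\ that the image of $p - uu^*$ is nonzero: by Proposition~\ref{prop:local_expectations_and_local_j_map} the map $\E_X^{\RR}$ sends $p - uu^*$ to $q_X\big((p - uu^*)|_X\big)$, the class in $\D(X)$ of a nonzero continuous function with nonempty open, hence non-meager, support, so it does not vanish; therefore $p - uu^*\neq 0$ in $\FR(\G,\LL)$ (see also Remark~\ref{rem:exotic_and_graded}). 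The main obstacle will be the technical core just described: manufacturing honest idempotents and an honest partial isometry with $uu^* < u^*u$ inside the quasi-continuous algebra of a possibly non-ample, non-Hausdorff groupoid, where no compact open sets are available to produce projections directly, while simultaneously keeping the $S$-norms within the stated constants.
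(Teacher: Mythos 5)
You have a genuine gap, and you in fact flag it yourself: the "technical core" — actually exhibiting $p$ and $u$ — is never carried out, only asserted to be solvable "by hand" with bump functions. That assertion is the entire content of the lemma. If you write $p = f_0 + f_1 + f_{-1}$ with $f_0\in C_c(X)$, $f_1\in C_c(W,\LL)$, $f_{-1}\in C_c(W^*,\LL)$, the equation $p^2=p$ is not solvable by merely choosing bumps subordinate to $h_W(\overline V)\subsetneq V$: the naive candidate built from a unitary section $c_W\in C_u(W,\LL)$ and a bump $b$, namely $x := c_W b$, has $x^*x=b^2$, which is never an idempotent for a continuous $b$ unless $b^2$ is an indicator function — impossible on a non-ample groupoid, which is exactly the obstruction you acknowledge. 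The paper's resolution is the scaling-element trick, which your sketch never reaches: choosing $b\in C_c(V_0)$ equal to $1$ on $h_W(\overline V)$ (where $h_W(\overline V)\subseteq V_0\subseteq\overline{V_0}\subsetneq V$), one gets $(1-x^*x)^{1/2}x=0$ in the unitization $\widetilde{\mathfrak{C}_c(\G,\LL)}$, so $v:=x+(1-x^*x)^{1/2}$ satisfies $v^*v=1$, and $p:=1-vv^* = x^*x-xx^*-x(1-x^*x)^{1/2}-(1-x^*x)^{1/2}x^*$ is an honest idempotent lying in $\mathfrak{C}_c(\G,\LL)$; the constant $4$ is the number of contractive summands in this formula, so without such a formula your bound $\|p\|_{\max}^S\leq 4$ cannot be checked either.

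The second, independent gap is the mechanism for infiniteness. You claim that the strictness $h_W(\overline V)\subsetneq V$, from a single application of local contractiveness, directly forces $u^*u=p$ and $uu^*<p$. But one scaling element only shows that the adjoined unit $1$ is infinite (since $v^*v=1$ and $vv^*=1-p<1$); it does not by itself produce a $u$ implementing an equivalence of $p$ with a proper subidempotent of $p$. The paper needs a genuinely two-stage construction: on the nonempty open set $O:=h_W(V)\setminus h_W(\overline{V_0})$, where $p\equiv 1$, one takes a bump $c\in C_c(O)$, runs the same construction again to get a second scaling element $y$ with $py=yp=y$, sets $w:=y+(1-y^*y)^{1/2}$ (so $w^*w=1$ and $p$ commutes with $w$), and only then $u:=pw$, giving $u^*u=p$, $uu^* = p - \big(y^*y-yy^*-y(1-y^*y)^{1/2}-(1-y^*y)^{1/2}y^*\big)<p$, and $\|u\|_{\max}^S\leq\|p\|_{\max}^S\|w\|_{\max}^S\leq 8$. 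Your single-stage picture has no substitute for this step. By contrast, your opening normalization (shrinking $V$ into $\{a=1\}$, trivializing $\LL|_W$) and your closing "soft" step — algebraic relations pass through the canonical homomorphism, and nonvanishing in any $S$-graded essential $\FR(\G,\LL)$ follows because $\E_X^{\RR}$ sends the relevant element to the class in $\D(X)$ of a function equal to $1$ on a nonempty open set — are correct and agree with the paper; the missing middle is what has to be supplied.
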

\begin{proof} 
We adapt the argument from the proof of \cite[Proposition~2.2]{A-D}, see also \cite[Theorem 4.4.2]{Sims}. 
If $X$ is not compact, we let $\widetilde{\mathfrak{C}_c(\G, \LL)}$ be the minimal unitization of the algebra $\mathfrak{C}_c(\G, \LL)$. 
Then $1\C+C_c(X)=\widetilde{C_c(X)}\subseteq \widetilde{\mathfrak{C}_c(\G, \LL)}$ is naturally a subalgebra and $\mathfrak{C}_c(\G, \LL)\subseteq \widetilde{\mathfrak{C}_c(\G, \LL)}$ is a two-sided ideal.
Moreover, every positive function in $\widetilde{C_c(X)}$ has a square root in $\widetilde{C_c(X)}$. 

Let $a\in C_c(X)$ be a bump function. 
Let $U$ be a non empty open set such that $a|_{U}\equiv 1$. % (and so $U$ is precompact).  
Take $V \subseteq U$ and $W\in S$ with $\overline{V} \subseteq \s(W)$ and $h_{W}(\overline{V}) \subsetneq V$. 
Find an open $V_0$ such that $h_{W}(\overline{V}) \subseteq V_0\subseteq \overline{V_0}\subsetneq V$.
Fix a positive norm one function $b\in C_c(V_0)$ which is equal to $1$ on $h_{W}(\overline{V})$. 
Then $1-b^2\in \widetilde{C_c(X)}$ vanishes on  $h_{W}(\overline{V})$ and $b\circ  h_{W^*} \in C_c(h_{W}(V_0))$, so in particular $(1-b^2)^{1/2}(b \circ h_{W^*})= 0$. 
Since the bundle $\LL$ can be trivialized on $W$, we may pick a unitary section $c_W\in C(W,\LL)$. 
Put $x := c_W b= ( b \circ h_{W^*})c_W$. 
Then $x^*x= b^2 \in C_c(V)$ and $x x^*= b^2\circ h_{W^*} \in C_c(h_{W}(V))$. 
It follows that $(1-x^*x)^{1/2}x=0$ ($x$ is the so called \emph{scaling element}). 
Therefore
\[
    v := x + ( 1 - x^*x )^{1/2} \in \widetilde{\mathfrak{C}_c(\G,\LL)}
\]
satisfies $v^*v=1$. 
Thus $vv^*$ is an idempotent and also
\[
    p := 1 - vv^* = x^*x - xx^* -  x(1 - x^*x)^{1/2} - ( 1-x^*x )^{1/2}x^*\in \mathfrak{C}_c(\G,\LL)
\] 
is an idempotent with $\|p\|_{\max}^S\leq 4$ (as the four elements in the above expression are $\|\cdot\|_{\max}^S$ contractive). 
Since $a|_{U}\equiv 1$  and $\s(\supp(x)), r(\supp(x))\subseteq V\subseteq U$, we have $ax = xa = x$. 
By the above formula for $p$, this implies that $p=ap=pa\in a\cdot \mathfrak{C}_c(\G,\LL)\cdot a$. 

Since $h_{W}(\overline{V_0}) \subsetneq h_{W}(V)$, the open set $O := h_W(V)\setminus h_{W}(\overline{V_0})$ is non-empty, and $b\circ h_{W^*}$ vanishes on $O$. 
The function $x^*x-xx^*=b^2 - b^2\circ h_{W^*}$ on $O$ is constant  equal to $1$, and $(1-x^*x)^{1/2}$ vanishes on $O$. 
Thus $p|_O \equiv 1$ and therefore for any $S$-graded essential Banach algebra $\FR(\G , \LL)$, using the canonical homomorphism $i_{\RR}: \Falg{\G , \LL}{S}{}{} \to \FR(\G , \LL)$, we have 
\[
    \| \E_X^{\RR} (i_{\RR}(p) ) \| = \| q_X(p|_X) \| \geq \| q_O(p|_O) \| = 1 ,
\]
and so $i_{\RR}(p)$ is non-zero. 

To prove that in fact $i_{\RR}(p)$ is infinite we take any bump function $c \in C_c(X)$ with support in $O$, so that we get $c x^*x=c$ and $c x=0$.  
Applying the above reasoning to $c$ (instead of $a$), we get a scaling element $y$ with $cy=yc=y$. 
This implies $y x^*x= x^*xy=y$ and $yx=0$. 
Hence $py=yp=y$. 
As a consequence $p$ commutes with the elements in the Banach subalgebra of $\widetilde{C_c(X)}$ generated by the positive function $1-y^*y$, and in particular with $(1-y^*y)^{1/2}$. 
Thus $p$ commutes with $w:=y+(1-y^*y)^{1/2}\in \widetilde{\mathfrak{C}_c(\G,\LL)}$. 
Therefore putting $u:=p w\in \mathfrak{C}_c(\G,\LL)$, and using that $w^*w=1$ (which holds because $y$ is scaling), we get $u^*u=p$ and 
\[
    uu^*= p - ( y^*y-yy^*-y(1-y^*y)^{1/2}-(1-y^*y)^{1/2}y^*) .
\]
The reasoning showing that $i_{\RR}(p)\neq 0$ shows that $i_{\RR}$ applied to the term in the brackets is non-zero. 
Hence the image of $p$ in $\FR(\G , \LL)$ is infinite. 
Clearly, $\| u \|_{\max}^S \leq \|p\|_{\max}^S \|w\|_{\max}^S \leq 8$. 
\end{proof}

\begin{prop}\label{prop:Anantharaman-Delaroche}
Assume $\G$ is topologically free and locally contracting with respect to a unital inverse semigroup $S\subseteq S(\LL)$ covering $\G$. 
Then every non-zero left ideal in every $S$-graded essential Banach algebra $\FR(\G , \LL)$ contains an infinite idempotent. 
\end{prop}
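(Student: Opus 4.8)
The plan is to pick a non-zero $y\in L$ and manufacture an infinite idempotent lying in $L$ by first compressing $y$ to something close to a bump function, then invoking local contraction to build an infinite idempotent in a corner, and finally \emph{reshuffling} the resulting factorisation so that the idempotent is produced by a pure \emph{left} multiplication of $y$ and hence belongs to $L$. Since $\FR(\G,\LL)$ is essential, the map $j_{\RR}^{\ess}$ of Proposition~\ref{prop:local_expectations_and_local_j_map} is injective, so $y\notin\ker j_{\RR}^{\ess}$; by the description of this kernel there are $U\in S$ and $a_U\in C_0(r(U))$ with $\E^\RR_U(a_U y)\neq 0$. As $a_U\in\FR(\G,\LL)$ we have $a_U y\in L$, and I would arrange that every further step of the reduction is a left multiplication of $y$, so that the element stays in $L$ throughout.

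Following the opening part of the proof of Theorem~\ref{thm:filling_implies_pure_infinite}, I would replace $y$ by $a_U y$ and rescale so that $\|\E^\RR_U(y)\|=1$, approximate it by some $f\in C_c(r(U))\mathfrak{C}_c(\G,\LL)$ with $\|y-i_{\RR}(f)\|_{\RR}$ small and $\|q_U(f\circ r|_U^{-1})\|=1$, and then use Lemma~\ref{lem:approx_conditional_exp_twisted} to find a contractive $b\in C_c(U^*,\LL)$; replacing $y$ by $by\in L$ and $f$ by $bf$ reduces to the case $U=X$, i.e.\ $\|q_X(f|_X)\|=1$. Applying the pinching property (Lemma~\ref{lem:pinching_property}) to $f$ then yields norm-one $a,b\in C(X)$ with $a$ a bump function such that, writing $g:=a^2$ (again a bump function), both $\|a(bf|_X)-a\|_\infty$ and $\|abfa-g\|_{\max}^S$ are small; combining these with $\|y-i_{\RR}(f)\|_{\RR}$ gives
\[
    \| a b y a - g \|_{\RR} < \delta
\]
with $\delta$ as small as we wish. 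Here $aby\in L$, while the trailing factor $a$ is what makes $abya$ \emph{a priori} leave $L$; this is the point the last step must repair.

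Next I would apply Lemma~\ref{lem:Anantharaman-Delaroche} to the bump function $a$ to obtain an infinite idempotent $p\in\mathfrak{C}_c(\G,\LL)$ with $p=ap=pa$ and $\|p\|_{\max}^S\leq 4$, whose image in $\FR(\G,\LL)$ is again an infinite idempotent. From $p=ap=pa$ one gets $p=pg=gp$ and hence $pgp=p$, so with $z:=abya$ we have
\[
    \|pzp-p\|_{\RR}=\|p(z-g)p\|_{\RR}\leq \|p\|_{\RR}^2\,\|z-g\|_{\RR}\leq 16\delta .
\]
Choosing $\delta<1/16$ makes $pzp$ invertible in the unital Banach algebra $p\,\FR(\G,\LL)\,p$ with unit $p$; denoting its inverse there by $h$, we obtain $p=h(pzp)=h(pab)\,y\,(ap)$, i.e.\ a factorisation $p=XyZ$ with $X:=h\,p\,ab$ and $Z:=ap$ in $\FR(\G,\LL)$.

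The hard part is precisely that $p=XyZ$ involves the right factor $Z$, so $p$ need not lie in $L$; this is resolved by the standard reshuffling of equivalent idempotents. Putting $s:=Z$ and $t:=pXy$ we have $ts=p(XyZ)=p^2=p$ and $st=ZpXy=(ZpX)\,y$, so $p=ts$ and $f:=st$ are equivalent idempotents (via $p=xy$, $f=yx$ with $x=t$, $y=s$). In particular $f$ is an idempotent, it is infinite because $\sim$ preserves infiniteness, and $f=(ZpX)y\in \FR(\G,\LL)\,y\subseteq L$ since $L$ is a left ideal; thus $f$ is the required infinite idempotent in $L$. The only remaining care is the bookkeeping of the (uniformly controlled) approximation constants so that $16\delta<1$, and verifying that every reduction in the second paragraph is a left multiplication of $y$. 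The essential and topologically free hypotheses enter exactly through the injectivity of $j_{\RR}^{\ess}$ and through Lemma~\ref{lem:pinching_property}, while local contractiveness enters only through Lemma~\ref{lem:Anantharaman-Delaroche}.
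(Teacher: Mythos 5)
Your proposal is correct and takes essentially the same route as the paper's proof: the same opening reductions, the pinching Lemma~\ref{lem:pinching_property}, Lemma~\ref{lem:Anantharaman-Delaroche} applied to the bump function $a$, a Neumann-series inversion in a corner, and a final swap of the two factors of a factorisation of $p$ so that the equivalent infinite idempotent lands in $\FR(\G,\LL)\,y$; the paper merely compresses by $u^*(\cdot)\,u$ (using $u^*u=p$ and $au=u$ from Lemma~\ref{lem:Anantharaman-Delaroche}) instead of your $p(\cdot)p$, which makes its reshuffle immediate. One small point to tighten: idempotency of $f=st$ is not automatic from $p=ts$ being an idempotent (your ``in particular'' runs the logic backwards), but it does hold here via the one-line check $sp=(ap)p=ap=s$, whence $(st)^2=s(ts)t=spt=st$.
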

\begin{proof}
Let $y\in \FR(\G , \LL) \setminus \{0\}$. 
We show that $\FR(\G , \LL) y$ contains an infinite idempotent. 
Let $0<\varepsilon < \frac{1}{192}$. 
Arguing as in the beginning of the proof of Theorem~\ref{thm:filling_implies_pure_infinite}, we may assume that $\| \E^{\RR}_X(y )\| = 1$ and that there is $f\in \mathfrak{C}_c(\G,\LL)$ with $\| y - i_{\RR}(f) \|_{\RR} < \varepsilon$ and $\| E^{\RR}_{X} (i_{\RR}(f)) \| = \| q_X(f|_X) \| = 1$. 
By Lemma~\ref{lem:pinching_property} there is a contractive $b\in C_c(X)$ and a bump function $a\in C_c(X)$ such that $b  f|_X \in C_c(X)^+$ and
\[
    \| a^2 - a (b f|_X) a \|_{\infty} \leq \varepsilon  \quad  \text{ and } \quad \| a (b f) a-  a (b f|_X)  a \|_{\max}^S \leq \varepsilon. 
\]
By Lemma~\ref{lem:Anantharaman-Delaroche}, there is $u\in \mathfrak{C}_c(\G,\LL)$ such that $\|u\|_{\max}^S\leq 8$, $p:=u^*u$ is an idempotent with $p=ap=pa$, $\|p\|_{\max}^S\leq 4$, $uu^* <p$ and $i_{\RR}(uu^*) < i_{\RR}(p)$. 
In particular, $i_{\RR}(p)$ is an infinite idempotent in $a \FR(\G , \LL) a$. 
Using these relations we get 
\[
\begin{gathered}
    \| p- u^* (b  f|_X) u \|_{\max}^S = \| u^* a^2 u - u^* a (b f|_X) a u \|_{\max}^S \leq  64 \varepsilon < 1/3, \\
    \| u^* b f u - u^*(b f|_X)u \|_{\max}^S = \| u^* a (b f)  a  u- u^* a (b f|_X)a  u \|_{\max}^S \leq 64 \varepsilon <1/3, \\ 
    \| i_{\RR}(u^*) (by) i_{\RR}(u) - i_{\RR}(u^* b f u) \|_{\RR} \leq 16 \| y - i_{\RR}(f) \|_{\RR} < 64 \varepsilon < 1/3 .
\end{gathered}
\]
This implies that $\| i_{\RR}(p) - i_{\RR}(u^*)(b y)i_{\RR}(u) \|_{\RR} <1$ and therefore $i_{\RR}(u^*)(b y)i_{\RR}(u)$ is invertible in the Banach algebra $i_{\RR}(p) \FR(\G , \LL) i_{\RR}(p)$. 
We denote its inverse by $c$ and we put $w := c\cdot i_{\RR}(u^*)(b y) \in \FR(\G , \LL) y$. 
Then $w i_{\RR}(u) = i_{\RR}(p)$ is an infinite idempotent and therefore the equivalent idempotent $i_{\RR}(u) w \in \FR(\G , \LL) y$ is also infinite. 
\end{proof}

\begin{thm}\label{thm:Anantharaman-Delaroche}
Let $(\G,\LL)$ be a twisted \'etale groupoid where $\G$ is topologically free and minimal. If $\G$ is locally contracting with respect to a unital subsemigroup $S\subseteq S(\LL)$ covering $\G$, then every $S$-graded essential Banach algebra $\FR(\G , \LL)$ of $(\G,\LL)$ is purely infinite simple.
\end{thm}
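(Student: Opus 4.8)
The plan is to assemble the statement from the two independent streams of machinery already developed: simplicity comes from the intersection-property theorem, while pure infiniteness comes from the local-contractivity construction of infinite idempotents, repackaged through the ring-theoretic characterisation of condition ($\dagger$). Since $\FR(\G,\LL)$ is $S$-graded and essential (hence exotic), Remark~\ref{rem:exotic_and_graded} guarantees that each $C_0(U,\LL)$, and in particular $C_0(X)$ (as $X\in S$ by unitality of $S$), embeds isometrically; this is the standing hypothesis needed to feed the algebra into the earlier results.

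First I would dispatch simplicity. With $C_0(X)\subseteq \FR(\G,\LL)$ isometric, and $\G$ both topologically free and minimal, Theorem~\ref{thm:Top_Free_Intersection_Property}(3) applies directly and yields that $\FR(\G,\LL)$ is simple precisely because it is essential and $\G$ is minimal. No further computation is required here; the topological-freeness hypothesis is exactly what activates that theorem.

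Next I would establish pure infiniteness. The hypotheses of Proposition~\ref{prop:Anantharaman-Delaroche}---topological freeness together with local contractivity with respect to the unital inverse semigroup $S\subseteq S(\LL)$ covering $\G$---are satisfied verbatim, so every non-zero left ideal in $\FR(\G,\LL)$ contains an infinite idempotent. By Remark~\ref{rem:on_condition_dagger}, the property that every non-zero left ideal contains an infinite idempotent is equivalent to condition ($\dagger$). Since we have already shown $\FR(\G,\LL)$ is simple, Definition~\ref{def:purely_infinite} then lets us conclude that $\FR(\G,\LL)$ is purely infinite simple, completing the proof.

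The theorem is thus essentially a corollary, and the genuine obstacle lives upstream rather than in the assembly: it is the construction in Lemma~\ref{lem:Anantharaman-Delaroche} of a \emph{scaling element} $x=c_W b$ out of the local-contractivity data $h_W(\overline{V})\subsetneq V$, the verification that the resulting idempotent $p=1-vv^*$ survives (is non-zero) in the essential quotient via the local-expectation estimate $\|\E_X^{\RR}(i_{\RR}(p))\|\geq\|q_O(p|_O)\|=1$, and the second scaling element $y$ producing a proper subequivalence $uu^*<p$ that certifies infiniteness. Those steps, feeding into Proposition~\ref{prop:Anantharaman-Delaroche} through the pinching estimates of Lemma~\ref{lem:pinching_property}, are where the real analytic content sits; at the level of this theorem the only care needed is to confirm that the grading hypothesis delivers the isometric $C_0(X)$-embedding demanded by both Theorem~\ref{thm:Top_Free_Intersection_Property} and Proposition~\ref{prop:Anantharaman-Delaroche}.
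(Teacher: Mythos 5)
Your proposal is correct and follows exactly the paper's own argument: the paper proves this theorem in one line by combining Proposition~\ref{prop:Anantharaman-Delaroche} (infinite idempotents in every non-zero left ideal) with Theorem~\ref{thm:Top_Free_Intersection_Property} (simplicity from essentiality, topological freeness and minimality), which is precisely your assembly, with the passage through Remark~\ref{rem:on_condition_dagger} to condition ($\dagger$) made explicit. Your added check that the $S$-grading yields the isometric embedding of $C_0(X)$ is a reasonable precaution, though it is already built into the definition of an exotic (hence essential) groupoid Banach algebra.
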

\begin{proof}
Combine Proposition~\ref{prop:Anantharaman-Delaroche} and Theorem~\ref{thm:Top_Free_Intersection_Property}.
\end{proof}

\section{Applications and examples}
\label{sec:ApplicationsExamples}

\subsection{Crossed products by twisted partial group actions}
\label{ssec:ExamplesPartialActions}

We use a transformation groupoid model to apply our results to twisted partial group actions. 
In particular, we will generalize and extend some of the results of \cite{BK} from global to partial actions. 

We fix a (topological) \emph{partial action} of a discrete group $G$ on a locally compact Hausdorff space $X$. 
This is a map $\theta : G\to \PHomeo(X)$ such that $\theta_1 = \id_X$, $\theta_{t}^{-1} = \theta_{t^{-1}}$ and the partial homeomorphism $\theta_{ts}$ extends $\theta_{t} \circ \theta_s$  for all $t,s\in G$. 
Thus for each $t\in G$ we have a homeomorphism $\theta_t : X_{t^{-1}} \to X_{t}$ between two open subsets of $X$ and for each $s\in G$ the natural domain $\theta_{t^{-1}}(X_{s^{-1}}\cap X_{t}) = X_{t^{-1}}\cap X_{(st)^{-1}}$ for $\theta_{s} \circ \theta_t$ is contained in $X_{(st)^{-1}}$. 
Fixing $\theta$ is equivalent to fixing an algebraic \emph{partial action} $\alpha : G\to \PAut(C_0(X))$ where $\alpha_t(a) := a \circ \theta_{t}^{-1}$, $a \in C_0( X_{t^{-1}})$, $t\in S$. 
We refer to $\theta$ as \emph{dual} to $\alpha$.

We also fix a  \emph{2-cocycle of the partial action $\alpha$} (or of $\theta$), which is a family of maps $u(s,t) : G \times G \to C_u(X_{s}\cap X_{st})$, $s,t\in G$, satisfying $u(1,t) = u(t,1) = 1$ for $t\in G$, and
\[
    \alpha_r \big( a u(s,t) \big) u(r,st) = \alpha_r(a ) u(r,s) u(rs,t), \qquad a \in C_0(X_{r^{-1}}\cap X_{s}\cap X_{st}),\ r,s,t\in G,
\]
see \cite{Exel_twisted_partial}.
\begin{defn}[{cf.\ \cite[Definitions 2.1, 4.12]{BK}}] %\cite[Definition 7.3]{BKM} and 
The \emph{universal Banach algebra crossed product} for the twisted partial action $(\alpha,u)$ on $C_0(X)$ is the Banach $*$-algebra 
\[
    F(\alpha,u) := \{ f \in \ell^1(G,C_0(X)) : f(t)\in C_0(X_{t}),\ t\in G \}
\]
with $\ell^{1}$-norm from $\ell^1(G,C_0(X))$ and operations 
\[
    (f * g)(r) := \sum_{st=r} \alpha_s \Big( \alpha_s^{-1} \big( f(s) \big) g(t) \Big) u(s,t) , \qquad 
    f^*(t) := \alpha_{t} \big( f (t^{-1})^* \big) u(t , t^{-1})^*,
\]
for $f,g \in F(\alpha,u)$ and $s,t,r\in G$. 
For any class $\RR$ of representations of $F(\alpha,u)$, we denote the Hausdorff completion of $F(\alpha,u)$ in the seminorm $\|f \|_{\RR} := \sup \{ \| \psi(f) \| : \psi \in \RR \}$, $f\in F(\alpha,u)$, by $F_{\RR}(\alpha,u)$. 
For any non-empty $P\subseteq [1,\infty]$ we define the \emph{$L^P$-crossed product} for  $(\theta,u)$ as the completion $F^P(\alpha,u) := \overline{F(\alpha,u)}^{\|\cdot\|_{L_P}}$, where $\|f\|_{L^P}$ is the supremum of $\| \psi(f) \|$ over representations $\psi : F(\alpha,u) \to B(L^p(\mu))$, where $p\in P$ and $\mu$ is a measure, and if $\F=\R$ we also assume that $\psi|_{C_c(X)}$ is positive. %acts by multiplication operators. 
\end{defn}

We briefly describe how to view the algebras $F_{\RR}(\alpha,u)$ as groupoid Banach algebras.
More details, as well as description of representations of $F(\alpha,u)$  in terms of covariant representations of $(\alpha,u)$, can be found in an earlier preprint version of \cite{BKM}, preserved as arXiv:2303.09997v3, see subsection 7.1 therein.
\begin{ex}\label{ex:groupoid_model_for_partial_action_crossed_product} 
 
The \emph{transformation groupoid for $\theta$} is  
\[
    \G_{\theta} := \bigsqcup_{t\in G} \{t\}\times X_{t^{-1}}
\] 
equipped with the product topology from $G\times X$ and the algebraic structure given by 
\[
    r(t,x) := \theta_t(x), \quad d(t,x) := x , \quad \big( s, \theta_t(x) \big) \cdot (t,x) := (s t,x) , \qquad s,t \in G,\ x \in X_{t^{-1}} . 
\]
The \emph{twist associated to $u$} is given by the trivial line bundle $\LL_{u} = \G_{\theta} \times \F$ and the $2$-cocycle $\sigma_{u}$ for $\G_{\theta}$ given by 
$$
\sigma_{u}((s,\theta_t(x)),(t,x))=u(s,t)(\theta_{st}(x)).
$$ 
Let $S_\theta$ be the inverse semigroup consisting of bisections $\{ t \} \times U \subseteq \G_\theta$ where  $U\subseteq X_{t^{-1}}$ is open. 
Then one sees that the map $C_c(\G_{\theta},\LL_{u}) \ni \hat{f} \mapsto f \in C_c(G, C_c(X))$ where $f(t)(x) := \hat{f}(t,\theta_{t^{-1}}(x))$ extends to an isometric isomorphism
\begin{equation}\label{eq:groupoid_model_for_partial_crossed_prod}
    F^{S_\theta}(\G_{\theta} , \LL_{u}) \cong F(\alpha,u).
\end{equation}
Thus for any class $\RR$ of representations of $F(\alpha,u)$ there is a class $\RR_0$ of representations of $F^{S_\theta}(\G_{\theta} , \LL_{u})$ such that \eqref{eq:groupoid_model_for_partial_crossed_prod} induces an isometric isomorphism  $F^{S_\theta}_{\RR_0}(\G_{\theta} , \LL_{u}) \cong F_{\RR}(\alpha,u)$.
\end{ex}

\begin{lem}\label{lem:crossed_products_by_partial_actions}
For any Banach algebra of the form $F_{\RR}(\alpha,u)$, the following are equivalent:
\begin{enumerate}
    \item $\|f(1)\|_{\infty}\leq \|f\|_{\RR}$ for all $f\in F(\alpha,u)$, and so $F(\alpha,u) \ni f\mapsto f(1) \in C_0(X)$ extends to a contractive map $E^{\RR} : \FR(\alpha,u)\to C_0(X)$; 
    \item $\max_{t\in G}\|f(t)\|_{\infty} \leq \|f\|_{\RR}$ for all $f\in F(\alpha,u)$, and so for any $t\in G$, $F(\alpha,u)\ni f\mapsto f(t) \in C_0(X_t)$ extends to a contractive map $E^{\RR}_t : F_{\RR}(\alpha,u) \to C_0(X_t)$; 
    \item the inclusion $F(\alpha,u) \subseteq C_0(\G_{\theta})$ extends to a contractive linear map $j_{\RR} : F_{\RR}(\alpha,u) \to C_0(\G_{\theta})$. 
\end{enumerate}
If the above equivalent conditions hold, then $\bigcap_{t\in G}\ker(E_t^{\RR})=\ker j_{\RR}$ is an ideal in $F_{\RR}(\alpha,u)$ (and $F_{\RR}(\alpha,u)$ is necessarily a completion of $F(\alpha,u)$). 
\end{lem}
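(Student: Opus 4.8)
The plan is to transport the whole statement through the groupoid model of Remark~\ref{rem:groupoid_model_for_partial_action_crossed_product} and then read off the three equivalences from Proposition~\ref{prop:expectations_and_j_map}. The key structural facts I would record first are that the transformation groupoid $\G_\theta = \bigsqcup_{t\in G}\{t\}\times X_{t^{-1}}$ carries the product topology of the discrete group $G$ and the Hausdorff space $X$, hence $\G_\theta$ is Hausdorff; consequently $\mathfrak{C}_0(\G_\theta,\LL_u) = C_0(\G_\theta,\LL_u)\cong C_0(\G_\theta)$, the last identification using triviality of the line bundle $\LL_u$. Moreover the bisections $U_t := \{t\}\times X_{t^{-1}}$, $t\in G$, together with their open subsets form the unital inverse semigroup $S_\theta$, which covers $\G_\theta$. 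Under the isometric isomorphism $F^{S_\theta}_{\RR_0}(\G_\theta,\LL_u)\cong F_{\RR}(\alpha,u)$, determined on the dense subalgebra by $\hat f(t,\theta_{t^{-1}}(x)) = f(t)(x)$, the class $\RR_0$ consists of contractive representations, so the transported seminorm is dominated by $\|\cdot\|^{S_\theta}_{\max}$.

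Next I would set up the dictionary between the coordinate maps and the restriction maps. Since $\theta_1=\id$, the component $f(1)$ corresponds precisely to the restriction $\hat f|_X$ to the unit space $\{1\}\times X$, while for general $t$ the component $f(t)$ corresponds to $E_{U_t}(\hat f)=\hat f|_{U_t}$ read off through the homeomorphism $r|_{U_t}:U_t\to X_t$ that trivialises $\LL_u|_{U_t}$; both identifications are isometric for $\|\cdot\|_\infty$. Hence condition (1) is exactly condition \ref{enu:expectations_and_j_map0} of Proposition~\ref{prop:expectations_and_j_map}, and condition (3) is exactly condition \ref{enu:expectations_and_j_map3} once $C_0(\G_\theta)$ is identified with $\mathfrak{C}_0(\G_\theta,\LL_u)$. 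The standing hypothesis of Proposition~\ref{prop:expectations_and_j_map}, that the canonical map be isometric on $C_c(X)$, follows from each of the three conditions: all three force $\|f\|_\infty\leq\|f\|_{\RR}$ for $f\in C_c(X)$ (take $t=1$), and the reverse inequality is the $S_\theta$-grading bound $\|\cdot\|_{\RR}\leq\|\cdot\|^{S_\theta}_{\max}$, which coincides with $\|\cdot\|_\infty$ on $C_c(X)$.

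With this in place the equivalence of (1) and (3) is immediate from Proposition~\ref{prop:expectations_and_j_map}. To close the loop through (2) I would note that (2) implies (1) by taking $t=1$, while (3) implies (2) by setting $E^{\RR}_t(f):=j_{\RR}(f)|_{U_t}$ for each $t\in G$, which is contractive and extends $f\mapsto f(t)$. For the final assertion, the family $\{U_t:t\in G\}$ covers $\G_\theta$, so the displayed identity in Proposition~\ref{prop:expectations_and_j_map} (applied with this covering family) yields $\ker j_{\RR}=\bigcap_{t\in G}\ker E^{\RR}_t$ and shows it to be a closed two-sided ideal with $\ker j_{\RR}\cap\mathfrak{C}_c(\G_\theta,\LL_u)=\{0\}$. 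Extending the inequality $\|\hat f\|_\infty\leq\|f\|_{\RR}$ from $\mathfrak{C}_c(\G_\theta,\LL_u)$ to all of $F(\alpha,u)$ by density then shows $\|\cdot\|_{\RR}$ is a genuine norm, so $F_{\RR}(\alpha,u)$ is a completion of $F(\alpha,u)$ rather than merely a Hausdorff completion.

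I expect the only genuine bookkeeping to be the careful matching of the twisted restriction map $E_{U_t}$ with the coordinate $f(t)$ under the cocycle twist $\LL_u$, and the uniform verification of the isometric embedding of $C_0(X)$ across the three conditions. The one mild subtlety is that condition (2) a priori supplies restriction maps indexed only by the covering family $\{U_t\}$ rather than by all of $\Bis(\G_\theta)$; this causes no trouble because it still enters the equivalence cycle through the trivial implication (2)$\Rightarrow$(1) and through the restriction of $j_{\RR}$ giving (3)$\Rightarrow$(2), and because the displayed kernel identity of Proposition~\ref{prop:expectations_and_j_map} only requires a covering family.
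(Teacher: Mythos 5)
Your proposal is correct and takes essentially the same route as the paper: the paper's entire proof consists of transporting the statement through the groupoid model of Remark~\ref{rem:groupoid_model_for_partial_action_crossed_product} and invoking Proposition~\ref{prop:expectations_and_j_map}, with Remark~\ref{rem:Fourier_decomposition} supplying the point you handle via clopenness of the bisections $\{t\}\times X_{t^{-1}}$ (namely that the maps $E^{\RR}_t$ land in continuous functions). Your write-up simply makes explicit the bookkeeping that the paper leaves implicit.
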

\begin{proof}
Using the groupoid model from Example~\ref{ex:groupoid_model_for_partial_action_crossed_product}, the assertion follows from Proposition~\ref{prop:expectations_and_j_map}, see also Remark~\ref{rem:Fourier_decomposition}. 
\end{proof}

\begin{defn} 
If the equivalent conditions in Lemma~\ref{lem:crossed_products_by_partial_actions} hold, then we call $F_{\RR}(\alpha,u)$ a \emph{Banach algebra crossed product} for $(\alpha,u)$. 
If in addition $\ker j_{\RR} = \bigcap_{t\in G} \ker E_{t}^{\RR}=\{0\}$, we say that $F_{\RR}(\alpha,u)$ is a \emph{reduced Banach algebra crossed product}.
\end{defn}

For each $p\in[1,+\infty]$, the representation \eqref{eq:regular_for_p} composed with the isomorphism \eqref{eq:groupoid_model_for_partial_crossed_prod} gives a representation $\Lambda_p : F(\alpha,u) \to B(\ell^p(\G_{\theta}))$, where
\[
    \Lambda_p(b)\xi(t,x) := \sum_{s\in G\text{ if } x\in X_s} b(s)(x) u(s,s^{-1}t)(x) \xi \big( s^{-1}t,\theta_{s^{-1}}(x) \big) , \qquad \xi \in \ell^p(\G_{\theta}), \ b \in F(\alpha,u).
\]

\begin{defn} 
For any non-empty $P\subseteq [1,+\infty]$, we call $F^{P}_{\red}(\alpha,u) := F_{ \{ \Lambda_p : p \in P \} }(\alpha,u)$ the \emph{reduced $L^P$-crossed product}. 
We define the \emph{full $L^P$-crossed product} $F^{P}(\alpha,u)$ as $F_{\RR}(\alpha,u)$ where $\RR$ is the class of all representations $\psi$ of $F(\alpha,u)$ on $L^p$-spaces $L^p(\mu)$, for $p\in P$, and if $\F=\R$ we additionally assume that $\psi(C_0(X))$ consists of multiplication operators. 
We let $\Lambda_P : F^{P}(\alpha,u)\to F^{P}_{\red}(\alpha,u)$ be the canonical representation.
\end{defn}

By \eqref{eq:groupoid_model_for_partial_crossed_prod} and Proposition~\ref{prop:regular_disintegrated}, the algebras $F^{P}_{\red}(\alpha,u)$ are reduced crossed products isometrically isomorphic to $F^P_{\red}(\G_{\theta},\LL_{u})$. 
The above definitions agree with those introduced in \cite{BK} for full actions and $\F=\C$.

%We deduce an $L^p$-version of \cite[Corollary 2.9]{ELQ}.
%We now recall the well-established notions of topological freeness and minimality for partial actions.  

\begin{defn}[\cite{ELQ}]
A partial action $\theta$ is \emph{topologically free} if $\{x\in X_{t^{-1}}: \theta_{t}(x)=x\}$ has empty interior for every $t\in G\setminus \{1\}$. 
A set $U\subseteq X$ is \emph{$\theta$-invariant} if $h_{t}(U\cap X_{t^{-1}})\subseteq U$ for all $t\in G$. 
A partial action $\theta$ is \emph{minimal} if there are no non-trivial $\theta$-invariant open sets.
\end{defn} 

%By \cite[Lemma 2.2]{ELQ} $\theta$ is topologically free if and only if for every finite subset $\{ t_1 , \ldots , t_n \}$ of $G \setminus \{ 1 \}$ the set $\cup_{i = 1}^n \{ x\in X : \theta_{t_i}(x) = x \}$ has empty interior, which is the condition used by Lebedev~\cite[2.5]{Leb05}. 

\begin{rem}\label{re:MinimalTopFreePartialActionGroupoid}
It is obvious that a partial action $\theta$ is minimal if and only if the associated transformation groupoid $\G_\theta$ is minimal. 
It is also easy to see from Remark~\ref{re:TopologicallyFreeGroupoid} that the partial action $\theta$ is topologically free if and only if $\G_\theta$ is topologically free. 
\end{rem}

If the twist $u$ is trivial, then we omit writing it. 
For a subgroup $H\subseteq G$ we denote the obvious restriction of the twisted action $(\alpha,u)$ of $G$ to a twisted action of $H$ by $(\alpha|_{H},u|_{H})$. 

\begin{thm}\label{pr:SimplicityPartialActions}
Let $(\alpha , u)$ be a twisted partial action of a discrete group $G$ on the algebra $C_0(X)$, with dual partial action $\theta$ of $G$ on $X$. 
Let $P\subseteq [1,\infty]$ be a non-empty set and let $F_{\RR}(\alpha , u)$ be any  reduced crossed product Banach algebra. 
The following are equivalent: 
\begin{enumerate}
    \item\label{enu:twisted_generalised_intersection_property1} $\theta$ is topologically free; 
    \item $C_0(X)$ is a maximal abelian subalgebra in $F_{\RR}(\alpha,u)$;
	\item\label{enu:twisted_generalised_intersection_property3} $C_0(X)$ detects ideals in all $F_{\red}^P (\alpha|_{H},u|_{H})$ for any subgroup $H\subseteq G$; 
	\item\label{enu:twisted_generalised_intersection_property4} $C_0(X)$ detects ideals in all $F_{\red}^P (\alpha|_{H},u|_{H})$ for any cyclic subgroup $H\subseteq G$; 
	\item $C_0(X)\subseteq F^P(\alpha)$ has the generalized intersection property with hidden ideal $\ker\Lambda_P$;
    \item $C_0(X)$ detects ideals in one (and hence all) of the algebras $F^1(\alpha)$, $F^{\infty}(\alpha)$, $F^{\{1,\infty\}}(\alpha)$. 
\end{enumerate}
If the above equivalent conditions hold, then $C_0(X)$ detects ideals in $F_{\RR}(\alpha,u)$ and so $F_{\RR}(\alpha,u)$ is simple if and only if $\theta$ is minimal. 
\end{thm}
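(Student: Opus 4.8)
The plan is to transport the whole statement to the transformation groupoid $\G_{\theta}$ with its twist $\LL_{u}$ via the isometric isomorphisms of Remark~\ref{rem:groupoid_model_for_partial_action_crossed_product}, and then read off each equivalence from the groupoid results already proved. The decisive simplification is that $\G_{\theta}=\bigsqcup_{t\in G}\{t\}\times X_{t^{-1}}$ is a disjoint union, over the discrete set $G$, of open subsets of the Hausdorff space $X$, hence \emph{Hausdorff}; consequently $\mathfrak{M}_0(\G_{\theta})=\{0\}$, reduced and essential algebras coincide, and topological freeness of $\G_{\theta}$ equals effectiveness. By \eqref{eq:groupoid_model_for_partial_crossed_prod} the reduced crossed product $F_{\RR}(\alpha,u)$ is an $S_\theta$-graded reduced (equivalently essential) groupoid Banach algebra $F^{S_\theta}_{\RR_0}(\G_{\theta},\LL_{u})$; for each subgroup $H\subseteq G$ one has $F^P_{\red}(\alpha|_H,u|_H)\cong F^P_{\red}(\G_{\theta|_H},\LL_u|_{\G_{\theta|_H}})$, where $\G_{\theta|_H}=\bigsqcup_{t\in H}\{t\}\times X_{t^{-1}}$ is an open subgroupoid of $\G_{\theta}$ containing $X$; and the untwisted $F^P(\alpha)$, $F^1(\alpha)$, $F^\infty(\alpha)$, $F^{\{1,\infty\}}(\alpha)$ correspond to $F^P(\G_\theta)$, $F^1(\G_\theta)$, $F^\infty(\G_\theta)$, $F_I(\G_\theta)$. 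Finally $\theta$ is topologically free (resp.\ minimal) exactly when $\G_{\theta}$ is.

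With these dictionaries in place, most equivalences are immediate. I would obtain $(1)\Leftrightarrow(2)$ from Proposition~\ref{prop:Cartan_subalgebra}: maximal abelianness of $C_0(X)$ in the exotic algebra $F_{\RR}(\alpha,u)$ forces $\G_\theta$ to be topologically free, and conversely topological freeness together with Hausdorffness makes $C_0(X)$ maximal abelian in the reduced algebra $F_{\RR}(\alpha,u)$. I would get $(1)\Leftrightarrow(5)\Leftrightarrow(6)$ from Theorem~\ref{thm:topological_freeness_untwisted} applied to $\G_\theta$: since $\mathfrak{M}_0(\G_\theta)=\{0\}$ we have $\ker j_P^{\ess}=\ker\Lambda_P$, so condition~\ref{enu:topological_freeness_untwisted2} there is exactly $(5)$ and condition~\ref{enu:topological_freeness_untwisted4} is exactly $(6)$. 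For $(1)\Rightarrow(3)$ I would use that topological freeness of $\G_\theta$ restricts to each open subgroupoid $\G_{\theta|_H}$ containing $X$, and then Theorem~\ref{thm:Top_Free_Intersection_Property}(2) (reduced $=$ essential) gives the intersection property for $C_0(X)\subseteq F^P_{\red}(\alpha|_H,u|_H)$; the implication $(3)\Rightarrow(4)$ is trivial.

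The only substantive step is $(4)\Rightarrow(1)$, which I would prove by contraposition and which is the main obstacle. Assume $\theta$ is not topologically free, so there are $t\neq 1$ and a non-empty open $V\subseteq X$ with $\theta_t|_V=\id$. The key point is that $V$ is invariant, with trivial action, for the \emph{cyclic} subgroup $H=\langle t\rangle$: from $\theta_t|_V=\id$ one deduces inductively that $V\subseteq X_{t^{k}}\cap X_{t^{-k}}$ and $\theta_{t^{k}}|_V=\id$ for all $k$, so the reduction $\G_{\theta|_H}|_V=\bigsqcup_{k}\{t^{k}\}\times V$ is the trivial transformation groupoid $H\times V$ and is an \emph{ideal} subgroupoid of $\G_{\theta|_H}$. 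Hence, as in Lemma~\ref{lem:invariant_ideals}, the closure $B$ of $\mathfrak{C}_c(\G_{\theta|_H}|_V,\LL_u)$ inside $F^P_{\red}(\alpha|_H,u|_H)$ is an ideal with $B\cap C_0(X)=C_0(V)$, and $B$ is naturally identified with the reduced twisted crossed product of $C_0(V)$ by the trivial $H$-action. For such a trivial action of a cyclic group, $C_0(V)$ fails to detect ideals in $B$ by the crossed-product results of \cite{BK} (exactly as in the final step of the proof of Theorem~\ref{enu:topological_freeness_twisted_intersection}); lifting a witnessing ideal $I\subseteq B$ with $I\cap C_0(V)=\{0\}$ through the ideal $B$ yields an ideal of $F^P_{\red}(\alpha|_H,u|_H)$ meeting $C_0(X)$ trivially, so $(4)$ fails. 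The delicate point is precisely the invariance of $V$ under the whole cyclic subgroup (not merely under $\theta_t$), which is what presents the bad subalgebra as an honest ideal and thereby lets non-detection lift; this is the technical heart of the argument.

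Finally, for the concluding sentence I would invoke Theorem~\ref{thm:Top_Free_Intersection_Property} for $(\G_\theta,\LL_u)$ under the now-established topological freeness: parts (1) and (2) give that the essential ($=$ reduced, since $\G_\theta$ is Hausdorff) algebra $F_{\RR}(\alpha,u)$ has $C_0(X)$ detecting its ideals, and part (3) gives that $F_{\RR}(\alpha,u)$ is simple precisely when $\G_\theta$ is minimal, that is, when $\theta$ is minimal.
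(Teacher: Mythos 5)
Your proposal is correct and follows essentially the same route as the paper: transport everything to the Hausdorff transformation groupoid $(\G_{\theta},\LL_{u})$ via \eqref{eq:groupoid_model_for_partial_crossed_prod}, use $\mathfrak{M}_0(\G_\theta)=\{0\}$ to identify reduced with essential algebras, and read off the equivalences from Proposition~\ref{prop:Cartan_subalgebra} and Theorems~\ref{thm:Top_Free_Intersection_Property}, \ref{thm:topological_freeness_untwisted}, \ref{enu:topological_freeness_twisted_intersection}. Your explicit treatment of $(4)\Rightarrow(1)$ --- deducing $\theta_{t^k}|_V=\id$ for all $k$, so that the fiber-contained bisection makes the ideal subgroupoid a trivial $\Z$- or $\Z_n$-transformation groupoid without the shrinking step needed in the general groupoid argument --- is precisely the modification the paper's proof only sketches (``replacing $U$ with its intersection with some $\{t\}\times X_{t^{-1}}$''), carried out correctly.
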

\begin{proof}
Extending isomorphism \eqref{eq:groupoid_model_for_partial_crossed_prod}, we may view a reduced crossed product Banach algebra $F_{\RR}(\alpha,u)$ of $(\alpha , u)$ as an $S_{\theta}$-graded reduced groupoid Banach algebra of $(\G_{\theta},\LL_{u})$. 
As noted in Remark~\ref{re:MinimalTopFreePartialActionGroupoid}, topological freeness and minimality of $\theta$ are equivalent to topological freeness and minimality of $\G_\theta$. The groupoid  $\G_{\theta}$ is Hausdorff. Hence the assertion follows from Proposition~\ref{prop:Cartan_subalgebra} and Theorems~\ref{thm:Top_Free_Intersection_Property}, \ref{thm:topological_freeness_untwisted}, \ref{enu:topological_freeness_twisted_intersection}, cf.\ also Theorem~\ref{thm:topological_freeness_intro} and Corollary~\ref{cor:simplicity_intro}. 
Formally, to get from \ref{enu:twisted_generalised_intersection_property4} to \ref{enu:twisted_generalised_intersection_property1}, one needs to modify the proof of Theorem~\ref{enu:topological_freeness_twisted_intersection} by replacing $U$ by its intersection with some $\{ t \} \times X_{t^{-1}}$, $t\in G$. 
\end{proof}

Moving towards pure infiniteness, we generalize the definitions of $n$-filling and local boundary actions, introduced in \cite{Jolissaint-Robertson} and \cite{Laca-Spielberg} respectively, to the partial action case. 

\begin{defn}
For $n\geq 2$, we say that a partial action $\theta$ is \emph{$n$-filling} if $X$ is compact, infinite and for any non-empty open set $U \subseteq X$ there are $t_1, \ldots ,t_n \in G$ with $\bigcup_{i=1}^{n} \theta_{t_i}(X_{t_i^{-1}}\cap U_i) = X$. 
\end{defn}

\begin{rem} 
For full actions, $2$-filling is equivalent to being a \emph{strong boundary action} introduced by Laca and Spielberg~\cite{Laca-Spielberg}.
%The class of $n$-filling actions, for some $n\in \N$, includes for instance actions of hyperbolic groups on their Gromov boundaries \cite[Example 2.1]{Laca-Spielberg}, the canonical action of $SL_n(\Z)$ on the projective space $\mathbb{P}^{n-1}(\R)$ \cite[Example 2.1]{Jolissaint-Robertson} and the action of $H_0(M)$ on a connected compact manifold with no boundaries $M$, where $H_0(M)$ is the path connected component of the group of all homeomorphisms of $M$ containing the identity \cite[Lemma 3.7]{Suzuki}. 
In general, the $n$-filling condition for the action $\theta$ is stronger than the $n$-filling condition for the groupoid $\G_{\theta}$, see \cite{Suzuki}. 
\end{rem}

\begin{defn}
We call $\theta$ a \emph{local boundary action} if for every non-empty open $U\subseteq X$ there is an open $V \subseteq U$ and $t\in G$ such that $\overline{V} \subseteq X_t$ and  $\theta_{t}(\overline{V}) \subsetneq V$.
\end{defn}

\begin{rem}
If  $\theta$ is a local boundary action, then the groupoid $\G_{\theta}$ is locally contracting (in fact the latter notion was inspired by the former, see \cite{A-D}).
%We do not know whether the converse is true. 
\end{rem}

We arrive at the following generalization of the classical results of \cite{Laca-Spielberg}, \cite[Example 2.1]{Jolissaint-Robertson}, to the twisted, partial action and Banach algebra case.

\begin{thm}\label{th:PureInfinitePartialAction}
Let $(\alpha,u)$ be a twisted partial action of a discrete group $G$ on $C_0(X)$. 
If the dual action $\theta$ is topologically free, minimal and either $n$-filling or a local boundary action, then every reduced twisted crossed product Banach algebra $F_{\RR}(\alpha,u)$ is purely infinite simple. 
\end{thm}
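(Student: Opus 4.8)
The plan is to pass to the transformation groupoid and then invoke the pure infiniteness criteria already established for groupoids. Recall from Remark~\ref{rem:groupoid_model_for_partial_action_crossed_product} that $(\alpha,u)$ is modelled by the Hausdorff transformation groupoid $\G_\theta=\bigsqcup_{t\in G}\{t\}\times X_{t^{-1}}$, carrying the twist $\LL_u=\G_\theta\times\F$ built from the $2$-cocycle $\sigma_u$, together with the grading inverse semigroup $S_\theta$ of bisections $\{t\}\times U$. The isomorphism \eqref{eq:groupoid_model_for_partial_crossed_prod} turns any reduced crossed product $F_\RR(\alpha,u)$ into a reduced, and hence $S_\theta$-graded, groupoid Banach algebra $F_{\RR_0}(\G_\theta,\LL_u)$. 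Since $\G_\theta$ is Hausdorff this algebra is automatically essential (Remark~\ref{rem:essential_vs_reduced}), and since $\LL_u$ comes from a cocycle we have $S(\LL_u)=\Bis(\G_\theta)$ (Remark~\ref{ex:cocycle_twist}); in particular the grading satisfies $S_\theta\subseteq S(\LL_u)$, as demanded by the groupoid theorems.

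First I would transfer the dynamical hypotheses from $\theta$ to $(\G_\theta,S_\theta)$. Topological freeness and minimality carry over directly, as recorded in the remarks preceding Theorem~\ref{pr:SimplicityPartialActions}. For the filling condition I would use the bisections $W_i=\{t_i\}\times X_{t_i^{-1}}\in S_\theta$ attached to an $n$-filling family $t_1,\dots,t_n$ for $\theta$: the identity $r(W_iU)=\theta_{t_i}(X_{t_i^{-1}}\cap U)$ converts the covering $\bigcup_{i=1}^n\theta_{t_i}(X_{t_i^{-1}}\cap U)=X$ into $\bigcup_{i=1}^n r(W_iU)=X$, so $\G_\theta$ is $n$-filling with respect to $S_\theta$. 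Similarly, a local boundary datum for $\theta$ supplies an open $V$ and a bisection $W\in S_\theta$ realising the contraction $h_W(\overline V)\subsetneq V$, so $\G_\theta$ is locally contracting with respect to $S_\theta$ in the sense of Definition~\ref{def:locally contractive}; these translations are the contents of the remarks following the $n$-filling and local boundary definitions for $\theta$.

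With the hypotheses transferred, the conclusion is immediate. In the $n$-filling case, $\G_\theta$ is topologically free and $n$-filling with respect to $S_\theta\subseteq S(\LL_u)$, so Theorem~\ref{thm:filling_implies_pure_infinite} shows that the $S_\theta$-graded essential algebra $F_{\RR_0}(\G_\theta,\LL_u)\cong F_\RR(\alpha,u)$ is purely infinite simple (minimality being automatic from $n$-filling). In the local boundary case, $\G_\theta$ is topologically free, minimal, and locally contracting with respect to $S_\theta\subseteq S(\LL_u)$, so Theorem~\ref{thm:Anantharaman-Delaroche} yields the same conclusion.

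The one step I would write out with care, and the only genuine obstacle, is the refinement in the second paragraph: Theorems~\ref{thm:filling_implies_pure_infinite} and~\ref{thm:Anantharaman-Delaroche} require the filling or contraction to hold with respect to the precise inverse semigroup that grades the algebra, so one must confirm that the witnessing bisections lie in $S_\theta$ and not merely in $\Bis(\G_\theta)$. Here this is immediate because the natural witnesses $\{t\}\times X_{t^{-1}}$ already belong to $S_\theta$; granting this, the result follows by citation, with no further estimates needed.
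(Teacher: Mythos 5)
Your proof is correct and follows essentially the same route as the paper's: pass to the twisted transformation groupoid $(\G_\theta,\LL_u)$ via \eqref{eq:groupoid_model_for_partial_crossed_prod}, use Hausdorffness to see the reduced algebra is essential, transfer topological freeness, minimality, and the $n$-filling or locally contracting conditions (with respect to $S_\theta$) to the groupoid, and invoke Theorems~\ref{thm:filling_implies_pure_infinite} and~\ref{thm:Anantharaman-Delaroche}. Your extra care in checking $S_\theta\subseteq S(\LL_u)$ and that the witnessing bisections lie in $S_\theta$ is a slightly more explicit rendering of what the paper leaves implicit, not a different argument.
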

\begin{proof} 
Extending isomorphism \eqref{eq:groupoid_model_for_partial_crossed_prod}, we may view $F_{\RR}(\alpha,u)$ as a reduced groupoid Banach algebra of $(\G_{\theta},\LL_{u})$. 
Since $\G_{\theta}$ is Hausdorff, this is also an essential groupoid Banach algebra, and $\G_\theta$ is topologically free and minimal. 
Clearly,   if $\theta$ is $n$-filling  then $\G_{\theta}$ is $n$-filling  with respect to $S_\theta$, and if $\theta$ is a local boundary action then $\G_{\theta}$ is locally contracting with respect to $S_\theta$. 
The assertion therefore follows from Theorem~\ref{thm:filling_implies_pure_infinite} and Theorem~\ref{thm:Anantharaman-Delaroche}. 
\end{proof}

\begin{rem} 
%For natural examples where the above theorem applies, see \cite{Laca-Spielberg}, \cite{A-D} and \cite[Example 2.1]{Jolissaint-Robertson}. 
Theorem~\ref{th:PureInfinitePartialAction} applies to a number of natural group actions on boundaries of various type, cf.\ \cite{A-D} and \cite{Jolissaint-Robertson}, to actions modeling Ruelle algebras arising from certain Smale spaces, see \cite[Examples 14.1, 14.2]{Laca-Spielberg}, and to many actions of any countable non-amenable exact group on a product of any connected closed topological manifold with a Cantor set, see  \cite[Theorem 4.1]{Suzuki}. 
If $X$ is totally disconnected and every compact open set is $\theta$-paradoxical, in the sense of \cite[Definition 4.3]{Giordano_Sierakowski}, then $\theta$ is a local boundary action. 
Thus Theorem~\ref{th:PureInfinitePartialAction} applies to partial actions described in \cite[Section 5]{Giordano_Sierakowski} such  as  those modeling Cuntz algebras, simple Cuntz--Krieger algebras and algebras of integral domains. 
\end{rem}

\subsection{Banach Roe-type operator algebras}
\label{ssec:BanachRoeAlgebras}

In this subsection, we associate to any uniformly locally finite coarse structure $(X,\E)$ a family of Banach algebras that cover Roe-type Banach algebras with compact coefficients considered by \v{S}pakula and Willett~\cite{Spakula_Willett}, as well as $\ell^p$ Roe-type algebras of Chung and Li~\cite{Chung_Li}. 
We prove that they can be naturally viewed as a reduced Banach algebra of a principal \'etale groupoid, so parts of our theory apply. 

Let $\E$ be a (unital) \emph{coarse structure} on a set $X$ in the sense of \cite{STY}, that is, $\E$ is a collection of subsets of the pair groupoid $X\times X$ which contains all finite subsets, and is closed under inverses, products, unions and formation of subsets. 
Elements of $\E$ are called \emph{entourages}. 
The coarse space $(X,\E)$ is called \emph{unital} if $\E$ contains the diagonal $\{(x,x):x\in X\}$.
We will assume that $(X,\E)$ is \emph{uniformly locally finite}, which means that for any $E\in \E$ the range and domain of $E$ is finite, that is, the following number
\[
    n(E) := \sup_{x\in X} |\{y\in X: (x,y)\in E \text{ or }(y,x)\in E\}| =\sup_{x\in X}|(r^{-1}(x) \cup d^{-1}(x))\cap E|
\]
is finite. 
As explained in \cite[page 810]{STY}, up to coarse equivalence, coarse structures coming from metric spaces are exactly countably generated unital coarse structures. 
In this case uniform local finiteness corresponds to what is called \emph{bounded geometry} for metric spaces. 
In this sense, our setting covers those in \cite{Spakula_Willett} or \cite{Chung_Li}.

Let $\EE$ be a Banach space with a fixed normalized (not necessarily countable) Schauder basis $\{e_i\}_{i\in I}$. 
For every finite $I_{0}\subseteq I$, we denote by $M_{I_0}(\EE)\cong M_{|I_0|}(\F)$ the set of operators on the finite dimensional space $\spane\{e_i:i\in I_{0}\}$. 
We view $M_{I_0}(\EE)$ as a subaglebra of $\BB(\EE)$ and we put $M_{I}(\EE):=\bigcup_{\stackrel{I_0\subseteq I}{\text{finite}} } M_{I_0}\subseteq \BB(\EE)$. 
Up to an equivalence of the norm in $\EE$, we may and shall assume that the unit $1_{I_{0}}$ in $M_{I_0}(\EE)$ has norm one as an element of $\BB(\EE)$, cf.\ \cite{Daws0}. 
We will refer to such a Schauder basis as \emph{standardized}.

\begin{defn}\label{defn:controlled_propagation_matrix}
A \emph{controlled propagation matrix} on $(X,\E)$ with coefficients in $\BB(\EE)$ is an $X$-by-$X$ matrix $T=\{T_{x,y}\}_{x,y\in X}\subseteq \BB(\EE)$ such that: 
\begin{enumerate}
    \item\label{enu:controlled_propagation_matrix1} $T_{x,y}\subseteq  M_{I_0}(\EE)$ for some finite $I_0\subseteq I$ and all $x,y \in X$;
    \item\label{enu:controlled_propagation_matrix2} the norms $\| T_{x,y} \|$, $x,y\in X$ are uniformly bounded; 
    \item\label{enu:controlled_propagation_matrix3} the support $\supp(T) := \{(x,y)\in X\times X: T_{x,y}\neq 0\}$ is an entourage ($\supp(T)\in \E$).
\end{enumerate}
The collection of all such controlled propagation matrices is denoted by $\F[X;\EE]$.
\end{defn}

\begin{rem}
The above definition differs from \cite[Definition 2.2]{Spakula_Willett} in that we impose the restriction \ref{enu:controlled_propagation_matrix1}. 
Obviously, \ref{enu:controlled_propagation_matrix1} is automatic when $\EE$ is finite dimensional. 
\end{rem}

\begin{ex}[cf.\ {\cite[Example 2.3]{Spakula_Willett}}] 
Fix a finite $I_0\subseteq I$ and an entourage $E\in \E$. 
Bounded maps from $X$ to $M_{I_0}(\EE)$ supported on $r(E):=\{x:(x,y)\in E \}$, that is elements of $\ell^{\infty}(r(E), M_{I_0}(\EE))$, when identified with diagonal matrices, are elements of $\F[X;\EE]$, referred to as \emph{multiplication operators}. 
If in addition the entourage $E$ is a bisection in $X\times X$, the characteristic function of $E$ multiplied by the unit $1_{I_{0}}$ in $M_{I_0}(\EE)$ is an element of $\F[X;\EE]$, referred to as a \emph{partial translation}. 
Namely, for a bisection entourage $E$ and any $f\in \ell^{\infty}(r(E), M_{I_0}(\EE))$, the formulas
\[
    f_{x,y}=\begin{cases}f(x), & x=y\in r(E),\
        \\
        0, & \text{otherwise},
        \end{cases}
    \qquad 
    E^{I_0}_{x,y}=\begin{cases}1_{I_{0}}, & (x,y)\in E,\
        \\
        0, & \text{otherwise},
        \end{cases}
\]
define basic elements of $\F[X;\EE]$.
\end{ex}

The uniform local finiteness of $(X,\E)$ guarantees that $\F[X;\EE]$ becomes an algebra under the usual matrix operations and the algebra structure of $\BB(\EE)$. 
Moreover, for every $p\in [1,\infty]$, $\F[X;\EE]$ can be naturally represented on  the space $\ell^p(X,\EE)$ of $\EE$ valued functions, with the usual norm $\|\xi\|_p=(\sum_{x\in X}\|\xi(x)\|^p)^{1/p}$. 
More specifically, we have:

\begin{lem}\label{lem:bisection_entourages_decomposition}
Let $T\in \F[X;\EE]$ and put $N := n(\supp(T))^2+1 < \infty$. 
Then there are pairwise disjoint bisection entourages $E_1 , \ldots , E_N \subseteq \supp(T)$ and functions $f_1, \ldots , f_N \in \ell^\infty(X, M_{I_0}(\EE))$, for some finite $I_0\subseteq I$, with $\supp_{x\in X, k=1, \ldots ,N} \| f_k(x) \| \leq \sup_{x,y} \| T_{x,y} \|$ and such that
\[
    T=\sum_{k=1}^{N} f_k  E_k^{I_0},
\]
where we use the usual matrix operations. 
Thus $T$ is a finite combination of partial translations with coefficients in multiplication operators.
\end{lem}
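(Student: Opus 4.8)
The plan is to reduce the statement to a purely combinatorial fact about the entourage $E_0 := \supp(T)$, namely that it can be partitioned into at most $n^2$ bisection entourages, where $n := n(\supp(T))$ is finite by uniform local finiteness together with Definition~\ref{defn:controlled_propagation_matrix}\ref{enu:controlled_propagation_matrix3}. Before doing so I would record the three standing facts supplied by Definition~\ref{defn:controlled_propagation_matrix}: by \ref{enu:controlled_propagation_matrix1} there is a single finite $I_0\subseteq I$ with $T_{x,y}\in M_{I_0}(\EE)$ for all $x,y$; by \ref{enu:controlled_propagation_matrix2} the scalar $m := \sup_{x,y}\|T_{x,y}\|$ is finite; and by \ref{enu:controlled_propagation_matrix3} the support $E_0$ is an entourage, so that $|\{y:(x,y)\in E_0\}|\le n$ and $|\{x:(x,y)\in E_0\}|\le n$ for every fixed $x$ and $y$. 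Once the bisections are in hand the remaining steps are bookkeeping.

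The decomposition is the heart of the argument, and I would obtain it by a \emph{product colouring}. For each $x$ fix an injection $y\mapsto\rho(x,y)$ of the finite set $\{y:(x,y)\in E_0\}$ into $\{1,\dots,n\}$, and for each $y$ fix an injection $x\mapsto\kappa(x,y)$ of the finite set $\{x:(x,y)\in E_0\}$ into $\{1,\dots,n\}$. Then $(x,y)\mapsto(\rho(x,y),\kappa(x,y))\in\{1,\dots,n\}^2$ assigns to each element of $E_0$ one of $n^2$ colours, and I claim each colour class $E_{(i,j)}:=\{(x,y)\in E_0:\rho(x,y)=i,\ \kappa(x,y)=j\}$ is a bisection: two elements of $E_{(i,j)}$ with a common first coordinate $x$ share the $\rho$-label $i$ at that coordinate, hence coincide, so $r|_{E_{(i,j)}}$ is injective, and symmetrically $d|_{E_{(i,j)}}$ is injective via $\kappa$. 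Each $E_{(i,j)}\subseteq E_0$ lies in $\E$ since $\E$ is closed under subsets, the classes are pairwise disjoint, and their union is $E_0$. Discarding the empty classes and relabelling yields at most $n^2$ bisection entourages contained in $\supp(T)$; padding the list with copies of the empty bisection then gives exactly $N=n^2+1$ of them.

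To finish I would construct the coefficients. For each $k$ define $f_k:X\to M_{I_0}(\EE)$ by $f_k(x):=T_{x,y}$ when $x\in r(E_k)$, where $y$ is the unique element with $(x,y)\in E_k$ (unique because $E_k$ is a bisection), and $f_k(x):=0$ otherwise; this is well defined, takes values in $M_{I_0}(\EE)$, and satisfies $\|f_k(x)\|\le m$, so $f_k\in\ell^\infty(X,M_{I_0}(\EE))$ with the asserted bound. It then remains to verify the factorisation entrywise. Since $f_k$ is a diagonal (multiplication) matrix and $(E_k^{I_0})_{z,y}=1_{I_0}$ precisely when $(z,y)\in E_k$, one computes $(f_k E_k^{I_0})_{x,y}=f_k(x)1_{I_0}=f_k(x)=T_{x,y}$ when $(x,y)\in E_k$ and $0$ otherwise, using that $1_{I_0}$ is the identity of $M_{I_0}(\EE)$. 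Summing over $k$ and invoking that the $E_k$ partition $\supp(T)$, every pair $(x,y)$ lies in exactly one $E_k$ when $T_{x,y}\ne 0$ and in none when $T_{x,y}=0$, so $\sum_{k=1}^{N}(f_k E_k^{I_0})_{x,y}=T_{x,y}$ for all $x,y$, giving $T=\sum_{k=1}^{N} f_k E_k^{I_0}$.

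The only genuinely nontrivial point is the colouring step, i.e.\ checking that the product of a row-colouring and a column-colouring splits $\supp(T)$ into bisections; this is a direct and deliberately crude substitute for a K\"onig-type edge colouring of the associated bipartite graph, and the quadratic bound $n^2$ is exactly what the product construction delivers. Everything else—extracting a common $I_0$, the uniform norm estimate, and the entrywise verification of the factorisation—is routine.
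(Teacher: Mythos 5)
Your proof is correct, and its skeleton is the paper's: partition $\supp(T)$ into pairwise disjoint bisection entourages, define $f_k(x):=T_{x,y}$ via the unique $y$ with $(x,y)\in E_k$, and check the identity $T=\sum_k f_k E_k^{I_0}$ entrywise. The one genuine difference lies in the key combinatorial step: the paper does not prove the partition, it cites it (Lemma 4.10 of \cite{Roe}, alternatively Lemma 2.7 of \cite{STY}), whereas you establish it from scratch by the product colouring $(x,y)\mapsto(\rho(x,y),\kappa(x,y))$, whose colour classes are bisections precisely because $\rho$ separates points within each row and $\kappa$ within each column. This buys self-containedness and makes the origin of the quadratic bound transparent: the product colouring yields at most $n(\supp(T))^2$ nonempty classes, each lying in $\E$ because $\E$ is closed under subsets, and padding by empty bisections (legitimate, since $\E$ contains all finite sets and empty sets are trivially pairwise disjoint bisections contained in $\supp(T)$) gives exactly the $N=n(\supp(T))^2+1$ summands the statement demands, with the corresponding $f_k=0$. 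As you yourself note, a K\"onig-type edge colouring of the bipartite graph underlying $\supp(T)$ would cut the count to $n(\supp(T))$, but nothing downstream needs the sharper bound --- the norm estimate in Proposition~\ref{prop:regular_reps_Roe_algebra} is tailored to $N$ as stated. The remaining ingredients (a single $I_0$ from condition (1) of Definition~\ref{defn:controlled_propagation_matrix}, the bound $\|f_k(x)\|\le\sup_{x,y}\|T_{x,y}\|$, and the computation $(f_kE_k^{I_0})_{x,y}=f_k(x)1_{I_0}=T_{x,y}$ for $(x,y)\in E_k$, using that $1_{I_0}$ is the unit of $M_{I_0}(\EE)$) coincide with the paper's.
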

\begin{proof}
Let $I_0\subseteq I$ be a finite set such that $T_{x,y}\subseteq  M_{I_0}(\EE)$ all  $x,y \in X$. 
By \cite[Lemma 4.10]{Roe}, see also \cite[Lemma 2.7]{STY}, there are pairwise disjoint bisection entourages $E_1, \ldots ,E_N$ such that $\supp(T)=\bigsqcup_{i=1}^N E_i$. 
For each $i=1, \ldots ,n$ define $f_i\in \ell^{\infty}(r(E_i), M_{I_0}(\EE))$ by $f_{i}(x) := T_{x,y}$ if $(x,y)\in E$ for some $y\in X$ and $f_{i}(x)=0$ otherwise. 
Then the displayed formula holds. 
\end{proof}

\begin{prop}\label{prop:regular_reps_Roe_algebra}
The usual matrix operations and the algebra structure of $\BB(\EE)$ turn $\F[X;\EE]$ into an algebra.
Moreover, for every $p\in [1,+\infty]$, a matrix $T \in \F[X;\EE]$ defines a bounded operator $T_p\in\BB(\ell^p(X,\EE))$ via 
\[
    (T_p\xi)(x) := \sum_{y\in X} T_{x,y} \xi(y), \qquad \xi \in \ell^p(X,\EE).
\]
Then $\|T_p\|\leq  \sup_{x,y} \|T_{x,y}\| \cdot n(\supp(T))^2+1$.
The assignment $\F[X;\EE]\ni T\mapsto T_p\in \BB(\ell^p(X,\EE))$ is an injective algebra homomorphism.
\end{prop}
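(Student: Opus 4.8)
The plan is to verify the three assertions in turn, letting the decomposition of Lemma~\ref{lem:bisection_entourages_decomposition} carry the analytic content of the norm estimate. First I would check that $\F[X;\EE]$ is closed under the matrix operations. Additivity and scalar multiplication are immediate: for $S,T\in\F[X;\EE]$ the entries $S_{x,y}+T_{x,y}$ lie in $M_{I_0}(\EE)$ for the (finite) union of the two index sets, the entrywise norms stay uniformly bounded, and $\supp(S+T)\subseteq\supp(S)\cup\supp(T)\in\E$ since $\E$ is closed under unions. For the product $(ST)_{x,y}=\sum_{z\in X}S_{x,z}T_{z,y}$ the key point is that uniform local finiteness makes this a \emph{finite} sum: for fixed $x$ only the $z$ with $(x,z)\in\supp(S)$ contribute, and there are at most $n(\supp(S))$ of them. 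Condition~\ref{enu:controlled_propagation_matrix1} of Definition~\ref{defn:controlled_propagation_matrix} then holds with the union of index sets, condition~\ref{enu:controlled_propagation_matrix3} holds because $\supp(ST)\subseteq\supp(S)\,\supp(T)$ is an entourage as $\E$ is closed under products, and condition~\ref{enu:controlled_propagation_matrix2} follows from $\|(ST)_{x,y}\|\le n(\supp(S))\,\sup_{x,z}\|S_{x,z}\|\,\sup_{z,y}\|T_{z,y}\|$. Associativity and distributivity are inherited from $\BB(\EE)$, so $\F[X;\EE]$ is an algebra.

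Next I would establish that each $T$ gives a bounded operator on $\ell^p(X,\EE)$ with the stated bound. By Lemma~\ref{lem:bisection_entourages_decomposition} I write $T=\sum_{k=1}^N f_k E_k^{I_0}$ with $N=n(\supp(T))^2+1$, the $E_k$ pairwise disjoint bisection entourages, and $\sup_{x}\|f_k(x)\|\le\sup_{x,y}\|T_{x,y}\|$. It therefore suffices to bound a single term $f_kE_k^{I_0}$ on $\ell^p$ and then sum. Since $E_k$ is a bisection entourage, for each $x$ there is at most one $y=:y_x$ with $(x,y)\in E_k$, and $(f_kE_k^{I_0}\xi)(x)=f_k(x)\,\xi(y_x)$ (and $0$ when no such $y$ exists), so $\|(f_kE_k^{I_0}\xi)(x)\|\le\|f_k(x)\|\,\|\xi(y_x)\|$. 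Because $x\mapsto y_x$ is injective (again by the bisection property), summing $p$-th powers and relabelling gives $\|f_kE_k^{I_0}\xi\|_p\le(\sup_x\|f_k(x)\|)\,\|\xi\|_p$ for $p<\infty$, and the analogous supremum estimate for $p=\infty$; here the standardized choice of basis ensures the norms $\|f_k(x)\|$ are computed consistently in $\BB(\EE)$. The triangle inequality over the $N$ summands then yields $\|T_p\|\le N\sup_{x,y}\|T_{x,y}\|=(n(\supp(T))^2+1)\sup_{x,y}\|T_{x,y}\|$, as claimed, and in particular $T_p$ is well defined and bounded.

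Finally I would check that $T\mapsto T_p$ is an injective algebra homomorphism. Linearity is clear, and multiplicativity is the statement that matrix multiplication is intertwined with operator composition: $((ST)_p\xi)(x)=\sum_y(ST)_{x,y}\xi(y)=\sum_y\sum_z S_{x,z}T_{z,y}\xi(y)=(S_p(T_p\xi))(x)$, where all sums are finite by uniform local finiteness so that the interchange of summations is legitimate. For injectivity, let $\delta_y e_i\in\ell^p(X,\EE)$ denote the function with value $e_i$ at $y$ and $0$ elsewhere, which lies in $\ell^p$ for every $p\in[1,\infty]$; then $(T_p(\delta_y e_i))(x)=T_{x,y}e_i$, so $T_p=0$ forces $T_{x,y}e_i=0$ for all $x,y$ and all basis vectors $e_i$, whence $T_{x,y}=0$ for all $x,y$ and $T=0$.

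The algebraic bookkeeping is routine; the one place requiring care is the uniform-in-$p$ operator-norm bound, where the bisection property of the $E_k$ is exactly what turns each term into an $\ell^p$-contraction up to the coefficient supremum. I expect this estimate, rather than the closure or homomorphism properties, to be the main obstacle.
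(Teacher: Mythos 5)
Your proof is correct, and its analytic core is the same as the paper's: both obtain the norm bound by invoking Lemma~\ref{lem:bisection_entourages_decomposition} to write $T=\sum_{k=1}^N f_k E_k^{I_0}$ and then estimating each summand by $\sup_x\|f_k(x)\|$ on $\ell^p$, the bisection property (equivalently, the injectivity of $x\mapsto y_x$) being exactly what makes each partial translation contractive. Where you diverge is in the algebraic verifications. The paper proves multiplicativity by reducing, via the same decomposition, to products of basic elements and establishing the explicit formula $(f_1 E_1^{I_1})(f_2 E_2^{I_2})= f_1 (f_2\circ h_{E_1}) (E_1 E_2)^{I_1\cap I_{2}}$ (equation~\eqref{eq:product_of_basic_elements}); you instead compute $((ST)_p\xi)(x)$ directly and interchange the two finite sums. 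Your route is more elementary and self-contained, but the paper's formula is not wasted effort: it is reused immediately afterwards, in the proof of the theorem identifying $F^{P}[X;\EE]$ with a groupoid Banach algebra, to check that the isomorphism $C_c(\G)\cong \F[X;\EE]$ is multiplicative. You also verify explicitly two points the paper treats implicitly or declares clear: closure of $\F[X;\EE]$ under the matrix product (which in the paper follows from the decomposition together with \eqref{eq:product_of_basic_elements}) and injectivity of $T\mapsto T_p$ (your evaluation against the vectors $\delta_y e_i$ is surely the argument intended by the paper's ``clear''). One small bonus of your version: since $f_k(x)$ is literally an entry $T_{x,y_x}$, your estimate does not even need the standardization of the basis at this step, whereas the paper invokes it to say $(E^{I_0})_p$ has norm one; the standardization is of course still needed for the norm bound as stated when one factors the basic operator as a multiplication operator composed with a partial translation.
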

\begin{proof} 
Let $E\in \E$ be a bisection entourage and let $f\in \ell^{\infty}(r(E), M_{I_0}(\EE))$, for a finite $I_0\subseteq I$. 
The norm of $(f E^{I_0})_{p}$, as an operator on $\ell^p(X,\EE)$, is not greater than $\sup_{x\in X}\|f(x)\|$ due to our standing normalization assumption that $1_{I_0}\in \B(\EE)$ has norm one, which implies that $(E^{I_0})_{p}$ has norm one. 
Thus, applying Lemma~\ref{lem:bisection_entourages_decomposition} to $T\in \F[X;\EE]$, we get that $T_p\in\BB(\ell^p(X,\EE))$ is bounded with the prescribed estimate on the norm.  

It is clear that the map $\F[X;\EE]\ni T\mapsto T_p\in \BB(\ell^p(X,\EE))$ is linear and injective. 
To see that it is multiplicative, in view of Lemma~\ref{lem:bisection_entourages_decomposition}, it suffices to consider only two elements of the form $f_1 E_1^{I_1}, f_2 E_2^{I_2}\in \F[X;\EE]$ where $f_1\in \ell^{\infty}(X, M_{I_1}(\EE))$, $f_2\in \ell^{\infty}(X, M_{I_2}(\EE))$, $I_1, I_2\subseteq I$ are finite, and  $E_1,E_2\in \E$ are bisections. 
Then we have 
\begin{equation}\label{eq:product_of_basic_elements}
    (f_1  E_1^{I_1}) (f_2 E_2^{I_2})= f_1 (f_2\circ h_{E_1}) (E_1 E_2)^{I_1\cap I_{2}} , 
\end{equation} 
where $f_2\circ h_{E_1}\in \ell^{\infty}(d(E_1), M_{I_2}(\EE))$ is given by $f_2\circ h_{E_1} (y):=f_2(x)$ for $(x,y)\in E_{1}$. 
This readily implies that $(f_1  E_1^{I_1})_p (f_2 E_2^{I_2})_p = (f_1  E_1^{I_1} f_2 E_2^{I_2})_p$.
\end{proof}

\begin{defn}
For any non-empty $P\subseteq [1,\infty]$ we define the \emph{$L^P$-$\EE$ Roe-type Banach algebra} of $(X,\E)$ as the completion $F^{P}[X;\EE] = \overline{\F[X;\EE]}^{\|\cdot\|_{L^P,\EE}}$ in the norm $\|T\|_{L^P,\EE} := \sup_{p\in P} \| T_p \|$.
When $P = \{p\}$ is a singleton, we omit writing braces in the above notation.
\end{defn}

The \emph{translation groupoid} associated to the coarse space $(X,\E)$ is defined as the union of closures of entourages in the Stone--\v{C}ech compactifaction of the discrete space $X\times X$, see \cite[3.2]{STY}, \cite[Definition 10.19]{Roe}. 
That is, as a topological space, we put
\[
    \G(X) := \bigcup_{E\in \E} \overline{E}\subseteq \beta(X\times X) .
\]
The range and domain maps $r,d : X\times X \to X$ extend uniquely to continuous maps $r,d: \beta(X\times X)\supseteq \G(X)\to \beta(X)$ and with this structure $\G(X)$ becomes an \'etale Hausdorff ample principal groupoid. 
Also the bisection entourages $S^X := \{ E\in \E : \text{$E$ is a bisection in $X\times X$} \}$ form an inverse semigroup whose canonical action by partial bijections on $X$ induces an action by partial homeomorphisms of $\beta(X)$, and we have a natural groupoid isomorphism $\G(X)\cong \beta(X)\rtimes S^X$, see \cite[Proposition 3.2]{STY}.

An \emph{ideal in the coarse space} $(X,\E)$ is a coarse space $(X,\E_0)$ where $\E_0\subseteq \E$ is such that $E_0 E, E E_0\in \E_0$ for all $E \in \E$ and $E_0\in\E_0$, cf.\ \cite[Definition 4.1]{Chena_Wang}. 
We will say that $(X,\E)$ is \emph{simple} if there are no non-trivial ideals in $(X,\E)$.

\begin{thm}
Let $(X,\E)$ be a uniformly locally finite coarse structure. 
Let $\EE$ be a Banach space with a standardized Schauder basis $\{e_i\}_{i\in I}$, and 
let $\G := \G(X)\times \RR_{I}$ where $\RR_I:=I\times I$ is the full equivalence relation on $I$. 
Then there is a canonical algebraic isomorphism 
\[
    C_c(\G) \cong \F[X;\EE] ,
\]
under which for any non-empty $P\subseteq [1,\infty)$ (or $P\subseteq [1,\infty]$ when $\EE$ is reflexive), the Roe-type algebra $F^{P}[X;\EE]$ becomes a reduced $S$-graded groupoid Banach algebra of $\G$ where $S$ consists of $\overline{E}\times \{(i,j)\}$, $E\in S^X$, $i,j\in I$, and the unit space $\beta(X)$. 
% graded by the inverse semigroup of bisections  $\overline{E}\times \{(i,j)$,  $E\in S^X$, $i,j\in I$, and the unit space. 

In particular, the diagonal $C_0(\beta(X)\times I)$ is a maximal abelian subalgebra,  $C_0(\beta(X)\times I)\subseteq F^{P}[X;\EE]$ has the intersection property, and $F^{P}[X;\EE]$ is simple if and only if the coarse space $(X,\E)$ is simple. 
\end{thm}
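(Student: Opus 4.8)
The plan is to establish the isomorphism $C_c(\G) \cong \F[X;\EE]$ first, and then verify that the Roe-type norm $\|\cdot\|_{L^P,\EE}$ coincides with the reduced groupoid norm, so that the general machinery of Theorem~\ref{thm:topological_freeness_intro} and Corollary~\ref{cor:simplicity_intro} applies. The groupoid $\G = \G(X)\times \RR_I$ is Hausdorff (as $\G(X)$ is Hausdorff and $\RR_I$ is the pair groupoid on a discrete set), \'etale, and principal, so $\G_{\sing}=\emptyset$ and topological freeness holds automatically via Remark~\ref{rem:meagerness_of_singular}: a principal groupoid has trivial isotropy everywhere, hence is topologically free (indeed effective). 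This disposes of condition~\ref{enu:topological_freeness_intro1} of Theorem~\ref{thm:topological_freeness_intro} for free, giving the intersection property and maximal abelianness of the diagonal.

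First I would construct the algebraic isomorphism. An element $T\in \F[X;\EE]$ decomposes by Lemma~\ref{lem:bisection_entourages_decomposition} as a finite sum $\sum_k f_k E_k^{I_0}$ of partial translations with multiplication-operator coefficients, where each $E_k$ is a bisection entourage. Under the correspondence that sends the matrix unit indexed by $((x,y),(i,j))$ to the characteristic section on the bisection $\overline{E}\times\{(i,j)\}\subseteq \G$, I would match the basic elements $f_k E_k^{I_0}$ of $\F[X;\EE]$ with elements of $C_c(\G)$ supported on the bisections in $S$. The multiplication rule \eqref{eq:product_of_basic_elements} for $\F[X;\EE]$ must be checked to agree with groupoid convolution; the composition $E_1 E_2$ of bisection entourages and the twisting $f_2\circ h_{E_1}$ exactly mirror the convolution formula and the partial-homeomorphism action encoded in \eqref{equ:bisection_left_module_formula}. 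The factor $\RR_I$ accounts for the matricial coefficients in $M_{I_0}(\EE)$, with composition in $\RR_I$ giving ordinary matrix multiplication of the blocks.

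Next I would identify the norms. The regular representation $\Lambda_p$ of $F^p(\G)$ on $\ell^p(\G)$ should be shown to be unitarily (isometrically) equivalent to the representation $T\mapsto T_p$ on $\ell^p(X,\EE)$ from Proposition~\ref{prop:regular_reps_Roe_algebra}, after identifying $\ell^p(\G_{\reg})$ with $\ell^p(X,\EE)$ via the Schauder basis $\{e_i\}$ and the unit space $\beta(X)\times I$. The standardization assumption on the basis (that $1_{I_0}$ has operator norm one) is what makes this an isometric rather than merely bounded identification, and is where the restriction $P\subseteq[1,\infty)$, or reflexivity of $\EE$ for $p=\infty$, enters: one needs $\ell^p(X,\EE)$ to be a genuine $L^p$-space and the regular representation to be well-defined and faithful at the relevant exponents. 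Taking the supremum over $p\in P$ then gives $\|T\|_{L^P,\EE}=\|T\|_{L^P,\red}$, exhibiting $F^P[X;\EE]$ as $F^P_{\red}(\G)$, an $S$-graded reduced groupoid Banach algebra.

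Finally, the simplicity statement follows from Corollary~\ref{cor:simplicity_intro}: since $\G$ is already topologically free, $F^P[X;\EE]=F^P_{\red}(\G)$ is simple if and only if $\G$ is minimal. The remaining task is to translate minimality of $\G(X)\times\RR_I$ into simplicity of the coarse space $(X,\E)$. Since $\RR_I$ is a transitive equivalence relation it contributes no invariant sets, so $\G$ is minimal iff $\G(X)$ is minimal; I would then match the open $\G(X)$-invariant subsets of $\beta(X)$ with ideals $\E_0\subseteq\E$ of the coarse structure, using the groupoid model $\G(X)\cong\beta(X)\rtimes S^X$ and the definition of a coarse ideal (closure under $E_0 E, E E_0\in\E_0$ being precisely the invariance condition $VUV^*\subseteq U$ from Lemma~\ref{lem:invariant_ideals}). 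I expect this last correspondence between coarse-space ideals and $\G(X)$-invariant open subsets of $\beta(X)$ to be the main obstacle, as it requires care with the Stone--\v{C}ech compactification and the passage between entourages in $\E$ and their closures in $\beta(X\times X)$; the other steps are essentially bookkeeping built on the already-established general theory.
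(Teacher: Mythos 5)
Your overall architecture (algebraic isomorphism via Lemma~\ref{lem:bisection_entourages_decomposition} and \eqref{eq:product_of_basic_elements}, then invoke the general theory, then match minimality of $\G(X)$ with simplicity of $(X,\E)$) follows the paper, but the central analytic step is wrong. You claim that the regular representation $\Lambda_p$ of $\G$ on $\ell^p(\G)$ is isometrically equivalent to the Roe representation $T\mapsto T_p$ on $\ell^p(X,\EE)$, and hence that $\|T\|_{L^P,\EE}=\|T\|_{L^P,\red}$, i.e.\ $F^P[X;\EE]=F^P_{\red}(\G)$. This fails for two reasons. First, the norms genuinely differ: $\ell^p(X,\EE)$ carries the mixed norm $\big(\sum_{x}\|\xi(x)\|_{\EE}^p\big)^{1/p}$, and a standardized Schauder basis identifies $\EE$ with a sequence space only up to bounded (not isometric) equivalence, so $\ell^p(X,\EE)$ is not isometric to $\ell^p(X\times I)$ unless $\EE$ happens to be $\ell^p(I)$ with its standard basis; already for $X$, $I$ finite and $\EE=\ell^2(I)$, $p=1$, the operator norm on $\ell^1(X,\EE)$ differs from the reduced groupoid norm, which is the operator norm on $\ell^1(X\times I)$. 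Second, $\ell^p(\G)$ is indexed by the whole groupoid, including Stone--\v{C}ech remainder points of $\beta(X\times X)$, so it cannot be identified with $\ell^p(X,\EE)$ at all. Note also that the theorem does not assert equality with $F^P_{\red}(\G)$: ``becomes a reduced $S$-graded groupoid Banach algebra'' means membership in the class of Definition~\ref{defn:reduced_algebras}, i.e.\ the completion admits an \emph{injective} contractive $j$-map into $\mathfrak{C}_0(\G)$, which is much weaker than having the reduced norm.

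What is actually needed, and what the paper does, is to construct the $j$-map on $F^P[X;\EE]$ directly from matrix coefficients, $j(T)(x,y,i,j):=e_i^*\big(T1_{(y,j)}\big)$ as in \eqref{eq:Roe_j_map}, check that on $C_c(\G)$ it recovers the function (so the values lie in $C_0(\G)$), and then prove injectivity: for $p<\infty$ the family $\{1_{(y,j)}\}_{(y,j)\in X\times I}$ is a Schauder basis of $\ell^p(X,\EE)$, so the matrix coefficients determine $T_p$ and $j$ is injective; for $p=\infty$ this fails, and one instead uses reflexivity of $\EE$ to get the anti-isomorphism $F^{\infty}[X;\EE]\cong F^1[X;\EE^*]$ and reduce to the case $p=1$. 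This is also the true source of the hypothesis $P\subseteq[1,\infty)$ versus reflexivity, not your suggestion that $\ell^p(X,\EE)$ must ``be a genuine $L^p$-space'': the theorem never claims $F^P[X;\EE]$ is an $L^p$-operator algebra, only that it is a reduced groupoid Banach algebra. Your final step (simplicity via topological freeness of the principal groupoid $\G$ plus the correspondence between $\G(X)$-minimality and simplicity of $(X,\E)$, which the paper cites from Chen--Wang) is fine once reducedness is established, but as written your proof has no valid route to reducedness.
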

\begin{proof}
Clearly, $S$ described in the assertion is a wide unital inverse subsemigroup of $\Bis(\G)$.
In particular, every element in $C_c(\G)$ is a finite linear combination of continuous functions with supports in elements in $S$. 
%Moreover, we have a linear isomorphism $C(\overline{E}\times \{(i,j)\})\ni \overline{f} \stackrel{\cong}{\longmapsto}f\in  \ell^\infty(r(E))$ where $r(E)=\{x: (x,y)\in E\}$ and $f(x)=\overline{f}((x,y), (i,j))$ for $(x,y)\in E$. The desired isomorphism 
Moreover, for every $E\in S^X$ and finite $I_0\subseteq I$ we have a linear isomorphism $C_c(\G)\cong \F[X;\EE]$, 
%is then determined by 
%\[
%    C(\overline{E}\times \{(i,j)\}) \overline{f}  \stackrel{\cong}{\longmapsto}fe_i\otimes e_j^* \in  \F[X;\EE]
%\]
\[
    C(\bigcup_{i,j\in I_0}\overline{E}\times \{(i,j)\})\ni \overline{f}\longmapsto f \in \ell^{\infty}(X, M_{I_0}(\EE)) 
\]
where $f(x)_{i,j} := \overline{f}(x,y,i,j)$ if $(x,y)\in E$ for some $y\in X$, and $f(x)= 0$ otherwise. 
Using this and Lemma~\ref{lem:bisection_entourages_decomposition}, one sees that the maps
\[
    C\big(\bigcup_{i,j\in I_0}\overline{E}\times \{(i,j)\}\big)\ni \overline{f}\longmapsto f E^{I_0}\in \F[X;\EE], \qquad E\in S^X,
\]
integrate to a linear isomorphism $C_c(\G)\cong \F[X;\EE]$. 
In view of \eqref{eq:product_of_basic_elements}, it readily follows that this isomorphism is multiplicative. 

Let $p\in [1,\infty]$. 
Composing $C_c(\G)\cong \F[X;\EE]$ with the monomorphism from Proposition~\ref{prop:regular_reps_Roe_algebra}, we get a monomorphism $\Lambda_p : C_c(\G)\to \BB(\ell^p(X,\EE))$ such that 
\[
    [\Lambda_p (f)\xi](x) = \sum_{y\in X} f(x,y,i,j) e_i\otimes e_j^* (\xi (y)), \qquad f\in C_c(\G), \xi\in \ell^p(X,\EE),
\]
where  $e_i\otimes e_j^*\in \B(\EE)$ is the contractive one-dimensional projection given by $(e_i\otimes e_j^*)(\sum_{k\in I}\xi_ke_k)= \xi_j e_i$. 
For $f$ supported on $\overline{E}\times \{(i,j)\}\in S$, we clearly have $\| \Lambda_p(f) \| = \|f\|_{\infty}$, and so $\Lambda_p$ extends to a representation $\Lambda_p:F^S(\G)\to F^{p}[X;\EE]$. 
For $(y,j)\in X\times I$, we let $1_{(y,j)} \in \ell^p(X,\EE)$ be such that $1_{(y,j)}(y):=e_j$ and $1_{(y,j)}(x) :=0$ for $x \in X\setminus \{ y \}$. 
Then putting 
\begin{equation}\label{eq:Roe_j_map}
    j(T)(x,y,i,j) := e_i^*\big(T 1_{(y,j)} \big)  , \qquad T\in F^{P}[X;\EE] ,\ (x,y,i,j) \in \G_0:=\bigcup_{E\in \E} E \times \RR_{I} ,
\end{equation} 
we get a contractive linear map $j$ from $F^{P}[X;\EE]$ to $C_{b}(\G_0)$, equipped   with the norm $\|\cdot\|_{\infty}$. 
For $f\in C_c(\G)$, we have $j(\Lambda_p(f))=f|_{\G_0}$ which has a unique continuous extension to $\G$ (which is $f$) and therefore it may be treated as an element of $C_c(\G)$. 
Thus we get that on a dense subalgebra $j$ takes values in $C_c(\G)$ and therefore we may assume that $j$ maps $F^{p}[X;\EE]$ to $C_{0}(\G)$. 
In particular, $F^{p}[X;\EE]$ is a groupoid Banach algebra. 
If $p<\infty$, then $\{1_{(y,j)}\}_{(y,j)\in X\times I}$ is a Schauder basis for $\ell^p(X,\EE)$, and then \eqref{eq:Roe_j_map} implies that the map $j$ is injective on $F^{p}[X;\EE]$ and hence $F^{P}[X;\EE]$ is reduced. 
For $p=\infty$, this is not so clear. 
However, when $\EE$ is reflexive, then $\{ e_{i}^* \}_{i\in I}$ is a Schauder basis for the dual space $\EE^*$, and the involution on $C_c(\G)$ induces an isometric anti-linear isomorphism $F^{\infty}[X;\EE]\cong F^1[X;\EE^*]$. 
Thus $F^{\infty}[X;\EE]$ is reduced because we have just proved that  $F^1[X;\EE^*]$ is. 
The assertion for the set $P\subseteq [1,\infty]$ follows now from Lemma~\ref{lem:reduced_from_other_reduced}.

Since $\G$ is Hausdorff and topologically free (even principal), the last part of the assertion follows from Theorem~\ref{thm:topological_freeness_intro} and Corollary~\ref{cor:simplicity_intro}, because $\G$ is minimal if and only if $\G(X)$ is minimal, which holds if and only if $(X,\EE)$ is simple \cite[Theorem 6.3]{Chena_Wang}. 
\end{proof}

\begin{rem} 
Using that $C_0(\beta(X)\times I)\subseteq F^{P}[X;\EE]$ has the intersection property, one can show that ideals in  $F^{P}[X;\EE]$ in which controlled propagation operators are dense, are exactly those generated by open $\G$-invariant subsets of $\beta(X)\times I$ (which in turn correspond to ideals in the coarse structure $(X,\E)$). 
In this way, one could generalize \cite[Theorems 6.3, 6.4]{Chena_Wang} from uniform Roe $C^*$-algebras to algebras of the form $F^{P}[X;\EE]$. 
\end{rem}

\subsection{Tight algebras of inverse semigroups}
\label{ssec:TightBanachAlgebrasInverseSgrp}

In this subsection, we define reduced and essential tight $L^P$-operator algebras associated to inverse semigroups, and phrase our main results in terms of the inverse semigroup. 
This extends, and sometimes improves, the corresponding results for $C^*$-algebras. 

We fix an inverse semigroup $S$ with zero $0$. We recall our notation from \cite[Section 6]{BKM} for the tight groupoid associated to  $S$ 
in \cite{Exel}. 
The set of idempotents $E := E(S)$ is a semilattice with the `meet' and `preorder' defined by $e \wedge f := e f$ and $e\leq f$ if and only if $ef = e$. 
The set of filters $\widehat{E}$ in $E$, with the product topology inherited from the Cantor space $\{0,1\}^{E}$, is a totally disconnected Hausdorff space. 
A \emph{cover of $e \in E$} is a finite set $F \subseteq  E$ such that, for every non-zero $z \leq e$, we have $z f \neq 0$ for some $f \in F$. 
%If in addition *** we say that $F$ is a \emph{cover of $e$}.
A filter $\phi\in \hat{E}$ is \emph{tight} if for every $e\in \phi$ and any cover $F \subseteq  eE$  of $e$, we have $\phi \cap F\neq \emptyset$. 
The set of tight filters $\widehat{E}_{\tight}$ is called the \emph{tight spectrum} of $E$. 
It is the closure in $\hat{E}$ of the set of all ultrafilters (if $E$ is a Boolean algebra, then the tight filters are precisely the ultrafilters). 
For each $e\in E$, the set $Z_e := \{ \phi\in\widehat{E}_{\tight} : e \in \phi \}$ is compact open in $\widehat{E}_{\tight}$. 
For each $t\in S$, we have a homeomorphism $h_t : Z_{t^* t} \to Z_{t t^*}$ given by 
\[
    h_t(\phi) := \{ e \in E : t^* e t\in \phi \} = \{ e \in E : \text{$f \geq t ft^*$  for some $f\in \phi$} \}. 
\]
This yields an inverse semigroup action $h : S \to \PHomeo(\widehat{E}_{{\tight}})$ and we can form the transformation groupoid 
\[
    \G_{{\tight}}(S) := S \ltimes_h \widehat{E}_{{\tight}}.
\]  
Let $u=\{u(s, t)\}_{s,t\in S}$ be a twist of the action $h : S \to \PHomeo(\widehat{E}_{{\tight}})$ described above. 
That is, $u(s, t)\in C_{u}(Z_{t^*s^*st})$, $t,s \in S$, are maps satisfying the conditions in \cite[Definition 4.1]{Buss_Exel}, or in \cite[Definition 3.4]{BKM}. %\ref{enu:twisted actions2}--\ref{enu:twisted actions4} in Appendix~\ref{sec:BanachAlgInverseSemigrpActions},
Also let $\LL_u$ be the twist associated to $u$, as in \cite[Subsection 4.3]{BKM}. 
Then $(\G_{{\tight}}(S),\LL_{u})$ is an ample twisted groupoid, and every ample twisted groupoid is of this form, see Remark~\ref{rem:every_ample_from_inverse_semigroup} below. 

We will now represent the twisted inverse semigroup $(S,u)$ via spatial partial isometries on the spaces $\ell^p(\G_{{\tight}}(S))$, $p\in [1,\infty]$.
We recall that for any localizable measure $\mu$, in particular for a counting measure, we have a natural inverse semigroup $\SPIso(L^p(\mu))\subseteq B(L^p(\mu))$ of contractive weighted composition operators called \emph{spatial partial isometries} on $L^p(\mu)$, see \cite[Definition 2.26]{BKM}, \cite[Definition 6.4]{PhLp1}, \cite[Definition 6.2]{Gardella}. 

\begin{prop}\label{prop:tight_reduced_twisted_inverse_semigroup_algebra}
Let $S$ be an inverse semigroup with zero and let $u=\{u(s, t)\}_{s,t\in S}$ be a twist of the associated action $h : S \to \PHomeo(\widehat{E}_{{\tight}})$. 
For each $p\in [1,\infty]$, we have a map $v : S \to \SPIso\big(\ell^p(\G_{{\tight}}(S))\big)$ into spatial isometries on $\ell^p(\G_{{\tight}}(S))$ given by 
\[
      v_t\xi[s,\phi] = \begin{cases}
                            u(t,t^*s) \big( h_{s}(\phi) \big) \xi \big( [t^*s,\phi] \big), &  \phi\in h_{s}^{-1}(Z_{tt^*}), \\
                            0 , & \text{otherwise}, % \eta\in U \text{ with }r(\gamma)=r(\eta) \gamma\not\in r^{-1}(r(U)),
                    \end{cases}
\]
where  $\xi \in \ell^p(\G)$, $\phi\in Z_{s^*s}$, $t,s \in S$. 
Denoting by $v_{t}^*$ the unique generalized inverse of $v_t$ in $\SPIso \big( \ell^p(\G_{{\tight}}(S)) \big)$, we get 
\[
    F^p_{\red,{\tight}}(S,u) := \clsp\{ v_{t} v_{s}^{*}: t,s\in S \}
\]
is a Banach algebra naturally isometrically isomorphic to the twisted reduced groupoid algebra $F_{\red}^p(\G_{{\tight}}(S), \LL_u)$ via an isomorphism that restricts to $\clsp\{ v_{t} v_{t}^{*}: t\in S\}\cong C_0(\widehat{E}_{\tight} )$. 
\end{prop}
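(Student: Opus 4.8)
The plan is to reduce the statement to the already-established theory of twisted groupoid $L^p$-operator algebras by showing that the operators $v_t$ are precisely the regular representations of the canonical generators of $\mathfrak{C}_c(\G_{\tight}(S),\LL_u)$. First I would unwind the definition of the transformation groupoid $\G_{\tight}(S)=S\ltimes_h \widehat E_{\tight}$: its elements are equivalence classes $[t,\phi]$ with $\phi\in Z_{t^*t}$, the range and domain are $r[t,\phi]=h_t(\phi)$, $d[t,\phi]=\phi$, and for each $t\in S$ the set $U_t:=\{[t,\phi]:\phi\in Z_{t^*t}\}$ is a compact open bisection, with the $U_t$ covering $\G_{\tight}(S)$. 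The twist $\LL_u$ is trivial over each $U_t$ (these bisections lie in $S(\LL_u)$), so to each $t$ corresponds a canonical section $\delta_t\in C_c(U_t,\LL_u)\subseteq \mathfrak{C}_c(\G_{\tight}(S),\LL_u)$, namely the unit section supported on $U_t$. The first key step is then to verify the covariance-type identities $\delta_t\delta_s=u(t,s)\,\delta_{ts}$ and $\delta_t^*=\overline{u(t^*,t)}\,\delta_{t^*}$ at the level of $\mathfrak{C}_c$, using the formula for the cocycle $u$ in Remark~\ref{ex:cocycle_twist} together with the multiplication in the inverse-semigroup crossed product from Appendix~\ref{sec:BanachAlgInverseSemigrpActions}.

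Next I would identify the regular representation. By Example~\ref{ex:reduced_L_p_groupoid_algebra} the reduced algebra $F^p_{\red}(\G_{\tight}(S),\LL_u)$ is the closure of $\Lambda_p(\mathfrak{C}_c(\G_{\tight}(S),\LL_u))$ inside $B(\ell^p(\G_{\tight}(S),\LL_u))$, with $\Lambda_p$ acting by convolution as in \eqref{eq:regular_for_p}. Choosing the norm-one basis sections $1_{[s,\phi]}$ of $\ell^p(\G_{\tight}(S),\LL_u)$ as in the proof of Proposition~\ref{prop:regular_disintegrated}, the plan is to compute $\Lambda_p(\delta_t)\,1_{[s,\phi]}$ directly from the convolution formula and check it coincides with $v_t\,1_{[s,\phi]}$ as written in the statement; the composable pairs $(\eta,\eta^{-1}\gamma)$ with $\eta\in U_t$ pin down exactly the condition $\phi\in h_s^{-1}(Z_{tt^*})$ and produce the cocycle factor $u(t,t^*s)(h_s(\phi))$ after reindexing $[t,-]\cdot[t^*s,\phi]=[s,\phi]$ (using $h_t(h_{t^*s}(\phi))=h_s(\phi)$). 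This shows $v_t=\Lambda_p(\delta_t)$, hence each $v_t$ is a spatial partial isometry (it is a weighted composition operator with unimodular weight, so lands in $\SPIso(\ell^p)$ by \cite[Definition 2.25]{BKM}), and its generalized inverse $v_t^*$ equals $\Lambda_p(\delta_t^*)=\Lambda_p(\delta_{t^*})$ up to the unimodular scalar, so $v_tv_s^*=\Lambda_p(\delta_t\delta_{s}^*)$ lies in $F^p_{\red}(\G_{\tight}(S),\LL_u)$.

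The remaining step is to match the closed linear spans. Since $S(\LL_u)$-bisections of the form $U_tU_s^*$ cover $\G_{\tight}(S)$ and the sections $\delta_t\delta_s^*$ span the corresponding graded pieces $C_c(U_tU_s^*,\LL_u)$ densely, the span $\spa\{v_tv_s^*:t,s\in S\}$ equals $\Lambda_p(\mathfrak{C}_c(\G_{\tight}(S),\LL_u))$, whose closure is by definition $F^p_{\red}(\G_{\tight}(S),\LL_u)$. Thus $F^p_{\red,\tight}(S,u)=\clsp\{v_tv_s^*\}$ is a Banach algebra and the identity map on generators extends to the asserted isometric isomorphism; restricting to $t=s$ (equivalently to the idempotent sections $\delta_e$, $e\in E$) gives $\clsp\{v_tv_t^*\}\cong C_0(\widehat E_{\tight})$, since these are exactly the characteristic-type sections spanning $C_0(\widehat E_{\tight})=C_0(\G_{\tight}(S)^{(0)})$.

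I expect the main obstacle to be the bookkeeping of the twist cocycle in the convolution computation of $\Lambda_p(\delta_t)1_{[s,\phi]}$: one must carefully track how the fibrewise multiplication in $\LL_u$, the reindexing $[t,\cdot][t^*s,\phi]=[s,\phi]$, and the normalization conditions on $u$ combine to yield precisely the weight $u(t,t^*s)(h_s(\phi))$ with no stray factors, and to confirm the domain condition $\phi\in h_s^{-1}(Z_{tt^*})$ rather than a mere $Z_{t^*t}$-condition. A subsidiary point needing care is the case $p=\infty$ (and $p=1$), where \eqref{eq:L_1_L_infty_are_reduced} already identifies full and reduced algebras, but one should still confirm that the $v_t$ are spatial partial isometries and that the span identification survives in the $\ell^\infty$ setting; this follows from the general spatiality results of \cite[Section 2]{BKM} but deserves an explicit remark.
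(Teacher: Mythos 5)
Your proposal is correct, and its mathematical core is the same as the paper's: identify each $v_t$ with the image of a canonical generator under the regular representation $\Lambda_p$, deduce spatiality and the generalized inverse from that identification, and then match closed spans. Where you differ is in execution. The paper never touches the convolution formula: it introduces the multiplication representation $\pi(a)\xi[s,\phi]=a\bigl(h_s(\phi)\bigr)\xi([s,\phi])$ of $C_0(\widehat{E}_{\tight})$, observes (citing \cite[Remark 5.2]{BKM}) that $(\pi,v)$ is a \emph{covariant representation} of the twisted inverse semigroup action whose integrated form $\pi\rtimes v$ \emph{is} the regular representation of $F(\G_{{\tight}}(S),\LL_u)$, and then reads off everything --- spatiality of $v_t$, the formulas $v_t^*=v_{t^*}\pi(\overline{u(t,t^*)})$ and $v_tv_t^*=\pi(1_{Z_{tt^*}})$, and the isometric isomorphism --- from the integration/disintegration theory of \cite{BKM}. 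You instead prove $v_t=\Lambda_p(\delta_t)$ by a direct computation on the basis $\{1_{[s,\phi]}\}$, which is more self-contained but makes you carry the cocycle bookkeeping that the covariant-representation formalism absorbs; you rightly flag this as the main burden. One point in your span-matching step needs sharpening: for fixed $t,s$ the single section $\delta_t\delta_s^*$ does not densely span $C_c(U_tU_s^*,\LL_u)$; density holds only collectively, after multiplying by the idempotent elements $v_e=\pi(1_{Z_e})$ (equivalently replacing $t,s$ by $te$, $se'$), using that the indicators $1_{Z_e}$ span a dense subspace of $C_0(\widehat{E}_{\tight})$ because $\G_{{\tight}}(S)$ is ample. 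This is not a gap in substance --- it is exactly what the paper's description of the range as $\clsp\{\pi(a_t)v_t\}$ handles automatically --- but as written your sentence is literally false. Finally, your caution about $p=\infty$ is unnecessary: both spatiality and the span identification are representation-theoretic and hold verbatim there, with no appeal to \eqref{eq:L_1_L_infty_are_reduced}.
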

\begin{proof} 
By \cite[Remark 5.2]{BKM}, we have an isometric representation $\pi : C_0 ( \widehat{E}_{\tight} ) \to B\big(\ell^p(\G_{{\tight}}(S))\big)$ given by the formula $\pi(a) \xi[s,x] = a \big( h_s(x) \big) \xi([s,x])$ and then the pair $(\pi, v)$ forms a covariant representation of the twisted action that integrates to the regular representation of $F(\G_{{\tight}}(S), \LL_u)$ on $\ell^p(\G_{{\tight}}(S))$. 
In particular, $v_t$ is a spatial partial isometry, with the generalized inverse being $v_t^* = v_{t^*} \pi(\overline{u(t,t^*)})$, and $v_{t} v_{t}^* = \pi(1_{Z_{tt^*}})$ for each $t\in S$. 
This implies that we have an isometric isomorphism $C_0( \widehat{E}_{\tight}) \cong \pi(C_0( \widehat{E}_{\tight})) = \clsp\{ v_{t} v_{t}^{*}: t\in S\}$ that extends to an isometric isomorphism $F_{\red}^p(\G_{{\tight}}(S), \LL_u)\cong F^p_{\red,{\tight}}(S,u)$ induced by the integrated form $\pi\rtimes v$ of the covariant representation $(\pi, v)$.
\end{proof}

\begin{rem}
When the twist $u$ is trivial, we write $F^p_{\red,{\tight}}(S)$ for $F^p_{\red,{\tight}}(S,u)$. 
Then the map $v$ in Proposition~\ref{prop:tight_reduced_twisted_inverse_semigroup_algebra} is a tight representation of $S$ in the sense of \cite[Definition 6.12]{BKM}. 
In particular, $F^p_{\red,{\tight}}(S)$ is a reduced version of the tight $L^p$-operator algebra $F^p_{\tight}(S)$ of $S$ introduced in \cite[Definition 6.14]{BKM}: $v$ induces a natural representation $F^p_{\tight}(S)\to  F^p_{\red,{\tight}}(S)$ (which, by \cite[Theorem 6.19]{Gardella_Lupini17}, is an isometric isomorphism if $S$ is countable and the groupoid $\G_{{\tight}}(S)$ is amenable).
\end{rem}

\begin{rem}\label{rem:every_ample_from_inverse_semigroup}
By \cite[Corollary 6.18 and Lemma 4.23]{BKM}, every twisted ample groupoid $(\G,\LL)$ is isomorphic to $(\G_{{\tight}}(S), \LL_u)$ for some inverse semigroup $S$ (one may take $S$ to be the collection of all compact open bisection in $S(\LL)$) and some twist $u$. 
Thus the algebras $F^p_{\red,{\tight}}(S,u)$ introduced in Proposition~\ref{prop:tight_reduced_twisted_inverse_semigroup_algebra} model all reduced twisted groupoid $L^p$-operator algebras associated to ample groupoids.
\end{rem}

\begin{cor}\label{cor:essential_inverse_semigroup_algebra}
Retain the notation and assumptions from Proposition~\ref{prop:tight_reduced_twisted_inverse_semigroup_algebra} and fix $p\in [1,\infty]$.
Denote by $\G_{\Hau}(S)$ the set of Hausdorff points in $\G_{{\tight}}(S)$. 
Then the obvious contractive projection $P_{\Hau}$ from $\ell^p(\G_{{\tight}}(S))$ onto $\ell^p(\G_{\Hau}(S))\subseteq \ell^p(\G_{{\tight}}(S))$ commutes with elements of $F^p_{\red,{\tight}}(S,u)$, and if $S$ is countable, then the compressed Banach algebra
\[
    F^p_{\ess,{\tight}}(S,u) := F^p_{\red,{\tight}}(S,u) P_{\Hau} \subseteq B(\ell^p(\G_{\Hau}(S)))
\]
is isometrically isomorphic to the essential groupoid $L^p$-operator algebra $\FFalg{\G_{{\tight}}(S), \LL_u}{\ess}{p}{}$ 
introduced in Proposition~\ref{prop:essential_L^p_operator_algebra}.
\end{cor}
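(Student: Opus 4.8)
The plan is to transport the whole statement through the two isomorphisms already in place and reduce it to a bookkeeping computation with the regular representation. First I would invoke Proposition~\ref{prop:tight_reduced_twisted_inverse_semigroup_algebra} to identify $F^p_{\red,{\tight}}(S,u)$ with the reduced groupoid algebra $F^p_{\red}(\G_{{\tight}}(S),\LL_u)$ acting on $\ell^p(\G_{{\tight}}(S))$ via the regular representation $\Lambda_p$ of \eqref{eq:regular_for_p}, so that $P_{\reg}$ becomes the canonical contractive projection onto the closed subspace $\ell^p(\G_{\reg}(S))$ of sections supported on $\G_{\reg}(S)$. Since $S$ is countable, the bisections $[t,Z_e]$ with $t\in S$ and $e\in E$ give a countable cover of $\G_{{\tight}}(S)$ by compact open bisections; hence $\G_{{\tight}}(S)_{\sing}$ is meager by Lemma~\ref{lem:singular_groupoid}, and Proposition~\ref{prop:essential_L^p_operator_algebra} applies, realising $\FFalg{\G_{{\tight}}(S),\LL_u}{\ess}{p}{}\subseteq B(\ell^p(\G_{\reg}(S)))$ as the closure of the image of the desingularised representation $\widetilde{\Lambda}_p$. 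As in the proof of that proposition I would handle $p=\infty$ by passing through the isometric anti-isomorphism induced by the involution, and so assume $p<\infty$.

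The first substantive step is the commutation of $P_{\reg}$, for which I would use Lemma~\ref{lem:singular_groupoid}, giving $\G_{{\tight}}(S)_{\sing}=d^{-1}(X_{\sing})=r^{-1}(X_{\sing})$, so that $\gamma$ is regular precisely when $d(\gamma)\in X_{\reg}$. In the convolution formula $\Lambda_p(f)\xi(\gamma)=\sum_{r(\eta)=r(\gamma)}f(\eta)\cdot\xi(\eta^{-1}\gamma)$ every input index $\eta^{-1}\gamma$ satisfies $d(\eta^{-1}\gamma)=d(\gamma)$, hence lies in $\G_{\reg}(S)$ if and only if $\gamma$ does. Thus $\Lambda_p(f)$ is block diagonal for the partition $\G_{{\tight}}(S)=\G_{\reg}(S)\sqcup\G_{{\tight}}(S)_{\sing}$: it neither feeds regular inputs into singular outputs nor conversely, so both subspaces are invariant. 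This yields $\Lambda_p(f)P_{\reg}=P_{\reg}\Lambda_p(f)$ for $f\in\mathfrak{C}_c(\G_{{\tight}}(S),\LL_u)$, and the commutation extends to all of $F^p_{\red,{\tight}}(S,u)$ by density and continuity, since commuting with a fixed $P_{\reg}$ is a closed condition.

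The second step identifies the compression with the essential algebra. By construction the restriction of $\Lambda_p(f)$ to the invariant subspace $\ell^p(\G_{\reg}(S))$ is exactly $\widetilde{\Lambda}_p(f)$, so the compression homomorphism $a\mapsto aP_{\reg}$ carries the dense subalgebra $\Lambda_p(\mathfrak{C}_c(\G_{{\tight}}(S),\LL_u))$ onto $\widetilde{\Lambda}_p(\mathfrak{C}_c(\G_{{\tight}}(S),\LL_u))$, which is dense in $\FFalg{\G_{{\tight}}(S),\LL_u}{\ess}{p}{}$. Since both algebras are realised concretely on $\ell^p(\G_{\reg}(S))$ with the operator norm, $F^p_{\ess,{\tight}}(S,u)=F^p_{\red,{\tight}}(S,u)P_{\reg}$ and $\FFalg{\G_{{\tight}}(S),\LL_u}{\ess}{p}{}$ share this common dense subalgebra and the same ambient norm, whence the identification is isometric.

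The main obstacle is precisely this last identification on the nose: a priori the compression map $F^p_{\red}(\G_{{\tight}}(S),\LL_u)\to B(\ell^p(\G_{\reg}(S)))$ is only contractive with dense range, and one must know that the inherited norm is the essential norm rather than something strictly smaller, so that the closed compressed algebra is genuinely $\FFalg{\G_{{\tight}}(S),\LL_u}{\ess}{p}{}$. I would resolve this by appealing to the injective contractive map $j_p^{\ess}$ furnished by Proposition~\ref{prop:essential_L^p_operator_algebra}: combined with the meagerness of $\G_{{\tight}}(S)_{\sing}$ it delivers the lower bound $\|q_{\G}(f)\|\leq\|\widetilde{\Lambda}_p(f)\|$ recorded there, which pins the norm of the compression and shows that the compressed algebra is canonically and isometrically $\FFalg{\G_{{\tight}}(S),\LL_u}{\ess}{p}{}$.
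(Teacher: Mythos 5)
Your proposal is correct and matches the route the paper intends: the corollary is stated without a separate proof precisely because it follows by combining Proposition~\ref{prop:tight_reduced_twisted_inverse_semigroup_algebra} (identifying $F^p_{\red,\tight}(S,u)$ with the regular representation picture), Lemma~\ref{lem:singular_groupoid} (the ideal-subgroupoid property $\G_{\sing}=d^{-1}(X_{\sing})$, giving block-diagonality of convolution operators, and meagerness of $\G_{\sing}$ under a countable bisection cover), and Proposition~\ref{prop:essential_L^p_operator_algebra}, exactly as you do. Your final ``obstacle'' paragraph is superfluous: once $P_{\reg}$ commutes and the compression of the dense subalgebra $\Lambda_p(\mathfrak{C}_c(\G_{\tight}(S),\LL_u))$ is $\widetilde{\Lambda}_p(\mathfrak{C}_c(\G_{\tight}(S),\LL_u))$, both algebras are closures of the same subalgebra in the same operator norm of $B(\ell^p(\G_{\reg}(S)))$, so the isometric identification is automatic without invoking injectivity of $j_p^{\ess}$.
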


\begin{defn}\label{def:L_p_inverse_semigroup_algebras}
Let $P\subseteq [1,\infty]$ be a non-empty set. 
We denote the Banach algebra generated by the $\ell^{\infty}$-direct sum of representations $S \to \SPIso\big(\ell^p(\G_{{\tight}}(S))\big)$, $p\in P$, from Proposition~\ref{prop:tight_reduced_twisted_inverse_semigroup_algebra}, by $F^P_{\red,{\tight}}(S,u)$. 
We denote the compression of $F^P_{\red,{\tight}}(S,u)$ to the subspace $\bigoplus_{p\in P}\ell^p(\G_{\Hau}(S))$ by $F^P_{\ess,{\tight}}(S,u)$.
We also write $F^P_{{\tight}}(S,u) := F^P(\G_{{\tight}}(S), \LL_u)$ and skip writing $u$ when $u$ is trivial. 
\end{defn}
 
\begin{defn}\label{def:InverseSemigroupSimplicityProperties}
Let $S$ be an inverse semigroup with zero. 
We say that $e\in E$ is \emph{fixed} by $t\in S$ if we have $ft^* f\neq 0$, for every nonzero idempotent $f\leq e$. 
Elements in $F_t = \{ e\in E : e\leq t \}$ are fixed by $t$ and we call them \emph{trivially fixed} by $t$. 
We also say that: 
\begin{enumerate}
    \item $S$ is \emph{closed} if for every $t\in S$ there is finite $F\subseteq F_t$ that covers every $e\in F_t$. 
    \item $S$ is \emph{topologically free} if for every $e\in E$ fixed by $t\in S$ there is  $f\in F_t$ with $f  e\neq 0$.
    \item $S$ is \emph{minimal} if for every $e, f \in E\setminus\{0\}$, 
		there is a finite $T\subseteq S$, such that $\{ t f t^* \}_{t\in T}$ is a cover for $e$. 
    \item $S$ is \emph{locally contracting} if for every $e \in E\setminus\{0\}$ there is $s\in S$ and a finite set $F\subseteq E\setminus\{0\}$ with a distinguished element $f_0\in F$ such that for every $f\in F$ we have $f\leq e s^*s$, $F$ is a cover of both $sfs^*$ and $f_0 s f$. 
\end{enumerate}
\end{defn}

\begin{rem} 
In Definition~\ref{def:InverseSemigroupSimplicityProperties} we slightly changed the terminology from \cite{EP16}. 
Namely, the authors of \cite{EP16} call weakly-fixed what we call fixed, and they call fixed what we call trivially fixed (our naming is consistent with \cite[Definition 4.1]{EP16}, though). 
Local contractiveness is as in \cite[Definition 6.1]{EP16}, closedness and minimality appear as conditions in \cite[Theorems 3.16 and 5.5]{EP16}, respectively. 
Topological freeness is our invention and it improves a condition appearing in \cite[Theorem 4.10]{EP16}, see Remark~\ref{rem:Exel_Pardo_topological_freeness} below. 
\end{rem}

\begin{lem}\label{lem:towards_topological_freeness_tight}
An idempotent $e\in E$ is fixed by $t\in S$ if and only if $Z_{e} \subseteq \{ \xi \in Z_{t^*t} : h_{t}(\xi) = \xi \}$.
\end{lem}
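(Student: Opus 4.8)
The plan is to unwind both sides of the claimed equivalence using the explicit description of the tight spectrum action $h$ and the definition of \emph{fixed}. Recall that $e$ is fixed by $t$ means $ft^*f \neq 0$ for every nonzero idempotent $f\leq e$, while $Z_e = \{\phi \in \widehat{E}_{\tight} : e\in\phi\}$, and $h_t(\phi) = \{g\in E : t^*gt\in\phi\}$ is defined on $Z_{t^*t}$. So I must show two things: first that the hypothesis forces $Z_e\subseteq Z_{t^*t}$ so that $h_t$ is at least defined on $Z_e$, and second that on $Z_e$ the map $h_t$ fixes every point precisely when $e$ is fixed by $t$.

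For the forward direction, suppose $e$ is fixed by $t$. First I would check $e\leq t^*t$: if not, then $f := e(1-t^*t) = e - et^*t$ would be a nonzero idempotent below $e$ (working in the Boolean-algebra completion / using that $E$ is a semilattice, $et^*t\leq e$); for such $f$ one has $ft^* = e(1-t^*t)t^* = et^* - et^*tt^* = et^* - et^* = 0$, whence $ft^*f = 0$, contradicting fixedness. Thus $e\leq t^*t$ and $Z_e\subseteq Z_{t^*t}$, so $h_t$ is defined on $Z_e$. Now take any $\phi\in Z_e$; I want $h_t(\phi)=\phi$. Since $\phi$ is a filter containing $e$, and both $h_t(\phi)$ and $\phi$ are tight filters, it suffices to show they generate the same filter, i.e. that for $g\in\phi$ with $g\leq e$ we have $t^*gt\in\phi$ and conversely. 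The key computation is that fixedness says $gt^*g\neq 0$ for all nonzero $g\leq e$; combined with $g\leq t^*t$ this should force the filter $\phi$ to contain $t^*gt$ whenever it contains $g$. Here I expect to use the characterization of tight filters via covers: if $t^*gt\notin\phi$ for some $g\in\phi$, then $\{t^*gt\}$ together with a complementary idempotent disjoint from it would form a cover of $g$ meeting $\phi$ only in the complement, producing a nonzero $f\leq g\leq e$ with $ft^*f = 0$.

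For the converse, suppose $Z_e\subseteq \{\xi\in Z_{t^*t} : h_t(\xi)=\xi\}$ and let $f\leq e$ be a nonzero idempotent; I must produce the inequality $ft^*f\neq 0$. Since $f\neq 0$, the compact open set $Z_f$ is non-empty, so pick a tight filter $\phi\in Z_f\subseteq Z_e$. By hypothesis $h_t(\phi)=\phi$, and $f\in\phi$ gives $t^*ft\in h_t(\phi)=\phi$, so $f\wedge t^*ft = f t^* f t \in\phi$ is nonzero (filters do not contain $0$). From $ft^*ft\neq 0$ one deduces $ft^*f\neq 0$, since $ft^*f = 0$ would immediately force $ft^*ft = 0$. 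This establishes fixedness of $e$ by $t$.

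The main obstacle will be the filter-theoretic step in the forward direction: translating the purely algebraic condition ``$ft^*f\neq 0$ for all nonzero $f\leq e$'' into the statement that $h_t$ fixes every tight filter through $e$. The delicate point is that tight filters are not merely filters but satisfy the cover condition, and I must argue that a failure of $h_t(\phi)=\phi$ at some $\phi\in Z_e$ yields, via a suitable finite cover, a nonzero idempotent $f\leq e$ with $ft^*f=0$. I would handle this by recalling the standard fact (e.g.\ from the theory of tight spectra, cf.\ the results on $\widehat{E}_{\tight}$ invoked earlier) that a tight filter is determined by the ultrafilters it limits, and that $e$ fixed by $t$ is exactly the condition that $t$ acts trivially on a dense subset of $Z_e$; the passage between the pointwise condition on $Z_e$ and the semilattice condition is the crux and where I would spend the most care.
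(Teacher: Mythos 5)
Your converse direction is correct and complete: picking a tight filter $\phi\in Z_f\subseteq Z_e$ (non-empty since $f\neq 0$), using $h_t(\phi)=\phi$ to get $t^*ft\in\phi$, and concluding $ft^*ft\in\phi$ hence $ft^*f\neq 0$, is exactly the right argument. The forward direction, however, contains a genuine error and a genuine gap. The error: fixedness of $e$ by $t$ does \emph{not} imply $e\leq t^*t$, and your argument for it manufactures a complement $f=e(1-t^*t)$ that simply does not exist in a semilattice $E$ (and fixedness only quantifies over idempotents of $E$, so passing to a Boolean completion does not help). What fixedness actually gives is weaker: for every nonzero $f\leq e$ one has $0\neq ft^*f=ft^*tt^*f=t^*tft^*f$, hence $t^*tf\neq 0$; that is, $\{t^*t\}$ is a \emph{cover} of $e$, which by the correspondence between covers and the tight spectrum (Proposition 3.8 of Exel--Pardo) yields the topological containment $Z_e\subseteq Z_{t^*t}$ but not the algebraic inequality $e\leq t^*t$. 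These are genuinely different: in the semilattice $\{0,c,a,b\}$ with $c<a$, $c<b$, $ab=c$, the element $b$ covers $a$ yet $a\not\leq b$. The paper's proof is precisely this computation plus citations, and its closing remark that the hypothesis $e\leq t^*t$ in \cite[Lemma 4.9]{EP16} is ``redundant'' is exactly the observation that your stronger claim is both unnecessary and unavailable.

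The gap: the heart of the forward direction --- that fixedness forces $h_t(\phi)=\phi$ for \emph{every} tight filter $\phi\in Z_e$ --- is never actually established in your proposal. Your sketch again appeals to ``a complementary idempotent disjoint from'' $t^*gt$, which need not exist, and you acknowledge the passage from the semilattice condition to the pointwise condition on $Z_e$ as ``the crux'' without supplying it. This is the nontrivial content of the lemma; the paper outsources it to \cite[Lemma 4.9]{EP16}, whose proof works first with ultrafilters in $Z_e$ (where maximality can be exploited) and then extends to all of $Z_e$ by density of ultrafilters in $\widehat{E}_{\tight}$ together with continuity of $h_t$. To make your proof self-contained you would need to reproduce an argument of that shape; as written, the forward implication is unproven.
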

\begin{proof} 
If $e\in E$ is fixed by $t$, then for every $0\neq f\leq e$ we have $0\neq ft^*f =  ft^*tt^*f = t^*tft^*f$, which implies that $t^*tf \neq 0$. 
Hence $e$ is covered by $t^*t$ (formally by $\{ t^* t \}$), which is equivalent to the inclusion $Z_{e} \subseteq Z_{t^*t}$, see \cite[Proposition 3.8]{EP16}. 
Thus the equivalence in the assertion is proved in \cite[Lemma 4.9]{EP16} (as the assumption that $e\leq t^*t$, made there, is redundant). 
\end{proof}

\begin{prop}\label{prop:properties_of_tight_groupoid} 
Let $S$ be an inverse semigroup with zero. 
Then $S$ is closed (resp.\ topologically free or minimal) if and only if $\G_{\tight}(S)$ is Hausdorff (resp.\ topologically free or minimal). 
If $S$ is locally contractive, then $\G_{\tight}(S)$ is. 
\end{prop}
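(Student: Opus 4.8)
The plan is to prove each equivalence (and the final one-directional implication) by translating the inverse-semigroup-theoretic conditions of Definition~\ref{def:InverseSemigroupSimplicityProperties} into the corresponding groupoid conditions for $\G_{\tight}(S) = S\ltimes_h \widehat{E}_{\tight}$, using the dictionary already set up: $Z_e$'s form a basis of compact open sets of $\widehat{E}_{\tight}$, each $t\in S$ induces the partial homeomorphism $h_t:Z_{t^*t}\to Z_{tt^*}$, and the bisections $[t,Z_e]$ (with $e\le t^*t$) generate the topology of $\G_{\tight}(S)$. The key translation tool is Lemma~\ref{lem:towards_topological_freeness_tight}, which identifies idempotents fixed by $t$ with the inclusion $Z_e\subseteq\{\xi\in Z_{t^*t}: h_t(\xi)=\xi\}$.

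First I would handle topological freeness, which is the heart of the statement. Using Remark~\ref{re:TopologicallyFreeGroupoid}, $\G_{\tight}(S)$ is topologically free if and only if for every bisection $V\subseteq\G_{\tight}(S)\setminus X$ the fixed-point set $\{\xi\in d(V):h_V(\xi)=\xi\}$ has empty interior. Since the $[t,Z_e]$ generate the topology, it suffices to test on these, and a nontrivial fixed point of $[t,Z_e]$ lands precisely where $h_t(\xi)=\xi$ with $[t,\xi]\neq$ unit. The negation of groupoid topological freeness then gives some $Z_e$ (shrinking if necessary) contained in the fixed-point set with $t$ acting nontrivially there; by Lemma~\ref{lem:towards_topological_freeness_tight} this means $e$ is fixed by $t$, and nontriviality of the action on $Z_e$ forces $e\not\leq t$ on a basic piece, contradicting the existence of some $f\in F_t$ with $f\not\perp e$ — i.e.\ contradicting inverse-semigroup topological freeness. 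Conversely, if $S$ is not topologically free one extracts $e$ fixed by $t$ with $f\perp e$ for all $f\in F_t$, producing an open bisection in $\G_{\tight}(S)\setminus X$ on which $r=d$. For closedness I would use that $\G_{\tight}(S)$ is Hausdorff iff $X$ is closed in $\G_{\tight}(S)$, equivalently (for ample groupoids) iff the closure of each compact open bisection is open; the finite cover of $F_t$ by trivially fixed idempotents is exactly what controls whether the set where $h_t=\mathrm{id}$ is clopen, which is the content of \cite[Theorem 3.16]{EP16} reinterpreted, and I would cite that correspondence.

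For minimality, I would invoke that $\G_{\tight}(S)$ is minimal iff $\widehat{E}_{\tight}$ has no nontrivial open invariant subsets, and translate the outer-cover condition on $\{tft^*\}_{t\in T}$ for $e$ into the statement that the orbit of $Z_f$ covers $Z_e$; since the $Z_e$ form a basis and every nonempty open invariant set must contain some $Z_e$, the outer-cover formulation for all pairs $e,f$ is equivalent to the only invariant open sets being $\emptyset$ and $\widehat{E}_{\tight}$. This is essentially \cite[Theorem 5.5]{EP16} and I would lean on it. Finally, for the one-way implication about local contraction, I would show that the data $(s,F,f_0)$ witnessing $S$-local-contractiveness produces, via $Z_{f_0}\subseteq\bigcup_{f\in F}Z_f\subseteq Z_{es^*s}$ and the partial homeomorphism $h_s$, an open $V\subseteq Z_e$ and a bisection $W=[s,\cdot]\in\Bis(\G_{\tight}(S))$ with $\overline V\subseteq d(W)$ and $h_W(\overline V)\subsetneq V$, matching Definition~\ref{def:locally contractive}; the outer-cover conditions on $sfs^*$ and $f_0sf$ are what guarantee the strict inclusion and the compact containment after passing to the spectrum.

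The main obstacle I expect is the topological-freeness equivalence in the non-Hausdorff direction: because $\G_{\tight}(S)$ need not be Hausdorff, I must be careful to work with $V\subseteq\G_{\tight}(S)\setminus X$ rather than $\G_{\tight}(S)\setminus\overline X$, and to ensure that shrinking a basic bisection $[t,Z_e]$ to isolate the fixed locus stays within a single $Z_e$ where the Lemma applies; the passage between "empty interior of the fixed set" and "no $f\in F_t$ with $f\not\perp e$" requires knowing that the clopen sets $Z_e$ separate enough filters, which is where I would need the density of ultrafilters in $\widehat{E}_{\tight}$ and the precise behaviour of $h_t$ near non-regular points. The cleanest route is to reduce everything to the already-established Lemma~\ref{lem:towards_topological_freeness_tight} and the characterisation in Remark~\ref{re:TopologicallyFreeGroupoid}, quoting \cite{EP16} for closedness and minimality rather than re-deriving those correspondences.
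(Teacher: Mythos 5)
Your proposal takes essentially the same route as the paper: the paper likewise delegates closedness, minimality and local contractiveness to \cite[Theorems 3.16, 5.5 and 6.5]{EP16}, and handles topological freeness by combining Lemma~\ref{lem:towards_topological_freeness_tight}, the fact (via density of ultrafilters and \cite[Proposition 2.5]{EP16}) that every non-empty open subset of $\widehat{E}_{\tight}$ contains some $Z_e$, and the observation that $Z_e\cap Z_f\neq\emptyset$ if and only if $ef\neq 0$. The only cosmetic differences are that the paper enters through the action-level characterisation of topological freeness (Remark~\ref{re:MinTopFreeSemigroupActionsEquiv}) rather than the bisection-level one in Remark~\ref{re:TopologicallyFreeGroupoid}, and that it cites \cite[Theorem 6.5]{EP16} for the locally contracting implication instead of re-deriving the construction as you sketch.
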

\begin{proof} 
The claims about $S$ being closed, minimal, and locally contractive follow by \cite[Theorems 3.16, 5.5 and 6.5]{EP16}. 
For the claim about topological freeness, recall that topological freeness of $\G_{\tight}(S)$ is equivalent to topological freeness of $h$ in the sense of 
\cite[Definition 2.20]{Kwa-Meyer}, by \cite[Lemma 2.23]{Kwa-Meyer}. 
%Definition~\ref{def:conditions_for_inverse_semigroup_actions}, see Remark~\ref{re:MinTopFreeSemigroupActionsEquiv}. 
This means that for every $t\in S$, the set $\{\phi\in Z_{t^*t}: h_{t}(\phi) = \phi \} \setminus \bigcup_{f\in F_t} Z_f$ has empty interior. 
By \cite[Proposition 2.5]{EP16}  and density of ultrafilters in $\widehat{E}_{\tight}$, every non-empty open set in $\widehat{E}_{\tight}$ contains a set of the form $Z_e$, for some $e\in E$.
Thus topological freeness of $\G_{\tight}(S)$ is equivalent to the condition that for any $t\in S$ and $e\in E$, if $Z_e \subseteq \{ \phi \in Z_{t^*t} : h_{t}(\phi) = \phi \}$, then $Z_e \cap \bigcup_{f\in F_t} Z_f \neq \emptyset$. 
In view of Lemma~\ref{lem:towards_topological_freeness_tight} and the fact that $Z_e \cap Z_f \neq \emptyset$ if and only if $e f \neq 0$, this in turn is equivalent to saying that for any $t\in S$ and $e\in E$, if $e$ is fixed by $t$, then $ef\neq 0$ for some $f\in F_t$, which is exactly what we called topological freeness of $S$.
\end{proof}

\begin{rem}\label{rem:Exel_Pardo_topological_freeness}
The condition (ii) in \cite[Theorem 4.10]{EP16} tries to characterize effectiveness of  $\G_{\tight}(S)$ in terms of $S$. 
It says that for every $e\in E$ fixed by $t\in S$, $e$ can be covered by elements in  $F_t$. 
When $\G_{\tight}(S)$ is Hausdorff (equivalently $S$ is closed), this condition is equivalent to $S$ being topologically free, but in general topological freeness is  is weaker. 
This agrees with the fact that effectiveness is equivalent to topological freeness for Hausdorff groupoids, but in general  is stronger. 
\end{rem}

\begin{thm}\label{thm:SimplicityTightAlgebras}
Let $S$ be an inverse semigroup with zero, let $u$ be a twist for the associated action on $\widehat{E}_{\tight}$ and let $P\subseteq [1,\infty]$ be non-empty. 
Assume $S$ is topologically free and minimal. 
If $S$ is closed, then $F^P_{\red,{\tight}}(S,u)$ is simple and it is purely infinite if $S$ is locally contractive. 
If $S$ is countable, then $F^P_{\ess,{\tight}}(S,u)$ is simple and it is purely infinite if $S$ is locally contractive. 
\end{thm}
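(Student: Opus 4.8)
The plan is to transport everything to the twisted tight groupoid $(\G_{{\tight}}(S),\LL_u)$ and then quote the groupoid theorems. First I would set up the dictionary between the inverse-semigroup algebras and the groupoid algebras. By Proposition~\ref{prop:tight_reduced_twisted_inverse_semigroup_algebra} together with Definition~\ref{def:L_p_inverse_semigroup_algebras}, the reduced algebra $F^P_{\red,{\tight}}(S,u)$ is isometrically isomorphic to the reduced twisted groupoid algebra $F^P_{\red}(\G_{{\tight}}(S),\LL_u)$; and when $S$ is countable, Corollary~\ref{cor:essential_inverse_semigroup_algebra} and Definition~\ref{def:L_p_inverse_semigroup_algebras} identify $F^P_{\ess,{\tight}}(S,u)$ with the essential groupoid $L^P$-algebra $\widetilde{F}^{P}_{\ess}(\G_{{\tight}}(S),\LL_u)$ of Corollary~\ref{cor:essential_L^P_operator_algebra} (countability of $S$ ensures $\G_{{\tight}}(S)$ has a countable cover by compact open bisections, which is exactly what that corollary requires). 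Let $S'\subseteq\Bis(\G_{{\tight}}(S))$ be the wide unital inverse semigroup of compact open bisections induced by elements of $S$; by the construction of $\LL_u$ the bundle is trivial on each such bisection, so $S'\subseteq S(\LL_u)$, and both algebras above are $S'$-graded, being $\Bis(\G_{{\tight}}(S))$-graded.

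Next I would translate the hypotheses on $S$ into properties of $\G_{{\tight}}(S)$ using Proposition~\ref{prop:properties_of_tight_groupoid}: topological freeness and minimality of $S$ are equivalent to the corresponding properties of $\G_{{\tight}}(S)$, closedness of $S$ is equivalent to Hausdorffness of $\G_{{\tight}}(S)$, and local contractiveness of $S$ implies that of $\G_{{\tight}}(S)$. The delicate point here is that I need $\G_{{\tight}}(S)$ to be locally contracting \emph{with respect to $S'$}, so that the contracting bisections lie in $S(\LL_u)$, rather than merely with respect to all of $\Bis(\G_{{\tight}}(S))$. Inspecting the proof of Proposition~\ref{prop:properties_of_tight_groupoid}, which rests on \cite[Theorem 6.5]{EP16}, shows that the witnessing bisection is precisely the one induced by an element $s\in S$ appearing in Definition~\ref{def:InverseSemigroupSimplicityProperties}(4), hence lies in $S'$.

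With this dictionary the two cases are quick. If $S$ is closed, then $\G_{{\tight}}(S)$ is Hausdorff, so reduced and essential groupoid Banach algebras coincide and $F^P_{\red}(\G_{{\tight}}(S),\LL_u)$ is an essential groupoid Banach algebra by Remark~\ref{rem:essential_vs_reduced}. Since $\G_{{\tight}}(S)$ is topologically free and minimal, Theorem~\ref{thm:Top_Free_Intersection_Property}(3) gives simplicity; and if $S$ is moreover locally contractive, then $\G_{{\tight}}(S)$ is locally contracting with respect to $S'\subseteq S(\LL_u)$, so Theorem~\ref{thm:Anantharaman-Delaroche} yields pure infiniteness. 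If instead $S$ is countable, then $\widetilde{F}^{P}_{\ess}(\G_{{\tight}}(S),\LL_u)$ is an $S'$-graded essential Banach algebra of the topologically free and minimal groupoid $\G_{{\tight}}(S)$, so Theorem~\ref{thm:pure_infinteness_intro} gives simplicity directly, and the local-contractiveness hypothesis (which transfers to local contraction with respect to $S'\subseteq S(\LL_u)$) upgrades this to pure infiniteness by the same theorem.

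The hard part will be exactly the bookkeeping flagged in the second paragraph: verifying that the local contraction of $\G_{{\tight}}(S)$ is realised inside $S(\LL_u)$, i.e.\ by bisections on which the twist is trivial, so that Theorems~\ref{thm:pure_infinteness_intro} and~\ref{thm:Anantharaman-Delaroche} apply with $S=S'$. Everything else is a direct invocation of the groupoid machinery already assembled, so no further calculation is needed.
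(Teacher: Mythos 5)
Your proposal is correct and follows essentially the same route as the paper's proof: identify the algebras with their groupoid models via Proposition~\ref{prop:tight_reduced_twisted_inverse_semigroup_algebra} and Corollary~\ref{cor:essential_inverse_semigroup_algebra} (extended from $p$ to $P$ by Lemma~\ref{lem:reduced_from_other_reduced}), translate the hypotheses through Proposition~\ref{prop:properties_of_tight_groupoid}, and invoke Theorems~\ref{thm:Top_Free_Intersection_Property} and~\ref{thm:Anantharaman-Delaroche}. The bookkeeping point you flag --- that the contracting bisections must lie in $S(\LL_u)$, which holds because they are induced by elements of $S$ --- is sound and is exactly what the paper leaves implicit (it is covered by its alternative reference to Theorem~\ref{th:SimplicityInverseSemigroupActions} together with Remark~\ref{re:MinTopFreeSemigroupActionsEquiv}), so this is careful reading of the same argument rather than a different one.
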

\begin{proof} 
When $S$ is closed, then $F^P_{\red,{\tight}}(S,u) = F^P_{\ess,{\tight}}(S,u)$ because the underlying groupoid is Hausdorff. 
Using the groupoid models for $F^P_{\red,{\tight}}(S,u)$ and $F^P_{\ess,{\tight}}(S,u)$, see Proposition~\ref{prop:tight_reduced_twisted_inverse_semigroup_algebra} and Corollary~\ref{cor:essential_inverse_semigroup_algebra} (that extends from $p$ to $P$ by Lemma~\ref{lem:reduced_from_other_reduced}), the assertion follows from Theorems~\ref{thm:Top_Free_Intersection_Property} and \ref{thm:Anantharaman-Delaroche}. 
%In fact it also follows from Theorem~\ref{th:SimplicityInverseSemigroupActions}.  
\end{proof}

\begin{rem}\label{rem:reduces_as_essential_tight}
It may happen that the reduced algebra $F^P_{\red,{\tight}}(S,u)$ is also an essential Banach algebra, even though the inverse semigroup is not closed. 
Then the above assertion also applies to $F^P_{\red,{\tight}}(S,u)$. 
This happens, for instance, if $S$ satisfies condition (S) introduced in \cite[Definition 5.4]{CEPSS}, and all tight filters in $E$ are ultrafilters, see \cite[Lemma 5.6]{CEPSS} and Lemma~\ref{lem:ample_essential_reduced_coincide}, cf.\ Remark~\ref{rem:essential_vs_reduced}. 
\end{rem}

\subsection{Graph algebras}
\label{subsect:graph_algebras}

Let $Q = (Q^0,Q^1, r, s)$ be a directed graph. 
So $Q^0$, $Q^1$ are countable sets whose elements are interpreted as vertices and edges respectively, and $r, s : Q^{1}\to Q^0$ are the range and source maps. 
For the standard inverse semigroup $S_{Q}$ associated to $Q$, the associated tight groupoid is the standard Deaconu--Renault groupoid $\G_Q$ for the one sided shift on the boundary path space of $Q$, see, for instance, \cite[Subsection 6.1]{BKM}. 
The groupoid $\G_Q$ is amenable, Hausdorff and second countable.  
For any $p\in [1,\infty]$, by Proposition~\ref{prop:tight_reduced_twisted_inverse_semigroup_algebra}, \cite[Corollary 6.25]{BKM} %\cite[Theorem 6.24]{BKM}, 
and \cite[Theorem 6.19]{Gardella_Lupini17}, the algebra $F^p_{\red,{\tight}}(S_{Q})$ is naturally isometrically isomorphic to the graph $L^p$-operator algebra $F^p(Q)$ considered in \cite[Definition 6.22]{BKM}, \cite{cortinas_rodrogiez}, \cite{cortinas_Montero_rodrogiez}, \cite{PhLp1}. 
Therefore, for every non-empty $P\subseteq [1,\infty]$, we call
\[
    F^P(Q) := F^P_{\red,{\tight}} (S_{Q}) \cong F^P(\G_Q) = F^P_{\red}(\G_Q) 
\]
the \emph{graph $L^P$-operator algebra} of $Q$. 

%We denote by $Q^*$ the set of all (finite and infinite) paths in $Q$. 
A \emph{path} in $Q$ is a sequence $\mu=\mu_1 \mu_2 \cdots $ (finite or infinite) where $s(\mu_i) = r(\mu_{i+1})$ for all $i$. 
We also treat vertices in $Q^0$ as paths of length zero. 
A finite path $\mu=\mu_1 \mu_2 \cdots \mu_n$ in $Q$ such that $s(\mu_n) = r(\mu_1)$ is called a \emph{cycle}. 
A vertex $x\in Q^0$ is called \emph{singular} if it is a source or an infinite receiver, that is, if $|r^{-1}(v)| \in \{0,\infty\}$. 
A path in $Q$ is called \emph{boundary} if either it is infinite or if it starts in a singular vertex. 
The set of boundary paths $\partial Q$ is naturally equipped with the topology that makes it homemomorphic to the tight spectrum of $S_{Q}$ and it is the unit space of $\G_Q$. In particular, for any vertex $v\in Q^0$, the cylinder
$Z_{v}:=\{\mu\in \partial Q: r(\mu)=b\}$ is compact open in $\partial Q$ and so its indicator $1_{Z_{v}}$ is a projection in $C_0(\partial Q)\subseteq  F^P(Q)$.
For two vertices $v,w\in Q^0$, we write $v\leq w$ if there is a path $\mu$ in $Q$ that ends in $v$ and starts in $w$. 

\begin{defn}\label{defn:cofinal}
We say that the graph $Q$ is \emph{cofinal} if every boundary path $\mu = \mu_1 \mu_2 \cdots $ in $Q$ is cofinal in $(Q^0,\leq)$ in the sense that for every $v\in Q^0$ there is $i$ such that $v\leq s(\mu_i)$. 
We say that a cycle $\mu=\mu_1 \mu_2 \cdots \mu_n$ has an \emph{entry} if for some $i=1, \ldots ,n$ there is an edge $e\in Q^1$ such that $r(e) = r(\mu_i)$ but $e \neq \mu_i$.
\end{defn}

%\begin{rem}
%A number of authors use a convention for paths in a graph that is dual to the convention we adopted here. 
%Then  coinitial corresponds to \emph{cofinal}, and having an entry corresponds to having an \emph{exit}.
%\end{rem}

The main results of \cite{cortinas_Montero_rodrogiez} and \cite{Gardella_Tinghammar}, proved using more direct methods, can be generalized using the groupoid methods as follows. 

\begin{thm}\label{thm:graph_algebras} Let $Q = (Q^0,Q^1, r, s)$ be a directed graph and let  $P\subseteq [1,\infty]$ be non-empty.
\begin{enumerate}
    \item\label{enu:graph_algebras1} If every cycle in $Q$ has an entry, then every representation of $F^P(\G)$ (into any Banach algebra) is injective whenever it is non-zero on projections $1_{Z_{v}}\in C_0(\partial Q)$, $v\in Q^0$. 
    \item \label{enu:graph_algebras2} The graph $L^P$-operator algebra $F^P(Q)$ is simple if and only if $Q$ is cofinal and every cycle in $Q$ has an entry. 
    \item \label{enu:graph_algebras3} If $F^P(Q)$ is simple, then either it is purely infinite, which holds when $Q$ contains a cycle, or it is approximately finite in the sense that it is the inductive limit of finite direct sums of full finite dimensional matrix algebras over $\F$.
\end{enumerate}
\end{thm}
\begin{proof}
We apply Corollary~\ref{cor:simplicity_intro2} from the introduction to $F^P(Q) \cong F^P(\G_Q)$.  It is well known, see, for instance, \cite[Propositions 8.2, 8.3]{Nyland_Ortega}, that $\G_Q$ is topologically free if and only if every cycle in $Q$ has an entry, and $\G_Q$ is minimal if and only if $Q$ is cofinal. 
Thus \ref{enu:graph_algebras2} in the assertion follows from  Corollary~\ref{cor:simplicity_intro2}\ref{enu:simplicity_intro2}.
Assume now that every cycle in $Q$ has an entry. 
By Corollary~\ref{cor:simplicity_intro2}\ref{enu:simplicity_intro1}, any representation $\pi:F^P(Q)\to B$ is injective whenever its restriction to $C_0(\partial Q)$ is injective.
But $\pi$ is injective on  $C_0(\partial Q)$ if it is non-zero   on projections $1_{Z_{v}}\in C_0(\partial Q)$, $v\in Q^0$.
This can be proved using that $\pi$ is non-zero on projections spanning finite-dimensional algebras turning $C_0(\partial Q)$ into an AF-algebra, cf.\ the proof of \cite[Theorem 3.7]{Kumjian_Pask_Reaburn}, or by applying the criterion in the last part of \cite[Theorem 6.15]{BKM} to $S_{Q}$. 
This shows \ref{enu:graph_algebras1}.
Assume now $Q$ is cofinal and that every cycle in $Q$ has an entry. 
It is well known and easy to see that then $\G_Q$ is locally contracting if and only if $Q$ contains a cycle. 
If this holds, then $F^P(Q)$ is purely infinite by Theorem~\ref{thm:pure_infinteness_intro}. 
If $Q$ has no cycles, then $F^P(Q)$ is an inductive limit of finite-dimensional graph algebras, 
which are $\F$-matricial algebras, see \cite[Proposition 2.6.20]{AAM}.    
This proves \ref{enu:graph_algebras3}.
\end{proof}

%The graph algebras $F^P(Q)$ have contractive approximate units consisting of idempotents (corresponding to finite subsetes of vertices). Thus our pure infiniteness agrees with the one see 

%
\subsection{Algebras associated to self-similar group actions on graphs}
\label{ssec:AlgebrasSelfSimilarActionsGraphs}

In this subsection, we  use our theory  to extend a number of non-trivial, recent results for $C^*$-algebras associated to self-similar group actions on graphs, as defined by  Exel and Pardo \cite{Exel-Pardo:Self-similar}, to the context of $L^P$-operator algebras.

We fix a self-similar group action on a directed graph as defined in \cite{Exel-Pardo:Self-similar}. 
Thus we let $Q = (Q^{0} , Q^{1} , r , s )$ be a \emph{finite directed graph without sources}. 
Hence $Q^{0}$, $Q^{1}$ are finite, and $r^{-1}(v)\neq \emptyset$ for all $v\in Q^0$.
We also fix a countable group $G$. 
An \emph{automorphism of $Q$} is a bijective map $\sigma : Q^0 \sqcup Q^1 \to Q^0 \sqcup Q^1$ such that $\sigma(Q^0)=Q^0$, $\sigma(Q^1)=Q^1$, and $r \circ \sigma = \sigma \circ r$ and $s \circ \sigma = \sigma \circ s$. 
A \emph{self-similar action} of $G$ on $Q$ is a group homomorphism $\sigma$ from $G$ into the group of automorphisms of $Q$ and a map $\varphi : G \times Q^{1} \to G$, called a one-cocycle or a \emph{restriction map}, whose values are sometimes denoted by $g|_e:=\varphi(g,e)$, such that
\[
    gh|_{e}  = g|_{\sigma_h(e)} h|_{e}, \qquad \sigma_{g|_{e}} (s(e)) = \sigma_g\big(s(e) \big) \qquad \text{ for all $g,h \in G,\ e\in E^{1}$.}
\]
Both the $G$-action and the restriction maps extend uniquely to paths in $Q$, in such a way that $\sigma_g (e\mu) = \sigma_g(e) \sigma_{g|_e} (\mu)$ and $g|_{e\mu} = (g|_e)|_{\mu}$ for all $g\in G$, $e\in Q^1$ and $\mu \in Q^*$ with $s(e) = r(\mu)$, see \cite[Proposition 2.4]{Exel-Pardo:Self-similar}. 
There is a natural inverse semigroup associated to these data, \cite[Sections 4, 8]{Exel-Pardo:Self-similar}. 
Namely, the set 
\[
    S_{G , Q} := \big\{ ( \alpha , g , \beta ) \in Q^* \times G \times Q^* : s(\alpha) = \sigma_g \big( s(\beta) \big) \big\} \cup \{ 0 \} 
\]
with the semigroup law defined by 
\[
    (\alpha , g , \beta) (\gamma , h , \delta) := 
        \begin{cases}  
            \big( \alpha \sigma_g(\epsilon) , g|_{\epsilon} h , \delta \big) & \text{if $\gamma = \beta \epsilon$}, \\
            \big( \alpha , g (h^{-1}|_{\epsilon})^{-1} , \delta \sigma_{h^{-1}}(\epsilon) \big) & \text{if $\beta = \epsilon \gamma$}, \\
            0 & \text{otherwise},  
        \end{cases} 
\]
and $(\alpha , g , \beta)^* := (\beta , g^{-1} , \alpha)$, is an inverse semigroup with zero \cite[Proposition 4.3]{Exel-Pardo:Self-similar}. 
We use it to define $L^P$-analogues of Exel--Pardo $C^*$-algebras:

\begin{defn} 
For non-empty $P\subseteq [1,\infty]$, using notation from Definition~\ref{def:L_p_inverse_semigroup_algebras}, we define the \emph{full}, \emph{reduced} and \emph{essential $L^P$-operator algebra of the self-similar action} $(G,Q)$ as 
\[
    F^P(G , Q) := F^P_{{\tight}}(S_{G , Q}), \quad F^P_{\red}(G , Q) := F^P_{\red,{\tight}}(S_{G , Q})\quad\text{and} \quad F^P_{\ess}(G , Q) := F^P_{\ess,{\tight}}(S_{G , Q}),
\]
respectively.
\end{defn}

\begin{rem}
When the group $G=\{e\}$ is trivial, the above algebras coincide with the graph algebra $F^P(Q)$ discussed in the previous subsection.
\end{rem}

\begin{rem} 
Let $\F=\C$ and $P=\{2\}$. 
Then $F^P(G , Q)$ is isomorphic to the universal $C^*$-algebra $\OO_{G,Q}$ introduced in \cite[Definition 3.2]{Exel-Pardo:Self-similar}, because $\OO_{G,Q}\cong C^*(\G_{{\tight}}(S_{G,Q}))= F^2_{\tight}(S_{G,Q})$ by \cite[Corollary 6.4]{Exel-Pardo:Self-similar}. 
This construction can be specialized to give the $C^*$-algebras constructed by Katsura~\cite{Katsura}. 
When the graph has only a single vertex, the algebra $F^2(G , Q)$ becomes the self-similar group $C^*$-algebra of Nekrashevych, who also considered its reduced and essential versions $F^2_{\red}(G , Q)$ and $F^2_{\ess}(G , Q)$, see~\cite[3.6, 5.4]{Nekrashevych2} and \cite{Nekrashevych}. 
\end{rem}

\begin{lem}
If $G$ is amenable, then $F^P(G , Q) = F^P_{\red}(G , Q)$.  
\end{lem}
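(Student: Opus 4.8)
The plan is to pass to the groupoid model and reduce the assertion to the coincidence of full and reduced groupoid $L^P$-algebras for an amenable, second countable groupoid. By Definition~\ref{def:L_p_inverse_semigroup_algebras} we have, directly from the definitions, $F^P(G,Q) = F^P_{\tight}(S_{G,Q}) = F^P(\G,\LL_u)$ where $\G := \G_{\tight}(S_{G,Q})$ and the twist $\LL_u$ is trivial. On the reduced side, Proposition~\ref{prop:tight_reduced_twisted_inverse_semigroup_algebra} gives, for each single $p$, an isometric isomorphism $F^p_{\red,\tight}(S_{G,Q}) \cong F^p_{\red}(\G,\LL_u)$, and this extends from a single $p$ to an arbitrary non-empty $P\subseteq[1,\infty]$ by Lemma~\ref{lem:reduced_from_other_reduced}, so that $F^P_{\red}(G,Q) \cong F^P_{\red}(\G,\LL_u)$. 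Hence it suffices to prove $F^P(\G,\LL_u) = F^P_{\red}(\G,\LL_u)$, and by the combination of Example~\ref{ex:reduced_L_p_groupoid_algebra} with Remark~\ref{rem:L_P_definition_discussion} (that is, by \cite[Theorem 6.19]{Gardella_Lupini17}) this equality holds as soon as $\G$ is second countable and amenable.

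Second countability is straightforward. Since $Q$ is finite and $G$ is countable, the inverse semigroup $S_{G,Q}$ and hence its idempotent semilattice $E := E(S_{G,Q})$ are countable. Consequently the tight spectrum $\widehat{E}_{\tight}\subseteq \{0,1\}^{E}$ is second countable (indeed metrizable), and the transformation groupoid $\G = S_{G,Q}\ltimes \widehat{E}_{\tight}$, whose topology is generated by the compact open bisections indexed by the countably many elements of $S_{G,Q}$ over a second countable unit space, admits a countable base. Thus $\G$ is second countable.

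The substantive point, and the main obstacle, is the amenability of $\G$: the amenability of $G$ has to be transported to the groupoid. The difficulty is precisely that in a self-similar action the group $G$ is genuinely entangled with the shift, so that neither the assignment $(\alpha,g,\beta)\mapsto g$ nor the assignment forgetting the group coordinate descends to a continuous cocycle on germs; extending a representative $(\alpha,g,\beta)$ along an edge $e$ simultaneously replaces $g$ by the restriction $g|_{e}$ and $e$ by $\sigma_g(e)$, so only the degree cocycle $\G\to\Z$, $(\alpha,g,\beta)\mapsto |\alpha|-|\beta|$, is germ-invariant. I would therefore not attempt a direct cocycle-to-$G$ argument, but instead invoke the amenability of $\G_{\tight}(S_{G,Q})$ for amenable $G$ established by Exel and Pardo \cite{Exel-Pardo:Self-similar}, whose proof realises $\G$ through the self-similar (Zappa--Sz\'ep) structure and transfers amenability of $G$, together with that of the finite graph groupoid, to $\G$ via the standard permanence properties for groupoid extensions.

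Combining the three steps, $\G=\G_{\tight}(S_{G,Q})$ is second countable and amenable, whence $F^P(\G,\LL_u)=F^P_{\red}(\G,\LL_u)$ by Example~\ref{ex:reduced_L_p_groupoid_algebra} and Remark~\ref{rem:L_P_definition_discussion}, and therefore $F^P(G,Q) = F^P_{\red}(G,Q)$, as claimed.
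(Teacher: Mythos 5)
Your proof is correct and follows essentially the same route as the paper: the paper's proof simply cites \cite[Corollary 10.18]{Exel-Pardo:Self-similar} for amenability of the underlying tight groupoid and then applies \cite[Theorem 6.19]{Gardella_Lupini17} (via Example~\ref{ex:reduced_L_p_groupoid_algebra} and Remark~\ref{rem:L_P_definition_discussion}). You merely spell out the details the paper leaves implicit, namely the identification of $F^P_{\red}(G,Q)$ with $F^P_{\red}(\G_{\tight}(S_{G,Q}),\LL_u)$ and the second countability of the groupoid.
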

\begin{proof}
By \cite[Corollary 10.18]{Exel-Pardo:Self-similar}, the underlying groupoid is amenable and we may apply \cite[Theorem 6.19]{Gardella_Lupini17}. 
%(see the proof of Lemma~\ref{lem:reduced_from_other_reduced}). 
\end{proof}

We combine the relation $\leq$ on $Q^0$ induced by edges, as described before Definition~\ref{defn:cofinal}, with the orbit equivalence $\sim$ on $Q^0$ induced by the action $\sigma$: $v\sim w$ if and only if $v=\sigma_g(w)$ for some $g\in \G$. 
We write $v \ll w$ if $v \leq u \sim w$ for some $u \in Q^0$, which holds if and only if $v \sim u' \leq w$ for some $u' \in Q^0$, see \cite[Proposition 13.2]{Exel-Pardo:Self-similar}. 

\begin{defn}\label{defn:cofinal_self-similar}
We say that the self-similar graph $(G,Q)$ is \emph{cofinal} if every infinite path $\mu = \mu_1 \mu_2 \cdots$ in $Q$ is cofinal in $(Q^0,\ll)$ in the sense that for every $v\in Q^0$ there is $i$ such that $v\ll s(\mu_i)$. 
A path $\mu$ in $Q$ is \emph{strongly fixed} by $g \in G$ if $\sigma_g(\mu) = \mu$ and $g|_{\mu} = 1_G$. 
In addition, if no proper prefix of $\mu$ is strongly fixed by $g$, we say that $\mu$ is a minimal strongly fixed path for $g$. 
An element $g \in G$ is \emph{slack at $v\in Q^0$} if there is $n\in\N$ such that all paths $\mu = \mu_1 \cdots \mu_m$ of length $m\geq n$ that ends in $v$ are strongly fixed by $g$. 
\end{defn}

\begin{thm}\label{thm:self_similar_graphs_simple}
Let $(G,Q)$ be the self-similar action as above. 
Assume that $(G,Q)$ is cofinal, every cycle in $Q$ has an entry, and if $g\in G$  fixes all infinite paths ending in $v\in Q^0$, then $g$ is a slack at $v$. 
For any non-empty $P\subseteq [1,\infty]$, $F^P_{\ess}(G , Q)$ is a simple purely infinite Banach algebra.

The reduced Banach algebra $F^P_{\red}(G , Q)$ is simple purely infinite if either every $g\in G$ admits at most finitely many minimal strongly fixed paths, or condition  \cite[(5.9)]{CEPSS} holds.
\end{thm}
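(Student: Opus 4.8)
The plan is to reduce everything to the simplicity and pure-infiniteness criteria already proved for essential groupoid Banach algebras, by translating the combinatorial hypotheses on $(G,Q)$ into dynamical properties of the tight groupoid. Throughout I would work with the countable inverse semigroup $S := S_{G,Q}$ (countable because $G$ is countable and $Q$ is finite) and the tight groupoid $\G := \G_{\tight}(S)$, noting that the twist here is trivial, so that $S(\LL)=\Bis(\G)$ and the ``with respect to $S$'' qualifiers in Definitions~\ref{def:n_feeling} and~\ref{def:locally contractive} are harmless. By Proposition~\ref{prop:tight_reduced_twisted_inverse_semigroup_algebra} and Corollary~\ref{cor:essential_inverse_semigroup_algebra}, the algebras $F^P_{\red}(G,Q)$ and $F^P_{\ess}(G,Q)$ are, respectively, the reduced and essential groupoid $L^P$-operator algebras of $\G$. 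Hence it suffices to verify that $\G$ is topologically free, minimal, and locally contracting, and then to invoke Theorem~\ref{thm:SimplicityTightAlgebras} (equivalently Theorems~\ref{thm:Top_Free_Intersection_Property} and~\ref{thm:Anantharaman-Delaroche}), since the whole content is independent of $P$.

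The first block of work is to establish the three dynamical properties from the graph data. For minimality I would show that cofinality of $(G,Q)$ is equivalent to minimality of $\G$, using the graph/groupoid dictionary of Exel--Pardo~\cite{Exel-Pardo:Self-similar} together with Proposition~\ref{prop:properties_of_tight_groupoid}. For topological freeness I would show that ``every cycle in $Q$ has an entry'' combined with the slack hypothesis (if $g$ fixes every infinite path ending at $v$, then $g$ is slack at $v$) is exactly what prevents any non-empty open bisection $V\subseteq \G\setminus X$ from satisfying $r|_V=d|_V$: the cycle–entry condition controls the graph part of the isotropy, while the slack condition controls the part coming from the group action, following the effectiveness analysis of~\cite{Exel-Pardo:Self-similar} and~\cite{CEPSS}. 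Finally, since $Q$ is finite and $r^{-1}(v)\neq\emptyset$ for every vertex, following incoming edges and applying the pigeonhole principle to the finite set $Q^0$ produces a cycle; together with the cycle–entry condition this yields that $\G$ is locally contracting, by the standard mechanism for graph-type groupoids.

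With these three properties in hand, Theorem~\ref{thm:pure_infinteness_intro} (or directly Theorem~\ref{thm:SimplicityTightAlgebras}) shows that every $S$-graded essential Banach algebra of $(\G,\LL)$ is purely infinite simple; in particular $F^P_{\ess}(G,Q)$ is simple and purely infinite for every non-empty $P\subseteq[1,\infty]$, which is the first assertion. For the reduced algebra, the remaining task is to check that, under either of the two extra hypotheses, the reduced and essential algebras coincide. Concretely, I would deduce $\mathfrak{M}_0(\G , \L)=\{0\}$ from Lemma~\ref{lem:ample_essential_reduced_coincide}: each of the two conditions --- that every $g\in G$ admits only finitely many minimal strongly fixed paths, and the condition~\cite[(5.9)]{CEPSS} --- is designed precisely to make the compact open sets of $\G$ regular open (equivalently to make condition~(S) of~\cite{CEPSS} hold while all tight filters are ultrafilters, cf.\ Remark~\ref{rem:reduces_as_essential_tight}). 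Then by Remark~\ref{rem:essential_vs_reduced} one has $\ker j_{\RR}^{\ess}=\ker j_{\RR}^{\red}$, so $F^P_{\red}(G,Q)=F^P_{\ess}(G,Q)$ and the first part applies verbatim.

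The main obstacle is the topological-freeness translation in the second paragraph: because $\G$ is genuinely non-Hausdorff, one must keep effectiveness and topological freeness (in the weak Kwa--Meyer sense used throughout, cf.\ Remark~\ref{re:TopologicallyFreeGroupoid}) carefully apart, and verify that the slack condition is exactly the combinatorial shadow of ``no open bisection outside $X$ lies in the interior of the isotropy,'' rather than of the stronger effectiveness condition. This is where the genuinely non-trivial input from~\cite{CEPSS} enters, and the closely related delicate point is matching the two extra hypotheses (``finitely many minimal strongly fixed paths'' and condition~\cite[(5.9)]{CEPSS}) to the regularity of compact open sets that forces $\mathfrak{M}_0(\G , \L)=\{0\}$ in the reduced case.
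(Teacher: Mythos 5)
Your proposal is correct and follows essentially the same route as the paper's proof: identify $F^P_{\ess}(G,Q)$ and $F^P_{\red}(G,Q)$ with the essential and reduced groupoid algebras of $\G_{\tight}(S_{G,Q})$ (Proposition~\ref{prop:tight_reduced_twisted_inverse_semigroup_algebra}, Corollary~\ref{cor:essential_inverse_semigroup_algebra}), translate the graph hypotheses into minimality, topological freeness and local contractiveness of that groupoid, apply Theorem~\ref{thm:Anantharaman-Delaroche}, and in the reduced case show that the reduced algebra is essential. Two remarks on execution. First, what you single out as the main obstacle is in fact a non-issue: the paper simply cites \cite[Theorem 14.10]{Exel-Pardo:Self-similar} together with \cite[Theorem 4.7]{EP16}, by which the cycle--entry and slackness hypotheses are equivalent to \emph{effectiveness} of $\G_{\tight}(S_{G,Q})$; since effectiveness implies topological freeness (Remark~\ref{re:TopologicallyFreeGroupoid}), one never needs the slack condition to be ``exactly'' the combinatorial shadow of the weaker notion --- an implication in one direction suffices, so there is no delicate separation of the two conditions to carry out. (Likewise, local contractiveness is obtained in the paper by citing \cite[Theorem 15.1]{Exel-Pardo:Self-similar} rather than by your pigeonhole argument; your sketch is viable since finiteness and absence of sources let every vertex be extended backwards to a cycle, but it is extra work.) Second, for the reduced statement the paper treats the two extra hypotheses differently: ``finitely many minimal strongly fixed paths'' is equivalent to Hausdorffness of $\G_{\tight}(S_{G,Q})$ by \cite[Theorem 12.2]{Exel-Pardo:Self-similar}, which gives $F^P_{\red}(G,Q)=F^P_{\ess}(G,Q)$ at once, whereas only the \cite[(5.9)]{CEPSS} hypothesis is routed through condition (S) and Remark~\ref{rem:reduces_as_essential_tight}. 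Your unified route through Lemma~\ref{lem:ample_essential_reduced_coincide} and $\mathfrak{M}_0(\G,\L)=\{0\}$ also works in the Hausdorff case (compact open subsets of a Hausdorff ample groupoid are clopen, hence regular open), but it is more roundabout than the paper's direct appeal to Hausdorffness.
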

\begin{proof} 
The assumption that $(G,Q)$ is cofinal is equivalent to $\G_{{\tight}}(S_{G , Q})$ being minimal by \cite[Theorem 13.6]{Exel-Pardo:Self-similar}. 
The remaining assumption on $(G,Q)$ is equivalent to $\G_{{\tight}}(S_{G , Q})$ being effective, see \cite[Theorem 14.10]{Exel-Pardo:Self-similar} and \cite[Theorem 4.7]{EP16}.
Also, under our assumptions, $\G_{{\tight}}(S_{G , Q})$ is locally contracting, see \cite[Theorem 15.1]{Exel-Pardo:Self-similar}. 
Thus since $F^P_{\ess}(G , Q)$ is an essential Banach algebra for $\G_{{\tight}}(S_{G , Q})$, cf.\ Corollary~\ref{cor:essential_inverse_semigroup_algebra}, $F^P_{\ess}(G , Q)$ is simple purely infinite by Theorem~\ref{thm:Anantharaman-Delaroche}. 
Every $g\in G$ admits at most finitely many minimal strongly fixed paths if and only if $\G_{{\tight}}(S_{G , Q})$ is Hausdorff, see \cite[Theorem 12.2]{Exel-Pardo:Self-similar}. 
Thus then $F^P_{\ess}(G , Q) = F^P_{\red}(G , Q)$. 
If \cite[(5.9)]{CEPSS} holds, then $S_{G , Q}$ satisfies condition (S) mentioned in Remark~\ref{rem:reduces_as_essential_tight}, and so then $F^P_{\red}(G , Q)$ is an essential Banach algebra. 
\end{proof}

\begin{ex}[Katsura algebras] 
Let $A$ and $B$ be two $N\times N$ matrices with integer entries. 
Katsura \cite{Katsura} associated to them a $C^*$-algebra, and Exel--Pardo noticed in  \cite[Example 3.4]{Exel-Pardo:Self-similar} that there is a natural self-similar action $(\Z , Q_A)$ associated to $A$ and $B$ that yield the same algebra. 
Thus, for any non-empty $P\subseteq [1,\infty]$, one could view $F^P(\Z , Q_A) = F^P_{\red}(\Z , Q_A)$ as $L^P$-versions of Katsura algebras. 
In particular, if the underlying groupoid is Hausdorff, which is characterized in terms of $A$ and $B$ in \cite[Theorem 18.6]{Exel-Pardo:Self-similar}, then $F^P(\Z , Q_A)$ is simple (and purely infinite) if and only if the matrix $A$ is irreducible, see \cite[Theorem  18.12]{Exel-Pardo:Self-similar}. 
An example of $A$ and $B$ such that the underlying groupoid is non-Hausdorff, but satisfies \cite[(5.9)]{CEPSS}, is analyzed in \cite[Subsection 5.4]{CEPSS}. 
Thus Theorem~\ref{thm:self_similar_graphs_simple} applied to this example gives that $F^P(\Z , Q_A)$ is simple and purely infinite.
\end{ex}

\begin{ex}[Self-similar groups] 
Let $X$ be a set with at least two elements. 
Self-similar groups $(G,X)$, cf.\ \cite{Nekrashevych}, \cite{Nekrashevych2}, are equivalent to self-similar group actions $(G,Q_X)$ on the graph $Q_X$ that has only one vertex, the set of edges $E^1=X$, and such that the extended action of $G$ on the set of finite paths $X^*$ is faithful. 
Then the associated groupoid is necessarily minimal and topologically free, cf.\  \cite[Corollary 14.14]{Exel-Pardo:Self-similar}, and so the first part of Theorem~\ref{thm:self_similar_graphs_simple} applies. 
Thus, for any self-similar group $(G,X)$ and any non-empty $P\subseteq [1,\infty]$, the associated essential algebra $F^P_{\ess}(G,X) := F^P_{\ess}(G,Q_X)$ is always purely infinite and simple. 
If $(G,X)$ is $\omega$-faithful in the sense of \cite[Definition 5.20]{CEPSS}, then also \cite[(5.9)]{CEPSS} holds, and so $F^P_{\red}(G,X) := F^P_{\red}(G,Q_X)$ is also simple purely infinite in this case. 
The second part of Theorem~\ref{thm:self_similar_graphs_simple} does not apply to the self-similar action $(G,X)$ modeling the \emph{Grigorchuk group}. 
Nevertheless, the proof of \cite[Theorem 5.22]{CEPSS} shows that $F^P(G,X) := F^P(G,Q_X)$ is simple purely infinite in this case as well. 
\end{ex}

\end{document}